\documentclass[12pt,a4paper]{amsart}
\usepackage[margin=1in, headsep=18pt]{geometry}

\usepackage{amsmath,amssymb,amsfonts,amsthm,amscd,amsopn,amstext}
\usepackage{mathtools}
\usepackage{verbatim}

\usepackage[mathscr]{euscript}
\usepackage[bbgreekl]{mathbbol}

\usepackage{lineno}

\usepackage{amsthm}
\usepackage{amsopn} 
\usepackage{verbatim} 
\usepackage{mathtools}
\usepackage{enumitem}
\usepackage{longtable}
\usepackage{stmaryrd}

\usepackage{xfp} % for calculations

\usepackage{tikz}
\usepackage{tikz-cd}
\usepackage{tikz-3dplot}
\usepackage[margin=1in]{geometry}

\usetikzlibrary{babel}
\usetikzlibrary{matrix}
\usetikzlibrary{shapes}
\usetikzlibrary{arrows}
\usetikzlibrary{calc,3d}
\usetikzlibrary{decorations,decorations.pathmorphing}
\usetikzlibrary{through}
\tikzset{ext/.style={circle, draw,inner sep=1pt},int/.style={circle,draw,fill,inner sep=1pt},nil/.style={inner sep=1pt}}
\tikzset{exte/.style={circle, draw,inner sep=3pt},inte/.style={circle,draw,fill,inner sep=3pt}}
\tikzset{diagram/.style={matrix of math nodes, row sep=3em, column sep=2.5em, text height=1.5ex, text depth=0.25ex}}
\tikzset{diagram2/.style={matrix of math nodes, row sep=0.5em, column sep=0.5em, text height=1.5ex, text depth=0.25ex}}
\tikzset{every picture/.append style={baseline=-.65ex}}
\usepackage{ytableau}

\usepackage[backref=page]{hyperref}

\providecommand{\eprint}[2][]{\href{http://arxiv.org/abs/#2}{arXiv:#2}}

\newcommand{\SW}{\mathsf{SW}} % Schur--Weyl functor: S_N-modules -> GL_V-modules
\newcommand{\WS}{\mathsf{WS}} % conjugate Schur--Weyl functor: GL_V-modules -> S_N-modules

\usepackage{ulem}
\usepackage{color}
\newcommand\cred[1]{{\color{red} #1}}

\let\le\leqslant
\let\ge\geqslant
\let\leq\leqslant
\let\geq\geqslant

\makeatletter
\renewcommand{\subsubsection}{%
	\@startsection{subsubsection}{3}%
	\z@{.5\linespacing\@plus.7\linespacing}{-.5em}%
	{\normalfont\bfseries\itshape}% <--- Added \bfseries here alongside \itshape
}
\makeatletter
\newcounter{alphaequation}

\newcommand{\nc}{\newcommand}
\newtheorem{theoremA}{Theorem}

\newtheorem{corollaryA}[theoremA]{Corollary}

\numberwithin{equation}{section}
\newtheorem{conjecture}[equation]{Conjecture}
\newtheorem*{conjecture*}{Conjecture}
\newtheorem{theorem}[equation]{Theorem}
\newtheorem*{theorem*}{Theorem}
\newtheorem{proposition}[equation]{Proposition}
\newtheorem{proposition-definition}[equation]{Proposition-Definition}

\newtheorem{lemma}[equation]{Lemma}
\newtheorem{corollary}[equation]{Corollary}
\newtheorem*{corollary*}{Corollary}
\newtheorem{remark}[equation]{Remark}
\newtheorem{prop::def}[equation]{Proposition-Definition}

\newtheorem{definition}[equation]{{\bf Definition}}

\newtheorem{notation}[equation]{{\bf Notation}}

\theoremstyle{definition}
\newtheorem{example}[equation]{Example}

\nc{\ov}[1]{\overline{#1}}

\nc{\U}{\mathrm U}

\nc{\cO}{\mathcal{O}}
\nc{\cL}{\mathcal{L}}

\newcommand{\calF}{{\mathcal{F}}}

\nc{\sR}{\mathbf{R}}
\nc{\sZ}{\mathbf{Z}}

\newcommand{\bQ}{{\mathbb Q}}
\newcommand{\bZ}{{\mathbb Z}}
\newcommand{\bC}{{\mathbb C}}

\newcommand{\bP}{{\mathbb P}}

\nc{\bU}{{\mathbb U}}
\newcommand{\bS}{\mathfrak{S}} % Symmetric group

\newcommand{\fh}{{\mathfrak h}}

\newcommand{\fg}{{\mathfrak g}}
\newcommand{\fa}{{\mathfrak a}}
\newcommand{\fb}{{\mathfrak b}}

\newcommand{\fn}{{\mathfrak n}}

\nc{\fB}{\mathfrak{B}}
\nc{\gl}{\mathfrak{gl}}
\nc{\GL}{\mathsf{GL}}
\nc{\msl}{\mathfrak{sl}}
\nc{\mgl}{\mathfrak{gl}}

\newcommand{\Iwahori}{\mathfrak{I}}
\newcommand{\nIw}{{\mathfrak{n}^{\Iwahori}}}

\newcommand{\euJ}{\EuScript{J}}

\newcommand{\Span}{\mathrm{Span}}

\nc{\ch}{{\mathop {\rm ch}}}
\nc{\Tr}{{\mathop {\rm Tr}\,}}
\nc{\tr}{{\mathrm tr}}
\nc{\Id}{{\mathop {\rm Id}}}
\nc{\ad}{{\mathop {\rm ad}}}
\nc{\End}{{\mathop {\rm End}}}

\newcommand{\mGH}{\EuScript{GH}} % Garsia-Haiman modules
\nc{\RepGL}[2]{{\mathsf{Rep}}(\GL_{#1})^{(#2)}}
\nc{\RepGLt}[2]{{\mathsf{Rep}}(\mgl_{#1}[t])^{(#2)}}
\nc{\RepGLz}[2]{{\mathsf{Rep}}(\mgl_{#1}[\bar{z}])^{(#2)}}

\nc{\RepSL}[2]{{\mathsf{Rep}}(\SL_{#1})^{(#2)}}
\nc{\RepSLt}[2]{{\mathsf{Rep}}(\msl_{#1}[t])^{(#2)}}
\nc{\Rep}{\mathsf{Rep}}

\newcommand{\mdeg}{\mathsf{mdeg}} % multidegree
\newcommand{\bomega}{{\boldsymbol{\omega}}} % Sign functor map

\newcommand{\ev}{\mathsf{ev}} % evaluation representation
\newcommand{\sW}{\mathscr{W}} % Local Weyl module
\newcommand{\cosoc}{\mathsf{socle}} % socle.
\newcommand{\soc}{\mathsf{head}} % cosocle=head.

\newcommand{\sWH}{\mathscr{H}} % Iterated fusion product.
\newcommand{\mtop}{\mathsf{Top}} % Top degree component
\newcommand{\gh}{{\widetilde{H}}} % Character of GH-module

\usepackage{graphicx}

\newcommand{\wcheck}[1]{\widehat{#1}}

\newcommand{\young}[3]{%
  \begin{tikzpicture}[scale=0.4,shift={(0,-2)}]
    \ifnum#1>0
      \foreach \x in {0,...,\numexpr#1-1} {
        \draw (\x,0) rectangle ++(1,1);
      }
    \fi
    \ifnum#2>0
      \foreach \x in {0,...,\numexpr#2-1} {
        \draw (\x,1) rectangle ++(1,1);
      }
    \fi
    \ifnum#3>0
      \foreach \x in {0,...,\numexpr#3-1} {
        \draw (\x,2) rectangle ++(1,1);
      }
    \fi
    \pgfmathsetmacro{\maxx}{max(#1,#2,#3)}
    \draw[->, thick] (0,0) -- (\maxx + 1.5,0) node[right] {$z_1$};
    \draw[->, thick] (0,0) -- (0,4) node[above] {$z_2$};
  \end{tikzpicture}%
}

\newcommand{\inYoung}[3]
{
    \ifnum#1>0
      \foreach \x in {0,...,\numexpr#1-1} {
        \draw (\x,0) rectangle ++(1,1);
      }
    \fi
    \ifnum#2>0
      \foreach \x in {0,...,\numexpr#2-1} {
        \draw (\x,1) rectangle ++(1,1);
      }
    \fi
    \ifnum#3>0
      \foreach \x in {0,...,\numexpr#3-1} {
        \draw (\x,2) rectangle ++(1,1);
      }
    \fi
    \pgfmathsetmacro{\maxx}{max(#1,#2,#3)}
    \draw[->, thick] (0,0) -- (\maxx + 1.5,0) node[right] {$z_1$};
    \draw[->, thick] (0,0) -- (0,4) node[above] {$z_2$};
}

\begin{document}

\title[Fusion product \& N!/k-conjecture]{On fusion product and N!/k-conjecture}

\author[A.\,Khoroshkin]{Anton Khoroshkin}
\address{Anton Khoroshkin: \newline
Department of Mathematics, University of Haifa, Mount Carmel, 3498838, Haifa, Israel}
\email{khoroshkin@gmail.com}

\author[Ie.\,Makedonskyi]{Ievgen Makedonskyi}
\address{Ievgen Makedonskyi:\newline
Yanqi Lake Beijing Institute of Mathematical Sciences And Applications (BIMSA), 
No. 544, Hefangkou Village, Huaibei Town, Huairou District, Beijing 101408.}
\email{makedonskii\_e@mail.ru}

\begin{abstract}
We formulate a version of Schur--Weyl duality for the current Lie algebra
$\mathfrak{gl}_V[x,y]=\mathfrak{gl}_V\otimes\mathbb{C}[x,y]$. Under this duality,
the Garsia--Haiman modules of the $N!$-conjecture become cyclic and cocyclic
$\mathfrak{gl}_V[x,y]$-modules, and we describe them as iterated fusion and
cofusion products of tautological $\mathfrak{gl}_V$-modules. We show that
Haiman's $N!$-theorem for a diagram is equivalent to the coincidence of the
fusion and the cofusion filtrations on the tensor product of the local Weyl modules attached
to its rows, and that the fusion and cofusion products of Garsia--Haiman modules are associative.
Let $\lambda$ and $\mu$ be Young diagrams obtained by removing two different corners
from the same diagram. We construct an iterated fusion product of $S^2V$ with local Weyl modules, which is a quotient of the common quotient of the
Garsia--Haiman modules of $\lambda$ and $\mu$, fits into short exact sequences with each of
them, and has Butler's intersection polynomial as its character.
This gives a representation-theoretic proof of Butler's conjecture.
The same construction gives lower bounds for the dimensions in the $N!/k$-conjecture.
\end{abstract}

\maketitle

\begingroup
\small % Выбираем нужный размер: \small, \footnotesize или \scriptsize
\tableofcontents
\endgroup

\setcounter{section}{-1}

\section{Introduction}
\label{sec::intro}

\subsection{The $N!$-, the $\frac{N!}{k}$- and the Science Fiction conjectures}
\label{sec::intro::history}

In 1988, I.\,G.~Macdonald introduced a two-parameter family of symmetric functions, now known as \emph{Macdonald polynomials}. They have attracted much attention and remain among the most studied families of symmetric polynomials, because of their applications in many areas of mathematics (see, e.g.,~\cite[\S1,2]{Cherednik::book} and references therein). In his original paper Macdonald conjectured that the coefficients of the Schur expansion of the (suitably modified) Macdonald polynomials $\widetilde{H}_\lambda(X;q,t)$, the $(q,t)$-Kostka polynomials, are polynomials in $q$ and $t$ with nonnegative integer coefficients~\cite{Macdonald1988}. This \emph{Macdonald positivity conjecture} quickly became a central problem of algebraic combinatorics.

To approach it, building on the work of Garsia and Procesi~\cite{GarsiaProcesi1992} on the one-parameter case, A.~Garsia and M.~Haiman~\cite{Garsia_Haiman} attached to a Young diagram $\lambda\vdash N$ with cells $(a_j,b_j)$ the alternating polynomial
\[
\Delta_\lambda=\det\big(x_i^{a_j}y_i^{b_j}\big)_{i,j=1}^N\in\bC[x_1,y_1,\ldots,x_N,y_N]
\]
and the bigraded $\bS_N$-module $\mGH^{\bS}_\lambda$ spanned by all partial derivatives of $\Delta_\lambda$. They conjectured that $\dim\mGH^{\bS}_\lambda=N!$, i.e., that $\mGH^{\bS}_\lambda$ is a bigraded version of the regular representation of $\bS_N$ (the \emph{$N!$-conjecture}). Haiman showed in~\cite{Haiman::MacdonaldGeometry} that the $N!$-conjecture implies that the bigraded Frobenius characteristic of $\mGH^{\bS}_\lambda$ is $\widetilde{H}_\lambda(X;q,t)$; in particular it implies Macdonald positivity. The $N!$-conjecture was proved by Haiman in~\cite{Haiman_n!}, with the help of the geometry of the Hilbert scheme of points on the plane and of the isospectral Hilbert scheme. By the work of Kumar and Thomsen~\cite{Kumar-n!}, it is enough to prove it for staircase diagrams. Several other proofs of Macdonald positivity have been found~\cite{Gordon,Grojnowski,Losev_Procesi,Bezruk_Kaledin,Etingof_Ginzburg}, but to the best of our knowledge Haiman's remains the only proof of the $N!$-conjecture itself. For combinatorial aspects, see the books~\cite{Bergeron,Haglund} and the combinatorial formula~\cite{HHL}; explicit expressions for the coefficients are also studied in~\cite{Corteel,Ayyer_1,Ayyer_2}, and the refined positivity statements conjectured by Chen~\cite{Chen-Haiman} were recently proved in~\cite{Kato-Plethysm}.

The intersections of Garsia--Haiman modules form a natural next family of objects. Already in~\cite{Garsia_Haiman::96} Garsia and Haiman observed that for a diagram $\lambda\vdash N+1$ with two corners $c_1,c_2$, the intersection $\mGH^{\bS}_{\lambda\setminus c_1}\cap\mGH^{\bS}_{\lambda\setminus c_2}$ is expected to be a module of dimension $\frac{N!}{2}$ (the \emph{$\frac{N!}{2}$-conjecture}), and that this could give an inductive proof of the $N!$-conjecture. Bergeron and Garsia~\cite{Bergeron_Garsia::SF} (see also~\cite{BBGHT}) generalized this to an arbitrary number of corners.

\begin{conjecture*}[the $\frac{N!}{k}$-conjecture, Bergeron--Garsia]
Let $\lambda\vdash N+1$ be a Young diagram, let $c_1,\ldots,c_k$ be pairwise different corners of $\lambda$, and let $\nu_i=\lambda\setminus\{c_i\}\vdash N$. Then
\[
\dim\big(\mGH^{\bS}_{\nu_1}\cap\ldots\cap\mGH^{\bS}_{\nu_k}\big)=\frac{N!}{k},
\]
and the bigraded Frobenius characteristic of this intersection is the \emph{Macdonald intersection polynomial}
\begin{equation}
\label{eq::inters::polynomial}
\sum_{i=1}^k\Big(\prod_{j\neq i}\frac{T_{\nu_j}}{T_{\nu_j}-T_{\nu_i}}\Big)\widetilde{H}_{\nu_i}(X;q,t),\qquad T_\nu=t^{n(\nu)}q^{n(\nu^T)}.
\end{equation}
\end{conjecture*}

For $k=1$ this is the $N!$-theorem. For $k=2$ the intersection polynomial is Butler's function $\frac{T_{\nu_1}\widetilde{H}_{\nu_2}-T_{\nu_2}\widetilde{H}_{\nu_1}}{T_{\nu_1}-T_{\nu_2}}$~\cite{Butler}, and Butler conjectured that it is Schur positive with coefficients in $\bZ_{\geq0}[q,t]$. The $\frac{N!}{k}$-conjecture is a part of the much broader \emph{Science Fiction conjecture}~\cite{Bergeron_Garsia::SF}, which concerns the spaces of derivatives of the determinants attached to arbitrary lattice diagrams, with cells removed. The $\frac{N!}{k}$-conjecture is open in general: for $k=2$ it is known for hooks~\cite{Bergeron_Hamel,Armon}, and Butler's positivity was proved in several special cases in~\cite{KLO::Butler}. While this manuscript was being prepared, a geometric proof of Butler's conjecture appeared on the arXiv~\cite{Guo:Kang:Xiong::Butler:conjecture}: it realizes the intersection polynomial as the Frobenius characteristic of a quotient of $\mGH^{\bS}_{\nu_1}$ constructed from the Procesi bundle. 
Below we provide an algebraic proof of Butler's conjecture (Corollary~\ref{cor::intro::Butler} below) of a different nature: it uses fusion products, and it provides an explicit algebraic description of the underlying module. The value of the intersection polynomial~\eqref{eq::inters::polynomial} at $q=t=1$ was computed in~\cite{KLO::shuffle}: it has dimension $\frac{N!}{k}$, and depends only on $N$ and $k$ (see Theorem~\ref{thm::KLO::specialization}). Apart from computer verifications in small cases (see Remark~\ref{rem::N!/k::evidence}), nothing seems to be known for $N!/k$ conjecture for $k\geq3$.

\subsection{Fusion products}
\label{sec::intro::fusion}

\emph{Fusion products}, or \emph{graded tensor products}, were introduced by B.~Feigin and S.~Loktev in~\cite{FeLo} in connection with generalized Kostka polynomials and the quantum Verlinde rule. Let $\fg$ be a Lie algebra and $\fg[z]:=\fg\otimes\bC[z]$ its current algebra. Given cyclic $\fg$-modules $M_0,\ldots,M_l$ with cyclic vectors $v_0,\ldots,v_l$, and pairwise distinct numbers $\gamma_0,\ldots,\gamma_l$, the tensor product of the evaluation modules $\bigotimes_i\mathsf{ev}_{\gamma_i}(M_i)$ is cyclic over $\fg[z]$ with the cyclic vector $v_0\otimes\ldots\otimes v_l$, and it is filtered by the subspaces $U(\fg[z])_{\leq s}\cdot(v_0\otimes\ldots\otimes v_l)$, where $U(\fg[z])_{\leq s}$ is spanned by the products $(x_1z^{r_1})\cdots(x_pz^{r_p})$ with $r_1+\ldots+r_p\leq s$. The associated graded space is a $z$-graded cyclic $\fg[z]$-module, the \emph{fusion product} $M_0\star_zM_1\star_z\cdots\star_zM_l$ (see \S\ref{sec::fusion} for details). It has the same dimension as $M_0\otimes\ldots\otimes M_l$, and it can be fused again, which gives iterated fusion products.

Fusion products turned out to be very useful in the representation theory of current algebras. For $\fg=\msl_{r}$ the fusion product of fundamental representations does not depend on the parameters and is isomorphic to a local Weyl module of Chari and Pressley~\cite{CP,CL}. The same observation for fusion products holds  for all simple $\fg$ -- simply-laced types was shown in~\cite{FL1,FL2} and for non-simply-laced types~\cite{N}. Fusion products of Kirillov--Reshetikhin modules were studied in~\cite{Naoi2012} (see also~\cite{Kedem}). Weyl modules are related to Demazure modules, to the Macdonald polynomials at $t=0$ and to their nonsymmetric analogues~\cite{S,I,FMO,FeMa3}, and to semi-infinite flag varieties~\cite{FiMi,BF1,BF2,BF3,Kat,KNS}. A geometric approach to fusion products, via Beilinson--Drinfeld Schubert varieties and the coherent Satake category, was developed in~\cite{DFF,Dumanski-Feigin,Dumanski} confirming associativity of fusion products of certain families of irreducible representations.

The fusion product is, however, a very implicit construction, and the following basic questions are open in general. Does the fusion product depend on the evaluation parameters? Is it associative, i.e., is the iterated fusion product
\[
\Big[\big[M_{0,1}\star_z\cdots\star_zM_{0,k_0}\big]\star_z\cdots\star_z\big[M_{l,1}\star_z\cdots\star_zM_{l,k_l}\big]\Big]
\]
isomorphic to the fusion product $\star_{i,j}M_{i,j}$ of all the modules at once? Feigin and Loktev conjectured that the answer is affirmative (Conjecture~\ref{conj::fusion::associativity}). Both properties are known only for very special families, such as the Weyl modules and the Kirillov--Reshetikhin modules mentioned above (see~\cite{CL,Kedem,Dumanski-Feigin,Dumanski} and \S\ref{sec::fusion::graded}).

There is a dual construction. If a finite-dimensional $\fg[z]$-module $M$ has a \emph{cocyclic} vector $w$ (a vector contained in every nonzero submodule), one defines the \emph{cofusion filtration} of $M$ by the minimal $z$-degree of an element of $U(\fg[z])$ needed to reach $w$ from a given vector (\S\ref{sec::cofusion}). Its associated graded space, the \emph{cofusion product}, is the dual of the fusion product of the dual modules.

\subsection{Statements of the main results}
\label{sec::intro::results}

Let $V$ be a complex vector space of dimension $\dim V\geq 3N+1$, let $x,y$ be two variables, and let $\mathfrak{G}=\gl_V[x,y]=\gl_V\otimes\bC[x,y]$. For a Young diagram $\lambda=(\lambda_0\geq\ldots\geq\lambda_l)\vdash N$ we put
\[
\sW^x_{h\varepsilon_1}:=\underbrace{V\star_x\cdots\star_xV}_{h},
\qquad
\sWH_\lambda:=\sW^x_{\lambda_0\varepsilon_1}\star_y
\sW^x_{\lambda_1\varepsilon_1}\star_y\cdots\star_y
\sW^x_{\lambda_l\varepsilon_1},
\]
the iterated fusion product of $N$ copies of the tautological representation $V$: first along the rows of $\lambda$ with respect to the variable $x$ (the result is the local Weyl module $\sW^x_{h\varepsilon_1}$ of $\gl_V[x]$), and then with respect to the variable $y$ (\S\ref{sec::iterated::fusion}). It is a bigraded cyclic $\mathfrak{G}$-module, generated by the vector of weight $N\varepsilon_1$ and isomorphic to $V^{\otimes N}$ as a $\gl_V$-module.

Our main tool is a version of the Schur--Weyl duality for the current algebra (Theorem~\ref{thm::SW::currents}). For $\dim V\geq N$, the category of finite-dimensional polynomial $\mathfrak{G}$-modules of degree $N$ is equivalent to the category of finite-dimensional modules over $\bC[\bS_N]\ltimes\bC[x_1,y_1,\ldots,x_N,y_N]$, by taking the weight subspace of weight $(1^N,0^{m-N})$. Under this equivalence the trivial and the sign representations of $\bS_N$ correspond to $S^NV$ and $\Lambda^NV$, the polynomial ring corresponds to $S^N(V[x,y])$, and the Garsia--Haiman module $\mGH^{\bS}_\lambda\simeq\bC[x_1,\ldots,y_N]/\euJ^{\bS}_\lambda$ corresponds to a cyclic and cocyclic $\mathfrak{G}$-module
\[
\mGH^{\gl}_\lambda=S^N(V[x,y])/\euJ^{\gl}_\lambda ,
\]
the quotient by the largest submodule on which a natural projection onto $\Lambda^NV$ vanishes (\S\ref{sec::GH::SW}). The $N!$-conjecture says that $\mGH^{\gl}_\lambda\simeq V^{\otimes N}$ as a $\gl_V$-module, and the $\frac{N!}{k}$-conjecture becomes a statement on the common quotient $\mGH^{\gl}_{\nu_1,\ldots,\nu_k}=S^N(V[x,y])/(\euJ^{\gl}_{\nu_1}+\ldots+\euJ^{\gl}_{\nu_k})$, which is Schur--Weyl dual to the intersection of Garsia--Haiman modules. After this translation, the symmetric group disappears from all our arguments.

Recall that $\bomega$ denotes the sign duality functor, which is the Schur--Weyl dual of the tensoring with the sign representation; it exchanges $S^kV$ and $\Lambda^kV$ and is compatible with tensor products (\S\ref{sec::GH::sign}). We write $U^\circ:=\bomega(U)^\dagger$ for the contravariant sign dual.

\begin{theoremA}[Theorems~\ref{thm::fusion::cofusion} and~\ref{thm::N!::equiv::coincidence}]
\label{thm::intro::A}
Let $\lambda\vdash N$ be a Young diagram.
\begin{enumerate}
\item There is a canonical surjection of bigraded $\mathfrak{G}$-modules $\sWH_\lambda\twoheadrightarrow\mGH^{\gl}_\lambda$, which is an isomorphism on the top component $\Lambda^NV$.
\item There is a canonical embedding of bigraded $\mathfrak{G}$-modules $\mGH^{\gl}_\lambda\hookrightarrow \sW_{\lambda_0\varepsilon_1}^x\circledast_y\cdots\circledast_y\sW_{\lambda_l\varepsilon_1}^x\langle\mdeg(\Delta_\lambda)\rangle$ of the Garsia--Haiman module into the iterated cofusion product of the local Weyl modules, which is an isomorphism on the bottom component $S^NV$ (of bidegree $(0,0)$).
\item The following conditions are equivalent:
\begin{enumerate}
\item the $N!$-conjecture holds for $\lambda$;
\item the surjection of item~(i) is an isomorphism, i.e., the iterated fusion product $\sWH_\lambda$ of the tautological representations coincides with the Garsia--Haiman module $\mGH^{\gl}_\lambda$; equivalently, the embedding of item~(ii) is an isomorphism;
\item the fusion filtration and the cofusion filtration of the tensor product $$\sW_{\lambda_0\varepsilon_1}^{x}\otimes\cdots\otimes \sW_{\lambda_l\varepsilon_1}^{x}$$ 
of local Weyl $\gl_V[x]$-modules coincide: $F_s=G^{n(\lambda)-s}$ for all $s$. Here $F$ is the fusion filtration with respect to the vector $e_1^{\otimes N}$ of weight $N\varepsilon_1$, and $G$ is the cofusion filtration with respect to the cocyclic vector of $\Lambda^NV$.
\end{enumerate}
\end{enumerate}
In particular, by Haiman's theorem, these conditions hold for all diagrams. Then:
\begin{enumerate}
\item[(iv)] for all pairwise distinct parameters,
\[
\sW^x_{\lambda_0\varepsilon_1}\star_y
\cdots\star_y
\sW^x_{\lambda_l\varepsilon_1}\simeq\ \mGH^{\gl}_\lambda\ \simeq \
\sW_{\lambda_0\varepsilon_1}^x\circledast_y\cdots\circledast_y\sW_{\lambda_l\varepsilon_1}^x\big\langle\mdeg(\Delta_\lambda)\big\rangle ,
\]
where each $W_h$ is itself both a fusion and (up to a shift of the grading) a cofusion product of $h$ copies of $V$ with respect to $x$;
\item[(v)] (associativity) for all Young diagrams $\lambda^1,\ldots,\lambda^k$ and all pairwise distinct parameters,
\[
\mGH^{\gl}_{\lambda^1}\star_y\cdots\star_y\mGH^{\gl}_{\lambda^k}\ \simeq\ \mGH^{\gl}_{\lambda^1\cup\ldots\cup\lambda^k}\ \simeq\ \mGH^{\gl}_{\lambda^1}\circledast_y\cdots\circledast_y\mGH^{\gl}_{\lambda^k}\big\langle\mdeg(\Delta_{\lambda^1\cup\ldots\cup\lambda^k})\big\rangle,
\]
where $\lambda^1\cup\ldots\cup\lambda^k$ is the diagram whose rows are all the rows of $\lambda^i$. In particular, the fusion product of the modules $\mGH^{\gl}_{\lambda^i}$ with respect to their cyclic vectors, and their cofusion product with respect to the cocyclic vectors, do not depend on the parameters and are associative.
\end{enumerate}
\end{theoremA}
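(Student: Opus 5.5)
The plan is to work entirely on the $\mathfrak{G}$-side, via the current Schur--Weyl duality (Theorem~\ref{thm::SW::currents}). This turns statements about $\mGH^{\bS}_\lambda$ into statements about cyclic/cocyclic $\mathfrak{G}$-modules.

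\textbf{Item (i).} I would first show that $\sWH_\lambda$ is a cyclic quotient of $S^N(V[x,y])$, generated by $e_1^{\otimes N}$ viewed as the image of $e_1^N$. Cyclicity of iterated fusion products gives a surjection $S^N(V[x,y])\twoheadrightarrow\sWH_\lambda$, because $S^N(V[x,y])$ is the universal cyclic polynomial module of degree $N$ generated by a weight $N\varepsilon_1$ vector on which $\gl_V\otimes\bC[x,y]$ acts through symmetric functions. The key point is then that the kernel $K$ of this surjection lies in $\euJ^{\gl}_\lambda$. For this it suffices that the image of $K$ in the $\Lambda^NV$-isotypic top component vanishes, i.e. that the top component of $\sWH_\lambda$ is a single copy of $\Lambda^NV$ in bidegree $\mdeg(\Delta_\lambda)=(n(\lambda^T),n(\lambda))$. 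This can be read off from the $\gl_V$-structure: $\sWH_\lambda\simeq V^{\otimes N}$, so $\Lambda^NV$ occurs exactly once. Its bidegree is determined by the fact that in the local Weyl module $\sW^x_{h\varepsilon_1}$ the copy of $\Lambda^hV$ sits in $x$-degree $\binom h2$. After fusing in $y$, the rows stack, and the $y$-degree of $\Lambda^N V$ is forced by the minimal degree of an antisymmetric polynomial in distinct $y$-parameters. Since $\euJ^{\gl}_\lambda$ is the largest submodule killed by the projection to $\Lambda^NV$, we get $K\subset\euJ^{\gl}_\lambda$. This yields the surjection, which is an isomorphism on $\Lambda^NV$ because both sides contain it once, dual to $\Delta_\lambda$.

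\textbf{Item (ii).} I would obtain this by dualizing item (i) with $U\mapsto U^\circ$. The sign functor $\bomega$ swaps $S^N$ and $\Lambda^N$ and intertwines fusion with cofusion. I would show that $(\mGH^{\gl}_\lambda)^\circ\simeq\mGH^{\gl}_\lambda$ up to the shift $\langle\mdeg(\Delta_\lambda)\rangle$, reflecting the Gorenstein self-duality of the space of derivatives of $\Delta_\lambda$. Applying $^\circ$ to the surjection of item (i) then gives the embedding into the cofusion product.

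\textbf{Item (iii).} The equivalence (a)$\Leftrightarrow$(b) is a dimension count. We have $\dim\sWH_\lambda=N^{\dim V}$ in the $\gl_V$ sense, namely $V^{\otimes N}$, and the $N!$-conjecture is exactly $\mGH^{\gl}_\lambda\simeq V^{\otimes N}$; a surjection between spaces of equal dimension is an isomorphism. For (b)$\Leftrightarrow$(c), both filtrations $F$ and $G$ live on the same space $\bigotimes\sW^x_{\lambda_i\varepsilon_1}$. The composite $\sWH_\lambda=\mathrm{gr}_F\twoheadrightarrow\mGH^{\gl}_\lambda\hookrightarrow\mathrm{gr}_G$ comes from the inclusion $F_s\subseteq G^{n(\lambda)-s}$. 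This inclusion holds because raising $y$-degree from the cyclic vector by $s$ leaves at most $n(\lambda)-s$ degree to reach the cocyclic vector, which has total $y$-degree $n(\lambda)$. The composite is an isomorphism iff this inclusion is an equality for all $s$.

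\textbf{Items (iv) and (v).} Given Haiman's theorem, (iv) follows directly; the independence of parameters comes for free, since $\mGH^{\gl}_\lambda$ has no parameters. For (v), I would apply the same argument to the multiset of rows of $\lambda^1\cup\cdots\cup\lambda^k$, using that $\mGH^{\gl}_{\lambda^i}$ is itself cyclic. This gives a surjection $\mGH^{\gl}_{\lambda^1}\star_y\cdots\twoheadrightarrow\mGH^{\gl}_{\cup\lambda^i}$ through the top-component argument. Equality of dimensions ($N!$ on both sides) then makes it an isomorphism, and the cofusion statement follows dually.

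\textbf{Main obstacle.} I expect the hardest step to be the degree bookkeeping in item (i). One must show that the $\Lambda^NV$ component of $\sWH_\lambda$ sits precisely in bidegree $\mdeg(\Delta_\lambda)$ and is nonzero in that bidegree. This requires an explicit lower bound on the fusion degree of antisymmetrizations, for example via evaluating at distinct parameters and a Vandermonde-type computation.
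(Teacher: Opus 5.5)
Your reduction in item~(i) does not work. Knowing that the top component of $\sWH_\lambda$ is a single copy of $\Lambda^NV$ in bidegree $\mdeg(\Delta_\lambda)$ does not give $K=\ker\theta_\lambda\subset\euJ^{\gl}_\lambda$. What you need is $\phi_\lambda(K)=0$. Since $\phi_\lambda$ lives in one bidegree, this means $\ker(\theta_\lambda|_C)\subset\ker(\phi_\lambda|_C)$ for $C:=S^N(V[x,y])_{\mdeg(\Delta_\lambda)}$. Your information only says that $\theta_\lambda|_C$ and $\phi_\lambda|_C$ are both $\gl_V$-maps of $C$ onto $\Lambda^NV$, and such maps are far from unique. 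By Schur--Weyl duality, $\mathrm{Hom}_{\gl_V}(C,\Lambda^NV)$ is the space of alternating polynomials of bidegree $\mdeg(\Delta_\lambda)$. This space is spanned by the alternants of all $N$-element subsets $S\subset\bZ^2_{\geq0}$ with $\sum_{\bar s\in S}\bar s=\mdeg(\Delta_\lambda)$. The map $\phi_\lambda$ corresponds to $\Delta_\lambda$, and the inclusion of kernels holds only if $\theta_\lambda|_C$ also corresponds to a multiple of $\Delta_\lambda$. Your argument never separates $\lambda$ from other diagrams with the same $\mdeg$, so it proves false statements. For instance, $(3,3)$ and $(4,1,1)$ both have $\mdeg=(6,3)$, so it would give $\ker\theta_{(3,3)}\subset\euJ^{\gl}_{(4,1,1)}$. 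But $\theta_{(3,3)}$ kills $\prod_ie_{a_i}\bar z^{\bar s_i}$ (with distinct $a_i\geq2$ and $\bar s_i$ running over the cells of $(4,1,1)$), while $\phi_{(4,1,1)}$ does not.

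The missing ingredient is the monomial criterion of Theorem~\ref{thm::top::fusion}(iv). For pairwise distinct $a_i\geq2$ and $\sum_i(r_i,t_i)=\mdeg(\Delta_\lambda)$, the vector $\bigl(\prod_iE_{a_i1}x^{r_i}y^{t_i}\bigr)\widehat{v}$ is nonzero exactly when $\{(r_i,t_i)\}$ is the set of cells of $\lambda$. Together with the sign rule from permutation matrices fixing $e_1$, this gives $\phi_\lambda=\ell\circ\theta_\lambda$ on the part of $C$ of weight $\varepsilon_{a_1}+\cdots+\varepsilon_{a_N}$. That suffices, because $\phi_\lambda(K)$ is a $\gl_V$-submodule of $\Lambda^NV$ (see the proof of Proposition~\ref{prp::fusion->GH}). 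Establishing the criterion is the main work, in three steps:
\begin{enumerate}
\item bound the $y$-degree by $n(\lambda)$ via Proposition~\ref{prp::fusion::abelian};
\item identify $\mtop^x(\sWH_\lambda)\simeq\sW^y_{\lambda^T}$ by comparing $\calF_\bullet\cap\mtop^x(T_\lambda)$ with the fusion filtration of $\bigotimes_j\ev_{\gamma_j}(\Lambda^{\lambda_j}V)$ generated by an explicit vector of $\calF_0$;
\item read off the nonzero top monomials from Proposition~\ref{lem:CocyclicWeylProperties}\ref{item::socle:2}.
\end{enumerate}
Your heuristic about antisymmetric polynomials in the parameters supplies none of this. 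It does not even show $\Lambda^NV\not\subset F_{n(\lambda)-1}$, which your inclusion $F_s\subset G^{n(\lambda)-s}$ in~(iii) also relies on. The same gap recurs in~(v): the ``top-component argument'' for $\mGH^{\gl}_{\lambda^1}\star_y\cdots\star_y\mGH^{\gl}_{\lambda^k}$ needs the analogous criterion for that fusion product, which is Theorem~\ref{thm::sWH::associativity}(ii). Granting item~(i), the rest of your outline is sound. The dualization in~(ii) is the paper's. Your (b)$\Leftrightarrow$(c) argument via the map $\mathrm{gr}_F\to\mathrm{gr}_G$ induced by $F_s\subset G^{n(\lambda)-s}$ also works. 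Both this map and the composite $\mathrm{gr}_F\twoheadrightarrow\mGH^{\gl}_\lambda\hookrightarrow\mathrm{gr}_G$ are nonzero on $\widehat v$, and a map out of the cyclic module $\sWH_\lambda$ is determined by the image of $\widehat v$, which lies in a one-dimensional weight space; so the two maps are proportional. Finally, a small correction: $\dim\sWH_\lambda=(\dim V)^N$, not $N^{\dim V}$.
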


Thus the Garsia--Haiman modules provide a family of cyclic and cocyclic modules, for which the independence of the parameters and the associativity conjectured by Feigin and Loktev hold both for the fusion and for the cofusion product. 
Note, that Theorem~\ref{thm::intro::A} does not give a new proof of Haiman's theorem; it gives a reformulation of it (see Remark~\ref{rem::N!:reformulation}).
However, we hope that the fusion product description will lead to an alternative proof of the $N!$-conjecture, which would avoid the cumbersome geometry of polygraphs and of the isospectral Hilbert scheme used in Haiman's proof.

The proof is organized as follows. First, we study the top-degree components of $\sWH_\lambda$ using the properties of local Weyl modules and of fusion products over abelian Lie algebras (Theorem~\ref{thm::top::fusion}); this gives the surjection of item~(i). Second, the sign duality $U\mapsto U^\circ$ exchanges fusion and cofusion (Proposition~\ref{prp::sign::cofusion}), and the Garsia--Haiman modules are self-dual with respect to it (Theorem~\ref{thm::GH::self::dual}); a dimension count then gives the coincidence of the filtrations.

Let $\mu,\nu\vdash N$ be two Young diagrams obtained from a diagram $\lambda\vdash N+1$ by removing two different corners. We construct an explicit iterated fusion product $\mGH'_{\mu,\nu}$, recursively from $S^2V$ (the common quotient for $N=2$), by fusion with local Weyl modules and by the transposition $x\leftrightarrow y$ (\S\ref{sec::Butler::lower}). For example, if $\mu=(2^{k},1^{l+2})$ and $\nu=(2^{k+1},1^{l})$, then
\[
\mGH'_{\mu,\nu}=S^2V\star_y\underbrace{W_2\star_y\cdots\star_yW_2}_{k}\star_y\underbrace{V\star_y\cdots\star_yV}_{l},
\]
which is isomorphic to $S^2V\otimes V^{\otimes(N-2)}$ as a $\gl_V$-module; the same holds for all pairs.

\begin{theoremA}[Theorem~\ref{thm::Butler::exact} and Corollary~\ref{cor::Butler::lower}]
\label{thm::intro::B}
Let $\mu,\nu\vdash N$ be obtained from the same diagram by removing two different corners.
\begin{enumerate}
\item The common quotient $\mGH^{\gl}_{\mu,\nu}$ surjects onto $\mGH'_{\mu,\nu}$. In particular, the weight subspace of weight $(1^N,0^{m-N})$ of $\mGH^{\gl}_{\mu,\nu}$ (equivalently, the intersection $\mGH^{\bS}_\mu\cap\mGH^{\bS}_\nu$) has dimension at least $\frac{N!}{2}$, and the $\frac{N!}{2}$-conjecture for $\mu,\nu$ is equivalent to the statement that this surjection is an isomorphism.
\item There are short exact sequences of bigraded $\mathfrak{G}$-modules
\[
0\longrightarrow\big(\mGH'_{\mu,\nu}\big)^{\circ}\longrightarrow\mGH^{\gl}_\mu\longrightarrow\mGH'_{\mu,\nu}\longrightarrow0,
\qquad
0\longrightarrow\big(\mGH'_{\mu,\nu}\big)^{\circ}\longrightarrow\mGH^{\gl}_\nu\longrightarrow\mGH'_{\mu,\nu}\longrightarrow0,
\]
(with appropriate shifts of the grading) with the same module $\mGH'_{\mu,\nu}$.
\end{enumerate}
\end{theoremA}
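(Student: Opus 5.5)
We prove Theorem~\ref{thm::intro::B} by induction on $N$, following the recursive construction of $\mGH'_{\mu,\nu}$. The base case is $N=2$, $\lambda=(2,1)$, $\mu=(2)$, $\nu=(1,1)$. Here $\mGH^{\gl}_{(2)}=\sW^x_{2\varepsilon_1}$ and $\mGH^{\gl}_{(1,1)}$ is its $x\leftrightarrow y$ transpose. The common quotient is $S^2(V[x,y])$ modulo both ideals, which is exactly $S^2V$ in bidegree $(0,0)$. The exact sequences $0\to\Lambda^2V\to\sW^x_{2\varepsilon_1}\to S^2V\to0$ and their transposes are immediate, and $(S^2V)^\circ=\Lambda^2V$ up to shift.

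For the induction step, write $\lambda\vdash N+1$ with two removable corners $c_1,c_2$. The recursion of \S\ref{sec::Butler::lower} either fuses along $y$ with a row $\sW^x_{h\varepsilon_1}$ that is common to $\mu$ and $\nu$, or transposes $x\leftrightarrow y$ to reduce to that situation. In the first case, write $\mu=\mu^-\cup(h)$ and $\nu=\nu^-\cup(h)$, where $\mu^-,\nu^-$ come from removing two corners of a smaller diagram. By Theorem~\ref{thm::intro::A}(v) we have $\mGH^{\gl}_\mu\simeq\mGH^{\gl}_{\mu^-}\star_y\sW^x_{h\varepsilon_1}$, and similarly for $\nu$. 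The induction hypothesis gives surjections $\mGH^{\gl}_{\mu^-},\mGH^{\gl}_{\nu^-}\twoheadrightarrow\mGH'_{\mu^-,\nu^-}$ that are compatible with the cyclic vector. Since fusion products of cyclic modules are functorial in surjections of cyclic modules, these surjections induce surjections $\mGH^{\gl}_\mu,\mGH^{\gl}_\nu\twoheadrightarrow\mGH'_{\mu,\nu}:=\mGH'_{\mu^-,\nu^-}\star_y\sW^x_{h\varepsilon_1}$. Both surjections send the common generator $S^N(V[x,y])\to$ cyclic vector, so they factor through the common quotient $S^N(V[x,y])/(\euJ^{\gl}_\mu+\euJ^{\gl}_\nu)$. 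This gives the surjection in (i).

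The dimension bound follows from the $\gl_V$-isomorphism $\mGH'_{\mu,\nu}\simeq S^2V\otimes V^{\otimes(N-2)}$. This holds because fusion preserves the underlying $\gl_V$-module. Its $(1^N)$-weight space has dimension $N!/2$. Combined with Theorem~\ref{thm::KLO::specialization}, which says the intersection polynomial has total dimension $N!/2$, and with the fact that the characters of the two sides agree, this yields the equivalence with the $\frac{N!}{2}$-conjecture. Alternatively one can use the character computation in (ii) below: the kernel is forced to have the right size exactly when the surjection is an isomorphism.

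For (ii), let $K$ be the kernel of $\mGH^{\gl}_\mu\twoheadrightarrow\mGH'_{\mu,\nu}$. We need $K\simeq(\mGH'_{\mu,\nu})^\circ$ with the appropriate shift.

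1. Apply the contravariant sign duality $(-)^\circ$ to the surjection. Using the self-duality $\mGH^{\gl}_\mu\simeq(\mGH^{\gl}_\mu)^\circ\langle\mdeg\Delta_\mu\rangle$ from Theorem~\ref{thm::GH::self::dual}, this gives an embedding $(\mGH'_{\mu,\nu})^\circ\hookrightarrow\mGH^{\gl}_\mu$.
2. Prove that the image of this embedding lies in $K$. By Proposition~\ref{prp::sign::cofusion}, the image is a cofusion-type submodule containing the socle $\Lambda^NV$. Since $\mGH'_{\mu,\nu}$ is a quotient of $S^2V\otimes V^{\otimes(N-2)}$, it has no $\Lambda^NV$ component in its head, so the image maps to zero.
3. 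Compare dimensions: $\dim K=N!-\dim\mGH'_{\mu,\nu}$ on the $(1^N)$ weight space, and the $\gl_V$-characters satisfy $V^{\otimes N}-S^2V\otimes V^{\otimes(N-2)}=\Lambda^2V\otimes V^{\otimes(N-2)}=\bomega(S^2V\otimes V^{\otimes(N-2)})$. Hence the embedding in step~1 is onto $K$.

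The same argument applies to $\nu$.

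The main obstacle is step~2. We must show that the dual image lands exactly in $K$, rather than merely proving that $K$ and $(\mGH'_{\mu,\nu})^\circ$ have equal size. In the fusion step this should follow from the cofusion description of $\mGH^{\gl}_\mu$ in Theorem~\ref{thm::intro::A}(ii),(iv). That description gives $(\mGH'_{\mu^-,\nu^-})^\circ\circledast_y\sW^x_{h\varepsilon_1}$ as the kernel, via the induction hypothesis and exactness of fusion/cofusion along short exact sequences with matching dimensions; the filtrations are strictly compatible because the dimensions add up.
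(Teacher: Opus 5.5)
Your part~(i) is the paper's argument (Lemma~\ref{lem::Butler::reduction} and Definition~\ref{def::GH::fusion::quotient}). The equivalence with the $\frac{N!}{2}$-conjecture, however, is argued in a muddled way. You need neither Theorem~\ref{thm::KLO::specialization} nor an agreement of characters, which you do not have at that point. Since $\WS$ is an equivalence, the surjection is an isomorphism iff it is one on the $(1^N,0^{m-N})$-weight spaces, i.e.\ iff $\dim(\mGH^{\gl}_{\mu,\nu})_{(1^N,0^{m-N})}=\frac{N!}{2}$.

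The genuine gap is step~2 of part~(ii). You must show that the composite $(\mGH'_{\mu,\nu})^\circ\hookrightarrow\mGH^{\gl}_\mu\twoheadrightarrow\mGH'_{\mu,\nu}$ vanishes. Your socle argument only shows that this composite kills the simple socle $\Lambda^NV$ of $(\mGH'_{\mu,\nu})^\circ$, and that is not enough. Indeed, the surjection $q_\mu\colon\mGH^{\gl}_\mu\twoheadrightarrow\mGH'_{\mu,\nu}$ is itself a nonzero map out of a module with simple socle $\Lambda^NV$ that kills that socle. Moreover, for $N\geq3$ the $\gl_V$-modules $\Lambda^2V\otimes V^{\otimes(N-2)}$ and $S^2V\otimes V^{\otimes(N-2)}$ share constituents (e.g.\ $V_{(2,1)}$ for $N=3$), so nothing at the level of $\gl_V$ forces the composite to be zero. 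Via Schur--Weyl duality, the claim says that the ideal $I=\WS(\ker q_\mu)$ of the Gorenstein ring $\bC[\bar{z}_1,\ldots,\bar{z}_N]/\euJ^{\bS}_\mu$ satisfies $\mathrm{Ann}(I)\subseteq I$. That is real content, not a formality.

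Your closing paragraph points the right way, but its justification is not a proof. Put $M=\mGH^{\gl}_{\mu^-}(c_0)\otimes\sW^x_{h\varepsilon_1}(c_1)$, $Q=\mGH'_{\mu^-,\nu^-}(c_0)\otimes\sW^x_{h\varepsilon_1}(c_1)$ and $X=\ker(q_{\mu^-}\otimes\mathrm{id}\colon M\to Q)$. The kernel of $\mathrm{gr}_FM\twoheadrightarrow\mathrm{gr}_FQ$ is always $\mathrm{gr}$ of $X$ for the filtration $F\cap X$ induced from $M$. A priori this has nothing to do with the cofusion filtration of $X$, and ``the dimensions add up'' holds for the filtrations induced by \emph{any} filtration.

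The missing ingredient is the coincidence $F_s=G^{d-s}$, $d=n(\mu)$, of the fusion and cofusion filtrations of $M$. This is Theorem~\ref{thm::fusion::cofusion}(ii), which uses Haiman's theorem and the self-duality of Garsia--Haiman modules. Since $G\cap X$ is the cofusion filtration of $X$ (Lemma~\ref{lem::sub::quot::filtrations}), this coincidence places $\mathrm{gr}_GX$ and $\mathrm{gr}_FQ$ in one exact sequence $0\to\mathrm{gr}_GX\to\mathrm{gr}_FM\simeq\mGH^{\gl}_\mu\to\mathrm{gr}_FQ=\mGH'_{\mu,\nu}\to0$ (Proposition~\ref{prp::coincidence::exact}). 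The vanishing of your composite is exactly the identification $G^t/G^{t+1}=F_{d-t}/F_{d-t-1}$.

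Finally, $X\simeq Q^\circ$ by the induction hypothesis and $(\sW^x_{h\varepsilon_1})^\circ\simeq\sW^x_{h\varepsilon_1}$ up to shift. Hence $\mathrm{gr}_GX\simeq(\mathrm{gr}_FQ)^\circ=(\mGH'_{\mu,\nu})^\circ$ by Proposition~\ref{prp::sign::cofusion}, and your step~3 dimension count becomes unnecessary.
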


Taking the characters of the two short exact sequences, and using Theorem~\ref{thm::intro::A}, we obtain a representation-theoretic proof of Butler's conjecture.

\begin{corollaryA}[Butler's conjecture; Corollary~\ref{cor::Butler}]
\label{cor::intro::Butler}
The bigraded character of $\mGH'_{\mu,\nu}$, in which $q$ and $t$ record the $x$- and the $y$-degree, is the Macdonald intersection polynomial
\[
\mathrm{ch}_{q,t}\big(\mGH'_{\mu,\nu}\big)=\frac{T_\mu\,\widetilde{H}_\nu-T_\nu\,\widetilde{H}_\mu}{T_\mu-T_\nu}\,,
\]
In particular, it is Schur positive with coefficients in $\bZ_{\geq0}[q,t]$.
\end{corollaryA}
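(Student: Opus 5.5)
The plan is to take bigraded characters in the two short exact sequences of Theorem~\ref{thm::intro::B}(ii) and solve the resulting linear system for $\mathrm{ch}_{q,t}(\mGH'_{\mu,\nu})$. Write $P:=\mathrm{ch}_{q,t}(\mGH'_{\mu,\nu})$ and let $P^\circ$ be the character of $(\mGH'_{\mu,\nu})^\circ=\bomega(\mGH'_{\mu,\nu})^\dagger$. By Theorem~\ref{thm::intro::A} and Haiman's theorem, $\mathrm{ch}_{q,t}\mGH^{\gl}_\mu=\widetilde{H}_\mu$ and $\mathrm{ch}_{q,t}\mGH^{\gl}_\nu=\widetilde{H}_\nu$. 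Under Schur--Weyl duality the bigraded character of a polynomial $\mathfrak{G}$-module of degree $N$ is the Frobenius characteristic of its $(1^N)$-weight space.

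The sequences then give, with the grading shifts made explicit,
\[
\widetilde{H}_\mu = P + T_\mu\, P^\circ,\qquad \widetilde{H}_\nu = P + T_\nu\, P^\circ .
\]
I expect the shift in the sequence for $\mu$ to be $\mdeg(\Delta_\mu)$, i.e.\ multiplication by $T_\mu=t^{n(\mu)}q^{n(\mu^T)}$. The reason is that the sign dual of a Garsia--Haiman module is a copy of itself shifted by the bidegree of $\Delta_\mu$ (Theorem~\ref{thm::GH::self::dual}), and the submodule $(\mGH'_{\mu,\nu})^\circ$ sits inside $\mGH^{\gl}_\mu$ as the image of the dual of the surjection $\mGH^{\gl}_\mu\to\mGH'_{\mu,\nu}$. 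More precisely, $\bomega$ twists characters by $\omega$ and $\dagger$ inverts $q,t$, so the kernel has character $T_\mu\,\omega P(X;q^{-1},t^{-1})$. Here the shift is read off from the identification of the top component $\Lambda^NV$ in bidegree $\mdeg(\Delta_\mu)$.

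The crucial point is that the same unshifted dual module appears in both sequences, so $P^\circ$ is common to both equations. Subtracting the two equations gives $P^\circ=(\widetilde{H}_\mu-\widetilde{H}_\nu)/(T_\mu-T_\nu)$. Substituting back yields
\[
P=\frac{T_\mu\widetilde{H}_\nu-T_\nu\widetilde{H}_\mu}{T_\mu-T_\nu}.
\]
Here $T_\mu\neq T_\nu$, since $\mu\neq\nu$ are obtained by removing different corners, so their $n$-statistics differ.

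Schur positivity with coefficients in $\bZ_{\geq0}[q,t]$ is then automatic. Indeed, $P$ is the bigraded character of an honest finite-dimensional bigraded polynomial $\mathfrak{G}$-module, so each bigraded piece decomposes into Schur functors of $V$ with nonnegative multiplicities.

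The main obstacle is pinning down the grading shifts in Theorem~\ref{thm::intro::B}(ii) precisely. One must verify that the kernel in the $\mu$-sequence is exactly $(\mGH'_{\mu,\nu})^\circ$ shifted by $\mdeg(\Delta_\mu)$, and likewise for $\nu$, with no additional relative shift. I would check this by tracking the image of the cocyclic vector $\Lambda^NV$ of $\mGH^{\gl}_\mu$: it must lie in the kernel, because $\mGH'_{\mu,\nu}\simeq S^2V\otimes V^{\otimes(N-2)}$ contains no $\Lambda^NV$. Its bidegree equals $\mdeg(\Delta_\mu)$, and it corresponds to the bottom $S^NV$ of $\mGH'_{\mu,\nu}$ under $\circ$, which fixes the shift.
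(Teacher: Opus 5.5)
This is correct and is essentially the paper's own argument (Proposition~\ref{prp::Butler::formula} followed by Corollary~\ref{cor::Butler}). You take characters of the two exact sequences of Theorem~\ref{thm::Butler::exact}, which share the module $\mGH'_{\mu,\nu}$, to get $\widetilde{H}_\mu=F+T_\mu\,\omega\overline{F}$ and $\widetilde{H}_\nu=F+T_\nu\,\omega\overline{F}$, subtract, and solve; positivity then comes from $\mGH'_{\mu,\nu}$ being a genuine bigraded polynomial module. For coefficients in $\bZ_{\geq0}[q,t]$ rather than Laurent polynomials, you should also say that $\mGH'_{\mu,\nu}$ is concentrated in nonnegative bidegrees, since it is a graded quotient of $S^N(V[x,y])$.
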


This proof uses Haiman's theorem together with the description of the Garsia--Haiman modules as fusion products (Theorem~\ref{thm::intro::A}). In contrast with the geometric proof mentioned in \S\ref{sec::intro::history}, the module with the Schur positive character is described explicitly, as an iterated fusion product.

The fusion product construction generalizes to an arbitrary number of corners. For $k$ corners of a diagram $\lambda\vdash N+1$ we reduce the common quotient to the common quotient $Z_k$ of the staircase diagram $(k,k-1,\ldots,1)$, which is the smallest diagram with $k$ corners (of size $N_k+1$, where $N_k=\frac{k(k+1)}{2}-1$).

\begin{theoremA}[Theorem~\ref{thm::Nk::lower} and Corollary~\ref{cor::Nk::lower}]
\label{thm::intro::D}
Let $\nu_1,\ldots,\nu_k\vdash N$ be obtained from a diagram $\lambda\vdash N+1$ by removing $k$ different corners. There is an explicit iterated fusion product $T$ of $Z_k$ with local Weyl modules (and a twist exchanging $x$ and $y$) and a surjection $\mGH^{\gl}_{\nu_1,\ldots,\nu_k}\twoheadrightarrow T$, where $T\simeq Z_k\otimes V^{\otimes(N-N_k)}$ as a $\gl_V$-module. Consequently,
\[
\dim\big(\mGH^{\bS}_{\nu_1}\cap\ldots\cap\mGH^{\bS}_{\nu_k}\big)\ \geq\ \frac{N!}{N_k!}\,\dim\big(\mGH^{\bS}_{\nu^{\mathrm{st}}_1}\cap\ldots\cap\mGH^{\bS}_{\nu^{\mathrm{st}}_k}\big),
\]
where $\{\nu^{\mathrm{st}}_i \colon i=1\ldots,k\}$ are the $k$ different diagrams of size $N_k$ obtained from the staircase by removing its corners. Hence, if the $\frac{N!}{k}$-conjecture holds for the staircase, then $\dim\geq\frac{N!}{k}$ holds for all diagrams, and the conjecture for a given diagram is equivalent to the injectivity of the above surjection. For $k\leq3$ the staircase case holds, so the lower bound $\frac{N!}{k}$ is unconditional.
\end{theoremA}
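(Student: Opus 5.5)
We reduce from an arbitrary diagram $\lambda\vdash N+1$ with $k$ marked corners to the staircase $(k,k-1,\ldots,1)$, removing rows and columns one at a time. Each removal is undone on the module side by a fusion product with a local Weyl module; columns are handled by the twist $x\leftrightarrow y$. The model case is the construction of $\mGH'_{\mu,\nu}$ for $k=2$ in Theorem~\ref{thm::intro::B}, and we follow it closely.

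\textbf{Combinatorial reduction.} If $\lambda$ has $k$ corners $c_1,\ldots,c_k$, then either some row of $\lambda$ can be deleted, or, after transposing, some column can be deleted, so that the result still has $k$ corners and the marked corners correspond. Such a row is one whose length equals that of the next row; deleting it keeps all corners, since the corner sits in the last row of each block of equal rows. Iterating, we reach a diagram with pairwise distinct row lengths and pairwise distinct column lengths and $k$ corners. Such a diagram is forced to be the staircase $(k,\ldots,1)$ of size $N_k+1$. Removing a row of length $h$ from $\lambda$ removes the same row from every $\nu_i$, since the corners lie in other rows. By Theorem~\ref{thm::intro::A}(v), each $\mGH^{\gl}_{\nu_i}$ is therefore isomorphic to $\mGH^{\gl}_{\nu_i'}\star_y\sW^x_{h\varepsilon_1}$, where $\nu_i'$ is $\nu_i$ with that row removed. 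The transposition is the automorphism of $\mathfrak{G}$ swapping $x$ and $y$, and it sends $\mGH^{\gl}_{\nu}$ to $\mGH^{\gl}_{\nu^T}$.

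\textbf{The module-theoretic step.} We first show that for cyclic modules $M_1,\ldots,M_k$ with a common quotient $Q$, where $Q$ is the quotient of $S^{N'}(V[x,y])$ by the sum of the kernels, there is a surjection
\[
\big(\text{common quotient of the } M_i\star_y W\big)\twoheadrightarrow Q\star_y W .
\]
The argument uses that each $M_i\star_y W$ surjects onto $Q\star_y W$, which follows from functoriality of the fusion filtration under surjections of cyclic modules. All these maps are compatible with the canonical projections from $S^{N}(V[x,y])$, which is cyclic with image vector $e_1^{\otimes N}$. Hence the sum of the kernels $\euJ^{\gl}_{\nu_i}$ lies in the kernel of the map to $Q\star_y W$. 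Iterating this along the reduction, and inserting twists where needed, gives the explicit iterated fusion product $T$ and the surjection $\mGH^{\gl}_{\nu_1,\ldots,\nu_k}\twoheadrightarrow T$. Fusion preserves the underlying $\gl_V$-module, so $T\simeq Z_k\otimes V^{\otimes(N-N_k)}$.

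\textbf{Dimension count.} We take the weight $(1^N)$ subspace of $T$ and apply the Schur--Weyl equivalence of Theorem~\ref{thm::SW::currents}. Under it, $Z_k$ corresponds to the $\bS_{N_k}$-module $\mGH^{\bS}_{\nu^{\mathrm{st}}_1}\cap\ldots\cap\mGH^{\bS}_{\nu^{\mathrm{st}}_k}$. Tensoring with $V^{\otimes(N-N_k)}$ corresponds to induction to $\bS_N$, which multiplies the dimension by $N!/N_k!$. Combined with the surjection, this yields the inequality. If the staircase dimension equals $N_k!/k$, the bound becomes $N!/k$. Since $N!/k$ is also the expected dimension, the conjecture is then equivalent to the injectivity of the surjection.

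For $k\le 3$ the staircase cases are $N_k=2$ and $N_k=5$. The case $k=2$ gives $S^2V$, which is immediate. For $k=3$ the staircase $(3,2,1)$ requires either a direct computation or the known small-case verification cited in Remark~\ref{rem::N!/k::evidence}. We expect the main obstacle to be this staircase case, together with checking that the functoriality of the fusion filtration is compatible with the common quotient. The subtle point is that the projections $M_i\star_y W\to Q\star_y W$ must all be induced by one map from $S^N(V[x,y])$ with matching gradings.
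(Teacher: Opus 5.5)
Your route is essentially the paper's. You use the fusion-of-rows isomorphism $\mGH^{\gl}_{\nu_s}\simeq\mGH^{\gl}_{\nu'_s}\star_y W_h$. You push the projections onto the common quotient through the fusion product, since fusion preserves surjections. All composites out of the cyclic module $S^N(V[x,y])$ send $e_1^N$ to the same vector, so $\euJ^{\gl}_{\nu_1}+\ldots+\euJ^{\gl}_{\nu_k}$ is killed. You then transpose via the twist $\theta$ and finish with the Schur--Weyl dimension count and the computer check for $(3,2,1)$. The paper does this in two batch steps: all rows other than the marked ones at once, then the same for $\kappa^T$. So its $T$ is $(Z_k\star_y W_{h'_\bullet})^\theta\star_y W_{h_\bullet}$. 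Your row-by-row iteration produces a more deeply nested fusion product, which is harmless because only the $\gl_V$-structure of $T$ is used.

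There is a concrete gap in your combinatorial reduction. You assume the $k$ marked corners are \emph{all} the corners of $\lambda$, and you delete only rows whose length equals that of the next row. The statement allows $\lambda$ to have more than $k$ corners.

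For example, take $\lambda=(3,2,1)$ with $k=2$ marked corners in rows $0$ and $2$. No row is deletable by your criterion, and after transposition no column is either. So the procedure stops at $\delta_3$ instead of $\delta_2$, and you never reach $Z_2$. In general you end at some $\delta_{k'}$ with $k'>k$ and only $k$ marked corners, so your claim that one reaches a diagram with exactly $k$ corners fails.

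The fix is that \emph{any} row $j$ not containing a marked corner may be deleted. It occurs unchanged in every $\nu_s$. Each marked corner stays a corner, because $\lambda_{i_s}>\lambda_{i_s+1}\geq\lambda_{j'}$ for all $j'>i_s$. This is what the paper does by letting $h_1,\ldots,h_p$ be all rows other than $i_1,\ldots,i_k$. The resulting diagram $\kappa$ has strictly decreasing rows, so its corners are exactly the marked ones. Only then does your equal-rows/equal-columns argument, applied to $\kappa^T$, land on the staircase.

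A minor point: for the equivalence with injectivity, state explicitly why equality of the dimensions of the $(1^N,0^{m-N})$-weight spaces forces the surjection to be injective. The reason is that $\WS$ is an equivalence, so a nonzero kernel would have a nonzero weight space of that weight.
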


For $k=3$ the staircase case is a computer calculation (exact linear algebra over $\bQ$, \S\ref{sec::Butler::Nk}). We also list there the bigraded characters of $Z_3$ and $Z_4$ (dimensions $40=\frac{5!}{3}$ and $90720=\frac{9!}{4}$).

Finally, we show that on the tensor product of the Demazure modules realized inside level one modules of $\widehat{\gl}_V$ (which are the local Weyl modules above), the fusion filtration defined by the current algebra $\gl_V[x]$ coincides with the one defined by the affine algebra $\widehat{\gl}_V$ (Theorem~\ref{thm::affine::current}).

\subsection{Structure of the paper}
\label{sec::intro::structure}

\S\ref{sec::partitions} fixes the notation for Young diagrams and contains an elementary combinatorial lemma on monotone fillings of skew diagrams. In \S\ref{sec::SW} we prove the Schur--Weyl duality for current algebras, compute graded characters and introduces the sign duality functor. \S\ref{sec::GH} recalls the Garsia--Haiman modules, the $N!$- and the $\frac{N!}{k}$-conjectures, translates them to $\gl_V[x,y]$-modules, shows the self-duality of the Garsia--Haiman modules. \S\ref{sec::fusion} is devoted to the fusion product: we recall its definition, the iterated and graded versions, the associativity conjecture, the vanishing lemma for abelian Lie algebras, and the behaviour of fusion products under surjections and common quotients. In \S\ref{sec::Weyl} we collect the properties of local Weyl modules needed later. \S\ref{sec::iterated::fusion} introduces the iterated fusion products $\sWH_\lambda$, describes their top-degree components, constructs the surjection onto $\mGH^{\gl}_\lambda$ and proves the associativity of the fusion product of the modules $\sWH_\lambda$. In \S\ref{sec::cofusion} we define the cofusion product, show that sign duality exchanges fusion and cofusion, and prove that the fusion and cofusion filtrations coincide; this completes the proof of Theorem~\ref{thm::intro::A}. \S\ref{sec::Butler} contains the results on two diagrams (Theorem~\ref{thm::intro::B} and Corollary~\ref{cor::intro::Butler}) and the lower bound for the $\frac{N!}{k}$-conjecture (Theorem~\ref{thm::intro::D}). The last \S\ref{sec::affine} compares the fusion filtrations defined by the current and by the affine algebra.

\subsection*{Acknowledgements}
We would like to thank Boris Feigin, Sergei Loktev and Ilya Dumanski for stimulating conversations on fusion products.

{\it{Statement on the use of AI:}}
AI tools were used for the proofreading and the language polishing of this manuscript, for drafting some parts of the text following the instructions of the authors, and for computer experiments on the dimensions in the $\frac{N!}{k}$-conjecture for small $k$. The final version of the text was proofread and edited by the authors alone.

\section{Young diagrams}
\label{sec::partitions}

In this section we fix the notation for partitions and Young diagrams, and we prove an elementary combinatorial lemma on monotone fillings of skew diagrams (Lemma~\ref{prp::subdiag}), which is not used in the rest of the text but is relevant for future applications.

\subsection{Partitions and Young diagrams}

A \emph{partition} $\lambda\vdash N$ of a positive integer $N$ is a non-increasing sequence of positive integers $\lambda=(\lambda_0\geq\lambda_1\geq\ldots\geq\lambda_l>0)$ with $\lambda_0+\ldots+\lambda_l=N$. We write $|\lambda|:=N$ and $\ell(\lambda):=l+1$ for the number of parts.
We use the French convention for the visualization of partitions: the \emph{Young diagram} of $\lambda$ is the union of unit squares (\emph{cells}) in the first quadrant, with the longest row at the bottom. A cell is indexed by the coordinates of its bottom-left corner, so that
\[
\lambda=\{\bar{s}=(s_x,s_y)\in\bZ_{\geq0}^{2}\colon\ s_y\leq l,\ s_x<\lambda_{s_y}\}.
\]
We do not distinguish between a partition and its Young diagram. In particular, $s_y$ is the number of the row of the cell $\bar{s}$ and $s_x$ is its position in this row, and the number of cells of $\lambda$ equals $N$.
Equivalently, a Young diagram is a finite subset $\lambda\subset\bZ_{\geq0}^2$ such that $(s_x,s_y)\in\lambda$ implies $(s_x',s_y')\in\lambda$ for all $0\leq s_x'\leq s_x$ and $0\leq s_y'\leq s_y$.
The figure below shows the partition $(6,4,4,1)\vdash 15$, together with the coordinates of its cells.

\[
\begin{array}{cc}
\text{Partition $(6,4,4,1) \vdash 15$} & \text{The same partition with cell coordinates} \\
\begin{tikzpicture}[scale=0.65,shift={(0,-2)}]
  \foreach \x in {0,1,2,3,4,5} {
    \draw (\x,0) rectangle ++(1,1);
  }
  \foreach \x in {0,1,2,3} {
    \draw (\x,1) rectangle ++(1,1);
    \draw (\x,2) rectangle ++(1,1);
  }
  \draw (0,3) rectangle ++(1,1);
  \draw[->, thick] (0,0) -- (7,0) node[right] {$s_x$};
  \draw[->, thick] (0,0) -- (0,5) node[above] {$s_y$};
\end{tikzpicture}
&
\begin{tikzpicture}[scale=0.65,shift={(0,-2)}]
  \foreach \x in {0,1,2,3,4,5} {
    \draw (\x,0) rectangle ++(1,1);
    \node at (\x+0.5,0.5) {\tiny($\x$,0)};
  }
  \foreach \x in {0,1,2,3} {
    \draw (\x,1) rectangle ++(1,1);
    \node at (\x+0.5,1.5) {\tiny($\x$,1)};
    \draw (\x,2) rectangle ++(1,1);
    \node at (\x+0.5,2.5) {\tiny($\x$,2)};
  }
  \draw (0,3) rectangle ++(1,1);
  \node at (0.5,3.5) {\tiny(0,3)};
  \draw[->, thick] (0,0) -- (7,0) node[right] {$s_x$};
  \draw[->, thick] (0,0) -- (0,5) node[above] {$s_y$};
\end{tikzpicture}
\end{array}
\]

The $j$-th row of $\lambda$ has $\lambda_j$ cells, and the $i$-th column has $\lambda^T_i$ cells, where $\lambda^T$ is the \emph{transposed} partition (the reflection of the diagram in the diagonal $s_x=s_y$):
\[
\lambda_j=\#\{\bar{s}\in\lambda\colon s_y=j\},\qquad
\lambda^T_i=\#\{\bar{s}\in\lambda\colon s_x=i\}.
\]
We also set
\begin{equation}
\label{eq::n::lambda}
n(\lambda):=\sum_{j}j\,\lambda_j=\sum_{\bar{s}\in\lambda}s_y,
\qquad
n(\lambda^T)=\sum_{j}\binom{\lambda_j}{2}=\sum_{\bar{s}\in\lambda}s_x .
\end{equation}

\begin{definition}
\label{def::skew}
For two Young diagrams we write $\mu\subset\lambda$ if every cell of $\mu$ is a cell of $\lambda$. In this case the set-theoretic difference $\lambda\setminus\mu$ is called the \emph{skew diagram}.
\end{definition}

\begin{definition}
\label{def::diagram::corner}
A cell $\bar{c}\in\lambda$ is called a \emph{corner} of $\lambda$ if $\lambda\setminus\{\bar{c}\}$ is again a Young diagram, i.e., if $\lambda\setminus\mu=\{\bar{c}\}$ for some Young diagram $\mu\subset\lambda$.
In terms of the rows of $\lambda=(\lambda_0\geq\ldots\geq\lambda_l)$, the corners are the cells $(\lambda_j-1,j)$ with $\lambda_j>\lambda_{j+1}$, where $\lambda_{l+1}:=0$.
\end{definition}

\subsection{Monomial ideals}
To a cell $\bar{s}=(s_x,s_y)$ we associate the monomial $\bar{z}^{\bar{s}}:=x^{s_x}y^{s_y}\in\bC[x,y]$, where $\bar{z}=(x,y)$.
For a Young diagram $\lambda$ let $\mathsf{J}_\lambda\subset\bC[x,y]$ be the ideal spanned by the monomials $\bar{z}^{\bar{s}}$ with $\bar{s}\notin\lambda$. This is the monomial ideal generated by the monomials of the cells just outside of $\lambda$. The quotient $\bC[x,y]/\mathsf{J}_\lambda$ has the basis $\{\bar{z}^{\bar{s}}\colon\bar{s}\in\lambda\}$, so its dimension is $|\lambda|$. This gives a bijection between Young diagrams of size $N$ and monomial ideals of codimension $N$.
Moreover, $\mu\subset\lambda$ if and only if $\mathsf{J}_\mu\supset\mathsf{J}_\lambda$, and in this case $\dim\mathsf{J}_\mu/\mathsf{J}_\lambda=|\lambda\setminus\mu|$.

\subsection{Monotone fillings of skew diagrams}

\begin{definition}
\label{def::filling}
Let $\bar{r}_1,\ldots,\bar{r}_K\in\bZ_{\geq0}^{2}$. A \emph{monotone filling} of a Young diagram $\lambda$ by $\bar{r}_1,\ldots,\bar{r}_K$ is an injective map $\psi\colon\{1,\ldots,K\}\hookrightarrow\lambda$ such that $\psi(i)-\bar{r}_i\in\bZ_{\geq0}^{2}$ for all $i$, i.e., both coordinates of $\psi(i)$ are greater than or equal to the corresponding coordinates of $\bar{r}_i$.
\end{definition}

\begin{lemma}
\label{prp::subdiag}
Let $\lambda$ be a Young diagram and let $\bar{r}_1,\ldots,\bar{r}_K\in\bZ_{\geq0}^2$. For a Young diagram $\mu\subset\lambda$ set
\[
\mu(\bar{r}):=\{\,i\in\{1,\ldots,K\}\ \colon\ \bar{z}^{\bar{r}_i}\in\mathsf{J}_\mu\,\}=\{\,i\ \colon\ \bar{r}_i\notin\mu\,\}.
\]
The following conditions are equivalent:
\begin{enumerate}[label=(\roman*)]
\item there exists a monotone filling of $\lambda$ by $\bar{r}_1,\ldots,\bar{r}_K$;
\item for every Young diagram $\mu\subset\lambda$ we have $|\mu(\bar{r})|\leq|\lambda\setminus\mu|$.
\end{enumerate}
\end{lemma}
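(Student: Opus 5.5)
This is a Hall marriage theorem statement, and the plan is to prove it by reducing it to Hall's condition for the bipartite graph $\Gamma$. One side of $\Gamma$ is the index set $\{1,\ldots,K\}$, and the other side is the set of cells of $\lambda$. An index $i$ is joined to a cell $\bar{s}$ whenever $\bar{s}-\bar{r}_i\in\bZ_{\geq0}^2$. A monotone filling is exactly a matching of $\Gamma$ saturating $\{1,\ldots,K\}$. For $I\subset\{1,\ldots,K\}$, write
\[
N(I)=\bigcup_{i\in I}\big(\lambda\cap(\bar{r}_i+\bZ_{\geq0}^2)\big).
\]
By Hall's theorem, (i) is then equivalent to $|I|\leq|N(I)|$ for all $I$.

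For (i)$\Rightarrow$(ii), fix $\mu\subset\lambda$ and take $i\in\mu(\bar{r})$, so $\bar{r}_i\notin\mu$. Since $\mu$ is a down-closed set, no cell $\bar{s}\geq\bar{r}_i$ lies in $\mu$. Hence the filling sends $\mu(\bar{r})$ injectively into $\lambda\setminus\mu$, which gives the inequality.

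For (ii)$\Rightarrow$(i), we verify Hall's condition. Fix $I$ and let $U=N(I)$. This set is an up-closed subset of $\lambda$: if $\bar{s}\in U$ and $\bar{s}\leq\bar{s}'\in\lambda$, then $\bar{s}'\in U$. Therefore $\mu:=\lambda\setminus U$ is a down-closed subset of $\bZ_{\geq0}^2$ contained in $\lambda$, i.e., a Young diagram $\mu\subset\lambda$, and $|\lambda\setminus\mu|=|U|$.

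We next check that $I\subset\mu(\bar{r})$. The case to watch is an index $i$ whose cone $\bar{r}_i+\bZ_{\geq0}^2$ meets no cell of $\lambda$. Such an $i$ has $\bar{r}_i\notin\lambda$, hence $\bar{r}_i\notin\mu$, so it is still counted in $\mu(\bar{r})$. If instead the cone meets $\lambda$, then $\bar{r}_i$ itself is a cell of $\lambda$, because it lies below some cell and $\lambda$ is down-closed. Then $\bar{r}_i\in U$, so again $\bar{r}_i\notin\mu$. Thus (ii) gives
\[
|I|\leq|\mu(\bar{r})|\leq|\lambda\setminus\mu|=|U|=|N(I)|.
\]
Hall's theorem then yields the matching, that is, the monotone filling.

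There is no serious obstacle. The only point requiring care is the case $\bar{r}_i\notin\lambda$: there $N(\{i\})=\emptyset$, and (ii) with $\mu=\lambda$ (giving $|\lambda(\bar{r})|\leq 0$) correctly rules such indices out. The rest is the standard observation that complements of up-sets in $\lambda$ are sub-diagrams, which is what makes Hall's condition only need to be checked on sets of the form $N(I)$.
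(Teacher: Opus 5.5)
Your proof is correct and follows the paper's argument essentially verbatim: both directions go through Hall's marriage theorem applied to the sets $F_i=\{\bar{s}\in\lambda\colon\bar{s}\geq\bar{r}_i\}$, with $\mu:=\lambda\setminus\bigcup_{i\in I}F_i$ as the test diagram in (ii)$\Rightarrow$(i). Your case split for $I\subset\mu(\bar{r})$ is just a longer form of the paper's one-line observation that $\bar{r}_i\in\mu\subset\lambda$ would force $\bar{r}_i\in F_i$, which is excluded from $\mu$ by construction.
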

\begin{proof}
(i)$\Rightarrow$(ii). Let $\psi$ be a monotone filling and let $\mu\subset\lambda$. If $i\in\mu(\bar{r})$, then $\psi(i)\notin\mu$: otherwise $\bar{r}_i\leq\psi(i)$ would imply $\bar{r}_i\in\mu$, because $\mu$ is closed under decreasing coordinates. Hence $\psi$ maps $\mu(\bar{r})$ injectively to $\lambda\setminus\mu$.

(ii)$\Rightarrow$(i). For $i=1,\ldots,K$ let $F_i:=\{\bar{s}\in\lambda\colon\bar{s}\geq\bar{r}_i\}$ be the set of cells where $\bar{r}_i$ may be placed. A monotone filling is a choice of pairwise distinct representatives $\psi(i)\in F_i$. By Hall's marriage theorem~\cite{Hall} it exists if and only if $|\bigcup_{i\in I}F_i|\geq|I|$ for every subset $I\subset\{1,\ldots,K\}$.
Fix $I$ and set $\mu:=\lambda\setminus\bigcup_{i\in I}F_i$. This is a Young diagram: if $\bar{s}\in\mu$ and $\bar{s}'\leq\bar{s}$, then $\bar{s}'\in\lambda$, and $\bar{s}'\in F_i$ for some $i\in I$ would imply $\bar{s}\in F_i$. Moreover, $I\subset\mu(\bar{r})$, since $\bar{r}_i\in\mu$ would imply $\bar{r}_i\in\lambda$ and hence $\bar{r}_i\in F_i$ for $i\in I$, contradicting the definition of $\mu$. Therefore, by~(ii),
\[
|I|\leq|\mu(\bar{r})|\leq|\lambda\setminus\mu|=\Big|\bigcup_{i\in I}F_i\Big|.\qedhere
\]
\end{proof}

\begin{remark}
If $K>|\lambda|$, both conditions of Lemma~\ref{prp::subdiag} fail (take $\mu=\emptyset$ in~(ii)).
\end{remark}

\section{Schur--Weyl Duality for Current Algebras}
\label{sec::SW}
\subsection{Classical Schur-Weyl Duality}

We begin by recalling several equivalent formulations of the classical Schur-Weyl duality, originally discovered by Issai Schur~\cite{Schur} and later developed by Hermann Weyl in~\cite{HW_invariants}.

\begin{theorem}
There exists an isomorphism of $\GL_V \times \bS_N$-modules:
\[
V^{\otimes N} \simeq \bigoplus_{\substack{\lambda \vdash N \\ l(\lambda) \le \dim V}} V_\lambda \otimes \mathbb{S}_\lambda.
\]
Here, $V_\lambda$ denotes the irreducible $\GL_V$-module of highest weight $\lambda$, and $\mathbb{S}_\lambda$ is the irreducible $\bS_N$-module corresponding to the partition $\lambda$, also known as the Specht module.
\end{theorem}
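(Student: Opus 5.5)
We plan to prove the classical Schur--Weyl decomposition by the double centralizer theorem. Let $A\subset\End(V^{\otimes N})$ be the image of $\bC[\bS_N]$, acting by permuting tensor factors, and let $B$ be the image of $U(\gl_V)$, equivalently of $\bC[\GL_V]$. Since $\bC[\bS_N]$ is semisimple by Maschke's theorem, $A$ is a semisimple algebra, and $V^{\otimes N}$ is a semisimple $A$-module. The double centralizer theorem then says that $A':=\End_A(V^{\otimes N})$ is semisimple and that $A''=A$. Moreover, we get a decomposition
\[
V^{\otimes N}\simeq\bigoplus_i U_i\otimes S_i ,
\]
where $S_i$ runs over the simple $A$-modules occurring in $V^{\otimes N}$ and the $U_i=\mathrm{Hom}_A(S_i,V^{\otimes N})$ are pairwise non-isomorphic simple $A'$-modules.

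The key step is to identify $A'$ with $B$. We have $\End_{\bS_N}(V^{\otimes N})=(\End(V)^{\otimes N})^{\bS_N}=S^N(\End V)$. This symmetric power is spanned by the elements $g^{\otimes N}$, $g\in\End V$, by polarization: it is spanned by the elements $a^{\otimes N}$ over all $a\in\End V$, and the invertible $a$ are Zariski dense. Hence $A'$ is spanned by $\{g^{\otimes N}\colon g\in\GL_V\}$, so $A'=B$. Consequently the $U_i$ are irreducible, pairwise non-isomorphic polynomial $\GL_V$-modules of degree $N$.

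It remains to match labels. The simple $\bS_N$-modules are the Specht modules $\mathbb{S}_\lambda$, $\lambda\vdash N$. We decide which ones occur, and with which $U_\lambda$, by computing characters. Comparing the $\GL_V$-character of $V^{\otimes N}$, namely $p_1^N=\sum_\lambda f^\lambda s_\lambda(x_1,\dots,x_m)$, with the Frobenius formula shows that $U_\lambda$ has character the Schur polynomial $s_\lambda(x_1,\dots,x_m)$. This character is nonzero exactly when $\ell(\lambda)\le m=\dim V$, and in that case it is the character of $V_\lambda$ by the Weyl character formula. Alternatively, applying the Young symmetrizer $c_\lambda$, one sees that $c_\lambda V^{\otimes N}$ contains the vector built from $e_1,\dots,e_{\ell(\lambda)}$ placed along the rows. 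This vector is a highest weight vector of weight $\lambda$, and it is nonzero iff $\ell(\lambda)\le\dim V$.

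The main obstacle is this last identification, that is, proving that the multiplicity space of $\mathbb{S}_\lambda$ is precisely $V_\lambda$ and is zero when $\ell(\lambda)>\dim V$. We would handle it first via the highest weight vector argument, to show that $\mathbb{S}_\lambda$ occurs and $V_\lambda$ appears. Completeness then follows from dimension counting against the character identity $\sum_\lambda f^\lambda s_\lambda=p_1^N$.
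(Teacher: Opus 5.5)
The paper gives no proof of this statement. It is recalled as the classical theorem of Schur and Weyl, so there is no internal argument to compare with. Your double-centralizer proof is the standard one and is sound, with one step to tighten.

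\emph{The labelling step.} As written, you compare only the $\GL_V$-character of $V^{\otimes N}$, and that is not enough. The identity $\sum_\lambda f^\lambda\,\mathrm{ch}(U_\lambda)=p_1^N=\sum_\lambda f^\lambda s_\lambda$ does not determine the individual characters $\mathrm{ch}(U_\lambda)$. For example, for $N=3$ it cannot tell $U_{(3)}$ from $U_{(1,1,1)}$, since $f^{(3)}=f^{(1,1,1)}=1$.

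What you need is the mixed trace
$\mathrm{tr}_{V^{\otimes N}}\bigl(\mathrm{diag}(x_1,\ldots,x_m)\cdot\sigma\bigr)=p_{\rho(\sigma)}(x_1,\ldots,x_m)$ for every $\sigma\in\bS_N$. Compare it with the Frobenius formula $p_\rho=\sum_\lambda\chi^\lambda(\rho)\,s_\lambda$ and use the linear independence of the irreducible characters $\chi^\lambda$; this gives $\mathrm{ch}(U_\lambda)=s_\lambda(x_1,\ldots,x_m)$ directly. The paper records exactly this trace identity right after~\eqref{eq::SW::characters}. Your Young-symmetrizer alternative also closes this step correctly, followed by the dimension count $m^N=\sum_\lambda f^\lambda s_\lambda(1^m)=\sum_{\ell(\lambda)\le m}f^\lambda\dim V_\lambda$, which forces $U_\lambda=0$ for $\ell(\lambda)>m$.

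\emph{The centralizer step.} The closest argument in the paper is Lemma~\ref{lem::current::Schur}. There, after polarization, $b^{\otimes N}=b^{(1)}\cdots b^{(N)}$ is written via Newton's identities as a polynomial in the power sums $\sum_i(b^{(i)})^k=\Delta(b^k)$, and these lie in the image of the Lie algebra. This identifies the centralizer directly with the image of $U(\gl_V)$. It therefore needs neither your Zariski-density step nor the fact, standard but only asserted in your ``equivalently'', that the span of the image of $\GL_V$ equals the image of $U(\gl_V)$ on $V^{\otimes N}$.

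More importantly for the paper, that argument works verbatim for the current algebra $\gl_V[\bar z]$, where no group is available. Your group-theoretic route is equally valid in the classical setting considered here.
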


Let $m:=\dim V$, fix a basis $e_1,\ldots,e_m$ of $V$ and denote by $E_{ij}\in\gl_V$ the corresponding matrix units.
The image of $U(\gl_V)$ in $\mathrm{End}(V^{\otimes N})$ is the \emph{Schur algebra}
\[
\mathcal{S}(V,N):=\mathrm{End}_{\bS_N}(V^{\otimes N}) = \big(\mathrm{End}(V)^{\otimes N}\big)^{\bS_N},
\]
and the category $\RepGL{V}{N}$ of degree-$N$ polynomial representations of $\gl_V$ coincides with the category of finite-dimensional $\mathcal{S}(V,N)$-modules.
From now on we assume $m\geq N$ and denote by $\pi\in\mathcal{S}(V,N)$ the projector onto the weight subspace of weight $(1^N,0^{m-N}) := (\underbrace{1,\ldots,1}_N,\underbrace{0,\ldots,0}_{m-N})$, so that $\pi U = U_{(1^N, 0^{m-N})}$ for any $U\in\RepGL{V}{N}$.

Schur-Weyl duality defines a pair of adjoint functors between the category $\mathsf{Rep}(\bS_N)$ of representations of the symmetric group $\bS_N$ and the category $\RepGL{V}{N}$:
\begin{equation}
\label{eq::Schur-Weyl}
\begin{tikzcd}
\mathsf{Rep}(\bS_N) \arrow[rrr, shift left=1ex, "{\SW\colon M \mapsto  V^{\otimes N} \otimes_{\bS_N} M}"]
& & & \RepGL{V}{N} \arrow[lll, "{\WS\colon U \mapsto U_{(1^N, 0^{m-N})}}"].
\end{tikzcd}
\end{equation}
We call the upper functor $\SW$ the \emph{Schur--Weyl functor} and the lower functor $\WS$ the \emph{conjugate Schur--Weyl functor}. The functor $\SW$ is left adjoint to $\WS$, because for all $U\in\RepGL{V}{N}$ we have
\[
\mathrm{Hom}_{\gl_V}(V^{\otimes N},U)\simeq U_{(1^N, 0^{m-N})}.
\]
For $m\geq N$ Adjunction~\eqref{eq::Schur-Weyl} is an equivalence of categories. For $m > N$ the weight $(1^N,0^{m-N})$ is the $\msl_m$-weight that corresponds to the $N$-th fundamental weight $\omega_N$.

Thus, the Specht module $\mathbb{S}_\lambda$ and the irreducible $\GL_V$-module $V_\lambda$ associated to a Young diagram $\lambda$ are dual to each other under Schur-Weyl duality. Respectively, the regular representation $\bC[\bS_N]$ is Schur-Weyl dual to the tensor power $V^{\otimes N}$ of the tautological representation.

Schur-Weyl duality is compatible with characters.
Let $\Lambda=\bigoplus_{N\geq 0}\Lambda^N$ be the ring of symmetric functions with integer coefficients in the alphabet $X=(x_1,x_2,\ldots)$, where $\Lambda^N$ is the component of degree $N$. We denote by $s_\lambda$, $h_N$, $e_N$ and $p_\mu$ the Schur functions, the complete and the elementary symmetric functions, and the power sums, respectively.
The character $\mathrm{ch}(U):=\mathrm{tr}_U\,\mathrm{diag}(x_1,\ldots,x_m)$ of $U\in\RepGL{V}{N}$ is a symmetric polynomial of degree $N$ in $x_1,\ldots,x_m$. Since $m\geq N$, the restriction map from $\Lambda^N$ to symmetric polynomials of degree $N$ in $x_1,\ldots,x_m$ is an isomorphism, so we consider $\mathrm{ch}(U)$ as an element of $\Lambda^N$. Then $\mathrm{ch}(V_\lambda)=s_\lambda$.
On the other side, the Frobenius characteristic of $M\in\mathsf{Rep}(\bS_N)$ is
\[
\mathrm{Frob}(M):=\frac{1}{N!}\sum_{\sigma\in\bS_N}\chi_M(\sigma)\,p_{\rho(\sigma)}\in\Lambda^N,
\]
where $\rho(\sigma)\vdash N$ is the cycle type of $\sigma$, and $\mathrm{Frob}(\mathbb{S}_\lambda)=s_\lambda$.
Both maps induce isomorphisms of the Grothendieck groups $K_0(\RepGL{V}{N})\simeq\Lambda^N\simeq K_0(\mathsf{Rep}(\bS_N))$, and Adjunction~\eqref{eq::Schur-Weyl} induces the identity map of $\Lambda^N$:
\begin{equation}
\label{eq::SW::characters}
\mathrm{ch}\big(\SW(M)\big)=\mathrm{Frob}(M),
\qquad
\mathrm{Frob}\big(\WS(U)\big)=\mathrm{ch}(U).
\end{equation}
The first identity also follows directly from $\mathrm{tr}_{V^{\otimes N}}\big(\mathrm{diag}(x_1,\ldots,x_m)\cdot\sigma\big)=p_{\rho(\sigma)}(x_1,\ldots,x_m)$.
In particular,
\[
\mathrm{ch}(V^{\otimes N})=\mathrm{Frob}(\bC[\bS_N])=p_1^N=\sum_{\lambda\vdash N}f^\lambda s_\lambda,
\qquad f^\lambda:=\dim\mathbb{S}_\lambda .
\]

We will use the following reformulation of the classical equivalence.

\begin{lemma}
\label{lem::Schur::full}
For $m\geq N$ the idempotent $\pi$ is full, that is $\mathcal{S}(V,N)\,\pi\,\mathcal{S}(V,N)=\mathcal{S}(V,N)$.
\end{lemma}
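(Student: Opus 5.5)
We must show that the two-sided ideal $\mathcal{S}\pi\mathcal{S}$ of $\mathcal{S}=\mathcal{S}(V,N)$ is all of $\mathcal{S}$. The plan is to reduce to the statement that every simple $\mathcal{S}$-module $L$ satisfies $\pi L\neq 0$. Granting this, the argument is short. Since $\mathcal{S}$ is the image of $U(\gl_V)$ acting on the completely reducible module $V^{\otimes N}$, it is a finite-dimensional semisimple algebra. Its two-sided ideals are therefore sums of Wedderburn blocks $\End(L)$, one block for each simple $L$. A two-sided ideal $I$ contains the block of $L$ exactly when $IL\neq 0$. Now $\pi\in\mathcal{S}\pi\mathcal{S}$, and $\pi L\neq0$ for every $L$, so $\mathcal{S}\pi\mathcal{S}$ contains every block and hence equals $\mathcal{S}$.

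The main step is to check that $\pi L\neq 0$. By Schur--Weyl duality the simple $\mathcal{S}$-modules are the irreducible summands $V_\lambda$ of $V^{\otimes N}$, where $\lambda\vdash N$ and $l(\lambda)\le m$. For each such $\lambda$ we must show that $V_\lambda$ has a nonzero vector of weight $(1^N,0^{m-N})$. This follows from the Schur function expansion: the weight multiplicity is the coefficient of $x_1\cdots x_N$ in $s_\lambda$, which equals the number of standard Young tableaux of shape $\lambda$, namely $f^\lambda>0$. The hypothesis $m\geq N$ is used here, since it guarantees that the weight $(1^N,0^{m-N})$ exists.

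Alternatively, one can avoid the Wedderburn decomposition. The weight space $\pi V_\lambda$ is identified with $\mathbb{S}_\lambda$ under $\WS$, and $\mathbb{S}_\lambda\neq0$. One then uses that $\pi V^{\otimes N}$ generates $V^{\otimes N}$ over $\mathcal{S}$: the vector $e_1\otimes\cdots\otimes e_N$ is cyclic over $\gl_V$, because any $e_{i_1}\otimes\cdots\otimes e_{i_N}$ is obtained from it by applying a suitable element of $\mathcal{S}$. This shows that $\mathcal{S}\pi V^{\otimes N}=V^{\otimes N}$. Combined with faithfulness of $\mathcal{S}$ on $V^{\otimes N}$ and semisimplicity, this again forces $\mathcal{S}\pi\mathcal{S}=\mathcal{S}$.

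I expect no real obstacle. The only subtle points are recording that $\mathcal{S}$ is semisimple and that its simple modules are exactly the $V_\lambda$ with $\lambda\vdash N$; both are standard parts of the classical Schur--Weyl theorem stated above.
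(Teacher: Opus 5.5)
Your proposal is correct, and its main argument is exactly the paper's proof. Both use that $\mathcal{S}(V,N)$ is semisimple, that the two-sided ideal generated by $\pi$ is the sum of the blocks $\End(V_\lambda)$ with $\pi V_\lambda\neq0$, and that $\dim\pi V_\lambda=K_{\lambda,(1^N)}=f^\lambda>0$; your alternative argument, via cyclicity of $e_1\otimes\cdots\otimes e_N$ (e.g.\ by elements $g^{\otimes N}$) and faithfulness of $\mathcal{S}(V,N)$ on $V^{\otimes N}$, is also valid.
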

\begin{proof}
The algebra $\mathcal{S}(V,N)$ is semisimple, $\mathcal{S}(V,N)\simeq\bigoplus_{\lambda}\mathrm{End}(V_\lambda)$, where $\lambda\vdash N$ and $l(\lambda)\le m$. The two-sided ideal generated by $\pi$ is the sum of those blocks $\mathrm{End}(V_\lambda)$ with $\pi V_\lambda\neq 0$. However, $\dim \pi V_\lambda = \dim (V_\lambda)_{(1^N,0^{m-N})}$ is the Kostka number $K_{\lambda,(1^N)}$, which equals the number of standard Young tableaux of shape $\lambda$ and is therefore positive.
\end{proof}

\subsection{Schur-Weyl Duality for Current Algebras}

We now extend the discussion to representation categories of current algebras. Fix an integer $d\geq0$ (from \S\ref{sec::GH} on we only use $d=1$, and then we write $z_0=x$, $z_1=y$, $z_{i,0}=x_i$, $z_{i,1}=y_i$) and a tuple of variables $\bar{z} := (z_0, \ldots, z_d)$ and, for $i=1,\ldots,N$, its copies $\bar{z}_i:=(z_{i,0},\ldots,z_{i,d})$.
All algebras and modules below are $\bZ^{d+1}$-graded by the multidegree, $\deg z_a=\deg z_{i,a}=\varepsilon_a$, where $\varepsilon_0,\ldots,\varepsilon_d$ is the standard basis of $\bZ^{d+1}$. All morphisms are homogeneous of degree zero, and for $k\in\bZ^{d+1}$ we denote by $M\langle k\rangle$ the shifted module, $(M\langle k\rangle)_j:=M_{j-k}$. All statements below remain true for the coarser grading by the total degree.

We abbreviate the wreath product algebra by
\[
\mathcal{A}_N:=\bC[\bS_N] \ltimes \bC[\bar{z}_1,\ldots,\bar{z}_N], \qquad \sigma\, z_{i,a}\,\sigma^{-1} = z_{\sigma(i),a},
\]
and denote by $\mathsf{Rep}(\bC[\bS_N] \ltimes \bC[\bar{z}_1,\ldots,\bar{z}_N])$ the category of finitely generated graded $\mathcal{A}_N$-modules.

The space
\[
P:=(V[\bar{z}])^{\otimes N} = V^{\otimes N}\otimes\bC[\bar{z}_1,\ldots,\bar{z}_N]
\]
is a $(\gl_V[\bar{z}],\mathcal{A}_N)$-bimodule: an element $x\otimes f(\bar{z})\in\gl_V[\bar{z}]$ acts by $\sum_{i=1}^{N} x^{(i)} f(\bar{z}_i)$, where $x^{(i)}$ acts on the $i$-th tensor factor, the polynomials $\bC[\bar{z}_1,\ldots,\bar{z}_N]$ act by multiplication, and $\sigma\in\bS_N$ permutes simultaneously the tensor factors of $V^{\otimes N}$ and the groups of variables $\bar{z}_1,\ldots,\bar{z}_N$.
Note that $P\simeq V^{\otimes N}\otimes_{\bS_N}\mathcal{A}_N$ as a right $\mathcal{A}_N$-module, in particular $P$ is a finitely generated projective right $\mathcal{A}_N$-module.
We call the graded algebra
\[
\mathcal{S}(V[\bar{z}],N):=\mathrm{End}_{\mathcal{A}_N}(P) = \Big(\big(\mathrm{End}(V)\otimes\bC[\bar{z}]\big)^{\otimes N}\Big)^{\bS_N}
\]
the \emph{current Schur algebra}. Its degree zero component is the classical Schur algebra $\mathcal{S}(V,N)$.

\begin{lemma}
\label{lem::current::Schur}
The image of $U(\gl_V[\bar{z}])$ in $\mathrm{End}(P)$ coincides with $\mathcal{S}(V[\bar{z}],N)$.
\end{lemma}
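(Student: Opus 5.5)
Since $\gl_V[\bar{z}]$ acts on $P$ by operators that commute with $\mathcal{A}_N$, the image lies in $\mathcal{S}(V[\bar{z}],N)$. The real content is the reverse inclusion, and I would reduce it to a statement about symmetric tensors. First identify
\[
\mathcal{S}(V[\bar{z}],N)=\Big(B^{\otimes N}\Big)^{\bS_N},\qquad B:=\mathrm{End}(V)\otimes\bC[\bar{z}].
\]
An $\mathcal{A}_N$-endomorphism of $P\simeq V^{\otimes N}\otimes_{\bS_N}\mathcal{A}_N$ is determined by the image of $V^{\otimes N}\otimes 1$. It is therefore a $\bC[\bar z_1,\ldots,\bar z_N]$-linear, $\bS_N$-equivariant endomorphism of the free module $V^{\otimes N}\otimes\bC[\bar{z}_1,\ldots,\bar{z}_N]$. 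Such endomorphisms are the $\bS_N$-invariants of $\mathrm{End}(V)^{\otimes N}\otimes\bC[\bar{z}_1,\ldots,\bar z_N]=B^{\otimes N}$.

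Under this identification, $x\otimes f\in\gl_V[\bar{z}]$ maps to $\sum_i 1\otimes\cdots\otimes(x\otimes f)\otimes\cdots\otimes 1$, the "primitive" diagonal image of $b=x\otimes f\in B$. We must show that the associative algebra generated by $\Delta(b):=\sum_i b^{(i)}$, for $b\in B$, is all of $(B^{\otimes N})^{\bS_N}$. This is the standard polarization argument used in the classical Schur--Weyl theorem, and it works for any unital associative algebra $B$ over a field of characteristic zero.

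The key steps are as follows.
\begin{enumerate}
\item $(B^{\otimes N})^{\bS_N}$ is spanned by the symmetrizations $\mathrm{Sym}(b_1\otimes\cdots\otimes b_N)$. By polarization in characteristic zero, it is spanned by the elements $b^{\otimes N}$ for $b\in B$. One may take $b$ of the form $\sum_j x_j\otimes f_j$; since $B$ is infinite-dimensional, work degree by degree, or with finite-dimensional subspaces, where polarization applies verbatim.
\item Write $e_k(b)$ for the $k$-th elementary symmetric tensor, namely the sum over $k$-subsets $I\subset\{1,\ldots,N\}$ of the tensor with $b$ in the positions $I$ and $1$ elsewhere. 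Then $e_N(b)=b^{\otimes N}$, and $\Delta(b^k)=p_k(b)$ is the power sum $\sum_i (b^k)^{(i)}$.
\item The components $b^{(i)}$ pairwise commute. Newton's identities in the commutative subalgebra generated by $b^{(1)},\ldots,b^{(N)}$ then express $e_N(b)$ as a polynomial with rational coefficients in $p_1(b),\ldots,p_N(b)$. Each $p_k(b)$ is the image of $b^k=x^k\otimes f^k$ when $b=x\otimes f$. For general $b=\sum_j x_j\otimes f_j$, the element $b^k$ lies in $B$ and is a sum of products $x_{j_1}\cdots x_{j_k}\otimes f_{j_1}\cdots f_{j_k}$.
\end{enumerate}

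The main obstacle is step (iii): $\Delta(b^k)$ must lie in the image of $U(\gl_V[\bar{z}])$, and $\Delta$ is linear but not multiplicative. So I first show that the image of $U(\gl_V[\bar z])$ contains $\Delta(B)$. It suffices to show that it contains $\Delta(a\otimes f)$ for every $a\in\mathrm{End}(V)$, not just for $a\in\gl_V$. But $\gl_V=\mathrm{End}(V)$ as a vector space, and the Lie action of $x\otimes f$ is exactly $\Delta(x\otimes f)$. Hence the image of $\gl_V[\bar{z}]$ is precisely $\Delta(B)$, which is closed under $b\mapsto b^k$ at the level of $B$, so $p_k(b)=\Delta(b^k)$ lies in the image for every $b$. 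The generated algebra therefore contains every $e_N(b)$, and by step (i) it is all of $\mathcal{S}(V[\bar{z}],N)$.

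Finally, the identification is compatible with the multigrading, since every map involved is homogeneous. The degree-zero part recovers the classical statement that the image of $U(\gl_V)$ is $\mathcal{S}(V,N)$.
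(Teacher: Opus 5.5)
Your proof is correct and follows essentially the same route as the paper. You identify $\mathcal{S}(V[\bar z],N)$ with $(B^{\otimes N})^{\bS_N}$ for $B=\mathrm{End}(V)\otimes\bC[\bar z]$, use polarization to reduce to the tensors $b^{\otimes N}=b^{(1)}\cdots b^{(N)}$, and express these by Newton's identities through the power sums $\Delta(b^k)$, which lie in the image because $\gl_V[\bar z]=B$ as a vector space. The caveat about working degree by degree is unnecessary, since the polarization identity holds in any vector space, but it is harmless.
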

\begin{proof}
Let $B:=\mathrm{End}(V)\otimes\bC[\bar{z}]$ be the current algebra $\gl_V[\bar{z}]$ considered as an associative algebra. For $b\in B$ denote by $b^{(i)}\in B^{\otimes N}$ the element $b$ placed in the $i$-th tensor factor and set $\Delta(b):=\sum_{i=1}^{N} b^{(i)}$. An element $b\in\gl_V[\bar{z}]$ acts on $P$ by $\Delta(b)$, which commutes with $\mathcal{A}_N$. Hence the image of $U(\gl_V[\bar{z}])$ is contained in $\mathcal{S}(V[\bar{z}],N)=(B^{\otimes N})^{\bS_N}$.

Conversely, over a field of characteristic zero the space of invariants $(B^{\otimes N})^{\bS_N}$ is spanned by the tensors $b^{\otimes N}$, $b\in B$, thanks to the polarization identity
\[
\sum_{\sigma\in\bS_N} b_{\sigma(1)}\otimes\ldots\otimes b_{\sigma(N)} = \sum_{\emptyset\neq I\subset\{1,\ldots,N\}} (-1)^{N-|I|}\Big(\sum_{i\in I} b_i\Big)^{\otimes N}.
\]
The elements $b^{(1)},\ldots,b^{(N)}$ pairwise commute and $b^{\otimes N}=b^{(1)}\cdots b^{(N)}$ is their $N$-th elementary symmetric function. By Newton's identities it is a polynomial with rational coefficients in the power sums
$\sum_{i}(b^{(i)})^k = \Delta(b^k)$, $k=1,\ldots,N$. Each $b^k$ is again an element of $\gl_V[\bar{z}]$, hence $b^{\otimes N}$ belongs to the image of $U(\gl_V[\bar{z}])$.
\end{proof}

\begin{definition}
Let $\RepGLz{V}{N}$ be the category of finitely generated graded $\gl_V[\bar{z}]$-modules $U$ such that the action of $U(\gl_V[\bar{z}])$ on $U$ factors through the current Schur algebra $\mathcal{S}(V[\bar{z}],N)$. Thanks to Lemma~\ref{lem::current::Schur}, $\RepGLz{V}{N}$ is the category of finitely generated graded $\mathcal{S}(V[\bar{z}],N)$-modules.
In particular, each graded component of $U\in\RepGLz{V}{N}$ is a finite-dimensional polynomial $\gl_V$-representation of degree $N$.
\end{definition}

\begin{remark}
\label{rem::counterexample}
The condition that each graded component is a polynomial $\gl_V$-representation of degree $N$ is not sufficient for a graded $\gl_V[\bar{z}]$-module to belong to $\RepGLz{V}{N}$. Indeed, let $d=0$, $z:=z_0$, $N=1$ and $m\geq 2$. Consider the graded space $U:=V\otimes\bC[z]$ with the action
\[
(x\otimes z^k)\cdot (v\otimes g) := xv\otimes z^k g + \delta_{k,1}\,\mathrm{tr}(x)\, v\otimes zg.
\]
The correction term commutes with the standard action and $\mathrm{tr}([x,y])=0$, hence $U$ is a graded $\gl_V[z]$-module. It is generated by $V\otimes 1$ and each of its graded components is isomorphic to $V$. However, the element $(E_{11}\otimes 1)(E_{22}\otimes z)\in U(\gl_V[z])$ maps to $E_{11}E_{22}\otimes z = 0$ in $\mathcal{S}(V[z],1)=\mathrm{End}(V)\otimes\bC[z]$, while on $U$ it maps $e_1\otimes 1$ to $e_1\otimes z\neq 0$. The source of this phenomenon is the center $\bC[\bar{z}]\cdot\mathrm{Id}\subset\gl_V[\bar{z}]$: its positive degree part may act on $U$ by arbitrary commuting operators without changing the structure of the graded components.
\end{remark}

\begin{lemma}
\label{lem::weight::idempotent}
Let $m\geq N$ and consider $\pi\in\mathcal{S}(V,N)\subset\mathcal{S}(V[\bar{z}],N)$. Then
\begin{enumerate}
\item the weight subspace $\pi P$ is a free right $\mathcal{A}_N$-module of rank one generated by $\omega:=e_1\otimes\ldots\otimes e_N$;
\item the map $s\pi\mapsto s\,\omega$ defines an isomorphism of $(\mathcal{S}(V[\bar{z}],N),\mathcal{A}_N)$-bimodules $\mathcal{S}(V[\bar{z}],N)\,\pi\simeq P$;
\item there is an isomorphism of graded algebras $\pi\,\mathcal{S}(V[\bar{z}],N)\,\pi\simeq\mathcal{A}_N$;
\item consequently, for any $U\in\RepGLz{V}{N}$ the weight subspace $U_{(1^N,0^{m-N})}=\pi U$ is a graded $\mathcal{A}_N$-module: $z_{i,a}$ acts by $E_{ii}\otimes z_a$ and $\bC[\bS_N]$ acts through the permutation matrices $g_\sigma\in\GL_V$, $\sigma\in\bS_N\subset\bS_m$.
\end{enumerate}
\end{lemma}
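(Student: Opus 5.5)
The plan is to compute everything explicitly inside $P=V^{\otimes N}\otimes\bC[\bar z_1,\ldots,\bar z_N]$ and then pass to endomorphisms.

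\emph{Item (i).} The weight subspace $\pi V^{\otimes N}$ of weight $(1^N,0^{m-N})$ is spanned by the vectors $e_{\sigma(1)}\otimes\ldots\otimes e_{\sigma(N)}$ with $\sigma\in\bS_N$. Hence
\[
\pi P=\bigoplus_{\sigma\in\bS_N}\bC\,e_{\sigma(1)}\otimes\ldots\otimes e_{\sigma(N)}\otimes\bC[\bar z_1,\ldots,\bar z_N].
\]
The right action of $\sigma^{-1}$ (or $\sigma$, depending on conventions) sends $\omega=e_1\otimes\ldots\otimes e_N$ to $e_{\sigma(1)}\otimes\ldots\otimes e_{\sigma(N)}$ and permutes the variables. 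Therefore $\omega\cdot(\sigma f)$, with $\sigma\in\bS_N$ and $f$ a polynomial, runs over a basis of $\pi P$ as $\sigma$ runs over $\bS_N$ and $f$ over monomials. This shows that $a\mapsto\omega a$ is a graded isomorphism $\mathcal{A}_N\to\pi P$, i.e.\ $\pi P$ is free of rank one with generator $\omega$.

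\emph{Item (ii).} Consider the map $\mathcal{S}(V[\bar z],N)\pi\to P$, $s\pi\mapsto s\omega$. It is well defined, since $s\pi=s'\pi$ implies $s\omega=s'\omega$ because $\pi\omega=\omega$. It is a map of left $\mathcal{S}$-modules. It is also right $\mathcal{A}_N$-linear, once $\mathcal{A}_N$ acts on $\mathcal{S}\pi$ via the isomorphism $\mathcal{A}_N\simeq\pi\mathcal{S}\pi$ of item (iii) below; alternatively, use $\mathcal{S}\pi=\mathrm{Hom}_{\mathcal{A}_N}(\pi P,P)$.

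The cleanest route is to use $\mathcal{S}=\mathrm{End}_{\mathcal{A}_N}(P)$ together with the fact that $\pi$ is the projection onto the direct summand $\pi P$. Then
\[
\mathcal{S}\pi\simeq\mathrm{Hom}_{\mathcal{A}_N}(\pi P,P)\simeq\mathrm{Hom}_{\mathcal{A}_N}(\mathcal{A}_N,P)\simeq P,
\]
where the middle isomorphism uses item (i) and the composite is evaluation at $\omega$. This gives bijectivity directly.

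\emph{Item (iii).} By the same reasoning,
\[
\pi\mathcal{S}\pi\simeq\mathrm{End}_{\mathcal{A}_N}(\pi P)\simeq\mathrm{End}_{\mathcal{A}_N}(\mathcal{A}_N)\simeq\mathcal{A}_N^{op}.
\]
I then have to fix an isomorphism $\mathcal{A}_N^{op}\simeq\mathcal{A}_N$. The map $\sigma\mapsto\sigma^{-1}$ on the group part together with the identity on polynomials is an anti-automorphism, so it provides one. Better still, I will exhibit the isomorphism explicitly by choosing the elements of $\pi\mathcal{S}\pi$ directly, which I do next.

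The element $z_{i,a}$ should correspond to $\pi(E_{ii}\otimes z_a)\pi$. On $P$ the operator $E_{ii}\otimes z_a$ acts by $\sum_j E_{ii}^{(j)}z_{j,a}$. On the vector $e_{\tau(1)}\otimes\ldots\otimes e_{\tau(N)}$ this equals multiplication by $z_{\tau^{-1}(i),a}$. This is exactly the right action by a polynomial variable transported through the identification of item (i), so it matches up to the anti-isomorphism.

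The permutation matrices $g_\sigma\in\GL_V$ lie in $\mathcal{S}(V,N)$, commute with $\pi$ because they preserve the weight $(1^N,0^{m-N})$, and act on $\omega$ by permuting tensor factors. Hence they realize the $\bS_N$ part.

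\emph{Item (iv).} This follows formally: $\pi U$ is a module over $\pi\mathcal{S}\pi\simeq\mathcal{A}_N$, with the generators acting as just described.

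The main obstacle is getting the left/right and inverse conventions consistent, so that the explicit elements $E_{ii}\otimes z_a$ and $g_\sigma$ really generate a copy of $\mathcal{A}_N$, rather than of $\mathcal{A}_N^{op}$, with the stated relation $\sigma z_{i,a}\sigma^{-1}=z_{\sigma(i),a}$. I would verify this directly:
\[
g_\sigma(E_{ii}\otimes z_a)g_\sigma^{-1}=E_{\sigma(i)\sigma(i)}\otimes z_a,
\]
and the $E_{ii}\otimes z_a$ commute with each other after compressing by $\pi$.

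These relations give an algebra map $\mathcal{A}_N\to\pi\mathcal{S}\pi$. It is an isomorphism because, under the map $s\mapsto s\omega$, its composite with the map to $\pi P$ matches the basis of item (i).
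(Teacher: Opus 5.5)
Your proposal is correct and follows essentially the paper's proof: (i) comes from the explicit basis $e_{\sigma(1)}\otimes\cdots\otimes e_{\sigma(N)}\otimes(\text{monomials})$ of $\pi P$, (ii) and (iii) come from $\mathcal{S}\pi=\mathrm{Hom}_{\mathcal{A}_N}(\pi P,P)$ and $\pi\mathcal{S}\pi=\mathrm{End}_{\mathcal{A}_N}(\pi P)$ together with evaluation at $\omega$, and (iv) comes from $(E_{ii}\otimes z_a)\omega=\omega z_{i,a}$ and the action of $g_\sigma$ on $\omega$. The one slip is the claim $\mathrm{End}_{\mathcal{A}_N}(\mathcal{A}_N)\simeq\mathcal{A}_N^{op}$: since $\pi P$ is a free \emph{right} module, its endomorphism ring is $\mathcal{A}_N$ itself via $\phi\mapsto a$ with $\phi(\omega)=\omega a$ (if $\psi(\omega)=\omega b$ then $\phi\psi(\omega)=\phi(\omega)b=\omega ab$), so there is no convention obstacle; composing with $\sigma\mapsto\sigma^{-1}$ would in fact produce an anti-isomorphism incompatible with (ii), whereas your final generators-and-relations map $z_{i,a}\mapsto\pi(E_{ii}\otimes z_a)\pi$, $\sigma\mapsto\pi g_\sigma\pi$ is exactly the inverse of the paper's isomorphism.
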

\begin{proof}
Since $m\geq N$, the weight subspace $(V^{\otimes N})_{(1^N,0^{m-N})}$ is spanned by the vectors $e_{\sigma(1)}\otimes\ldots\otimes e_{\sigma(N)}$, $\sigma\in\bS_N$, and is isomorphic to the regular representation $\bC[\bS_N]$. Therefore $\pi P = \omega\cdot\mathcal{A}_N$ is free of rank one, which proves~(i).
Abbreviate $\mathcal{S}:=\mathcal{S}(V[\bar{z}],N)$. Then $\mathcal{S}\pi=\mathrm{Hom}_{\mathcal{A}_N}(\pi P,P)\simeq\mathrm{Hom}_{\mathcal{A}_N}(\mathcal{A}_N,P)\simeq P$, where the last map is the evaluation at $\omega$. Similarly, $\pi\mathcal{S}\pi=\mathrm{End}_{\mathcal{A}_N}(\pi P)\simeq\mathrm{End}_{\mathcal{A}_N}(\mathcal{A}_N)\simeq\mathcal{A}_N$, where an endomorphism $\phi$ is mapped to the element $a\in\mathcal{A}_N$ with $\phi(\omega)=\omega a$. This map is multiplicative, since $\phi\psi(\omega)=\phi(\omega b)=\omega ab$, and the right actions of $\pi\mathcal{S}\pi$ and $\mathcal{A}_N$ on $\mathcal{S}\pi$ and $P$ agree. This proves~(ii) and~(iii).
Finally, $E_{ii}\otimes z_a$ preserves weights and $(E_{ii}\otimes z_a)\,\omega = \sum_{k} E_{ii}^{(k)} z_{k,a}\,\omega = \omega\, z_{i,a}$, while $g_\sigma^{\otimes N}\in\mathcal{S}(V,N)$ permutes the tensor factors of $\omega$. This proves~(iv).
\end{proof}

\begin{theorem}
\label{thm::SW::currents}
Let $m:=\dim V\geq N$. The Schur-Weyl functors~\eqref{eq::Schur-Weyl} extend naturally to the setting of current algebras:
\begin{equation}
\label{eq::SW::currents}
\begin{tikzcd}
\mathsf{Rep}(\bC[\bS_N] \ltimes \bC[\bar{z}_1,\ldots,\bar{z}_N]) \arrow[rrr, shift left=1ex, "{\SW\colon M \mapsto (V[\bar{z}])^{\otimes N} \otimes_{\bC[\bS_N] \ltimes \bC[\bar{z}_1,\ldots,\bar{z}_N]} M}"]
& & & \RepGLz{V}{N} \arrow[lll, "{\WS\colon U \mapsto U_{(1^N, 0^{m-N})}}"].
\end{tikzcd}
\end{equation}
The Schur--Weyl functor $\SW$ is left adjoint to the conjugate Schur--Weyl functor $\WS$, and $\SW$ and $\WS$ are mutually inverse equivalences of categories. By abuse of notation, we use the same names $\SW$ and $\WS$ as for the classical functors~\eqref{eq::Schur-Weyl}, to which they restrict in multidegree zero.
\end{theorem}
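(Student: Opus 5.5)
The plan is the standard Morita argument for a full idempotent. Abbreviate $\mathcal{S}:=\mathcal{S}(V[\bar{z}],N)$. By Lemma~\ref{lem::current::Schur}, $\RepGLz{V}{N}$ is the category of finitely generated graded $\mathcal{S}$-modules. By Lemma~\ref{lem::weight::idempotent}, $\pi\mathcal{S}\pi\simeq\mathcal{A}_N$ and $\mathcal{S}\pi\simeq P$ as bimodules. Under these identifications the two functors become the usual pair
\[
M\mapsto \mathcal{S}\pi\otimes_{\pi\mathcal{S}\pi}M,\qquad U\mapsto \pi U=\mathrm{Hom}_{\mathcal{S}}(\mathcal{S}\pi,U).
\]
This immediately gives the adjunction, by tensor--hom adjunction:
\[
\mathrm{Hom}_{\mathcal{S}}(P\otimes_{\mathcal{A}_N}M,U)\simeq\mathrm{Hom}_{\mathcal{A}_N}(M,\mathrm{Hom}_{\mathcal{S}}(P,U))\simeq \mathrm{Hom}_{\mathcal{A}_N}(M,\pi U).
\]
All maps are homogeneous of degree zero, so everything stays graded. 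Finite generation is preserved in both directions: $P$ is finitely generated over $\mathcal{S}$ (it is cyclic, generated by $\omega$), and $\pi U$ is finitely generated over $\pi\mathcal{S}\pi$ once we know $\mathcal{S}$ is finite over its center (see below).

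Next I would show that the unit $M\to\pi(\mathcal{S}\pi\otimes_{\pi\mathcal{S}\pi}M)=\pi\mathcal{S}\pi\otimes_{\pi\mathcal{S}\pi}M=M$ is an isomorphism. This is formal.

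The real content is the counit $\mathcal{S}\pi\otimes_{\pi\mathcal{S}\pi}\pi U\to U$. It is an isomorphism for all $U$ exactly when $\pi$ is full, i.e. $\mathcal{S}\pi\mathcal{S}=\mathcal{S}$.

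\begin{itemize}
\item \emph{Surjectivity.} Its image is $\mathcal{S}\pi U=\mathcal{S}\pi\mathcal{S}\,U$, which is all of $U$ once $1\in\mathcal{S}\pi\mathcal{S}$.
\item \emph{Injectivity.} Let $K$ be the kernel. Exactness of $\pi$ together with the unit isomorphism gives $\pi K=0$. Hence $K=\mathcal{S}K=\mathcal{S}\pi\mathcal{S}K=\mathcal{S}\pi K=0$.
\end{itemize}

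So the key step, and the main obstacle, is fullness of $\pi$ in the current Schur algebra. I would deduce it from Lemma~\ref{lem::Schur::full}. That lemma gives $1\in\mathcal{S}(V,N)\pi\mathcal{S}(V,N)\subset\mathcal{S}\pi\mathcal{S}$, because $\mathcal{S}(V,N)$ is the degree-zero subalgebra of $\mathcal{S}$ with the same unit (the identity of $P$). Since $\mathcal{S}\pi\mathcal{S}$ is a two-sided ideal containing $1$, it equals $\mathcal{S}$.

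One must check that the unit of $\mathcal{S}(V,N)$ really is the unit of $\mathcal{S}$. It is, because $\mathcal{S}(V,N)$ acts on $P=V^{\otimes N}\otimes\bC[\bar z_1,\ldots,\bar z_N]$ through the first factor.

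Finally, I would verify the finite-generation claims. $\mathcal{S}$ is a finite module over the polynomial invariants $\bC[\bar z_1,\ldots,\bar z_N]^{\bS_N}$, so it is Noetherian; hence $\pi U$ is finitely generated over $\mathcal{A}_N$. Conversely, $\SW(M)$ is a quotient of finitely many copies of $P$, which is finitely generated over $\mathcal{S}$. In multidegree zero the construction reduces to the classical $\SW$ and $\WS$, which justifies the notation.
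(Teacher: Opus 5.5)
Your proposal is correct and follows the paper's proof essentially step by step. It uses the same Morita setup via $P\simeq\mathcal{S}\pi$ and $\pi\mathcal{S}\pi\simeq\mathcal{A}_N$, the tensor--hom adjunction, the formal unit isomorphism, and the surjectivity and injectivity of the counit, both derived from fullness of $\pi$ inherited from Lemma~\ref{lem::Schur::full}. The only deviation is in proving that $\pi U$ is finitely generated. The paper uses that $\pi\mathcal{S}\simeq\mathrm{Hom}_{\mathcal{A}_N}(P,\mathcal{A}_N)$ is finitely generated because $P$ is finitely generated projective. You instead use finiteness of $\mathcal{S}$ over the central subring $\bC[\bar z_1,\ldots,\bar z_N]^{\bS_N}$; your ``hence'' needs the one-line remark that $\pi U$ is a submodule of $U$ over this central Noetherian subring. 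Both arguments work.
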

\begin{proof}
We abbreviate $\mathcal{S}:=\mathcal{S}(V[\bar{z}],N)$ and use the identification $P\simeq\mathcal{S}\pi$ of Lemma~\ref{lem::weight::idempotent}.

\emph{The functors are well defined.}
A finitely generated graded $\mathcal{A}_N$-module $M$ is a quotient of $\bigoplus_{j=1}^{r}\mathcal{A}_N\langle k_j\rangle$, hence $\SW(M)$ is a quotient of $\bigoplus_{j=1}^{r}P\langle k_j\rangle$, which is a finitely generated graded $\mathcal{S}$-module. Conversely, $\WS(U)=\pi U$ is an $\mathcal{A}_N$-module by Lemma~\ref{lem::weight::idempotent}. If $U$ is generated by $u_1,\ldots,u_r$, then $\pi U=\sum_j \pi\mathcal{S}\,u_j$, and $\pi\mathcal{S}\simeq\mathrm{Hom}_{\mathcal{A}_N}(P,\mathcal{A}_N)$ is a finitely generated left $\mathcal{A}_N$-module, because $P$ is a finitely generated projective right $\mathcal{A}_N$-module. Hence $\WS(U)$ is finitely generated.

\emph{Adjunction.}
Morphisms in $\RepGLz{V}{N}$ are $\mathcal{S}$-linear maps by Lemma~\ref{lem::current::Schur}, hence
\[
\mathrm{Hom}_{\mathcal{S}}(P\otimes_{\mathcal{A}_N}M,U)\simeq\mathrm{Hom}_{\mathcal{A}_N}(M,\mathrm{Hom}_{\mathcal{S}}(\mathcal{S}\pi,U))\simeq\mathrm{Hom}_{\mathcal{A}_N}(M,\pi U),
\]
where the last isomorphism is the evaluation at $\pi$.

\emph{$\WS\circ \SW\simeq\mathrm{Id}$.}
The unit of the adjunction is the isomorphism
\[
M\simeq\mathcal{A}_N\otimes_{\mathcal{A}_N}M\simeq \pi\mathcal{S}\pi\otimes_{\pi\mathcal{S}\pi}M\simeq \pi\,(\mathcal{S}\pi\otimes_{\pi\mathcal{S}\pi}M) = \WS(\SW(M)).
\]
In other words, $\WS(\SW(M))=(V^{\otimes N}\otimes_{\bS_N}M)_{(1^N,0^{m-N})}\simeq\bC[\bS_N]\otimes_{\bS_N}M\simeq M$, and the element $E_{ii}\otimes f(\bar{z})=\sum_k E^{(k)}_{ii}f(\bar{z}_k)$ acts on $\omega\otimes M$ by $f(\bar{z}_i)$.

\emph{$\SW\circ \WS\simeq\mathrm{Id}$.}
The counit of the adjunction is the map
\[
\varepsilon_U\colon \mathcal{S}\pi\otimes_{\pi\mathcal{S}\pi}\pi U\to U, \qquad s\pi\otimes u\mapsto su .
\]
By Lemma~\ref{lem::Schur::full}, $1\in\mathcal{S}(V,N)\,\pi\,\mathcal{S}(V,N)\subset\mathcal{S}\pi\mathcal{S}$, hence the idempotent $\pi$ is full in $\mathcal{S}$: $\mathcal{S}\pi\mathcal{S}=\mathcal{S}$.
The image of $\varepsilon_U$ equals $\mathcal{S}\pi U=\mathcal{S}\pi\mathcal{S}U=U$, so $\varepsilon_U$ is surjective.
Let $K:=\ker\varepsilon_U$. The functor $U\mapsto\pi U$ is exact and $\pi\varepsilon_U$ is the isomorphism $\pi\mathcal{S}\pi\otimes_{\pi\mathcal{S}\pi}\pi U\simeq\pi U$, hence $\pi K=0$. Since $K$ is an $\mathcal{S}$-submodule, we get
\[
K=\mathcal{S}K=\mathcal{S}\pi\mathcal{S}K\subset\mathcal{S}\,(\pi K) = 0,
\]
so $\varepsilon_U$ is an isomorphism.
\end{proof}

\begin{remark}
The only non-formal ingredient of the proof of Theorem~\ref{thm::SW::currents} is the fullness of the idempotent $\pi$, which is inherited from the classical Schur algebra $\mathcal{S}(V,N)$ (Lemma~\ref{lem::Schur::full}). In other words, the current Schur-Weyl duality is obtained from the classical one by the Morita theory for the full idempotent $\pi\in\mathcal{S}(V[\bar{z}],N)$ with $\pi\,\mathcal{S}(V[\bar{z}],N)\,\pi\simeq\mathcal{A}_N$.
\end{remark}

\begin{corollary}
Let $m\geq N$.
\begin{enumerate}
\item The module $(V[\bar{z}])^{\otimes N}$ is a projective generator of $\RepGLz{V}{N}$. In particular, $\RepGLz{V}{N}$ is the full subcategory of finitely generated graded $\gl_V[\bar{z}]$-modules that are quotients of finite direct sums of grading shifts $(V[\bar{z}])^{\otimes N}\langle k\rangle$.
\item The simple objects of $\RepGLz{V}{N}$ are the grading shifts $V_\lambda\langle k\rangle$, $\lambda\vdash N$, on which $\gl_V\otimes\bar{z}\bC[\bar{z}]$ acts by zero. They are Schur-Weyl dual to the Specht modules $\mathbb{S}_\lambda\langle k\rangle$ with trivial action of the variables $\bar{z}_1,\ldots,\bar{z}_N$.
\end{enumerate}
\end{corollary}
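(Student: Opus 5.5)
Both items will be deduced from the equivalence $\SW\colon\mathsf{Rep}(\mathcal{A}_N)\rightleftarrows\RepGLz{V}{N}\colon\WS$ of Theorem~\ref{thm::SW::currents}, transporting statements from the wreath product side, where they are elementary.

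For item~(i), note that $\mathcal{A}_N$ is a free module of rank one over itself, hence a projective generator of the category of finitely generated graded $\mathcal{A}_N$-modules: every such module is a quotient of a finite sum $\bigoplus_j\mathcal{A}_N\langle k_j\rangle$, and $\mathcal{A}_N$ is projective. Equivalences preserve projectivity and quotient presentations, so $\SW(\mathcal{A}_N)=P\otimes_{\mathcal{A}_N}\mathcal{A}_N\simeq P=(V[\bar z])^{\otimes N}$ is a projective generator of $\RepGLz{V}{N}$. Concretely, any $U$ is a quotient of sums of shifts of $P$, already seen in the proof of the theorem via $\SW(\WS U)\simeq U$. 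For the converse description, observe that $\RepGLz{V}{N}$ is closed under quotients: a quotient of an $\mathcal{S}(V[\bar z],N)$-module still has the $U(\gl_V[\bar z])$-action factoring through $\mathcal{S}(V[\bar z],N)$. Since $P$ is itself in the category, any finitely generated graded module that is a quotient of $\bigoplus P\langle k\rangle$ lies in $\RepGLz{V}{N}$. This gives the stated full subcategory characterization.

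For item~(ii), I would first classify the simple graded $\mathcal{A}_N$-modules. Let $M$ be simple and finitely generated graded. The ideal $\mathfrak{m}$ generated by all $z_{i,a}$ is $\bS_N$-stable and of positive degree, so $\mathfrak{m}M$ is a graded submodule. By the graded Nakayama lemma, using that $M$ is bounded below as a finitely generated module over a positively graded ring, $\mathfrak{m}M\neq M$, hence $\mathfrak{m}M=0$. Thus $M$ is a simple graded $\bC[\bS_N]$-module concentrated in a single degree $k$, i.e. $\mathbb{S}_\lambda\langle k\rangle$ with the variables acting by zero. Applying $\SW$ gives $V^{\otimes N}\otimes_{\bS_N}\mathbb{S}_\lambda\langle k\rangle\simeq V_\lambda\langle k\rangle$. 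Here $f\in\bar z\bC[\bar z]$ acts through the $\mathcal{A}_N$-side as multiplication by $f(\bar z_i)$, which is zero, so $\gl_V\otimes\bar z\bC[\bar z]$ acts trivially. Since equivalences preserve simplicity and every simple object of $\RepGLz{V}{N}$ is $\SW$ of a simple object, this is the full list. Duality with the Specht modules is then the classical statement in degree zero.

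The only point requiring some care is the grading/Nakayama step: it needs boundedness below, which holds because the modules are finitely generated and the grading is by $\bZ^{d+1}_{\ge0}$-shifts. Everything else is formal transport through the equivalence.
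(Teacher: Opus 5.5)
Your proposal is correct and follows the paper's proof essentially step for step. Both arguments run as follows: $\mathcal{A}_N$ is a projective generator, and this transports to $P=\SW(\mathcal{A}_N)=(V[\bar z])^{\otimes N}$; quotients of $\bigoplus_j P\langle k_j\rangle$ are automatically $\mathcal{S}(V[\bar z],N)$-modules; and the positive-degree variables act by zero on any simple graded $\mathcal{A}_N$-module, after which one applies $\SW$. The only difference is that you justify the last point by graded Nakayama, using boundedness below in total degree, whereas the paper simply asserts it.
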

\begin{proof}
The regular module $\mathcal{A}_N$ is a projective generator of $\mathsf{Rep}(\bC[\bS_N] \ltimes \bC[\bar{z}_1,\ldots,\bar{z}_N])$ and $\SW(\mathcal{A}_N)=(V[\bar{z}])^{\otimes N}$. Conversely, any quotient of $\bigoplus_j P\langle k_j\rangle$ is an $\mathcal{S}(V[\bar{z}],N)$-module. For the second claim, note that the positive degree part of $\bC[\bar{z}_1,\ldots,\bar{z}_N]$ acts by zero on any simple graded $\mathcal{A}_N$-module, and apply $\SW$.
\end{proof}

We now describe the graded characters.
For $k\in\bZ^{d+1}$ we set $q^k:=q_0^{k_0}\cdots q_d^{k_d}$ and denote by $\mathcal{R}_q:=\bZ[[q_0,\ldots,q_d]][q_0^{-1},\ldots,q_d^{-1}]$ the ring of formal power series with finitely many negative powers allowed. Negative powers appear only because of the grading shifts.
The graded components of $U\in\RepGLz{V}{N}$ are finite-dimensional polynomial $\gl_V$-representations of degree $N$. Since $\deg z_{i,a}$ does not depend on $i$, the symmetric group $\bS_N$ preserves the grading of $\mathcal{A}_N$, and the graded components of $M\in\mathsf{Rep}(\bC[\bS_N] \ltimes \bC[\bar{z}_1,\ldots,\bar{z}_N])$ are finite-dimensional $\bS_N$-modules.
We define the graded character and the graded Frobenius characteristic
\[
\mathrm{ch}_q(U):=\sum_{k\in\bZ^{d+1}}\mathrm{ch}(U_k)\,q^k,
\qquad
\mathrm{Frob}_q(M):=\sum_{k\in\bZ^{d+1}}\mathrm{Frob}(M_k)\,q^k .
\]
Both are symmetric functions of degree $N$ with coefficients in $\mathcal{R}_q$, i.e.\ elements of $\Lambda^N\otimes\mathcal{R}_q$.
For finite-dimensional modules the coefficients belong to $\bZ[q_0^{\pm1},\ldots,q_d^{\pm1}]$, and to $\bZ[q_0,\ldots,q_d]$ if the module is concentrated in nonnegative degrees.

\begin{proposition}
\label{prop::SW::graded::characters}
Let $m\geq N$. For all $M\in\mathsf{Rep}(\bC[\bS_N] \ltimes \bC[\bar{z}_1,\ldots,\bar{z}_N])$ and $U\in\RepGLz{V}{N}$ we have
\[
\mathrm{ch}_q\big(\SW(M)\big)=\mathrm{Frob}_q(M),
\qquad
\mathrm{Frob}_q\big(\WS(U)\big)=\mathrm{ch}_q(U).
\]
In particular, the Grothendieck groups of the subcategories of finite-dimensional objects are both identified with $\Lambda^N\otimes\bZ[q_0^{\pm1},\ldots,q_d^{\pm1}]$. The classes of the simple objects $V_\lambda\langle k\rangle$ and $\mathbb{S}_\lambda\langle k\rangle$ correspond to $s_\lambda\,q^k$, and the Schur-Weyl duality~\eqref{eq::SW::currents} induces the identity map.
\end{proposition}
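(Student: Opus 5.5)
The plan is to reduce the graded statement to the ungraded identities~\eqref{eq::SW::characters}, applied one multidegree at a time, and then use Theorem~\ref{thm::SW::currents} for the Grothendieck group statements.

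\textbf{Step 1: the functors preserve multidegree.} The bimodule $P=(V[\bar{z}])^{\otimes N}$ is graded, and $\bS_N$ preserves the grading of $\mathcal{A}_N$, since $\deg z_{i,a}=\varepsilon_a$ does not depend on $i$. Hence for each $k\in\bZ^{d+1}$ we have $\WS(U)_k=(U_k)_{(1^N,0^{m-N})}$. The projector $\pi$ has degree zero, and the $\bS_N$-action on this space is the action of the permutation matrices $g_\sigma\in\GL_V$ from Lemma~\ref{lem::weight::idempotent}(iv). So, as an $\bS_N$-module, $\WS(U)_k$ is exactly the classical conjugate Schur--Weyl functor applied to the finite-dimensional polynomial $\gl_V$-module $U_k$. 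The classical identity~\eqref{eq::SW::characters} then gives $\mathrm{Frob}(\WS(U)_k)=\mathrm{ch}(U_k)$. Summing over $k$ with weights $q^k$ gives the second formula. The sums make sense in $\Lambda^N\otimes\mathcal{R}_q$ because the grading is bounded below: $U$ is finitely generated over the nonnegatively graded algebra $\mathcal{S}(V[\bar{z}],N)$, so only finitely many negative degrees occur.

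\textbf{Step 2: the first formula.} By Theorem~\ref{thm::SW::currents}, $M\simeq\WS(\SW(M))$ as graded $\mathcal{A}_N$-modules, and the unit isomorphism is homogeneous of degree zero. Applying Step~1 to $U=\SW(M)$ gives
\[
\mathrm{ch}_q(\SW(M))=\mathrm{Frob}_q(\WS\SW M)=\mathrm{Frob}_q(M).
\]

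\textbf{Step 3: Grothendieck groups.} In the subcategory of finite-dimensional objects, every object has a finite composition series. By the Corollary above, the simple objects are $V_\lambda\langle k\rangle$ on one side and $\mathbb{S}_\lambda\langle k\rangle$ on the other. Their classes form bases of the respective $K_0$, and they map to $s_\lambda q^k$, which is a $\bZ$-basis of $\Lambda^N\otimes\bZ[q_0^{\pm1},\ldots,q_d^{\pm1}]$. So both Grothendieck groups are identified with this ring. The equivalence sends $\mathbb{S}_\lambda\langle k\rangle$ to $V_\lambda\langle k\rangle$, so under these identifications it induces the identity map.

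I expect the only real care to be needed in Step~1. One must check that the graded $\bS_N$-structure on $\pi U$ really agrees with the classical one in each degree, i.e.\ that no degree shift enters through the identification $\pi\mathcal{S}\pi\simeq\mathcal{A}_N$. This holds because every map involved has degree zero.
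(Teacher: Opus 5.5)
Your proof is correct. It is the paper's argument with the two halves swapped.

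The paper proves the $\SW$-identity directly. The isomorphism $P\simeq V^{\otimes N}\otimes_{\bS_N}\mathcal{A}_N$ is one of graded right $\mathcal{A}_N$-modules, so $\SW(M)_k\simeq V^{\otimes N}\otimes_{\bS_N}M_k$, and the classical identity~\eqref{eq::SW::characters} applies degree by degree. The $\WS$-identity is then deduced from $U\simeq\SW(\WS(U))$.

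You prove the $\WS$-identity directly. You use that $\pi$ has degree zero and that $\bS_N$ acts on $\pi U$ through permutation matrices (Lemma~\ref{lem::weight::idempotent}(iv)), so $\WS(U)_k$ is the classical $\WS$ of $U_k$. You then get the $\SW$-identity from the unit isomorphism $M\simeq\WS(\SW(M))$.

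What each route buys:
\begin{itemize}
\item Your route makes compatibility with the grading evident, since a degree-zero idempotent obviously preserves each graded piece. It does require matching the $\bS_N$-action on the weight space with the classical one.
\item The paper's route needs no such matching, because $V^{\otimes N}\otimes_{\bS_N}M_k$ is literally the classical functor. It uses the graded tensor identity instead.
\end{itemize}

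Your Step~3 spells out the Grothendieck-group claim that the paper leaves implicit, and it is fine.

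One small imprecision: ``only finitely many negative degrees occur'' is not the right condition. For $d\geq1$, a module such as $P\langle(-1,0)\rangle$ has nonzero components in all multidegrees $(-1,b)$, $b\geq0$. What finite generation gives, and what membership in $\mathcal{R}_q$ needs, is that the support lies in $k_0+\bZ_{\geq0}^{d+1}$ for some $k_0$.
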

\begin{proof}
The isomorphism $P\simeq V^{\otimes N}\otimes_{\bS_N}\mathcal{A}_N$ is an isomorphism of graded right $\mathcal{A}_N$-modules. Hence $\SW(M)\simeq V^{\otimes N}\otimes_{\bS_N}M$ as graded $\gl_V$-modules, that is $\SW(M)_k\simeq V^{\otimes N}\otimes_{\bS_N}M_k$, and the first identity follows from the classical identity~\eqref{eq::SW::characters}. The second identity follows from the first one, because $U\simeq \SW(\WS(U))$ by Theorem~\ref{thm::SW::currents}.
\end{proof}

The characters of the free $\mathcal{A}_N$-modules and of their Schur-Weyl duals are given by plethystic substitutions.
For $f\in\Lambda$ we denote by
\[
f\Big[\frac{X}{\prod_{a=0}^{d}(1-q_a)}\Big]\in\Lambda\otimes\bZ[[q_0,\ldots,q_d]]
\]
the image of $f$ under the ring homomorphism defined on the power sums by $p_r\mapsto p_r(X)\prod_{a=0}^{d}(1-q_a^r)^{-1}$. Equivalently, it is the evaluation of $f$ at the alphabet $\{x_i\,q^k\colon i\geq 1,\ k\in\bZ_{\geq0}^{d+1}\}$, that is the plethysm of $f$ with the graded character $\mathrm{ch}_q(V[\bar{z}])=p_1\prod_{a=0}^{d}(1-q_a)^{-1}$ of the tautological module.

\begin{proposition}
\label{prop::characters::free}
Let $m\geq N$ and let $W$ be a finite-dimensional $\bS_N$-module. Consider the free module $\mathcal{A}_N\otimes_{\bS_N}W\simeq W\otimes\bC[\bar{z}_1,\ldots,\bar{z}_N]$ with the diagonal action of $\bS_N$. Then
\begin{gather*}
\SW(\mathcal{A}_N\otimes_{\bS_N}W)\simeq(V[\bar{z}])^{\otimes N}\otimes_{\bS_N}W \quad\text{and}\quad
\\
\mathrm{Frob}_q(\mathcal{A}_N\otimes_{\bS_N}W)=\mathrm{ch}_q\big((V[\bar{z}])^{\otimes N}\otimes_{\bS_N}W\big)=\mathrm{Frob}(W)\Big[\frac{X}{\prod_{a=0}^{d}(1-q_a)}\Big].
\end{gather*}
In particular, let $P_\lambda:=(V[\bar{z}])^{\otimes N}\otimes_{\bS_N}\mathbb{S}_\lambda$ be the Schur functor corresponding to $\lambda\vdash N$ applied to $V[\bar{z}]$. Then
\begin{enumerate}
\item $P_\lambda$ is the projective cover of $V_\lambda$ in $\RepGLz{V}{N}$; it is Schur-Weyl dual to the projective cover $\mathcal{A}_N\otimes_{\bS_N}\mathbb{S}_\lambda$ of $\mathbb{S}_\lambda$, and
\[
\mathrm{ch}_q(P_\lambda)=s_\lambda\Big[\frac{X}{\prod_{a=0}^{d}(1-q_a)}\Big];
\]
\item $P=(V[\bar{z}])^{\otimes N}\simeq\bigoplus_{\lambda\vdash N}P_\lambda^{\oplus f^\lambda}$, and
\[
\mathrm{ch}_q(P)=\mathrm{Frob}_q(\mathcal{A}_N)=p_1^N\Big[\frac{X}{\prod_{a=0}^{d}(1-q_a)}\Big]=\frac{p_1^N}{\prod_{a=0}^{d}(1-q_a)^N}=\sum_{\lambda\vdash N}f^\lambda\, s_\lambda\Big[\frac{X}{\prod_{a=0}^{d}(1-q_a)}\Big].
\]
\end{enumerate}
\end{proposition}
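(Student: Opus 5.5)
The plan is to compute everything on the symmetric group side and transport it through Theorem~\ref{thm::SW::currents} and Proposition~\ref{prop::SW::graded::characters}. First, the isomorphism $\SW(\mathcal{A}_N\otimes_{\bS_N}W)\simeq(V[\bar{z}])^{\otimes N}\otimes_{\bS_N}W$ follows from associativity of tensor products: $P\otimes_{\mathcal{A}_N}(\mathcal{A}_N\otimes_{\bS_N}W)\simeq P\otimes_{\bS_N}W$, where $P=(V[\bar{z}])^{\otimes N}$ carries its right $\bS_N$-action by restriction from $\mathcal{A}_N$. The identification $\mathcal{A}_N\otimes_{\bS_N}W\simeq W\otimes\bC[\bar{z}_1,\ldots,\bar{z}_N]$ with diagonal $\bS_N$-action is the standard PBW-type decomposition $\mathcal{A}_N=\bC[\bar{z}_1,\ldots,\bar{z}_N]\otimes\bC[\bS_N]$.

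For the character, I will compute $\mathrm{Frob}_q$ of $W\otimes\bC[\bar{z}_1,\ldots,\bar{z}_N]$ directly, via the graded trace of $\sigma$. Let $\sigma$ have cycle type $\rho=\rho(\sigma)$. The graded trace of $\sigma$ on $\bC[\bar{z}_1,\ldots,\bar{z}_N]=\bigotimes_a\bC[z_{1,a},\ldots,z_{N,a}]$ is computed cycle by cycle. A cycle of length $r$ permutes the monomials, and the fixed monomials in each variable type $a$ contribute $(1-q_a^r)^{-1}$. The total is therefore $\prod_{j}\prod_a(1-q_a^{\rho_j})^{-1}$. Multiplying by $\chi_W(\sigma)$, averaging, and recalling $p_\rho=\prod_j p_{\rho_j}$, we obtain
\[
\frac{1}{N!}\sum_\sigma\chi_W(\sigma)\prod_j\frac{p_{\rho_j}}{\prod_a(1-q_a^{\rho_j})}.
\]
This is exactly the plethystic substitution applied to $\mathrm{Frob}(W)$, since that substitution is a ring map on power sums. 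Equality with $\mathrm{ch}_q$ of the $\SW$-image is then Proposition~\ref{prop::SW::graded::characters}.

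For item (i), $\mathcal{A}_N\otimes_{\bS_N}\mathbb{S}_\lambda$ is projective, since induction preserves projectives and $\mathbb{S}_\lambda$ is projective over the semisimple algebra $\bC[\bS_N]$. Its head is computed by quotienting out the augmentation ideal $(\bar{z})$ of positive degree polynomials. This quotient is $\mathbb{S}_\lambda$, which is simple, so by the graded Nakayama lemma (the module is nonnegatively graded with degree zero part $\mathbb{S}_\lambda$) it is the projective cover. The equivalence $\SW$ preserves projective covers, and $\SW(\mathbb{S}_\lambda)=V_\lambda$. The character formula is the specialization $W=\mathbb{S}_\lambda$, using $\mathrm{Frob}(\mathbb{S}_\lambda)=s_\lambda$.

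For item (ii), decompose $\bC[\bS_N]=\bigoplus_\lambda\mathbb{S}_\lambda^{\oplus f^\lambda}$ and apply $\mathcal{A}_N\otimes_{\bS_N}-$ and then $\SW$. The character identities follow from $p_1^N[X/\prod(1-q_a)]=(p_1/\prod(1-q_a))^N$ together with $p_1^N=\sum f^\lambda s_\lambda$.

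The only point needing care is the graded trace computation, namely that permutation of monomials has trace equal to the number of fixed monomials, weighted by degree. This is routine, so I expect no real obstacle.
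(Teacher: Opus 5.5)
Your proposal is correct and follows the paper's proof essentially step by step: associativity of the tensor product for the first isomorphism, a cycle-by-cycle graded trace for the plethystic formula, projectivity plus the head computation for item~(i), and the decomposition of $\bC[\bS_N]$ for item~(ii). The one difference is which side of the duality carries the trace computation. You compute the trace of $\sigma$ on $W\otimes\bC[\bar{z}_1,\ldots,\bar{z}_N]$ on the symmetric-group side and transport the result via Proposition~\ref{prop::SW::graded::characters}. The paper instead computes the trace of $\mathrm{diag}(x_1,\ldots,x_m)\cdot\sigma$ on $(V[\bar{z}])^{\otimes N}$ and takes $\bS_N$-invariants in $P\otimes W$ on the $\gl_V$-side.
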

\begin{proof}
The first isomorphism is the associativity of the tensor product: $P\otimes_{\mathcal{A}_N}(\mathcal{A}_N\otimes_{\bS_N}W)\simeq P\otimes_{\bS_N}W$. The first equality of characters is Proposition~\ref{prop::SW::graded::characters}.
For the second equality, note that the graded trace of $\mathrm{diag}(x_1,\ldots,x_m)^r$ on $V[\bar{z}]$ is $p_r(x_1,\ldots,x_m)\prod_{a}(1-q_a^r)^{-1}$. Hence, by the standard computation of the trace of a permutation of tensor factors, for all $\sigma\in\bS_N$
\[
\sum_{k}\mathrm{tr}_{P_k}\big(\mathrm{diag}(x_1,\ldots,x_m)\cdot\sigma\big)\,q^k
=\prod_{c}\,p_{|c|}(x_1,\ldots,x_m)\prod_{a=0}^{d}(1-q_a^{|c|})^{-1}
=p_{\rho(\sigma)}\Big[\frac{X}{\prod_{a=0}^{d}(1-q_a)}\Big],
\]
where the product is taken over the cycles $c$ of $\sigma$. In characteristic zero $P\otimes_{\bS_N}W$ is isomorphic to the space of $\bS_N$-invariants in $P\otimes W$, therefore
\[
\mathrm{ch}_q(P\otimes_{\bS_N}W)=\frac{1}{N!}\sum_{\sigma\in\bS_N}\chi_W(\sigma)\,p_{\rho(\sigma)}\Big[\frac{X}{\prod_{a}(1-q_a)}\Big]=\mathrm{Frob}(W)\Big[\frac{X}{\prod_{a}(1-q_a)}\Big].
\]
The module $\mathcal{A}_N\otimes_{\bS_N}\mathbb{S}_\lambda$ is projective, since $\bC[\bS_N]$ is semisimple, and its quotient by the ideal generated by the variables of positive degree is $\mathbb{S}_\lambda$. Hence it is the projective cover of $\mathbb{S}_\lambda$, and item~(i) follows from Theorem~\ref{thm::SW::currents} with $W=\mathbb{S}_\lambda$. Item~(ii) follows from $\mathcal{A}_N\simeq\mathcal{A}_N\otimes_{\bS_N}\bC[\bS_N]$ and $\bC[\bS_N]\simeq\bigoplus_\lambda\mathbb{S}_\lambda^{\oplus f^\lambda}$ together with $p_1^N=\sum_\lambda f^\lambda s_\lambda$.
\end{proof}

\begin{remark}
The coefficient of $s_\mu\,q^k$ in $\mathrm{ch}_q(P_\lambda)$ is the graded multiplicity $[P_\lambda:V_\mu\langle k\rangle]$, which by Proposition~\ref{prop::SW::graded::characters} equals the multiplicity of $\mathbb{S}_\mu$ in the degree $k$ component of $\mathbb{S}_\lambda\otimes\bC[\bar{z}_1,\ldots,\bar{z}_N]$. The identity $s_\lambda[XY]=\sum_{\mu,\nu}g_{\lambda\mu\nu}\,s_\mu[X]\,s_\nu[Y]$, where $g_{\lambda\mu\nu}$ are the Kronecker coefficients, gives
\[
\sum_{k}[P_\lambda:V_\mu\langle k\rangle]\,q^k=\sum_{\nu\vdash N}g_{\lambda\mu\nu}\; s_\nu\Big[\frac{1}{\prod_{a=0}^{d}(1-q_a)}\Big],
\]
where $s_\nu[1/\prod_a(1-q_a)]$ is the evaluation of $s_\nu$ at the alphabet $\{q^k\colon k\in\bZ_{\geq0}^{d+1}\}$.
For a single variable ($d=0$) this is the principal specialization $s_\nu(1,q,q^2,\ldots)=q^{n(\nu)}\prod_{\square\in\nu}(1-q^{h(\square)})^{-1}$, where $n(\nu)=\sum_i(i-1)\nu_i$ and $h(\square)$ is the hook length. For $\lambda=(N)$ we have $g_{(N)\mu\nu}=\delta_{\mu\nu}$, and one recovers the classical formula for the graded multiplicity of $\mathbb{S}_\mu$ in $\bC[z_1,\ldots,z_N]$.
\end{remark}

\begin{example}
    The following modules are Schur-Weyl dual to each other:
$$
\begin{array}{c}
{\begin{tikzcd}
\bC[\bS_N] \ltimes \bC[\bar{z}_1,\ldots,\bar{z}_N]
 \arrow[r,shift left=0.5ex,"\SW"]
 &  (V[\bar{z}])^{\otimes N},
 \arrow[l,shift left=0.5ex,"\WS"]
\end{tikzcd}}
\\
{\begin{tikzcd}
\bC[\bar{z}_1,\ldots,\bar{z}_N]
 \arrow[r,shift left=0.5ex,"\SW"]
 &  S^{N}(V[\bar{z}])\simeq (V[\bar{z}])^{\otimes N}\otimes_{\bS_N}\bC,
 \arrow[l,shift left=0.5ex,"\WS"]
\end{tikzcd}}
\\
{\begin{tikzcd}
\mathrm{sgn}\otimes\bC[\bar{z}_1,\ldots,\bar{z}_N]
 \arrow[r,shift left=0.5ex,"\SW"]
 &  \Lambda^{N}(V[\bar{z}])\simeq (V[\bar{z}])^{\otimes N}\otimes_{\bS_N}\mathrm{sgn}.
 \arrow[l,shift left=0.5ex,"\WS"]
\end{tikzcd}}
\end{array}
$$
Here $\bS_N$ acts on $\bC[\bar{z}_1,\ldots,\bar{z}_N]$ by permutations of the groups of variables $\bar{z}_1,\ldots,\bar{z}_N$, and $\mathrm{sgn}$ denotes the sign representation of $\bS_N$.
By Proposition~\ref{prop::characters::free}, the graded characters of these three pairs of modules are equal to
\[
\frac{p_1^N}{\prod_{a=0}^{d}(1-q_a)^N},
\qquad
h_N\Big[\frac{X}{\prod_{a=0}^{d}(1-q_a)}\Big],
\qquad
e_N\Big[\frac{X}{\prod_{a=0}^{d}(1-q_a)}\Big],
\]
respectively.
\end{example}

\begin{remark}
A particular case of a single variable Schur-Weyl duality was discussed in details in~\cite[Th.\,4.3]{FKhM}.
\end{remark}

\subsection{Sign duality for current algebras}
\subsubsection{The sign duality functor}
Tensoring with the sign representation is an involutive autoequivalence of the category of $\bS_N$-modules.
It extends to $\mathcal{A}_N$-modules: for an $\mathcal{A}_N$-module $M$ we denote by $\mathrm{sgn}\otimes M$ the same space, on which the polynomials act as on $M$ and $\sigma\in\bS_N$ acts by $(-1)^{\sigma}\sigma$.
Since $\mathrm{Frob}(\mathrm{sgn}\otimes W)=\omega\,\mathrm{Frob}(W)$ for every $\bS_N$-module $W$, where $\omega\colon s_\mu\mapsto s_{\mu^T}$ is the standard involution of $\Lambda$, we have
\begin{equation}
	\label{eq::sign::Frob}
	\mathrm{Frob}_q(\mathrm{sgn}\otimes M)=\omega\,\mathrm{Frob}_q(M).
\end{equation}

In this subsection we also allow finite-dimensional modules that are not graded; the proof of Theorem~\ref{thm::SW::currents} applies to them verbatim.
We keep the notation $\RepGLz{V}{N}$ for the category of finite-dimensional $\gl_V[\bar{z}]$-modules whose action factors through the current Schur algebra.

\begin{definition}
	\label{def::sign::functor}
	Let $m=\dim V\geq N$. The \emph{sign duality functor} is the Schur--Weyl dual of the sign twist:
	\[
	\bomega:=\SW\circ(\mathrm{sgn}\otimes-)\circ\WS\colon\RepGLz{V}{N}\longrightarrow\RepGLz{V}{N}.
	\]
	Its restriction to multidegree zero is the classical sign duality functor on $\RepGL{V}{N}$, which we denote by the same symbol $\bomega$.
\end{definition}

We use the bold letter $\bomega$ for the functor, in order to distinguish it from the involution $\omega$ of $\Lambda$ and from the fundamental weights $\omega_k$.

\subsubsection{The contragredient duality}
The transposition $X\otimes f\mapsto X^{T}\otimes f$ is an anti-involution of the Lie algebra $\gl_V[\bar{z}]$, i.e., $[X,Y]^T=[Y^T,X^T]$. Hence it extends to an anti-involution $u\mapsto u^T$ of $U(\gl_V[\bar{z}])$.
For a finite-dimensional multigraded $\gl_V[\bar{z}]$-module $U$ we define its \emph{contragredient dual} $U^{\dagger}$ as follows. As a vector space, $U^{\dagger}:=\bigoplus_{\bar{r}}(U_{\bar{r}})^{*}$, where $(U_{\bar{r}})^{*}$ is placed in multidegree $-\bar{r}$, and the action is given by
\[
(u\cdot\xi)(w):=\xi(u^{T}w),\qquad u\in U(\gl_V[\bar{z}]),\ \xi\in U^\dagger,\ w\in U.
\]
For a module without grading, $U^\dagger$ is the dual space with the same formula for the action.
The transposition fixes the Cartan subalgebra pointwise. Hence the component $(U_{\bar{r}})^{*}$ of $U^{\dagger}$ in multidegree $-\bar{r}$ has the same $\gl_V$-character as $U_{\bar{r}}$, and $U^\dagger\simeq U$ as $\gl_V$-modules.
Moreover, the current Schur algebra $\mathcal{S}(V[\bar{z}],N)$ is stable under the transposition. Therefore $U^{\dagger}\in\RepGLz{V}{N}$ whenever $U\in\RepGLz{V}{N}$. Clearly, $U^{\dagger\dagger}\simeq U$, and $(U\otimes U')^\dagger\simeq U^\dagger\otimes U'^\dagger$, because the transposition is compatible with the coproduct.

Similarly, the assignment $\sigma\mapsto\sigma^{-1}$, $x_i\mapsto x_i$, $y_i\mapsto y_i$ extends to an anti-involution $a\mapsto a^{T}$ of $\mathcal{A}_N$. For a finite-dimensional multigraded $\mathcal{A}_N$-module $M$ we define $M^{\dagger}:=\bigoplus_{\bar{r}}(M_{\bar{r}})^{*}$, with $(M_{\bar{r}})^{*}$ in multidegree $-\bar{r}$ and the action $(a\cdot\xi)(w):=\xi(a^{T}w)$.

\begin{lemma}
	\label{lem::SW::dagger}
	For every finite-dimensional $U\in\RepGLz{V}{N}$ there is a natural isomorphism of $\mathcal{A}_N$-modules $\WS(U^{\dagger})\simeq\WS(U)^{\dagger}$.
	Consequently, $\SW(M^{\dagger})\simeq\SW(M)^{\dagger}$ for every finite-dimensional multigraded $\mathcal{A}_N$-module $M$, and $\bomega(U^\dagger)\simeq\bomega(U)^\dagger$.
\end{lemma}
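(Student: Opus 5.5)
The plan is to compare the two sides directly inside $U^\dagger$. As graded spaces,
\[
\WS(U^\dagger)=\pi(U^\dagger)=\bigoplus_{\bar r}\big(\pi U_{\bar r}\big)^*,
\]
because $\pi$ is a weight projector. The transposition fixes the Cartan subalgebra pointwise, so $\pi^T=\pi$, and the weight-$(1^N,0^{m-N})$ part of $(U_{\bar r})^*$ is exactly the dual of $\pi U_{\bar r}$: a functional supported on one weight space annihilates all the others. Restricting functionals therefore gives a natural linear isomorphism $\pi(U^\dagger)\to(\pi U)^\dagger$, which places $(\pi U_{\bar r})^*$ in multidegree $-\bar r$ on both sides. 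It remains to check that this map intertwines the $\mathcal{A}_N$-actions.

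By Lemma~\ref{lem::weight::idempotent}(iv), $z_{i,a}$ acts on $\pi U$ by $E_{ii}\otimes z_a$, and $\sigma$ acts by the permutation matrix $g_\sigma$.
\begin{itemize}
\item On $U^\dagger$, the element $E_{ii}\otimes z_a$ acts by $\xi\mapsto\xi\circ(E_{ii}\otimes z_a)^T=\xi\circ(E_{ii}\otimes z_a)$. This matches the action of $z_{i,a}$ on $(\pi U)^\dagger$, since $z_{i,a}^T=z_{i,a}$.
\item The group $\GL_V$ acts on $U^\dagger$ through the transpose, extended from $\gl_V$ via exponentials (equivalently via the Schur algebra). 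So $g_\sigma$ acts by $\xi\mapsto\xi\circ g_\sigma^T=\xi\circ g_{\sigma^{-1}}$, because permutation matrices are orthogonal. This matches the action of $\sigma$ on $(\pi U)^\dagger$, which is $\xi\mapsto\xi\circ\sigma^{-1}$ since $\sigma^T=\sigma^{-1}$.
\end{itemize}
The one point to handle carefully is that $g_\sigma$ is a group element, not an element of $\gl_V[\bar z]$. I would argue that the anti-involution $u\mapsto u^T$ of $U(\gl_V)$ induces transposition on the image $\mathcal{S}(V,N)=(\End V^{\otimes N})^{\bS_N}$, where $g_\sigma^{\otimes N}$ lives, and that it sends $g_\sigma^{\otimes N}$ to $(g_\sigma^T)^{\otimes N}$. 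This is essentially the only nontrivial check. Naturality in $U$ is clear, since restriction of functionals commutes with transposes of morphisms.

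For the consequences, write $M\simeq\WS(\SW(M))$ by Theorem~\ref{thm::SW::currents}. Applying the first part to $U=\SW(M)$ gives
\[
\WS(\SW(M)^\dagger)\simeq\WS(\SW(M))^\dagger\simeq M^\dagger .
\]
Applying $\SW$ and using $\SW\circ\WS\simeq\mathrm{Id}$ yields $\SW(M)^\dagger\simeq\SW(M^\dagger)$. Here one notes that $\SW(M)$ is finite-dimensional whenever $M$ is, so the duality is defined.

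Finally, twisting by the sign commutes with $\dagger$: $(\mathrm{sgn}\otimes M)^\dagger\simeq\mathrm{sgn}\otimes M^\dagger$, because $(-1)^{\sigma^{-1}}=(-1)^\sigma$. Chaining the three commutations gives
\[
\bomega(U^\dagger)=\SW\big(\mathrm{sgn}\otimes\WS(U^\dagger)\big)\simeq\SW\big((\mathrm{sgn}\otimes\WS U)^\dagger\big)\simeq\bomega(U)^\dagger .
\]
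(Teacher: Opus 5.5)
Your proposal is correct and follows essentially the same route as the paper. Both identify the weight subspace of $U^\dagger$ with the dual of $\pi U$, note that $E_{ii}\otimes z_a$ is fixed by the transposition while $g_\sigma^T=g_{\sigma^{-1}}$, and then deduce the statements for $\SW$ and $\bomega$ from the equivalence of Theorem~\ref{thm::SW::currents} and from $(\mathrm{sgn}\otimes M)^\dagger=\mathrm{sgn}\otimes M^\dagger$. Your remark that the transposition descends to matrix transposition on the Schur algebra, where $g_\sigma^{\otimes N}$ lives, makes explicit a step the paper takes for granted.
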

\begin{proof}
	The transposition fixes the diagonal matrices. Hence the weight subspace of $U^{\dagger}$ of weight $(1^N,0^{m-N})$ is the dual of the corresponding weight subspace of $U$. By Lemma~\ref{lem::weight::idempotent}(iv), the variables $x_i$ and $y_i$ act on $\WS(U)$ by $E_{ii}\otimes x$ and $E_{ii}\otimes y$, which are fixed by the transposition, and $\sigma\in\bS_N$ acts by the permutation matrix $g_\sigma$, whose transpose is $g_\sigma^{-1}=g_{\sigma^{-1}}$. This proves the first claim. The second one follows because $\SW$ and $\WS$ are mutually inverse equivalences. The last claim holds, because $(\mathrm{sgn}\otimes M)^\dagger=\mathrm{sgn}\otimes M^\dagger$.
\end{proof}

\subsubsection{Properties of the sign duality functor}

For an automorphism $\varphi$ of the algebra $\bC[\bar{z}]$ (for example, the translation $z_a\mapsto z_a+c$ or the interchange $x\leftrightarrow y$) and a $\gl_V[\bar{z}]$-module $U$ we denote by $U^{\varphi}$ the pullback of $U$ along the automorphism $\mathrm{id}\otimes\varphi$ of $\gl_V[\bar{z}]$.

\begin{lemma}
	\label{lem::sign::properties}
	Let $m=\dim V\geq N$.
	\begin{enumerate}
		\item The functor $\bomega$ is an exact involutive autoequivalence, $\bomega\circ\bomega\simeq\mathrm{Id}$, compatible with the multigrading. It preserves heads and socles. It maps the simple object $V_\mu$, placed in multidegree $\bar{r}$, to $V_{\mu^T}$ placed in the same multidegree. For graded modules
		\begin{equation}
			\label{eq::sign::ch}
			\mathrm{ch}_q\big(\bomega(U)\big)=\omega\,\mathrm{ch}_q(U).
		\end{equation}
		\item $\bomega\big((V[\bar{z}])^{\otimes N}\big)\simeq(V[\bar{z}])^{\otimes N}$, $\ \bomega\big(S^N(V[\bar{z}])\big)\simeq\Lambda^N(V[\bar{z}])$ 
		\\
		and $\ \bomega\big(\Lambda^N(V[\bar{z}])\big)\simeq S^N(V[\bar{z}])$.
		\item $\bomega(U^{\varphi})\simeq\bomega(U)^{\varphi}$ for every automorphism $\varphi$ of $\bC[\bar{z}]$.
		\item Let $U\in\RepGLz{V}{N}$ and $U'\in\RepGLz{V}{N'}$. Then $U\otimes U'\in\RepGLz{V}{N+N'}$, and
		\[
		\bomega(U\otimes U')\simeq\bomega(U)\otimes\bomega(U').
		\]
		\item $\bomega(U)^\dagger\simeq\bomega(U^\dagger)$, and the functor $U\mapsto\bomega(U)^\dagger$ is an exact contravariant involution: $\big(\bomega(\bomega(U)^\dagger)\big)^\dagger\simeq U$. It reverses the multigrading.
	\end{enumerate}
\end{lemma}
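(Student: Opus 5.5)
Everything will be transported through the equivalence $\SW\leftrightarrow\WS$ of Theorem~\ref{thm::SW::currents}, using that $\bomega=\SW\circ(\mathrm{sgn}\otimes-)\circ\WS$. Each property of $\bomega$ then reduces to an elementary property of the sign twist on $\mathcal{A}_N$-modules.

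For item~(i), note that $\mathrm{sgn}\otimes-$ is exact and involutive on $\mathcal{A}_N$-modules, because $\mathrm{sgn}\otimes\mathrm{sgn}$ is trivial. It preserves the grading and sends simple modules to simple modules, so it preserves heads and socles. Composing with the mutually inverse exact equivalences $\SW,\WS$ gives $\bomega\circ\bomega\simeq\SW\circ\WS\simeq\mathrm{Id}$ together with all the listed properties. The simple $V_\mu\langle\bar r\rangle$ corresponds to $\mathbb{S}_\mu\langle\bar r\rangle$, whose sign twist is $\mathbb{S}_{\mu^T}\langle\bar r\rangle$. The character formula~\eqref{eq::sign::ch} follows from Proposition~\ref{prop::SW::graded::characters} and~\eqref{eq::sign::Frob}.

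For item~(ii), recall $\WS(P)=\mathcal{A}_N$, and $\mathrm{sgn}\otimes\mathcal{A}_N\simeq\mathcal{A}_N$ via $a\mapsto \epsilon a$, where $\epsilon$ is the sign idempotent-twisting automorphism $\sigma\mapsto(-1)^\sigma\sigma$ of $\mathcal{A}_N$ (right multiplication intertwines). Equivalently, use $\mathcal{A}_N\simeq\mathcal{A}_N\otimes_{\bS_N}\bC[\bS_N]$ and $\mathrm{sgn}\otimes\bC[\bS_N]\simeq\bC[\bS_N]$. Similarly, $\mathrm{sgn}\otimes(\mathcal{A}_N\otimes_{\bS_N}\mathrm{triv})\simeq\mathcal{A}_N\otimes_{\bS_N}\mathrm{sgn}$, and applying $\SW$ gives the claims via the Example after Proposition~\ref{prop::characters::free}.

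For item~(iii), the automorphism $\varphi$ induces an automorphism $\varphi^{\otimes N}$ of $\mathcal{A}_N$ commuting with $\bS_N$. By Lemma~\ref{lem::weight::idempotent}(iv), $\WS(U^\varphi)=\WS(U)^{\varphi^{\otimes N}}$, and the pullback commutes with the sign twist; applying $\SW$ gives $\bomega(U^\varphi)\simeq\bomega(U)^\varphi$.

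Item~(iv) is the main step. First, $U\otimes U'$ is a quotient of $P_N\otimes P_{N'}\simeq P_{N+N'}$, so $U\otimes U'\in\RepGLz{V}{N+N'}$. For the isomorphism, use induction: with $m\geq N+N'$,
\[
\WS(U\otimes U')\simeq\mathrm{Ind}_{\mathcal{A}_N\otimes\mathcal{A}_{N'}}^{\mathcal{A}_{N+N'}}\big(\WS(U)\boxtimes\WS(U')\big),
\]
where the weight spaces for $(1^N,0^{m-N})$ and for $(0^N,1^{N'},0)$ are used. This holds because the $(1^{N+N'})$-weight space of $U\otimes U'$ decomposes over the choices of which $N$ indices lie in the first factor, and these are permuted transitively by $\bS_{N+N'}$ with stabilizer $\bS_N\times\bS_{N'}$. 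One must also check that the polynomial actions match, and this follows from Lemma~\ref{lem::weight::idempotent}(iv) and the coproduct. Since $\mathrm{sgn}_{N+N'}$ restricts to $\mathrm{sgn}_N\boxtimes\mathrm{sgn}_{N'}$, the projection formula gives $\mathrm{sgn}\otimes\mathrm{Ind}(M\boxtimes M')\simeq\mathrm{Ind}\big((\mathrm{sgn}\otimes M)\boxtimes(\mathrm{sgn}\otimes M')\big)$. Applying $\SW$ and the same induction formula to $\bomega(U)\otimes\bomega(U')$ then finishes the argument. The obstacle here is verifying naturality of the induction isomorphism; I would check it on the generators $P_N\otimes P_{N'}$ and extend by right exactness.

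Item~(v) is Lemma~\ref{lem::SW::dagger} combined with~(i). We have $\bomega(\bomega(U)^\dagger)^\dagger\simeq\bomega(\bomega(U^\dagger))^{\dagger}\simeq U^{\dagger\dagger}\simeq U$. Exactness holds because both $\dagger$ and $\bomega$ are exact, and the grading is reversed by $\dagger$ and preserved by $\bomega$.
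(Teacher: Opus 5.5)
Your proof is correct. Items (i), (ii), (iii) and (v) follow the paper's argument essentially verbatim.

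The genuine difference is in (iv). The paper works on the $\SW$ side. It writes $U=\SW(R)$, $U'=\SW(R')$, puts $\mathcal{A}_{N,N'}:=\bC[\bS_N\times\bS_{N'}]\ltimes\bC[\bar{z}_1,\ldots,\bar{z}_{N+N'}]$, and uses the purely formal associativity of tensor products:
\[
U\otimes U'\simeq(V[\bar{z}])^{\otimes(N+N')}\otimes_{\mathcal{A}_{N,N'}}(R\boxtimes R')\simeq\SW\big(\mathrm{Ind}_{\mathcal{A}_{N,N'}}^{\mathcal{A}_{N+N'}}(R\boxtimes R')\big).
\]
This gives both $U\otimes U'\in\RepGLz{V}{N+N'}$ and the induction description at once, with nothing to verify about how the variables act.

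Your $\WS$-side decomposition of the $(1^{N+N'},0^{m-N-N'})$-weight space is the dual, Mackey-type description of the same module. It is more explicit, but it needs the check you flag, and that check contains one non-formal point. On the summand $U_{(1^N,0^{m-N})}\otimes U'_{(0^N,1^{N'},0^{m-N-N'})}$, the element $E_{ii}\otimes z_a$ with $i\leq N$ must act by zero on the second factor. This holds only because the action on $U'$ factors through $\mathcal{S}(V[\bar{z}],N')$, where $\pi_\mu(E_{ii}\otimes z_a)\pi_\mu=0$ whenever $\mu_i=0$. That is exactly the property whose failure is illustrated in Remark~\ref{rem::counterexample}, so you should state it explicitly.

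Your naturality worry, by contrast, is unnecessary. You only need an isomorphism for each pair $(U,U')$. Your induction formula, the sign restriction $\mathrm{sgn}\mapsto\mathrm{sgn}\boxtimes\mathrm{sgn}$, and $\WS\circ\bomega\simeq(\mathrm{sgn}\otimes-)\circ\WS$ already give $\WS(\bomega(U)\otimes\bomega(U'))\simeq\WS(\bomega(U\otimes U'))$. Since $\WS$ is an equivalence, this implies the claim.

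Both routes implicitly need $m\geq N+N'$, so that $\bomega$ is defined in degree $N+N'$.
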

\begin{proof}
	(i) By Theorem~\ref{thm::SW::currents}, $\bomega$ is a composition of exact equivalences, and $\mathrm{sgn}\otimes\mathrm{sgn}\otimes M\simeq M$. The Specht module $\mathbb{S}_\mu$ is mapped by $\mathrm{sgn}\otimes-$ to $\mathbb{S}_{\mu^T}$, which gives the statement on simple objects. Heads and socles are preserved by any equivalence of categories. Equation~\eqref{eq::sign::ch} follows from Proposition~\ref{prop::SW::graded::characters} and~\eqref{eq::sign::Frob}.
	
	(ii) Since $\mathrm{sgn}\otimes\mathcal{A}_N\simeq\mathcal{A}_N$ and $\mathrm{sgn}\otimes\bC[\bar{z}_1,\ldots,\bar{z}_N]\simeq\mathcal{A}_N\otimes_{\bS_N}\mathrm{sgn}$, the example at the end of \S\ref{sec::SW} gives the statement.
	
	(iii) The automorphism $\varphi$ extends to an automorphism of $\mathcal{A}_N$ commuting with $\bS_N$ (it acts diagonally on the variables $\bar{z}_i$), and the bimodule $(V[\bar{z}])^{\otimes N}$ is stable under it. By Lemma~\ref{lem::weight::idempotent}(iv), $\WS(U^{\varphi})=\WS(U)^{\varphi}$. The twist by $\varphi$ commutes with $\mathrm{sgn}\otimes-$ and with $\SW$.
	
	(iv) Let $\mathcal{A}_{N,N'}:=\bC[\bS_N\times\bS_{N'}]\ltimes\bC[\bar{z}_1,\ldots,\bar{z}_{N+N'}]\subset\mathcal{A}_{N+N'}$. The algebra $\mathcal{A}_{N+N'}$ is free over $\mathcal{A}_{N,N'}$. Let $U=\SW(R)$ and $U'=\SW(R')$. 
	\begin{multline*}
		U\otimes U'=\Big((V[\bar{z}])^{\otimes N}\otimes_{\mathcal{A}_N}R\Big)\otimes\Big((V[\bar{z}])^{\otimes N'}\otimes_{\mathcal{A}_{N'}}R'\Big)
		\simeq \\
		\simeq (V[\bar{z}])^{\otimes(N+N')}\otimes_{\mathcal{A}_{N,N'}}(R\boxtimes R')\simeq\SW\Big(\mathrm{Ind}_{\mathcal{A}_{N,N'}}^{\mathcal{A}_{N+N'}}(R\boxtimes R')\Big).
	\end{multline*}
	In particular, $U\otimes U'\in\RepGLz{V}{N+N'}$. The sign representation of $\bS_{N+N'}$ restricts to $\mathrm{sgn}\boxtimes\mathrm{sgn}$, hence $\mathrm{sgn}\otimes\mathrm{Ind}(R\boxtimes R')\simeq\mathrm{Ind}\big((\mathrm{sgn}\otimes R)\boxtimes(\mathrm{sgn}\otimes R')\big)$. Applying $\SW$ once more, we get the claim.
	
	(v) The first claim is Lemma~\ref{lem::SW::dagger}. The functor $U\mapsto\bomega(U)^{\dagger}$ is a composition of an exact covariant functor and the exact contravariant duality. The involutivity follows from the first claim and from $\bomega\circ\bomega\simeq\mathrm{Id}$ and $\dagger\circ\dagger\simeq\mathrm{Id}$.
\end{proof}

\section{The $N!$- and the $\frac{N!}{k}$-conjectures}
\label{sec::GH}

From now on $d=1$, so that $\bar{z}=(z_0,z_1)=(x,y)$, the multigrading is $\bZ^2$-valued, and $\gl_V[\bar{z}]=\gl_V[x,y]$. In this section we recall the Garsia--Haiman modules attached to Young diagrams, the $N!$-conjecture (now a theorem of Haiman) and the $\frac{N!}{k}$-conjecture of Bergeron and Garsia.
Each of these modules has two equivalent descriptions.
The classical one realizes it as a space of polynomials spanned by all partial derivatives of a single alternating polynomial.
The dual one realizes it as the quotient of the polynomial ring by the annihilating ideal of this polynomial.
For the $\frac{N!}{k}$-conjecture the passage from the first description to the second one replaces an \emph{intersection} of Garsia--Haiman modules by a \emph{common quotient} of them.
The quotient description is the one that is compatible with the current Schur--Weyl duality of Theorem~\ref{thm::SW::currents}, and in~\S\ref{sec::GH::SW} we collect the Schur--Weyl dual statements.

\subsection{Garsia--Haiman modules and the $N!$-conjecture}
\label{sec::GH::N!}

Let $\lambda\vdash N$ be a Young diagram.
As in \S\ref{sec::SW}, we consider $N$ pairs of variables $\bar{z}_i=(x_i,y_i)$ (these are the variables $x_{i},y_{i}$ of \S\ref{sec::SW}), $i=1,\ldots,N$, and the polynomial ring $\bC[\bar{z}_1,\ldots,\bar{z}_N]=\bC[x_1,y_1,\ldots,x_N,y_N]$, which is $\bZ_{\geq 0}^{2}$-graded by the multidegree $\mdeg(x_i):=(1,0)$, $\mdeg(y_i):=(0,1)$.
The symmetric group $\bS_N$ acts on it by simultaneous permutations of the tuples:
\[
(\sigma\cdot g)(\bar{z}_1,\ldots,\bar{z}_N):=g(\bar{z}_{\sigma^{-1}(1)},\ldots,\bar{z}_{\sigma^{-1}(N)}).
\]
Together with the action of polynomials by multiplication, this makes $\bC[\bar{z}_1,\ldots,\bar{z}_N]$ a cyclic module over the wreath product algebra $\mathcal{A}_N=\bC[\bS_N]\ltimes\bC[\bar{z}_1,\ldots,\bar{z}_N]$, generated by $1$.

Choose an enumeration $\bar{s}_1,\ldots,\bar{s}_N$ of the cells of $\lambda$ and consider the \emph{generalized Vandermonde determinant}
\begin{equation}
\label{eq::GH::Delta}
\Delta_{\lambda}(\bar{z}_1,\ldots,\bar{z}_N):=\det\left(\bar{z}_i^{\,\bar{s}_j}\right)_{i,j=1}^{N}
=\sum_{\sigma\in\bS_N}(-1)^{\sigma}\,\bar{z}_{1}^{\,\bar{s}_{\sigma(1)}}\cdots\bar{z}_{N}^{\,\bar{s}_{\sigma(N)}} .
\end{equation}
A different enumeration of the cells changes $\Delta_\lambda$ by a sign, so $\Delta_\lambda$ is well defined up to a sign.
It is alternating, $\sigma\cdot\Delta_\lambda=(-1)^{\sigma}\Delta_\lambda$ for all $\sigma\in\bS_N$, and it is homogeneous of multidegree
\begin{equation}
\label{eq::mdeg::Delta}
\mdeg(\Delta_\lambda)=\sum_{\bar{s}\in\lambda}\bar{s}
=\big(n(\lambda^T),\,n(\lambda)\big),
\end{equation}
where $n(\lambda)$ and $n(\lambda^T)$ are defined in~\eqref{eq::n::lambda}.
The monomials occurring in $\Delta_\lambda$ are exactly the monomials $\prod_{i}\bar{z}_i^{\,\bar{t}_i}$ such that $\{\bar{t}_1,\ldots,\bar{t}_N\}$ coincides with the set of cells of $\lambda$; in particular, $\Delta_\lambda\neq 0$.

Consider the following standard pairing on the polynomial ring:
\begin{equation}
\label{eq::GH::pairing}
\langle f,g\rangle:=\left.\left(f(\partial)\,g\right)\right|_{\bar{z}_1=\ldots=\bar{z}_N=0},
\qquad f,g\in\bC[\bar{z}_1,\ldots,\bar{z}_N], \text{ and } f(\partial):=f\Big(\frac{\partial}{\partial x_{i}},\frac{\partial}{\partial y_{i}}\Big).
\end{equation}
Distinct monomials are orthogonal, and $\langle \bar{z}^{\alpha},\bar{z}^{\alpha}\rangle=\alpha!:=\prod_{i,a}\alpha_{i,a}!$.
Hence the pairing is symmetric, homogeneous components of different multidegrees are orthogonal, and the restriction of the pairing to each (finite-dimensional) homogeneous component is nondegenerate; moreover, $\langle f,f\rangle>0$ for every nonzero $f$ with real coefficients.
The pairing is $\bS_N$-invariant, and multiplication by $f$ is adjoint to the differential operator $f(\partial)$:
\begin{equation}
\label{eq::GH::adjoint}
\langle fg,h\rangle=\langle g,f(\partial)h\rangle.
\end{equation}

\begin{definition}
The \emph{Garsia--Haiman module} $\mGH_{\lambda}^{\bS}$ is the multigraded subspace of the polynomial ring spanned by all partial derivatives of $\Delta_\lambda$:
\begin{equation}
\label{eq::GH::module}
\mGH^{\bS}_{\lambda}:=\left\{ f(\partial)\,\Delta_{\lambda} \ \colon \ f\in\bC[\bar{z}_1,\ldots,\bar{z}_N] \right\}.
\end{equation}
The \emph{annihilating ideal} of $\Delta_\lambda$ is
\[
\euJ_{\lambda}^{\bS}:=\left\{ f\in\bC[\bar{z}_1,\ldots,\bar{z}_N] \ \colon \ f(\partial)\,\Delta_{\lambda}=0\right\}.
\]
\end{definition}

The space $\mGH^{\bS}_{\lambda}$ is $\bS_N$-stable and is closed under all partial derivatives.
%The main goal of the paper is a description of $\mGH^{\bS}_{\lambda}$ as a multigraded representation of $\bS_N$, or, equivalently, of the quotient ring $\bC[\bar{z}_1,\ldots,\bar{z}_N]/\euJ^{\bS}_\lambda$, as the following proposition explains.
We first establish some basic properties of the pairing and of the annihilating ideal $\euJ^{\bS}_\lambda$.

\begin{proposition}
\label{prp::GH::pairing}
Let $\lambda\vdash N$ be a Young diagram.
\begin{enumerate}
\item The annihilator $\euJ^{\bS}_\lambda$ is a multigraded $\bS_N$-stable ideal, i.e., a multigraded $\mathcal{A}_N$-submodule of $\bC[\bar{z}_1,\ldots,\bar{z}_N]$, and $\Delta_\lambda\notin\euJ^{\bS}_\lambda$.
\item With respect to the pairing~\eqref{eq::GH::pairing}, the subspaces $\mGH^{\bS}_\lambda$ and $\euJ^{\bS}_\lambda$ are the orthogonal complements of each other in every multigraded component.
Consequently, the pairing induces a nondegenerate $\bS_N$-invariant pairing
\[
\langle\cdot,\cdot\rangle\colon \bC[\bar{z}_1,\ldots,\bar{z}_N]/\euJ^{\bS}_{\lambda} \ \times\ \mGH^{\bS}_{\lambda}\longrightarrow\bC
\]
of multigraded spaces, and an isomorphism of multigraded $\bS_N$-modules
\[
\mGH^{\bS}_{\lambda}\simeq\bC[\bar{z}_1,\ldots,\bar{z}_N]/\euJ^{\bS}_{\lambda}.
\]
More precisely, the quotient $\bC[\bar{z}_1,\ldots,\bar{z}_N]/\euJ^{\bS}_{\lambda}$ is the graded dual of $\mGH^{\bS}_{\lambda}$ as an $\mathcal{A}_N$-module, where the variables act on $\mGH^{\bS}_{\lambda}$ by the partial derivatives.
\item The ideal $\euJ^{\bS}_\lambda$ is the largest ideal of $\bC[\bar{z}_1,\ldots,\bar{z}_N]$ contained in the hyperplane
$\Delta_\lambda^{\perp}:=\{f\colon\langle f,\Delta_\lambda\rangle=0\}$.
\item The $\mathcal{A}_N$-module $\bC[\bar{z}_1,\ldots,\bar{z}_N]/\euJ^{\bS}_{\lambda}$ is cyclic and cocyclic: it is generated by the class of $1$, which spans its head $\soc$, isomorphic to the trivial representation in multidegree $0$; and every nonzero multigraded submodule contains the class of $\Delta_\lambda$, which spans its socle $\cosoc$, isomorphic to the sign representation in multidegree $\mdeg(\Delta_\lambda)$.
In other words, $\bC[\bar{z}_1,\ldots,\bar{z}_N]/\euJ^{\bS}_{\lambda}$ is a multigraded Artinian Gorenstein algebra with the socle $\bC\cdot\Delta_\lambda$.
\end{enumerate}
\end{proposition}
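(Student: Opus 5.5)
The proof is elementary linear algebra built on the pairing~\eqref{eq::GH::pairing} and the adjunction identity~\eqref{eq::GH::adjoint}. I will prove the four items in order, each using the previous ones.

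For (i), the set $\euJ^{\bS}_\lambda$ is an ideal because $(fg)(\partial)\Delta_\lambda=f(\partial)\big(g(\partial)\Delta_\lambda\big)$. It is multigraded because $\Delta_\lambda$ is homogeneous: a homogeneous component of $f$ of degree $\bar r$ sends $\Delta_\lambda$ into degree $\mdeg(\Delta_\lambda)-\bar r$, so $f(\partial)\Delta_\lambda=0$ forces each component to vanish separately. It is $\bS_N$-stable because $(\sigma f)(\partial)\Delta_\lambda=\sigma\big(f(\partial)\sigma^{-1}\Delta_\lambda\big)=\pm\sigma\big(f(\partial)\Delta_\lambda\big)$. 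Finally, $\Delta_\lambda\notin\euJ^{\bS}_\lambda$, since $\Delta_\lambda(\partial)\Delta_\lambda$ is the constant $\langle\Delta_\lambda,\Delta_\lambda\rangle$, and this is positive because $\Delta_\lambda\neq0$ has real coefficients.

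For (ii), work in a fixed multidegree $\bar r$, where the pairing is nondegenerate. First show that $\euJ^{\bS}_\lambda\cap(\cdot)_{\bar r}$ is the orthogonal complement of $(\mGH^{\bS}_\lambda)_{\bar r}$. Indeed, $\langle f,g(\partial)\Delta_\lambda\rangle=\langle fg,\Delta_\lambda\rangle=\langle g,f(\partial)\Delta_\lambda\rangle$ for all $g$, so $f$ is orthogonal to all derivatives of $\Delta_\lambda$ if and only if $f(\partial)\Delta_\lambda$ pairs to zero with every $g$. Nondegeneracy then forces $f(\partial)\Delta_\lambda=0$. Since each component is finite-dimensional, taking orthogonal complements twice gives the converse inclusion. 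The induced pairing on the quotient and $\mGH^{\bS}_\lambda$ is therefore nondegenerate and $\bS_N$-invariant, so the quotient is the graded dual of $\mGH^{\bS}_\lambda$. Its $\mathcal{A}_N$-compatibility (multiplication dual to differentiation) is~\eqref{eq::GH::adjoint} again. The isomorphism $\mGH^{\bS}_\lambda\simeq\bC[\bar z]/\euJ^{\bS}_\lambda$ of $\bS_N$-modules follows because finite-dimensional $\bS_N$-modules are self-dual, componentwise.

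For (iii), $\euJ^{\bS}_\lambda\subset\Delta_\lambda^\perp$ because $\langle f,\Delta_\lambda\rangle$ is the constant term of $f(\partial)\Delta_\lambda$. Conversely, let $I$ be any ideal contained in $\Delta_\lambda^\perp$ and take $f\in I$. For all $g$ we have $gf\in I$, hence $\langle g,f(\partial)\Delta_\lambda\rangle=\langle gf,\Delta_\lambda\rangle=0$, so $f(\partial)\Delta_\lambda=0$ and $f\in\euJ^{\bS}_\lambda$.

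For (iv), cyclicity with generator $1$ is clear. The head statement uses that the quotient is a local graded algebra with degree-zero part $\bC$ carrying the trivial action. For cocyclicity, let $K$ be a nonzero multigraded submodule, i.e.\ an ideal of the quotient, and take a nonzero homogeneous class $[f]\in K$. Since $f\notin\euJ^{\bS}_\lambda$, the derivative $f(\partial)\Delta_\lambda$ is nonzero, so some monomial $g$ gives $\langle gf,\Delta_\lambda\rangle\neq0$. The class $[gf]\in K$ then lies outside the hyperplane $\Delta_\lambda^\perp$, and by degree it has degree $\mdeg(\Delta_\lambda)$. Applying (ii) in the top degree $\mdeg(\Delta_\lambda)$: the derivatives of $\Delta_\lambda$ of order zero are just $\bC\Delta_\lambda$, so the quotient there is one-dimensional, spanned by $[\Delta_\lambda]$. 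Hence $[\Delta_\lambda]\in K$. This class transforms by the sign character, since $\sigma\Delta_\lambda=(-1)^\sigma\Delta_\lambda$. The step that needs the most care is the graded bookkeeping in (iv): checking that the top component is exactly one-dimensional and that nothing lives above $\mdeg(\Delta_\lambda)$. Both follow from (ii), because $\mGH^{\bS}_\lambda$ vanishes in degrees not $\le\mdeg(\Delta_\lambda)$.
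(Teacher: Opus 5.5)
Your proposal is correct and follows essentially the same route as the paper's proof. You derive (i)--(iii) from the adjunction $\langle fg,h\rangle=\langle g,f(\partial)h\rangle$ together with componentwise nondegeneracy, and for cocyclicity you multiply a homogeneous class $[f]$ by some $g$ with $\langle gf,\Delta_\lambda\rangle\neq0$ and use that the quotient is one-dimensional in degree $\mdeg(\Delta_\lambda)$, spanned by $[\Delta_\lambda]$.
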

\begin{proof}
(i) If $f(\partial)\Delta_\lambda=0$, then $(gf)(\partial)\Delta_\lambda=g(\partial)f(\partial)\Delta_\lambda=0$, so $\euJ^{\bS}_\lambda$ is an ideal.
Since $\Delta_\lambda$ is homogeneous, the homogeneous components of $f$ are mapped by $f\mapsto f(\partial)\Delta_\lambda$ to different multigraded components; hence $\euJ^{\bS}_\lambda$ is multigraded.
For $\sigma\in\bS_N$ we have $(\sigma f)(\partial)\Delta_\lambda=\sigma\big(f(\partial)\,\sigma^{-1}\Delta_\lambda\big)=(-1)^{\sigma}\sigma\big(f(\partial)\Delta_\lambda\big)$, so $\euJ^{\bS}_\lambda$ is $\bS_N$-stable.
Finally, $\Delta_\lambda(\partial)\Delta_\lambda=\langle\Delta_\lambda,\Delta_\lambda\rangle$ is a nonzero constant, because $\Delta_\lambda$ has integer coefficients.

(ii) By~\eqref{eq::GH::adjoint}, $g\perp\mGH^{\bS}_\lambda$ if and only if $\langle f, g(\partial)\Delta_\lambda\rangle=\langle fg,\Delta_\lambda\rangle=\langle g,f(\partial)\Delta_\lambda\rangle=0$ for all $f$, that is, if and only if $g(\partial)\Delta_\lambda=0$. Thus $(\mGH^{\bS}_\lambda)^{\perp}=\euJ^{\bS}_\lambda$. Since all multigraded components are finite-dimensional and the pairing is nondegenerate on each of them, we also have $(\euJ^{\bS}_\lambda)^{\perp}=\mGH^{\bS}_\lambda$, and the remaining claims follow. The dual multigraded components are isomorphic as $\bS_N$-modules because all representations of $\bS_N$ are self-dual.

(iii) If $f\in\euJ^{\bS}_\lambda$, then $\langle f,\Delta_\lambda\rangle=\langle 1,f(\partial)\Delta_\lambda\rangle=0$, so $\euJ^{\bS}_\lambda\subset\Delta_\lambda^{\perp}$. Conversely, if $I\subset\Delta_\lambda^{\perp}$ is an ideal and $f\in I$, then $\langle g,f(\partial)\Delta_\lambda\rangle=\langle gf,\Delta_\lambda\rangle=0$ for all $g$, hence $f(\partial)\Delta_\lambda=0$ and $I\subset\euJ^{\bS}_\lambda$.

(iv) The first claim is obvious, since the quotient of $\bC[\bar{z}_1,\ldots,\bar{z}_N]/\euJ^{\bS}_{\lambda}$ by the ideal generated by all variables is spanned by the class of $1$.
Let $U$ be a nonzero multigraded submodule and let $f\notin\euJ^{\bS}_\lambda$ be a homogeneous polynomial whose class lies in $U$. Since $f(\partial)\Delta_\lambda\neq 0$, by (ii) there exists a homogeneous $g$ with $\langle gf,\Delta_\lambda\rangle=\langle g,f(\partial)\Delta_\lambda\rangle\neq0$; necessarily $\mdeg(gf)=\mdeg(\Delta_\lambda)$. The component of $\bC[\bar{z}_1,\ldots,\bar{z}_N]/\euJ^{\bS}_{\lambda}$ of multidegree $\mdeg(\Delta_\lambda)$ is dual to the component $\bC\cdot\Delta_\lambda$ of $\mGH_\lambda^{\bS}$, so it is one-dimensional and spanned by the class of $\Delta_\lambda$ (because $\langle\Delta_\lambda,\Delta_\lambda\rangle\neq0$). Therefore the class of $gf$, which belongs to $U$, is a nonzero multiple of the class of $\Delta_\lambda$. The group $\bS_N$ acts on $\Delta_\lambda$ by the sign character.
\end{proof}

Thus $\mGH^{\bS}_\lambda$ and $\bC[\bar{z}_1,\ldots,\bar{z}_N]/\euJ^{\bS}_{\lambda}$ have the same multigraded $\bS_N$-character, and we refer to both of them as the Garsia--Haiman module.
Whenever the $\mathcal{A}_N$-module structure matters, we use the quotient model $\bC[\bar{z}_1,\ldots,\bar{z}_N]/\euJ^{\bS}_{\lambda}$, in which the polynomials act by multiplication.

%\begin{remark} \label{rem::GH::alternants}
%In item~(3) of Proposition~\ref{prp::GH::pairing} one cannot replace the hyperplane $\Delta_\lambda^\perp$ by the weaker condition $\Delta_\lambda\notin I$. The point is that the space of alternating polynomials of multidegree $\mdeg(\Delta_\lambda)$ may have dimension greater than one: the alternants $\Delta_S:=\det(\bar{z}_i^{\,\bar{t}_j})$ associated with $N$-element subsets $S=\{\bar{t}_1,\ldots,\bar{t}_N\}\subset\bZ_{\geq0}^{2}$ with $\sum_{\bar{t}\in S}\bar{t}=\mdeg(\Delta_\lambda)$ are linearly independent and pairwise orthogonal. For instance, the partitions $(3,3)$ and $(4,1,1)$ satisfy $\mdeg(\Delta_{(3,3)})=\mdeg(\Delta_{(4,1,1)})=(6,3)$. Then the annihilator $I$ of $\Delta_{(3,3)}+\Delta_{(4,1,1)}$ is an $\mathcal{A}_N$-submodule that does not contain $\Delta_{(3,3)}$ (since $\langle\Delta_{(3,3)},\Delta_{(3,3)}+\Delta_{(4,1,1)}\rangle\neq0$) and is not contained in $\euJ^{\bS}_{(3,3)}$. By item~(4), any submodule strictly containing $\euJ^{\bS}_{(3,3)}$ contains $\Delta_{(3,3)}$. Hence the family of $\mathcal{A}_N$-submodules not containing $\Delta_{(3,3)}$ has no largest element. This remark will be important for the correct definition of the Schur--Weyl dual ideals in~\S\ref{sec::GH::SW}.
%\end{remark}

The following theorem, conjectured by A.~Garsia and M.~Haiman~\cite{Garsia_Haiman,Garsia_Haiman::96} and proved by M.~Haiman~\cite{Haiman_n!}, is known as \textbf{the $N!$-conjecture}.

\begin{theorem}[\cite{Haiman_n!}]
\label{conj::n!}
Let $\lambda\vdash N$ be a Young diagram. Then
\begin{itemize}
\item[($\imath$)] the $\bS_N$-module $\mGH^{\bS}_{\lambda}$ is isomorphic to the regular representation $\bC[\bS_N]$; in particular, $\dim\mGH^{\bS}_{\lambda}=N!$;
\item[($\imath\imath$)] the bigraded Frobenius characteristic of $\mGH^{\bS}_{\lambda}$ is the modified Macdonald polynomial:
\[
\mathrm{Frob}_q\big(\mGH^{\bS}_{\lambda}\big)=\mathrm{Frob}_q\big(\bC[\bar{z}_1,\ldots,\bar{z}_N]/\euJ^{\bS}_{\lambda}\big)
=\widetilde{H}_{\lambda}(X;q_0,q_1):=q_1^{n(\lambda)}J_\lambda\Big[\frac{X}{1-q_1^{-1}};q,q_1^{-1}\Big],
\]
normalized in such a way that the variable $q=q_0$ corresponds to the direction of the rows of $\lambda$ (the coordinate $s_x$ of a cell $(s_x,s_y)$).
\end{itemize}
\end{theorem}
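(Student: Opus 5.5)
This theorem is quoted from~\cite{Haiman_n!}, and the present paper does not reprove it. Theorem~\ref{thm::intro::A} only reformulates it. The plan I would follow is therefore Haiman's route, split into the dimension statement~($\imath$) and the character statement~($\imath\imath$), with~($\imath\imath$) deduced from~($\imath$).

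First I reduce~($\imath\imath$) to~($\imath$), following~\cite{Haiman::MacdonaldGeometry}. By Proposition~\ref{prp::GH::pairing}(iv), $R_\lambda:=\bC[\bar z_1,\ldots,\bar z_N]/\euJ^{\bS}_\lambda$ is cyclic, generated by $1$, and Gorenstein with socle $\bC\Delta_\lambda$ of multidegree $(n(\lambda^T),n(\lambda))$. Assume $\dim R_\lambda=N!$.
\begin{enumerate}
\item Deform $\Delta_\lambda$ to the determinant built on $N$ generic points of $\bC^2$. The resulting ideals give a flat family whose generic fiber is the regular representation, so $R_\lambda\simeq\bC[\bS_N]$ as an $\bS_N$-module.
\item The $y$-degree filtration and the $x$-degree filtration give the two triangularity conditions characterizing $\widetilde H_\lambda$. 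Namely, $\mathrm{Frob}_q(R_\lambda)[X(1-q_1)]$ lies in the span of $s_\mu$ with $\mu\geq\lambda$, and symmetrically for $q_0$ with $\lambda^T$. These conditions come from the fact that the subring generated by the $y_i$ (respectively the $x_i$) acts through a quotient of the Garsia--Procesi ring.
\item Gorenstein duality, $\omega\,\mathrm{Frob}_q(R_\lambda)(q^{-1})\,q^{\mdeg\Delta_\lambda}=\mathrm{Frob}_q(R_\lambda)$, together with the normalization of the coefficient of $s_{(N)}$, then pins down $\mathrm{Frob}_q(R_\lambda)=\widetilde H_\lambda$, because these properties characterize $\widetilde H_\lambda$ uniquely.
\end{enumerate}

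For~($\imath$) the lower bound $\dim R_\lambda\geq N!$ is the easy half. It follows from upper semicontinuity in the flat degeneration from generic points, or from the fact that the $\bS_N$-isotypic multiplicities are at least those of the regular representation. The hard half is $\dim R_\lambda\leq N!$. For this I pass to the isospectral Hilbert scheme $X_N\to\mathrm{Hilb}^N(\bC^2)$, the reduced fiber product of $\mathrm{Hilb}^N$ with $(\bC^2)^N$ over $S^N\bC^2$. Its fiber over the torus-fixed point $I_\lambda=\mathsf J_\lambda$ is $\mathrm{Spec}$ of an algebra surjecting onto $R_\lambda$. If $X_N$ is Cohen--Macaulay, and hence Gorenstein, then $X_N\to\mathrm{Hilb}^N$ is flat of degree $N!$, and every fiber, in particular the one over $I_\lambda$, has dimension exactly $N!$. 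This gives $\dim R_\lambda\le N!$.

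The main obstacle is proving that $X_N$ is Cohen--Macaulay. I would follow Haiman's reduction. First, identify the coordinate ring of $X_N$ with the Rees-type algebra $\bigoplus_d A^d$, where $A=\bC[\bar z]^{\mathrm{sgn}}$ is the space of alternants. Then prove that the polygraph coordinate rings $R(N,l)$, the coordinate rings of unions of the linear subspaces $W_f\subset(\bC^2)^{N}\times(\bC^2)^l$, are free over $\bC[\bar y]$. This freeness is established by an intricate induction on $l$ and on subsets of points, using explicit spanning sets. By~\cite{Kumar-n!}, it would also suffice to treat staircase shapes.
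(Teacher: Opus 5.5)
The paper does not reprove this theorem. It quotes Haiman, and the sentences right after the statement record that $\dim\mGH^{\bS}_\lambda\le N!$ was already known \cite{Garsia_Haiman::96}, and that ($\imath$)$\Rightarrow$($\imath\imath$) is in \cite{Haiman::MacdonaldGeometry}. Measured against this, your outline has a genuine gap: you have interchanged the two halves of ($\imath$), so the half that carries the content of the theorem is never proved.

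The bound $\dim R_\lambda\le N!$ is the elementary one. Let $\mathbf{p}$ list the $N$ lattice points of $\lambda$ and filter by a lexicographic-type weight. Then the lowest part of $\det\big(e^{p_j\cdot\bar z_i}\big)_{i,j}$ is a nonzero multiple of $\Delta_\lambda$. Hence $\euJ^{\bS}_\lambda$ contains the associated graded ideal of the orbit $\bS_N\mathbf{p}$, and $R_\lambda$ is a quotient of an $N!$-dimensional ring. The paper gets the same bound from $\sWH_\lambda\twoheadrightarrow\mGH^{\gl}_\lambda$, Corollary~\ref{cor::sWH::GH}(i).

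None of your three arguments for $\dim R_\lambda\ge N!$ works:
upper semicontinuity only bounds from below the length of the scheme-theoretic fibre of $\rho\colon X_N\to\mathrm{Hilb}^N$ over $I_\lambda$, and, as you note yourself a few lines later, that fibre ring surjects onto $R_\lambda$, so the bound does not descend to $R_\lambda$;
degenerating the determinant can only lower the dimension of its space of derivatives, so it yields upper bounds, not lower ones;
``the isotypic multiplicities are at least those of the regular representation'' is just a restatement of ($\imath$).
Consequently your Cohen--Macaulay argument (flatness, fibre of length $N!$, surjection onto $R_\lambda$) only re-proves the inequality $\le N!$, which was known before Haiman.

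What is missing is the step where Gorenstein-ness enters; you mention it but never use it. Since $X_N$ is Gorenstein and flat over the smooth $\mathrm{Hilb}^N$, the fibre ring $R(I_\lambda)=\bC[\bar z_1,\ldots,\bar z_N]/J$ is a bigraded Artinian Gorenstein ring carrying the regular representation. For every $N$-element subset $D\neq\lambda$ of $\bZ_{\ge0}^2$ one has $\Delta_D=f_D\,\Delta_\lambda$ on $X_N$. Here $f_D$ is a regular function on the affine chart $U_\lambda\ni I_\lambda$ (where the monomials of $\lambda$ form a basis), and it vanishes at $I_\lambda$. Hence $\Delta_D\in J$, and therefore $\Delta_\lambda\notin J$, since otherwise no sign representation would survive in the quotient.

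The trivial representation sits only in degree $(0,0)$. By the equivariant Gorenstein pairing, the one-dimensional socle is therefore (for $N\ge2$) the sign representation, spanned by $\Delta_\lambda$. The Macaulay dual generator is then an alternant of bidegree $\mdeg(\Delta_\lambda)$ orthogonal to all $\Delta_D$ with $D\neq\lambda$, hence proportional to $\Delta_\lambda$, and $J=\euJ^{\bS}_\lambda$. This identification $R(I_\lambda)\simeq R_\lambda$ is what gives $\dim R_\lambda=N!$; without it, ($\imath$) is not established.

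Two smaller points in your reduction of ($\imath\imath$). First, the degeneration must use points in $\lambda$-position, not generic points: for generic points the lowest alternant is $\Delta_D$ for a diagram $D$ cut out by a line. Second, in the normalization of the statement the triangularity reads $\widetilde H_\lambda[X(1-q_0)]\in\mathrm{span}\{s_\mu\colon\mu\ge\lambda\}$, since $\widetilde H_{(N)}$ depends only on $q_0$. This is a statement about a Koszul Euler characteristic. It does not follow from $\bC[x_1,\ldots,x_N]$ or $\bC[y_1,\ldots,y_N]$ acting through a Garsia--Procesi quotient: the trivial module $\bC$ has that property, yet $h_N[X(1-q_0)]$ contains every hook.
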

 In particular, for the one-row partition $\widetilde{H}_{(N)}(X;q_0,q_1)=\prod_{i=1}^{N}(1-q_0^{i})\cdot h_N\big[\frac{X}{1-q_0}\big]$ (see Example~\ref{ex::GH::row}); recall that $\widetilde{H}_{\lambda^T}(X;q,t)=\widetilde{H}_{\lambda}(X;t,q)$.
Item~($\imath\imath$) implies the Macdonald positivity conjecture~\cite{Macdonald1988}.
The inequality $\dim\mGH^{\bS}_{\lambda}\leq N!$ was known before Haiman's proof (see~\cite{Garsia_Haiman::96}), and it was shown in~\cite{Haiman::MacdonaldGeometry} that item~($\imath$) implies item~($\imath\imath$).
The direct analogue of Theorem~\ref{conj::n!} for higher-dimensional partitions (with more than two groups of variables) fails in general~\cite{Haiman::MacdonaldGeometry}.

\begin{example}
\label{ex::GH::row}
Let $\lambda=(N)$ be the one-row partition, whose cells are $(s,0)$, $0\leq s<N$.
Then the determinant~\eqref{eq::GH::Delta} is the classical Vandermonde determinant in the variables $x_{1},\ldots,x_{N}$:
\[
\Delta_{(N)}=\det\big(x_{i}^{\,j-1}\big)_{i,j=1}^N=\pm\prod_{1\leq i<j\leq N}(x_{i}-x_{j}).
\]
It is annihilated by all symmetric differential operators without constant term and by all derivatives $\partial/\partial y_{i}$.
Hence $\euJ_{(N)}^{\bS}$ is generated by the variables $y_{i}$ and by the symmetric polynomials in $x_{1},\ldots,x_{N}$ without constant term.
We recover the classical coinvariant algebra:
\[
\mGH_{(N)}^{\bS}\simeq\bC[x_{1},\ldots,x_{N}]\big/\big(\bC[x_{1},\ldots,x_{N}]^{\bS_N}_{+}\big)\simeq\bC[\bS_N],
\qquad
\widetilde{H}_{(N)}=\prod_{i=1}^{N}(1-q_0^{i})\cdot h_N\Big[\frac{X}{1-q_0}\Big].
\]
\end{example}
Let $\tau$ be the automorphism of $\bC[\bar{z}_1,\ldots,\bar{z}_N]$ that interchanges the two variables in each pair, $\tau(x_i):=y_i$ and $\tau(y_i):=x_i$. It commutes with the action of $\bS_N$, and it maps the monomial $\bar{z}_i^{\,\bar{s}}$ with $\bar{s}=(s_x,s_y)$ to $\bar{z}_i^{\,(s_y,s_x)}$; thus it corresponds to the transposition $\lambda\mapsto\lambda^T$ of Young diagrams.

\begin{lemma}
\label{lem::GH::symmetry}
We have $\tau(\Delta_\lambda)=\pm\Delta_{\lambda^T}$ and $\tau(\euJ_{\lambda}^{\bS})=\euJ_{\lambda^T}^{\bS}$.
Consequently, $\tau$ induces isomorphisms of $\mathcal{A}_N$-modules
\[
\mGH_{\lambda}^{\bS}\simeq\mGH_{\lambda^T}^{\bS},
\qquad
\bC[\bar{z}_1,\ldots,\bar{z}_N]/\euJ_{\lambda}^{\bS}\simeq\bC[\bar{z}_1,\ldots,\bar{z}_N]/\euJ_{\lambda^T}^{\bS},
\]
that interchange the two components of the multigrading. In particular, $$\widetilde{H}_{\lambda^T}(X;q_0,q_1)=\widetilde{H}_{\lambda}(X;q_1,q_0).$$
\end{lemma}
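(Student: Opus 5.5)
The plan is to work directly from the definition~\eqref{eq::GH::Delta}. First note how $\tau$ acts on monomials: $\tau(\bar{z}_i^{\,\bar{s}})=\bar{z}_i^{\,\bar{s}^T}$, where $\bar{s}^T=(s_y,s_x)$. If $\bar{s}_1,\ldots,\bar{s}_N$ enumerates the cells of $\lambda$, then $\bar{s}_1^T,\ldots,\bar{s}_N^T$ enumerates the cells of $\lambda^T$. Since $\tau$ is a ring automorphism, applying it entrywise to the determinant gives
\[
\tau(\Delta_\lambda)=\det\big(\bar{z}_i^{\,\bar{s}_j^T}\big)_{i,j=1}^N.
\]
This is the generalized Vandermonde determinant of $\lambda^T$ for this particular enumeration of its cells. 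It therefore equals $\pm\Delta_{\lambda^T}$, because changing the enumeration only changes the sign.

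Next, for the ideals, I will check that $\tau$ intertwines differentiation. Concretely, $\tau\circ\frac{\partial}{\partial x_i}=\frac{\partial}{\partial y_i}\circ\tau$, and symmetrically with $x_i$ and $y_i$ swapped. It follows that for every polynomial $f$,
\[
\tau\big(f(\partial)g\big)=(\tau f)(\partial)\,\tau(g).
\]
Taking $g=\Delta_\lambda$: $f(\partial)\Delta_\lambda=0$ holds if and only if $(\tau f)(\partial)\Delta_{\lambda^T}=0$, using the previous step and the fact that the sign does not affect vanishing. This gives $\tau(\euJ^{\bS}_\lambda)=\euJ^{\bS}_{\lambda^T}$. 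The same identity shows that $\tau$ maps $\mGH^{\bS}_\lambda$ onto $\mGH^{\bS}_{\lambda^T}$.

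Since $\tau$ commutes with $\bS_N$, the induced maps are isomorphisms of $\bS_N$-modules. They are compatible with the module structures in the twisted sense: on the quotient side, multiplication by $f$ becomes multiplication by $\tau f$; on the derivative side, $f(\partial)$ becomes $(\tau f)(\partial)$. So these are isomorphisms of $\mathcal{A}_N$-modules up to the twist by $\tau$. Moreover, $\tau$ sends multidegree $(a,b)$ to $(b,a)$, which is why the two components of the multigrading are interchanged.

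Finally, for the character identity, I will take bigraded Frobenius characteristics of the isomorphism $\bC[\bar{z}_1,\ldots,\bar{z}_N]/\euJ^{\bS}_\lambda\simeq\bC[\bar{z}_1,\ldots,\bar{z}_N]/\euJ^{\bS}_{\lambda^T}$ with the grading swapped. This gives $\mathrm{Frob}_q$ of the $\lambda^T$ module with $q_0$ and $q_1$ exchanged. Then I apply Theorem~\ref{conj::n!}($\imath\imath$) to both diagrams. None of the steps looks like a real obstacle. The only point requiring care is this last bookkeeping, since it depends on Haiman's theorem identifying the characteristics with $\widetilde{H}$.
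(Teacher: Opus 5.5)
Your proof is correct and follows essentially the same route as the paper: the identity $\tau(\Delta_\lambda)=\pm\Delta_{\lambda^T}$ comes from $\tau(\bar{z}_i^{\,(s_x,s_y)})=\bar{z}_i^{\,(s_y,s_x)}$ and a reordering of the cells. The claims about $\euJ^{\bS}_\lambda$ and $\mGH^{\bS}_\lambda$ come from the compatibility of $\tau$ with differentiation, which the paper phrases as $\tau$ preserving the pairing~\eqref{eq::GH::pairing}; this is equivalent to your identity $\tau(f(\partial)g)=(\tau f)(\partial)\,\tau(g)$. Your remark that the isomorphisms are $\mathcal{A}_N$-linear only up to the twist by $\tau$ is an accurate reading of the statement. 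So is your derivation of the $\widetilde{H}$ identity from Theorem~\ref{conj::n!}($\imath\imath$).
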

\begin{proof}
The first claim follows from $\tau(\bar{z}_i^{\,(s_x,s_y)})=\bar{z}_i^{\,(s_y,s_x)}$ and the definition~\eqref{eq::GH::Delta} of the determinant; the sign comes from the reordering of the cells.
The automorphism $\tau$ commutes with the action of $\bS_N$ and preserves the pairing~\eqref{eq::GH::pairing}; hence it maps $\euJ^{\bS}_\lambda$ to $\euJ^{\bS}_{\lambda^T}$ and $\mGH^{\bS}_\lambda$ to $\mGH^{\bS}_{\lambda^T}$.
\end{proof}

\subsection{The $\frac{N!}{k}$-conjecture}
\label{sec::GH::N!/k}

\subsubsection{Statement via intersections}
Let $\lambda\vdash N+1$ be a Young diagram and let $\bar{c}_1,\ldots,\bar{c}_k$ be $k$ pairwise distinct corners of $\lambda$ (see Definition~\ref{def::diagram::corner}).
Removing them one at a time we obtain $k$ pairwise distinct Young diagrams of the same size $N$:
\begin{equation}
\label{eq::nu::corners}
\nu_i:=\lambda\setminus\{\bar{c}_i\}\vdash N,\qquad i=1,\ldots,k.
\end{equation}
Since all $\nu_i$ have $N$ cells, the Garsia--Haiman modules $\mGH^{\bS}_{\nu_1},\ldots,\mGH^{\bS}_{\nu_k}$ are subspaces of the same polynomial ring $\bC[\bar{z}_1,\ldots,\bar{z}_N]$.
Their intersection
\[
\mGH^{\bS}_{\nu_1}\cap\ldots\cap\mGH^{\bS}_{\nu_k}\subset\bC[\bar{z}_1,\ldots,\bar{z}_N]
\]
is again a multigraded $\bS_N$-stable subspace closed under all partial derivatives.

The smallest partition with $k$ corners is the staircase partition $\delta_k:=(k,k-1,\ldots,1)\vdash\frac{k(k+1)}{2}$; in particular, $N\geq\frac{k(k+1)}{2}-1\geq 2k-2$.

To each Young diagram $\nu$ we assign the monomial
\begin{equation}
\label{eq::T::nu}
T_\nu:=q^{\mdeg(\Delta_\nu)}=\prod_{\bar{s}\in\nu}q_0^{s_x}q_1^{s_y}=q_0^{n(\nu^T)}q_1^{n(\nu)},
\end{equation}
where $\mdeg(\Delta_\nu)$ is given by~\eqref{eq::mdeg::Delta}.
Note that $T_{\nu_i}=T_\lambda\cdot q^{-\bar{c}_i}$, so the monomials $T_{\nu_1},\ldots,T_{\nu_k}$ are pairwise distinct.
Following~\cite{Bergeron_Garsia::SF}, we define the \emph{intersection polynomial}
\begin{equation}
\label{eq::SF::I}
\gh_{\nu_1,\ldots,\nu_k}(X;q_0,q_1):=\sum_{i=1}^{k}\Bigg(\prod_{j\neq i}\frac{T_{\nu_j}}{T_{\nu_j}-T_{\nu_i}}\Bigg)\,\widetilde{H}_{\nu_i}(X;q_0,q_1).
\end{equation}
For $k=1$ it is the Frobenius characteristic $\gh_{\nu_1}$ of the Garsia--Haiman module itself.
For $k=2$ it is Butler's function $\frac{T_{\nu_1}\gh_{\nu_2}-T_{\nu_2}\gh_{\nu_1}}{T_{\nu_1}-T_{\nu_2}}$~\cite{Butler}.
By Theorem~\ref{conj::n!}, it coincides with the \emph{Macdonald intersection polynomial} of~\cite{Bergeron_Garsia::SF,KLO::shuffle}.

\begin{conjecture}[the $\frac{N!}{k}$-conjecture, F.~Bergeron and A.~Garsia~\cite{Bergeron_Garsia::SF}]
\label{conj::N!/k}
Let $\lambda\vdash N+1$ be a Young diagram, and let $\nu_1,\ldots,\nu_k\vdash N$ be obtained from $\lambda$ by removing $k$ pairwise distinct corners as in~\eqref{eq::nu::corners}. Then
\begin{enumerate}
\item the dimension of the intersection of the Garsia--Haiman modules equals $\frac{N!}{k}$:
\[
\dim\left(\mGH^{\bS}_{\nu_1}\cap\ldots\cap\mGH^{\bS}_{\nu_k}\right)=\frac{N!}{k};
\]
\item the bigraded Frobenius characteristic of the intersection is given by the intersection polynomial:
\[
\mathrm{Frob}_q\left(\mGH^{\bS}_{\nu_1}\cap\ldots\cap\mGH^{\bS}_{\nu_k}\right)=\gh_{\nu_1,\ldots,\nu_k}(X;q_0,q_1).
\]
\end{enumerate}
\end{conjecture}

\begin{remark}
\label{rem::N!/k::status}
\begin{itemize}
\item For $k=1$ Conjecture~\ref{conj::N!/k} is Theorem~\ref{conj::n!}.
\item Item~(\it{ii}) implies item~(\it{i}). Indeed, the dimension of an $\bS_N$-module $M$ equals $\langle\mathrm{Frob}(M),h_1^N\rangle$, and Bergeron and Garsia showed that $\langle\gh_{\nu_1,\ldots,\nu_k}(X;1,1),h_1^N\rangle=\frac{N!}{k}$~\cite{Bergeron_Garsia::SF} (see also Theorem~\ref{thm::KLO::specialization} below).
\item The case $k=2$ (the ``$\frac{N!}{2}$-conjecture'') was introduced by Garsia and Haiman as a possible inductive route to the $N!$-conjecture~\cite{Garsia_Haiman::96}. Both items of Conjecture~\ref{conj::N!/k} are consequences of the more general \emph{Science Fiction conjecture}~\cite{Bergeron_Garsia::SF}, which predicts the existence of bases of the Garsia--Haiman modules compatible with all such intersections (see also~\cite{BBGHT} for the framework of lattice diagram polynomials). Item~(\it{ii}) for $k=2$ implies Butler's conjecture on the Schur positivity of $\gh_{\nu_1,\nu_2}$~\cite{Butler}, which is proved in several special cases in~\cite{KLO::Butler}.
\item To the best of our knowledge, Conjecture~\ref{conj::N!/k} is proved only for $k=2$ and hook partitions $\lambda$~\cite{Bergeron_Hamel,Armon} (for hooks, $k=2$ is the only nontrivial value). The first case with $k\geq3$ is $\lambda=(3,2,1)$.
\end{itemize}
\end{remark}

\subsubsection{Dual statement via common quotients}
By Proposition~\ref{prp::GH::pairing}, the orthogonal complement of $\mGH^{\bS}_{\nu_i}$ is the annihilating ideal $\euJ^{\bS}_{\nu_i}$.
Since the orthogonal complement of an intersection of subspaces is the sum of their orthogonal complements, the intersection of Garsia--Haiman modules is dual to the quotient by the \emph{sum} of the annihilating ideals.
We denote this quotient by
\begin{equation}
\label{eq::GH::common::quotient}
\mGH^{\bS}_{\nu_1,\ldots,\nu_k}:=\bC[\bar{z}_1,\ldots,\bar{z}_N]\big/\left(\euJ^{\bS}_{\nu_1}+\ldots+\euJ^{\bS}_{\nu_k}\right).
\end{equation}
For $k=1$ it is the quotient model $\bC[\bar{z}_1,\ldots,\bar{z}_N]/\euJ^{\bS}_{\nu_1}$ of the Garsia--Haiman module.

\begin{proposition}
\label{prp::N!/k::dual}
Let $\lambda\vdash N+1$ be a Young diagram and let $\nu_1,\ldots,\nu_k$ be as in~\eqref{eq::nu::corners}.
\begin{enumerate}
\item In every multigraded component of $\bC[\bar{z}_1,\ldots,\bar{z}_N]$ we have
\[
\left(\mGH^{\bS}_{\nu_1}\cap\ldots\cap\mGH^{\bS}_{\nu_k}\right)^{\perp}=\euJ^{\bS}_{\nu_1}+\ldots+\euJ^{\bS}_{\nu_k},
\qquad
\mGH^{\bS}_{\nu_1}\cap\ldots\cap\mGH^{\bS}_{\nu_k}=\left(\euJ^{\bS}_{\nu_1}+\ldots+\euJ^{\bS}_{\nu_k}\right)^{\perp}.
\]
\item The pairing~\eqref{eq::GH::pairing} induces a nondegenerate $\bS_N$-invariant pairing, respecting the multigrading, between the quotient $\mGH^{\bS}_{\nu_1,\ldots,\nu_k}$ and the intersection of the Garsia--Haiman modules. In particular,
\[
\mGH^{\bS}_{\nu_1,\ldots,\nu_k}\simeq\mGH^{\bS}_{\nu_1}\cap\ldots\cap\mGH^{\bS}_{\nu_k}
\]
as multigraded $\bS_N$-modules, and $\mGH^{\bS}_{\nu_1,\ldots,\nu_k}$ is the graded dual $\mathcal{A}_N$-module of the intersection.
\item The $\mathcal{A}_N$-module $\mGH^{\bS}_{\nu_1,\ldots,\nu_k}$ is the largest common cyclic quotient of the Garsia--Haiman modules $\bC[\bar{z}_1,\ldots,\bar{z}_N]/\euJ^{\bS}_{\nu_i}$, $i=1,\ldots,k$. Namely, it is generated by the class of $1$, the natural projection $\bC[\bar{z}_1,\ldots,\bar{z}_N]\twoheadrightarrow\mGH^{\bS}_{\nu_1,\ldots,\nu_k}$ factors through each projection $\bC[\bar{z}_1,\ldots,\bar{z}_N]\twoheadrightarrow\bC[\bar{z}_1,\ldots,\bar{z}_N]/\euJ^{\bS}_{\nu_i}$, and every surjection of $\mathcal{A}_N$-modules $\bC[\bar{z}_1,\ldots,\bar{z}_N]\twoheadrightarrow M$ with this property factors uniquely through $\mGH^{\bS}_{\nu_1,\ldots,\nu_k}$.
\end{enumerate}
\end{proposition}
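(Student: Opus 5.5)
The argument is linear algebra in each multigraded component, built on Proposition~\ref{prp::GH::pairing}. Each multigraded component $R_{\bar r}$ of $R:=\bC[\bar{z}_1,\ldots,\bar{z}_N]$ is finite-dimensional, and the pairing~\eqref{eq::GH::pairing} is nondegenerate on it. Different multidegrees are orthogonal, so every orthogonal complement below can be computed componentwise. The subspaces $\mGH^{\bS}_{\nu_i}$, $\euJ^{\bS}_{\nu_i}$, and their sums and intersections are multigraded. Indeed, $\mGH^{\bS}_{\nu_i}$ is spanned by derivatives of a homogeneous polynomial, and $\euJ^{\bS}_{\nu_i}$ is multigraded by Proposition~\ref{prp::GH::pairing}(i). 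Hence intersecting and summing commute with taking homogeneous components.

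\textbf{Item (i).} Fix $\bar r$. By Proposition~\ref{prp::GH::pairing}(ii), $(\mGH^{\bS}_{\nu_i})_{\bar r}^{\perp}=(\euJ^{\bS}_{\nu_i})_{\bar r}$ inside $R_{\bar r}$. For a nondegenerate pairing on a finite-dimensional space we have $(A\cap B)^\perp=A^\perp+B^\perp$ and $(A^\perp)^\perp=A$. Applying these identities inductively over $i$ gives the first equality. Taking $\perp$ once more gives the second.

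\textbf{Item (ii).} By (i), the restriction of the pairing to
\[
R_{\bar r}/(\textstyle\sum_i\euJ^{\bS}_{\nu_i})_{\bar r}\ \times\ \big(\bigcap_i\mGH^{\bS}_{\nu_i}\big)_{\bar r}
\]
is well defined and nondegenerate. The pairing is $\bS_N$-invariant, and the complex representations of $\bS_N$ are self-dual. Therefore the two sides are isomorphic as $\bS_N$-modules in each multidegree.

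For the graded-dual statement, I first check that the intersection is an $\mathcal{A}_N$-module when the variables act by partial derivatives. Each $\mGH^{\bS}_{\nu_i}$ is stable under derivatives and under $\bS_N$, so the intersection is too. The adjointness~\eqref{eq::GH::adjoint} then shows that the pairing intertwines multiplication on the quotient with differentiation on the intersection. Together with $\bS_N$-invariance, this identifies the quotient with the graded dual.

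\textbf{Item (iii).} The quotient is generated by the class of $1$. Since $\euJ^{\bS}_{\nu_i}\subset\sum_j\euJ^{\bS}_{\nu_j}$, the projection factors through each $R/\euJ^{\bS}_{\nu_i}$.

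For universality, let $R\twoheadrightarrow M$ be a surjection of $\mathcal{A}_N$-modules with kernel $K$ that factors through every $R\twoheadrightarrow R/\euJ^{\bS}_{\nu_i}$. Factoring through $R/\euJ^{\bS}_{\nu_i}$ means $\euJ^{\bS}_{\nu_i}\subset K$. Hence $\sum_i\euJ^{\bS}_{\nu_i}\subset K$, and the map factors through $\mGH^{\bS}_{\nu_1,\ldots,\nu_k}$. The factorization is unique because $R\to\mGH^{\bS}_{\nu_1,\ldots,\nu_k}$ is surjective.

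The only potential obstacle is the bookkeeping of multigradings: the identity $(A\cap B)^\perp=A^\perp+B^\perp$ requires finite dimension. I handle this by always working componentwise, which is justified because all the subspaces involved are multigraded.
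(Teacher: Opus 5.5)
Your proposal is correct and follows essentially the same route as the paper. Both arguments work componentwise with $(W_1\cap\ldots\cap W_k)^{\perp}=W_1^{\perp}+\ldots+W_k^{\perp}$ and Proposition~\ref{prp::GH::pairing}(ii) for item~(i), repeat the argument of Proposition~\ref{prp::GH::pairing} for item~(ii), and for item~(iii) use that factoring through $\bC[\bar{z}_1,\ldots,\bar{z}_N]/\euJ^{\bS}_{\nu_i}$ means the kernel contains $\euJ^{\bS}_{\nu_i}$.
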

\begin{proof}
All multigraded components are finite-dimensional and the pairing is nondegenerate on each of them. Therefore item~(i) follows from Proposition~\ref{prp::GH::pairing}(ii) and the identity $(W_1\cap\ldots\cap W_k)^{\perp}=W_1^{\perp}+\ldots+W_k^{\perp}$. Item~(ii) follows from item~(i) exactly as in the proof of Proposition~\ref{prp::GH::pairing}. For item~(iii), note that a surjection $\bC[\bar{z}_1,\ldots,\bar{z}_N]\twoheadrightarrow M$ factors through $\bC[\bar{z}_1,\ldots,\bar{z}_N]/\euJ^{\bS}_{\nu_i}$ if and only if its kernel contains $\euJ^{\bS}_{\nu_i}$; it does so for all $i$ if and only if its kernel contains $\euJ^{\bS}_{\nu_1}+\ldots+\euJ^{\bS}_{\nu_k}$.
\end{proof}

Consequently, Conjecture~\ref{conj::N!/k} admits the following equivalent reformulation in terms of quotient rings.

\begin{conjecture}[$\frac{N!}{k}$-conjecture, dual form]
\label{conj::N!/k::quotient}
Let $\lambda\vdash N+1$ and $\nu_1,\ldots,\nu_k\vdash N$ be as in Conjecture~\ref{conj::N!/k}. Then the bigraded $\mathcal{A}_N$-module
\[
\mGH^{\bS}_{\nu_1,\ldots,\nu_k}=\bC[\bar{z}_1,\ldots,\bar{z}_N]\big/\left(\euJ^{\bS}_{\nu_1}+\ldots+\euJ^{\bS}_{\nu_k}\right)
\]
has dimension $\frac{N!}{k}$, and its bigraded Frobenius characteristic equals $\gh_{\nu_1,\ldots,\nu_k}(X;q_0,q_1)$.
\end{conjecture}

\begin{remark}
\label{rem::N!/k::not::cocyclic}
By Proposition~\ref{prp::N!/k::dual}(iii), the module $\mGH^{\bS}_{\nu_1,\ldots,\nu_k}$ is cyclic, and its head $\soc$ is the trivial representation spanned by the class of $1$.
In contrast with the case $k=1$ (Proposition~\ref{prp::GH::pairing}(iv)), for $k\geq2$ it is not cocyclic in general. For example, for $\lambda=(3,2,1)$ and $k=3$ the socle $\cosoc$ contains two copies of the Specht module $\mathbb{S}_{(2,2,1)}$, sitting in the multidegrees $(1,2)$ and $(2,1)$ (see Example~\ref{ex::N!/k::staircase3}).
\end{remark}

\begin{example}
\label{ex::N!/k::staircase3}
Let $\lambda=\delta_3=(3,2,1)$, so that $N=5$, $k=3$, and $\nu_1=(2,2,1)$, $\nu_2=(3,1,1)$, $\nu_3=(3,2)$.
A direct computation of the intersection $\mGH^{\bS}_{(2,2,1)}\cap\mGH^{\bS}_{(3,1,1)}\cap\mGH^{\bS}_{(3,2)}$ gives the bigraded Frobenius characteristic
\[
s_{5}+(q_0+q_1)\,s_{41}+(q_0^2+q_0q_1+q_1^2)\,s_{32}+q_0q_1\,s_{311}+(q_0^2q_1+q_0q_1^2)\,s_{221},
\]
which coincides with the intersection polynomial $\gh_{(2,2,1),(3,1,1),(3,2)}$ of~\eqref{eq::SF::I}. The total dimension is $1+2\cdot4+3\cdot5+6+2\cdot5=40=\frac{5!}{3}$.
\end{example}

\subsubsection{Specialization at $q_0=q_1=1$ and shape independence}
Recall that the \emph{Kreweras number} of a partition $\mu=(1^{m_1}2^{m_2}\ldots)\vdash N$ with $\ell(\mu)$ parts,
\[
\mathrm{Krew}(\mu):=\frac{N!}{(N-\ell(\mu)+1)!\,\prod_{i\geq1}m_i!},
\]
is the number of noncrossing set partitions of $\{1,\ldots,N\}$ whose block sizes form the partition $\mu$~\cite{Kreweras}.
For $k\geq 2$ consider the following symmetric function of degree $2k-2$:
\begin{equation}
\label{eq::Phi_k}
\Phi_k:=\sum_{\mu\vdash k-1}(-1)^{k-1-\ell(\mu)}\,\mathrm{Krew}\big(\mu+(1^{\ell(\mu)})\big)\ h_{\mu+(1^{\ell(\mu)})}\,h_1^{\,k-1-\ell(\mu)},
\end{equation}
where $\mu+(1^{\ell(\mu)})=(\mu_1+1,\ldots,\mu_{\ell(\mu)}+1)$.

\begin{theorem}[D.~Kim, S.\,J.~Lee, J.~Oh~{\cite[Thm.~1.3]{KLO::shuffle}}]
\label{thm::KLO::specialization}
Let $\lambda\vdash N+1$ and $\nu_1,\ldots,\nu_k\vdash N$ be as in Conjecture~\ref{conj::N!/k}. Then
\[
\gh_{\nu_1,\ldots,\nu_k}(X;1,1)=h_1^{\,N-2k+2}\cdot\Phi_k .
\]
In particular, the specialization $\gh_{\nu_1,\ldots,\nu_k}(X;1,1)$ does not depend on $\lambda$ and on the choice of the corners, but only on $N$ and $k$, and $\langle\Phi_k,h_1^{2k-2}\rangle=\frac{(2k-2)!}{k}$.
\end{theorem}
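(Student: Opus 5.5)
Observe first that, by \eqref{eq::T::nu}, $T_{\nu_i}=T_\lambda q^{-\bar c_i}$, so the coefficients in \eqref{eq::SF::I} are
\[
\prod_{j\neq i}\frac{T_{\nu_j}}{T_{\nu_j}-T_{\nu_i}}=\prod_{j\neq i}\frac{1}{1-q^{\bar c_j-\bar c_i}} .
\]
These are exactly the Lagrange weights of a divided difference of order $k-1$ at the nodes $u_i:=q^{-\bar c_i}$. The plan is therefore to realize $\gh_{\nu_1,\ldots,\nu_k}$ as a $(k-1)$-st divided difference of a function $F(u)$ that interpolates $\widetilde H_{\lambda\setminus\bar c}$ at $u=q^{-\bar c}$, and then to pass to the limit $q_0,q_1\to1$. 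Each $\widetilde H_{\nu_i}(X;1,1)=h_1^N$, while the weights blow up; only the order-$(k-1)$ Taylor data of $F$ at $u=1$ survives. I would set $q_0=e^{as}$, $q_1=e^{bs}$ with generic $a,b$ and let $s\to0$. The final answer must be independent of $a,b$, which is a useful consistency check.

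To construct $F$, I would use a residue formula instead of an abstract interpolation. Start from the Pieri-type identity $e_1^\perp\widetilde H_\lambda=\sum_{\bar c}d_{\lambda,\bar c}\,\widetilde H_{\lambda\setminus\bar c}$, where the coefficients $d_{\lambda,\bar c}$ are products over the addable and removable cells of $\lambda$. Equivalently, use the Garsia--Haiman--Tesler plethystic expression of $\widetilde H_{\lambda\setminus\bar c}$ through $\widetilde H_\lambda$ and the operator $\Omega[-zX]$. The goal is to write
\[
\widetilde H_{\lambda\setminus \bar c}=\operatorname{Res}_{z=q^{\bar c}}R_\lambda(z;X)
\]
for one rational function $R_\lambda$ whose poles are located only at the weights of the removable and addable cells of $\lambda$ and at $0,\infty$. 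Then $\gh_{\nu_1,\ldots,\nu_k}$ becomes the sum of the residues of $R_\lambda(z;X)\prod_j(1-z^{-1}q^{\bar c_j})^{-1}$ over the chosen corners. By the residue theorem, this equals minus the sum over the remaining poles.

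In the limit $s\to0$, I expect all the poles to coalesce at $z=1$. The limiting expression then depends only on the multiplicity structure, namely $k$ marked poles against the boundary of $\lambda$, and the data of $\lambda$ should cancel. This cancellation is where the shape independence should come from. The resulting coefficient extraction is of Lagrange-inversion type. Expanding it in the basis $h_{\mu+(1^{\ell})}h_1^{k-1-\ell}$ should produce the alternating sum \eqref{eq::Phi_k}, because the Kreweras numbers are precisely the Lagrange-inversion coefficients counting noncrossing partitions with prescribed block sizes. The remaining $N-2k+2$ cells contribute the factor $h_1^{N-2k+2}$, since at $q=t=1$ each unmarked cell acts by multiplication by $h_1$.

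The main obstacle is the middle step: producing the uniform residue representation $R_\lambda$ valid for an arbitrary subset of corners, and then controlling its Taylor expansion to order $k-1$ at $q=t=1$ uniformly in $\lambda$. Here I would first treat the case where the chosen corners are all corners of $\lambda$, where the full sum is a clean residue at $\infty$. I would then reduce a general subset to this case by checking that the unchosen corners contribute terms vanishing to sufficient order. The final identity $\langle\Phi_k,h_1^{2k-2}\rangle=(2k-2)!/k$ is then a separate enumeration: it amounts to the standard Kreweras/Catalan summation, which I would verify directly.
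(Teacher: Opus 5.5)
The paper gives no proof of this statement; it is quoted from \cite{KLO::shuffle}. Judged on its own, your proposal has a genuine gap: every load-bearing step is missing, and the one concrete formula is set up incorrectly.

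If $\widetilde H_{\lambda\setminus\bar c}=\operatorname{Res}_{z=q^{\bar c}}R_\lambda(z;X)$, then $R_\lambda(z;X)\prod_j(1-z^{-1}q^{\bar c_j})^{-1}$ has \emph{double} poles at the chosen corners. Its residues there are not the terms of~\eqref{eq::SF::I}. The divided-difference picture instead needs a function $F$ that is \emph{regular} at the nodes with $F(q^{\bar c_i})=\widetilde H_{\nu_i}$. Then $\gh_{\nu_1,\ldots,\nu_k}=\sum_i\operatorname{Res}_{z=q^{\bar c_i}}F(z)\prod_j(1-z^{-1}q^{\bar c_j})^{-1}\frac{dz}{z}$. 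The Pieri identity you quote gives only one weighted sum of the $\widetilde H_{\lambda\setminus\bar c}$, so it does not produce such an $F$, and you never say what $F$ is.

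Everything after that is asserted (``should'', ``I expect'') rather than derived. This includes the coalescence of poles, the cancellation of the data of $\lambda$, and the emergence of the signed Kreweras sum~\eqref{eq::Phi_k}. ``Each unmarked cell acts by multiplication by $h_1$'' is not an argument. What you would need is a divisibility statement, by a power of $p_1$, for the low-order Taylor coefficients of $\widetilde H_\nu$ at $q_0=q_1=1$. Even that would only account for the factor $h_1^{N-2k+2}$, not for the cofactor. Finally, reducing an arbitrary set of corners to the set of all corners ``because unchosen corners vanish to sufficient order'' has no meaning. These are divided differences over different node sets and of different orders, carrying different powers of $h_1$.

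Shape independence is actually the easy part and needs no residue calculus. Assume, as the statement does, that $\gh$ is regular at $q_0=q_1=1$, and specialize $q_1=1$ first. The weights in~\eqref{eq::SF::I} stay well defined, because distinct corners lie in distinct columns. Since $\widetilde H_\nu(X;q_0,1)=\prod_j\widetilde H_{(\nu_j)}(X;q_0)$, all rows without a chosen corner factor out; this includes the rows containing unchosen corners. Transposing and repeating as in Lemmas~\ref{lem::Nk::reduction} and~\ref{lem::Nk::staircase} (compare Corollary~\ref{cor::Butler::q1}) gives $\gh_{\nu_1,\ldots,\nu_k}(X;1,1)=h_1^{N-N_k}\,\gh_{\nu^{\mathrm{st}}_1,\ldots,\nu^{\mathrm{st}}_k}(X;1,1)$.

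So the whole content of the theorem is the staircase identity $\gh_{\nu^{\mathrm{st}}_1,\ldots,\nu^{\mathrm{st}}_k}(X;1,1)=h_1^{\binom{k-1}{2}}\Phi_k$, with its specific signed Kreweras coefficients. That explicit computation is what the cited result of Kim--Lee--Oh provides, and nothing in your sketch carries it out. The same applies to $\langle\Phi_k,h_1^{2k-2}\rangle=\frac{(2k-2)!}{k}$. Given the main identity, it is equivalent to the Bergeron--Garsia count $\langle\gh_{\nu_1,\ldots,\nu_k}(X;1,1),h_1^N\rangle=\frac{N!}{k}$. As a standalone claim, it is an alternating Kreweras sum that still has to be evaluated.
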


The symmetric functions $\Phi_k$ for small $k$ are:
\begin{align*}
\Phi_2&=h_{2}=s_{2},\\
\Phi_3&=2h_{22}-h_{31}=s_{4}+s_{31}+2s_{22},\\
\Phi_4&=5h_{222}-5h_{321}+h_{411}=s_{6}+2s_{51}+6s_{42}+s_{411}+5s_{321}+5s_{222},\\
\Phi_5&=14h_{2222}-21h_{3221}+3h_{3311}+6h_{4211}-h_{5111}\\
&=s_{8}+3s_{71}+12s_{62}+3s_{611}+8s_{53}+20s_{521}+s_{5111}+12s_{44}+17s_{431}\\
&\quad+30s_{422}+9s_{4211}+3s_{332}+10s_{3311}+21s_{3221}+14s_{2222}.
\end{align*}
The Schur expansions show that $\Phi_k$ is Schur positive for $k\leq 5$. To the best of our knowledge it is not know whether $\Phi_k$ is Schur positive for all $k$.
%For $k\leq 6$ the coefficient of $s_{(2^{k-1})}$ in $\Phi_k$ is the Catalan number $\frac{1}{k}\binom{2k-2}{k-1}$.

If $\Phi_k$ is Schur positive, denote by $W_k$ the $\bS_{2k-2}$-module with $\mathrm{Frob}(W_k)=\Phi_k$; it has dimension $\frac{(2k-2)!}{k}$.
Since $\mathrm{Frob}\big(\mathrm{Ind}_{\bS_{m}}^{\bS_N}W\big)=h_1^{N-m}\,\mathrm{Frob}(W)$, Theorem~\ref{thm::KLO::specialization} shows that item~(ii) of Conjecture~\ref{conj::N!/k} implies the following ``shape independence'' of the ungraded modules:
\begin{equation}
\label{eq::N!/k::ungraded}
\mGH^{\bS}_{\nu_1}\cap\ldots\cap\mGH^{\bS}_{\nu_k}\ \simeq\ \mGH^{\bS}_{\nu_1,\ldots,\nu_k}\ \simeq\ \mathrm{Ind}_{\bS_{2k-2}}^{\bS_N}W_k
\quad\text{as $\bS_N$-modules.}
\end{equation}
For $k=2$ this is $\mathrm{Ind}_{\bS_2}^{\bS_N}\bC$, with Frobenius characteristic $h_2h_1^{N-2}$~\cite{KLO::Butler}.
The Schur--Weyl dual form of~\eqref{eq::N!/k::ungraded} is discussed in~\S\ref{sec::GH::SW}.

\begin{remark}
\label{rem::N!/k::evidence}
A computer calculation by exact linear algebra over $\bQ$ (derivative spaces, their intersections, and traces of permutations in every bidegree) confirms Conjecture~\ref{conj::N!/k} for all choices of $k$ corners of the partitions
$(2,1)$, $(3,1)$, $(3,2)$, $(2,2,1)$, $(4,1,1)$, $(3,3,1)$ (here $k=2$) and $(3,2,1)$, $(4,2,1)$, $(3,2,1,1)$ (here $k=3$):
in all these cases the dimension of the intersection is $\frac{N!}{k}$, and its bigraded Frobenius characteristic is the intersection polynomial~\eqref{eq::SF::I}. By Theorem~\ref{thm::KLO::specialization}, the isomorphism~\eqref{eq::N!/k::ungraded} holds in these cases as well.
\end{remark}

\subsection{Schur--Weyl duals of the $N!$- and the $\frac{N!}{k}$-conjectures}
\label{sec::GH::SW}

Throughout this subsection we assume that $m=\dim V\geq N$, we fix the basis $e_1,\ldots,e_m$ of $V$ and the matrix units $E_{ab}\in\gl_V$ as in \S\ref{sec::SW}, and we denote by
\begin{gather*}
\SW\colon\mathsf{Rep}(\bC[\bS_N]\ltimes\bC[\bar{z}_1,\ldots,\bar{z}_N])\rightleftarrows\RepGLz{V}{N}\colon \WS,
\\
\SW(M)=(V[\bar{z}])^{\otimes N}\otimes_{\mathcal{A}_N}M,\quad \WS(U)=U_{(1^N,0^{m-N})},
\end{gather*}
the Schur--Weyl functor and the conjugate Schur--Weyl functor of Theorem~\ref{thm::SW::currents}, which are mutually inverse equivalences of categories.
Being equivalences of categories compatible with the multigrading, $\SW$ and $\WS$ identify the lattices of (multigraded) submodules of corresponding objects, and they preserve sums, intersections, quotients, heads and socles.
They also match the simple objects: the Specht module $\mathbb{S}_\mu$ placed in multidegree $\bar{r}$, with the variables acting by zero, corresponds to $V_\mu$ placed in multidegree $\bar{r}$, with $\gl_V\otimes\bar{z}\bC[\bar{z}]$ acting by zero (see the corollary following Theorem~\ref{thm::SW::currents}).
In particular, the trivial and the sign representations of $\bS_N$ correspond to $S^NV$ and $\Lambda^NV$ respectively.
Finally, by Proposition~\ref{prop::SW::graded::characters}, $\mathrm{ch}_q(\SW(M))=\mathrm{Frob}_q(M)$.
In this subsection we translate the statements of~\S\ref{sec::GH::N!} and~\S\ref{sec::GH::N!/k} through this dictionary.

\subsubsection{Cyclic and cocyclic modules}
Let us recall the relevant notions for an algebra $A$ and an $A$-module $M$; in the multigraded setting all submodules are assumed to be multigraded.

\begin{definition}
\label{def::cyclic::cocyclic}
A vector $v\in M$ is \emph{cyclic} if $M=A\cdot v$.
A nonzero vector $w\in M$ is \emph{cocyclic} if every nonzero submodule of $M$ contains $w$; equivalently, for every nonzero $u\in M$ there exists $a\in A$ with $a\cdot u=w$.
The \emph{head} $\soc(M)$ of $M$ is its maximal semisimple quotient, and the \emph{socle} $\cosoc(M)$ of $M$ is its maximal semisimple submodule.
\end{definition}

The notion of the socle goes back to Frobenius~\cite{Fr}.
A finite-dimensional module with a simple head is cyclic, and every vector outside its radical is a cyclic vector.
A finite-dimensional module has a cocyclic vector if and only if its socle is simple, and in this case every nonzero vector of the socle is cocyclic.
The following elementary observation is our main tool for identifying cyclic-cocyclic modules.

\begin{lemma}
\label{lm::socle::head::iso}
Let $M_1$ and $M_2$ be finite-dimensional $A$-modules such that the socle of $M_1$ and the head of $M_2$ are simple.
Let $\varphi\colon M_1\to M_2$ be a homomorphism such that $\varphi(\cosoc(M_1))\neq0$ and $\varphi(M_1)$ is not contained in the radical of $M_2$.
Then $\varphi$ is an isomorphism.
\end{lemma}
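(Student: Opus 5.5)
We prove injectivity and surjectivity of $\varphi$ separately, using only the definitions of socle, head and radical for finite-dimensional modules. Throughout, $\cosoc(M_1)$ is simple, hence it is the unique minimal nonzero submodule of $M_1$. Likewise $\soc(M_2)$ is simple, hence $\mathrm{rad}(M_2)$ is the unique maximal proper submodule of $M_2$.

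\emph{Injectivity.} Suppose $K:=\ker\varphi\neq 0$. Since $M_1$ is finite-dimensional, $K$ contains a simple submodule, which lies in $\cosoc(M_1)$. Because $\cosoc(M_1)$ is simple, this simple submodule equals $\cosoc(M_1)$. Hence $\cosoc(M_1)\subset K$, so $\varphi(\cosoc(M_1))=0$, which contradicts the hypothesis. Therefore $\ker\varphi=0$.

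\emph{Surjectivity.} The image $\varphi(M_1)$ is a submodule of $M_2$. If it were proper, it would be contained in some maximal submodule of $M_2$, since $M_2$ is finite-dimensional. The maximal submodules are the kernels of surjections onto simple modules, and their intersection is $\mathrm{rad}(M_2)$. Since $M_2/\mathrm{rad}(M_2)=\soc(M_2)$ is simple, there is only one maximal submodule, namely $\mathrm{rad}(M_2)$. So a proper $\varphi(M_1)$ would lie in $\mathrm{rad}(M_2)$, contradicting the hypothesis. Hence $\varphi$ is surjective, and therefore bijective.

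The only point that needs care is the identification used in each step. On the injectivity side we use that a simple socle is contained in every nonzero submodule; this is the cocyclicity remark made just before the lemma. On the surjectivity side we use that a simple head forces the radical to be the unique maximal submodule. In the multigraded setting we apply the same argument with graded submodules, graded maximal submodules and the graded radical; finite-dimensionality still guarantees that minimal and maximal graded submodules exist. No further obstacle arises.
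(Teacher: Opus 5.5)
Your proposal is correct and follows essentially the same argument as the paper: a nonzero kernel would contain the simple socle of $M_1$, and a simple head makes the radical of $M_2$ its unique maximal submodule, so an image not contained in the radical must be all of $M_2$. Your extra remarks, that every nonzero submodule contains a simple one and that $M_2/\mathrm{rad}(M_2)$ simple forces uniqueness of the maximal submodule, just spell out steps the paper leaves implicit.
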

\begin{proof}
If $\ker\varphi\neq0$, then $\ker\varphi$ contains the simple socle of $M_1$, which contradicts $\varphi(\cosoc(M_1))\neq0$; hence $\varphi$ is injective.
Since the head of $M_2$ is simple, the radical of $M_2$ is its unique maximal submodule. The image $\varphi(M_1)$ is not contained in it, hence $\varphi(M_1)=M_2$.
\end{proof}

\subsubsection{The polynomial ring and the symmetric power}
The cyclic $\mathcal{A}_N$-module $\bC[\bar{z}_1,\ldots,\bar{z}_N]=\mathcal{A}_N\otimes_{\bS_N}\bC$ corresponds to the symmetric power $S^N(V[\bar{z}])$ (see the example at the end of \S\ref{sec::SW}).
Explicitly, consider the linear map
\begin{equation}
\label{eq::SW::Psi}
\begin{gathered}
\Psi\colon V^{\otimes N}\otimes\bC[\bar{z}_1,\ldots,\bar{z}_N]\longrightarrow S^N(V[\bar{z}]),\\
e_{a_1}\otimes\ldots\otimes e_{a_N}\otimes\bar{z}_1^{\,\bar{r}_1}\cdots\bar{z}_N^{\,\bar{r}_N}\ \mapsto\
(e_{a_1}\bar{z}^{\,\bar{r}_1})\cdots(e_{a_N}\bar{z}^{\,\bar{r}_N}).
\end{gathered}
\end{equation}
It is the quotient map onto the space of coinvariants $\big((V[\bar{z}])^{\otimes N}\big)_{\bS_N}=(V[\bar{z}])^{\otimes N}\otimes_{\bS_N}\bC\simeq \SW\big(\bC[\bar{z}_1,\ldots,\bar{z}_N]\big)$.
Hence, for every $\mathcal{A}_N$-submodule (i.e., every multigraded $\bS_N$-stable ideal) $I\subset\bC[\bar{z}_1,\ldots,\bar{z}_N]$ we have
\[
\SW(I)=\Psi\big(V^{\otimes N}\otimes I\big)\subset S^N(V[\bar{z}]).
\]
Conversely, the weight subspace $\WS\big(S^N(V[\bar{z}])\big)=S^N(V[\bar{z}])_{(1^N,0^{m-N})}$ is identified with $\bC[\bar{z}_1,\ldots,\bar{z}_N]$ via $f\mapsto\Psi(e_1\otimes\ldots\otimes e_N\otimes f)$, and under this identification $\WS(\SW(I))=I$.
Thus $I\mapsto\Psi(V^{\otimes N}\otimes I)$ is an isomorphism between the lattice of multigraded $\bS_N$-stable ideals of $\bC[\bar{z}_1,\ldots,\bar{z}_N]$ and the lattice of $\gl_V[\bar{z}]$-submodules of $S^N(V[\bar{z}])$, with the inverse $K\mapsto \WS(K)$.

By Proposition~\ref{prop::characters::free} (applied to $\mathbb{S}_{(N)}=\bC$), $S^N(V[\bar{z}])$ is the projective cover of $S^NV$ in $\RepGLz{V}{N}$ (see also Proposition~\ref{prp::proj::cov}).
It is generated by the highest weight vector $v_{N\varepsilon_1}:=e_1^{N}=(e_1\bar{z}^{\,\bar{0}})^N$ of multidegree $\bar{0}$, which corresponds to the cyclic vector $1\in\bC[\bar{z}_1,\ldots,\bar{z}_N]$.

\subsubsection{Garsia--Haiman $\gl_V[\bar{z}]$-modules}
Fix a Young diagram $\lambda\vdash N$, an enumeration $\bar{s}_1,\ldots,\bar{s}_N$ of its cells, and the corresponding determinant $\Delta_\lambda$ defined in~\eqref{eq::GH::Delta}.
For indices $1\leq a_1,\ldots,a_N\leq m$ we set
$\delta_\lambda^{a_1\ldots a_N}:=\Psi\big(e_{a_1}\otimes\ldots\otimes e_{a_N}\otimes\Delta_\lambda\big)\in S^N(V[\bar{z}])$. Explicitly,
\begin{equation}
\label{eq::GH::determinant}
\begin{split}
\delta_\lambda^{a_1\ldots a_N}
&=\det\left(e_{a_i}\bar{z}^{\,\bar{s}_j}\right)_{i,j=1}^{N}
=\sum_{\sigma\in\bS_N}(-1)^{\sigma}\,(e_{a_1}\bar{z}^{\,\bar{s}_{\sigma(1)}})\cdots(e_{a_N}\bar{z}^{\,\bar{s}_{\sigma(N)}}) \\
&=\frac{1}{N!}\sum_{\sigma\in\bS_N}(-1)^{\sigma}\left(\prod_{i=1}^{N}E_{a_i1}\bar{z}^{\,\bar{s}_{\sigma(i)}}\right)v_{N\varepsilon_1}
\qquad\text{if } a_1,\ldots,a_N\geq2 .
\end{split}
\end{equation}
Here the determinant is computed in the commutative algebra $S^{\bullet}(V[\bar{z}])$.
The last equality holds because the operators $E_{a1}\bar{z}^{\,\bar{r}}$ with $a\geq2$ pairwise commute and act on $S^N(V[\bar{z}])$ by derivations, so that $\prod_{i=1}^{N}(E_{a_i1}\bar{z}^{\,\bar{r}_i})\cdot e_1^N=N!\,\prod_{i=1}^N(e_{a_i}\bar{z}^{\,\bar{r}_i})$.
We denote by $\mathcal{Q}_\lambda\subset S^N(V[\bar{z}])$ the linear span of all elements $\delta_\lambda^{a_1\ldots a_N}$.

Consider the $\gl_V$-equivariant map
\begin{equation}
\label{eq::phi::lambda}
\phi_\lambda\colon S^N(V[\bar{z}])\longrightarrow\Lambda^NV,\qquad
\Psi\big(e_{a_1}\otimes\ldots\otimes e_{a_N}\otimes f\big)\mapsto\langle f,\Delta_\lambda\rangle\ e_{a_1}\wedge\ldots\wedge e_{a_N}.
\end{equation}
It is well defined: it is obtained by applying the classical Schur--Weyl functor $\SW=V^{\otimes N}\otimes_{\bS_N}(-)$ to the $\bS_N$-equivariant functional $\langle-,\Delta_\lambda\rangle\colon\bC[\bar{z}_1,\ldots,\bar{z}_N]\to\mathrm{sgn}$, and using $\SW(\mathrm{sgn})\simeq\Lambda^NV$.
The map $\phi_\lambda$ vanishes outside the multidegree $\mdeg(\Delta_\lambda)$.
Note that $\phi_\lambda$ is a homomorphism of $\gl_V$-modules but not of $\gl_V[\bar{z}]$-modules.

\begin{definition}
\label{not::GH::gl}
The \emph{Garsia--Haiman $\gl_V[\bar{z}]$-module} is the quotient
\begin{multline*}
\Theta_{\lambda}\colon S^N(V[\bar{z}])\twoheadrightarrow\mGH^{\gl}_{\lambda}:=S^N(V[\bar{z}])/\euJ_{\lambda}^{\gl}, \\
\qquad\text{where}\quad
\euJ^{\gl}_{\lambda}:=\left\{u\in S^N(V[\bar{z}])\ \colon\ \phi_\lambda\big(U(\gl_V[\bar{z}])\cdot u\big)=0\right\}
\end{multline*}
is the largest $\gl_V[\bar{z}]$-submodule of $S^N(V[\bar{z}])$ contained in $\ker\phi_\lambda$.
\end{definition}

\begin{proposition}
\label{prp::GH::gl}
Let $\lambda\vdash N$ be a Young diagram and let $m=\dim V\geq N$.
\begin{enumerate}
\item The subspace $\mathcal{Q}_\lambda$ is a $\gl_V$-submodule of the multidegree $\mdeg(\Delta_\lambda)$ component of $S^N(V[\bar{z}])$, isomorphic to $\Lambda^NV$; namely, $\mathcal{Q}_\lambda=\Psi\big(V^{\otimes N}\otimes\bC\Delta_\lambda\big)\simeq\SW(\bC\cdot\Delta_\lambda)$ is the image of the sign representation under the classical Schur--Weyl functor. The elements $\delta_\lambda^{a_1\ldots a_N}$ are skew-symmetric in $a_1,\ldots,a_N$, and the restriction of $\phi_\lambda$ to $\mathcal{Q}_\lambda$ is an isomorphism.
\item Under the Schur--Weyl duality, the annihilating ideal $\euJ^{\bS}_\lambda$ corresponds to $\euJ^{\gl}_\lambda$ and the Garsia--Haiman module $\mGH^{\bS}_\lambda$ corresponds to $\mGH^{\gl}_\lambda$:
\[
\begin{array}{c}
\SW\big(\euJ^{\bS}_\lambda\big)=\Psi\big(V^{\otimes N}\otimes\euJ^{\bS}_\lambda\big)=\euJ^{\gl}_\lambda
\quad \Rightarrow \quad
\SW\big(\bC[\bar{z}_1,\ldots,\bar{z}_N]/\euJ^{\bS}_\lambda\big)\simeq\mGH^{\gl}_\lambda,
\\
\WS\big(\mGH^{\gl}_\lambda\big)\simeq\bC[\bar{z}_1,\ldots,\bar{z}_N]/\euJ^{\bS}_\lambda .
\end{array}
\]
In particular, $\mathrm{ch}_q\big(\mGH^{\gl}_\lambda\big)=\gh_\lambda(X;q_0,q_1)$.
\item $\euJ^{\gl}_\lambda\cap\mathcal{Q}_\lambda=0$, and $\euJ^{\gl}_\lambda$ is maximal among the $\gl_V[\bar{z}]$-submodules of $S^N(V[\bar{z}])$ with zero intersection with $\mathcal{Q}_\lambda$: every submodule strictly containing $\euJ^{\gl}_\lambda$ contains $\mathcal{Q}_\lambda$.
\item The module $\mGH^{\gl}_\lambda$ is cyclic and cocyclic. Its head is $\soc(\mGH^{\gl}_\lambda)\simeq S^NV$, sitting in multidegree $\bar{0}$ and generated by $\Theta_\lambda(v_{N\varepsilon_1})$. Its socle is $\cosoc(\mGH^{\gl}_\lambda)=\Theta_\lambda(\mathcal{Q}_\lambda)\simeq\Lambda^NV$, sitting in multidegree $\mdeg(\Delta_\lambda)=\sum_{\bar{s}\in\lambda}\bar{s}$. Moreover, $\mGH^{\gl}_\lambda$ vanishes in all multidegrees $\bar{r}$ with $\bar{r}\not\leq\mdeg(\Delta_\lambda)$ (componentwise), so the socle is the top multidegree component of $\mGH^{\gl}_\lambda$.
\end{enumerate}
\end{proposition}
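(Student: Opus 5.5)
The plan is to transport everything through the equivalence $\SW$ of Theorem~\ref{thm::SW::currents}, using the explicit description of $\SW$ on ideals via $\Psi$ and the lattice isomorphism $I\mapsto\Psi(V^{\otimes N}\otimes I)$ between multigraded $\bS_N$-stable ideals and $\gl_V[\bar{z}]$-submodules of $S^N(V[\bar{z}])$. Then Proposition~\ref{prp::GH::pairing} on the $\bS_N$-side will translate into the claimed statements on the $\gl$-side.

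For item~(i), the line $\bC\Delta_\lambda$ is an $\bS_N$-stable subspace carrying the sign representation. Applying the classical functor $V^{\otimes N}\otimes_{\bS_N}-$ gives $\Psi(V^{\otimes N}\otimes\bC\Delta_\lambda)\simeq\Lambda^NV$, and this space is spanned by the $\delta^{a_1\ldots a_N}_\lambda$. Their skew-symmetry follows from $\sigma\Delta_\lambda=(-1)^\sigma\Delta_\lambda$ together with the coinvariant relations defining $\Psi$. The map $\phi_\lambda$ sends $\delta_\lambda^{a_1\ldots a_N}$ to $\langle\Delta_\lambda,\Delta_\lambda\rangle\,e_{a_1}\wedge\ldots\wedge e_{a_N}$, and $\langle\Delta_\lambda,\Delta_\lambda\rangle\neq0$. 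Hence $\phi_\lambda|_{\mathcal{Q}_\lambda}$ is surjective onto $\Lambda^NV$ and, by dimension count, an isomorphism.

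Item~(ii) is the key step. Since $\phi_\lambda=\SW(\langle-,\Delta_\lambda\rangle)$ classically, the kernel of $\phi_\lambda$ in each multidegree is $\Psi(V^{\otimes N}\otimes\Delta_\lambda^\perp)$. This holds because $V^{\otimes N}\otimes_{\bS_N}-$ is exact on finite-dimensional $\bS_N$-modules, and it is the point I would check most carefully. A $\gl_V[\bar{z}]$-submodule $K$ is therefore contained in $\ker\phi_\lambda$ if and only if $\WS(K)\subset\Delta_\lambda^\perp$. To see this, note that $K=\SW(\WS K)$ is the image of $V^{\otimes N}\otimes\WS(K)$. Conversely, the weight space of $\ker\phi_\lambda$ is $\Delta_\lambda^\perp$, identified via $f\mapsto\Psi(e_1\otimes\ldots\otimes e_N\otimes f)$, whose image under $\phi_\lambda$ is $\langle f,\Delta_\lambda\rangle e_1\wedge\ldots\wedge e_N$. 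By Proposition~\ref{prp::GH::pairing}(iii), the largest ideal in $\Delta_\lambda^\perp$ is $\euJ^{\bS}_\lambda$. Since the lattice isomorphism preserves order, the largest submodule in $\ker\phi_\lambda$ is $\SW(\euJ^{\bS}_\lambda)$. This gives $\euJ^{\gl}_\lambda=\SW(\euJ^{\bS}_\lambda)$. Exactness of $\SW$ then yields the isomorphism of quotients. Applying $\WS$ and using Theorem~\ref{conj::n!} together with Proposition~\ref{prop::SW::graded::characters} gives the character.

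Items~(iii) and~(iv) then follow by transporting Proposition~\ref{prp::GH::pairing}(iv). The quotient $\bC[\bar{z}_1,\ldots,\bar{z}_N]/\euJ^{\bS}_\lambda$ has head the trivial representation in degree $\bar{0}$ and simple socle $\bC\Delta_\lambda$. Since $\SW$ preserves heads and socles and sends trivial and sign representations to $S^NV$ and $\Lambda^NV$, the quotient $\mGH^{\gl}_\lambda$ has head $S^NV$, generated by the image of $v_{N\varepsilon_1}$, and socle $\SW(\bC\Delta_\lambda)=\Theta_\lambda(\mathcal{Q}_\lambda)$. For~(iii), we have $\euJ^{\gl}_\lambda\cap\mathcal{Q}_\lambda=0$ because $\phi_\lambda$ is injective on $\mathcal{Q}_\lambda$ while $\phi_\lambda(\euJ^{\gl}_\lambda)=0$. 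A submodule $K\supsetneq\euJ^{\gl}_\lambda$ gives a nonzero submodule of $\mGH^{\gl}_\lambda$, which therefore contains the simple socle. Hence $K$ contains $\mathcal{Q}_\lambda$ modulo $\euJ^{\gl}_\lambda$, and since $\mathcal{Q}_\lambda$ is a $\gl_V$-summand of its multidegree component, $K\supset\mathcal{Q}_\lambda$. Finally, for the degree bound, $\mGH^{\bS}_\lambda$ consists of derivatives of $\Delta_\lambda$, so it lives in multidegrees $\leq\mdeg(\Delta_\lambda)$. By the duality of Proposition~\ref{prp::GH::pairing}(ii), the same holds for the quotient. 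Applying $\SW$, which preserves multidegree, gives the vanishing in multidegrees $\bar{r}\not\leq\mdeg(\Delta_\lambda)$.
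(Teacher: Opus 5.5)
Your proof is correct and follows the paper's route: you transport everything through the lattice isomorphism $I\mapsto\Psi(V^{\otimes N}\otimes I)$ and use Proposition~\ref{prp::GH::pairing}(iii),(iv). The differences are only cosmetic: the paper characterizes $K\subset\ker\phi_\lambda$ by observing that the $\gl_V$-submodule $\phi_\lambda(K)\subset\Lambda^NV$ vanishes iff its $(1^N,0^{m-N})$-weight space does, and it proves maximality in (iii) via $\WS(K)\supsetneq\euJ^{\bS}_\lambda$ rather than via the socle of $\mGH^{\gl}_\lambda$. Also, your appeal to $\mathcal{Q}_\lambda$ being a $\gl_V$-summand is unnecessary: since $K\supset\euJ^{\gl}_\lambda$, the inclusion $\Theta_\lambda(\mathcal{Q}_\lambda)\subset K/\euJ^{\gl}_\lambda$ already gives $\mathcal{Q}_\lambda\subset K$.
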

\begin{proof}
(i) The restriction of $\Psi$ to $V^{\otimes N}\otimes\bC[\bar{z}_1,\ldots,\bar{z}_N]$ is the quotient map onto the $\bS_N$-coinvariants. Since taking coinvariants is exact in characteristic zero, $\Psi\big(V^{\otimes N}\otimes\bC\Delta_\lambda\big)\simeq\SW(\mathrm{sgn})\simeq\Lambda^NV$. The skew-symmetry is clear from the determinant formula~\eqref{eq::GH::determinant}, and $\phi_\lambda\big(\delta_\lambda^{a_1\ldots a_N}\big)=\langle\Delta_\lambda,\Delta_\lambda\rangle\,e_{a_1}\wedge\ldots\wedge e_{a_N}$ with $\langle\Delta_\lambda,\Delta_\lambda\rangle\neq0$.

(ii) Let $K\subset S^N(V[\bar{z}])$ be a $\gl_V[\bar{z}]$-submodule. The image $\phi_\lambda(K)$ is a $\gl_V$-submodule of $\Lambda^NV$, hence it vanishes if and only if its weight subspace of weight $(1^N,0^{m-N})$ vanishes. By the definition of $\phi_\lambda$, the latter subspace is spanned by $\langle f,\Delta_\lambda\rangle\,e_1\wedge\ldots\wedge e_N$ with $f\in \WS(K)$. Therefore $K\subset\ker\phi_\lambda$ if and only if $\WS(K)\subset\Delta_\lambda^{\perp}$. By the lattice isomorphism described above and by Proposition~\ref{prp::GH::pairing}(iii), the largest submodule contained in $\ker\phi_\lambda$ is $\SW(\euJ^{\bS}_\lambda)$. The largest submodule contained in $\ker\phi_\lambda$ is exactly $\euJ^{\gl}_\lambda$, which proves the first identity. The other two follow from the exactness of $\SW$ and from $\WS\circ \SW\simeq\mathrm{Id}$, and the equality of characters is Proposition~\ref{prop::SW::graded::characters}.

(iii) The first claim follows from item~(i) and $\euJ^{\gl}_\lambda\subset\ker\phi_\lambda$. If $K\supsetneq\euJ^{\gl}_\lambda$ is a submodule, then $\WS(K)\supsetneq\euJ^{\bS}_\lambda$. By Proposition~\ref{prp::GH::pairing}(iv), the nonzero submodule $\WS(K)/\euJ^{\bS}_\lambda$ contains the class of $\Delta_\lambda$, so $\Delta_\lambda\in \WS(K)$. Hence $\mathcal{Q}_\lambda=\Psi(V^{\otimes N}\otimes\bC\Delta_\lambda)\subset\Psi(V^{\otimes N}\otimes \WS(K))=K$.

(iv) By Proposition~\ref{prp::GH::pairing}(iv), the head of $\bC[\bar{z}_1,\ldots,\bar{z}_N]/\euJ^{\bS}_\lambda$ is the trivial representation spanned by the class of $1$, and its socle is the sign representation spanned by the class of $\Delta_\lambda$. The functor $\SW$ maps them to $S^NV$ and to $\SW(\bC\Delta_\lambda)=\Theta_\lambda(\mathcal{Q}_\lambda)$ respectively. Finally, the component of $\mGH^{\bS}_\lambda$ of multidegree $\bar{r}$ vanishes unless $\bar{r}\leq\mdeg(\Delta_\lambda)$, since $\mGH^{\bS}_\lambda$ is spanned by derivatives of $\Delta_\lambda$, and its component of multidegree $\mdeg(\Delta_\lambda)$ is $\bC\Delta_\lambda$. The same holds for the dual module $\bC[\bar{z}_1,\ldots,\bar{z}_N]/\euJ^{\bS}_\lambda$ and, by item~(ii), for $\mGH^{\gl}_\lambda$.
\end{proof}

Combining Proposition~\ref{prp::GH::gl} with Theorem~\ref{conj::n!} we obtain the Schur--Weyl dual form of the $N!$-theorem.

\begin{corollary}
\label{cor::N!::gl}
Let $\lambda\vdash N$ be a Young diagram and $m\geq N$. The following conditions are equivalent:
\begin{enumerate}
\item the $\bS_N$-module $\mGH^{\bS}_\lambda$ is isomorphic to the regular representation $\bC[\bS_N]$;
\item the Garsia--Haiman module $\mGH^{\gl}_\lambda$ is isomorphic to $V^{\otimes N}$ as a $\gl_V$-module;
\item the specialization $\mathrm{ch}_q(\mGH^{\gl}_\lambda)|_{q_0=q_1=1}$ equals $h_1^N$.
\end{enumerate}
In particular, thanks to Haiman for every Young diagram $\lambda\vdash N$ we have $\mGH^{\gl}_\lambda\simeq V^{\otimes N}$ as $\gl_V$-modules and $\mathrm{ch}_q(\mGH^{\gl}_\lambda)=\widetilde{H}_\lambda(X;q_0,q_1)$.
\end{corollary}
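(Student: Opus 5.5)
The plan is to move every condition to the $\mathcal{A}_N$-side with the current Schur--Weyl duality and compare characters there. By Proposition~\ref{prp::GH::gl}(ii) we have $\mGH^{\gl}_\lambda\simeq\SW\big(\bC[\bar{z}_1,\ldots,\bar{z}_N]/\euJ^{\bS}_\lambda\big)$. Forgetting the polynomial action, this $\mathcal{A}_N$-module is just the $\bS_N$-module $\mGH^{\bS}_\lambda$, by Proposition~\ref{prp::GH::pairing}(ii). Since $\SW$ and $\WS$ restrict in multidegree zero to the classical functors, and since $\mGH^{\gl}_\lambda\simeq V^{\otimes N}\otimes_{\bS_N}\mGH^{\bS}_\lambda$ as graded $\gl_V$-modules (proof of Proposition~\ref{prop::SW::graded::characters}), the equivalence of (i) and (ii) reduces to the classical statement. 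A finite-dimensional $\bS_N$-module $W$ is isomorphic to $\bC[\bS_N]$ if and only if $V^{\otimes N}\otimes_{\bS_N}W\simeq V^{\otimes N}$. This holds because the classical functor $\SW$ is an equivalence for $m\geq N$ and sends $\bC[\bS_N]$ to $V^{\otimes N}$.

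For (ii)$\Leftrightarrow$(iii), use the fact that a finite-dimensional $\gl_V$-module of degree $N$ is determined up to isomorphism by its character in $\Lambda^N$, by semisimplicity and linear independence of Schur functions. The ungraded character of $\mGH^{\gl}_\lambda$ is obtained from $\mathrm{ch}_q(\mGH^{\gl}_\lambda)$ by setting $q_0=q_1=1$. This is legitimate because the module is finite-dimensional and concentrated in nonnegative multidegrees $\bar{r}\leq\mdeg(\Delta_\lambda)$ (Proposition~\ref{prp::GH::gl}(iv)), so the graded character is a polynomial in $q_0,q_1$. Since $\mathrm{ch}(V^{\otimes N})=p_1^N=h_1^N$, condition (iii) says exactly that the two characters coincide, which gives (ii).

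For the final assertion, invoke Theorem~\ref{conj::n!}($\imath$) to get (i) for every $\lambda$, and hence (ii). Theorem~\ref{conj::n!}($\imath\imath$) combined with $\mathrm{ch}_q(\mGH^{\gl}_\lambda)=\mathrm{Frob}_q\big(\bC[\bar{z}_1,\ldots,\bar{z}_N]/\euJ^{\bS}_\lambda\big)$ (Propositions~\ref{prp::GH::gl}(ii) and~\ref{prop::SW::graded::characters}) gives $\mathrm{ch}_q(\mGH^{\gl}_\lambda)=\widetilde{H}_\lambda(X;q_0,q_1)$.

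There is no real obstacle. The only point needing care is that the specialization at $q_0=q_1=1$ is a genuine evaluation of a polynomial, which follows from finite-dimensionality and the multidegree bound.
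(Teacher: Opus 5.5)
Your proposal is correct and follows essentially the same route as the paper. Like the paper, you identify $\mGH^{\gl}_\lambda$ as a $\gl_V$-module with the classical Schur--Weyl image $V^{\otimes N}\otimes_{\bS_N}\mGH^{\bS}_\lambda$ to get (i)$\Leftrightarrow$(ii), use that polynomial $\gl_V$-modules are semisimple and determined by their characters for (ii)$\Leftrightarrow$(iii), and combine Haiman's theorem with Proposition~\ref{prp::GH::gl}(ii) for the final claim. Your remark that the specialization at $q_0=q_1=1$ is a genuine polynomial evaluation, by Proposition~\ref{prp::GH::gl}(iv), is a detail the paper leaves implicit.
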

\begin{proof}
As a $\gl_V$-module, $\mGH^{\gl}_\lambda\simeq\SW\big(\mGH^{\bS}_\lambda\big)$ (with the classical functor $\SW$), so (i)$\Leftrightarrow$(ii) follows from the classical Schur--Weyl duality~\eqref{eq::Schur-Weyl}. Polynomial $\gl_V$-modules are semisimple and determined by their characters, hence (ii)$\Leftrightarrow$(iii). The last claim is Theorem~\ref{conj::n!} together with Proposition~\ref{prp::GH::gl}(ii).
\end{proof}

\subsubsection{The Schur--Weyl dual of the $\frac{N!}{k}$-conjecture}
Let $\lambda\vdash N+1$ be a Young diagram, and let $\nu_1,\ldots,\nu_k\vdash N$ be obtained from $\lambda$ by removing $k$ pairwise distinct corners as in~\eqref{eq::nu::corners}.
Following~\eqref{eq::GH::common::quotient}, we define the \emph{common quotient}
\begin{equation}
\label{eq::GHgl::common::quotient}
\mGH^{\gl}_{\nu_1,\ldots,\nu_k}:=S^N(V[\bar{z}])\big/\left(\euJ^{\gl}_{\nu_1}+\ldots+\euJ^{\gl}_{\nu_k}\right).
\end{equation}

\begin{proposition}
\label{prp::N!/k::gl}
\begin{enumerate}
\item The Schur--Weyl duality maps the sum of the annihilating ideals to the sum of the submodules $\euJ^{\gl}_{\nu_i}$ and the common quotient to the common quotient:
\[
\SW\big(\euJ^{\bS}_{\nu_1}+\ldots+\euJ^{\bS}_{\nu_k}\big)=\euJ^{\gl}_{\nu_1}+\ldots+\euJ^{\gl}_{\nu_k},
\qquad
\SW\big(\mGH^{\bS}_{\nu_1,\ldots,\nu_k}\big)\simeq\mGH^{\gl}_{\nu_1,\ldots,\nu_k}.
\]
Consequently,
\[
\mathrm{ch}_q\big(\mGH^{\gl}_{\nu_1,\ldots,\nu_k}\big)=\mathrm{Frob}_q\big(\mGH^{\bS}_{\nu_1,\ldots,\nu_k}\big)=\mathrm{Frob}_q\big(\mGH^{\bS}_{\nu_1}\cap\ldots\cap\mGH^{\bS}_{\nu_k}\big),
\]
and the weight subspace of $\mGH^{\gl}_{\nu_1,\ldots,\nu_k}$ of weight $(1^N,0^{m-N})$ has the same dimension as the intersection $\mGH^{\bS}_{\nu_1}\cap\ldots\cap\mGH^{\bS}_{\nu_k}$.
\item The $\gl_V[\bar{z}]$-module $\mGH^{\gl}_{\nu_1,\ldots,\nu_k}$ is the largest common cyclic quotient of the Garsia--Haiman modules $\mGH^{\gl}_{\nu_1},\ldots,\mGH^{\gl}_{\nu_k}$. It is generated by the image of $v_{N\varepsilon_1}$, and its head is $S^NV$, sitting in multidegree $\bar{0}$.
\item Let $S^N(V[\bar{z}])^{\vee}:=\bigoplus_{\bar{r}}\big(S^N(V[\bar{z}])_{\bar{r}}\big)^{*}$ be the graded dual module. Then the graded dual of the common quotient is the intersection of the graded duals of the Garsia--Haiman modules:
\[
\big(\mGH^{\gl}_{\nu_1,\ldots,\nu_k}\big)^{\vee}=\big(\mGH^{\gl}_{\nu_1}\big)^{\vee}\cap\ldots\cap\big(\mGH^{\gl}_{\nu_k}\big)^{\vee}\ \subset\ S^N(V[\bar{z}])^{\vee}.
\]
\end{enumerate}
\end{proposition}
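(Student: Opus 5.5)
The proof is a direct transport of Proposition~\ref{prp::N!/k::dual} through the equivalence of Theorem~\ref{thm::SW::currents}, using the lattice isomorphism between multigraded $\bS_N$-stable ideals of $\bC[\bar{z}_1,\ldots,\bar{z}_N]$ and $\gl_V[\bar{z}]$-submodules of $S^N(V[\bar{z}])$, given by $I\mapsto\Psi(V^{\otimes N}\otimes I)$.

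For item~(i), we will use that $\Psi$ is linear. This gives $\Psi\big(V^{\otimes N}\otimes\sum_i\euJ^{\bS}_{\nu_i}\big)=\sum_i\Psi\big(V^{\otimes N}\otimes\euJ^{\bS}_{\nu_i}\big)$. By Proposition~\ref{prp::GH::gl}(ii) each summand equals $\euJ^{\gl}_{\nu_i}$, so $\SW$ maps the sum of the annihilating ideals to $\sum_i\euJ^{\gl}_{\nu_i}$. Since $\SW$ is exact and $\SW(\bC[\bar{z}_1,\ldots,\bar{z}_N])=S^N(V[\bar{z}])$, applying it to the short exact sequence
$0\to\sum_i\euJ^{\bS}_{\nu_i}\to\bC[\bar{z}_1,\ldots,\bar{z}_N]\to\mGH^{\bS}_{\nu_1,\ldots,\nu_k}\to0$
yields $\SW(\mGH^{\bS}_{\nu_1,\ldots,\nu_k})\simeq\mGH^{\gl}_{\nu_1,\ldots,\nu_k}$. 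The character identity then follows from Proposition~\ref{prop::SW::graded::characters} together with the isomorphism of Proposition~\ref{prp::N!/k::dual}(ii). For the statement about dimensions, $\WS(\mGH^{\gl}_{\nu_1,\ldots,\nu_k})\simeq\mGH^{\bS}_{\nu_1,\ldots,\nu_k}$ because $\WS\circ\SW\simeq\Id$, and this module has the same dimension as the intersection by Proposition~\ref{prp::N!/k::dual}(ii).

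For item~(ii), we argue with kernels. A surjection $S^N(V[\bar{z}])\twoheadrightarrow M$ factors through every $\mGH^{\gl}_{\nu_i}$ if and only if its kernel contains every $\euJ^{\gl}_{\nu_i}$, which holds if and only if the kernel contains their sum. Hence the common quotient is the largest common cyclic quotient, exactly as in Proposition~\ref{prp::N!/k::dual}(iii). It is generated by the image of $v_{N\varepsilon_1}$, since this vector generates $S^N(V[\bar{z}])$. For the head, $\SW$ preserves heads, and the head of $\mGH^{\bS}_{\nu_1,\ldots,\nu_k}$ is the trivial representation in multidegree $\bar 0$ (Remark~\ref{rem::N!/k::not::cocyclic}), which corresponds to $S^NV$. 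Alternatively, the common quotient is a nonzero quotient of the projective cover $S^N(V[\bar{z}])$ of the simple module $S^NV$; its nonvanishing holds because $\sum_i\euJ^{\gl}_{\nu_i}\subset\sum_i\ker\phi_{\nu_i}$ lies in positive multidegrees and misses multidegree $\bar 0$.

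For item~(iii), each multigraded component of $S^N(V[\bar{z}])$ is finite-dimensional. In such a component, the graded dual of a quotient $S^N(V[\bar{z}])/K$ is identified with the annihilator $K^{\perp}\subset S^N(V[\bar{z}])^{\vee}$, and the annihilator of a sum is the intersection of the annihilators: $(\sum_i\euJ^{\gl}_{\nu_i})^{\perp}=\bigcap_i(\euJ^{\gl}_{\nu_i})^{\perp}$. This is the same linear-algebra identity used in Proposition~\ref{prp::N!/k::dual}(i), applied componentwise.

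No step is a genuine obstacle. The one point needing care is item~(iii): one must check that $(\euJ^{\gl}_{\nu_i})^{\perp}$ is a $\gl_V[\bar{z}]$-submodule of the dual for a consistent choice of dual action, either the transpose $\dagger$ action or the restricted dual. With that action fixed, the annihilator of a submodule is a submodule, and the intersection identity holds as an equality of modules, not just of graded vector spaces.
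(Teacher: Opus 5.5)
Your proposal is correct and is essentially the paper's own proof: item~(i) by transporting Proposition~\ref{prp::N!/k::dual} through the exact equivalence $\SW$ together with Proposition~\ref{prp::GH::gl}(ii); item~(ii) by the kernel-containment argument plus the head of $\mGH^{\bS}_{\nu_1,\ldots,\nu_k}$ (the paper cites Proposition~\ref{prp::GH::gl}(iv) instead); and item~(iii) by the identity $(\sum_iK_i)^{\perp}=\bigcap_iK_i^{\perp}$. One slip in your optional alternative argument for the head: $\sum_i\ker\phi_{\nu_i}$ does not lie in positive multidegrees (for $N\geq2$ each $\phi_{\nu_i}$ kills the whole degree-$\bar{0}$ component $S^NV$), so the correct reason that $\sum_i\euJ^{\gl}_{\nu_i}$ misses degree $\bar{0}$ is that each $\euJ^{\gl}_{\nu_i}$ is a proper graded submodule and therefore cannot meet the irreducible component $S^NV$ containing the generator $e_1^N$.
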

\begin{proof}
Item~(i) follows from Proposition~\ref{prp::GH::gl}(ii), from the fact that the equivalence $\SW$ preserves sums of submodules and quotients, and from Proposition~\ref{prp::N!/k::dual}(ii). Item~(ii) is Proposition~\ref{prp::N!/k::dual}(iii) transported by $\SW$, together with Proposition~\ref{prp::GH::gl}(iv). Item~(iii) is the identity $\big(\sum_iK_i\big)^{\perp}=\bigcap_iK_i^{\perp}$ for the annihilators $K_i^\perp$ of the submodules $K_i=\euJ^{\gl}_{\nu_i}$ in the graded dual space.
\end{proof}

As in Remark~\ref{rem::N!/k::not::cocyclic}, the module $\mGH^{\gl}_{\nu_1,\ldots,\nu_k}$ is not cocyclic for $k\geq2$ in general: for $\lambda=(3,2,1)$ its socle contains two copies of $V_{(2,2,1)}$, sitting in the multidegrees $(1,2)$ and $(2,1)$.

We now formulate the Schur--Weyl dual of Conjecture~\ref{conj::N!/k}.
Recall the symmetric function $\Phi_k$ of degree $2k-2$ defined in~\eqref{eq::Phi_k}.
Whenever $\Phi_k$ is Schur positive (which is the case for $k\leq6$), we denote by $M_k$ the polynomial $\gl_V$-module of degree $2k-2$ with character $\mathrm{ch}(M_k)=\Phi_k$. In other words, $M_k=\SW(W_k)$ and $W_k=\WS(M_k)$ for the classical functors, where $W_k$ is the $\bS_{2k-2}$-module from~\S\ref{sec::GH::N!/k}; in particular, $\dim W_k=\dim (M_k)_{(1^{2k-2},0^{m-2k+2})}=\frac{(2k-2)!}{k}$.

\begin{conjecture}[Schur--Weyl dual $\frac{N!}{k}$-conjecture]
\label{conj::N!/k::gl}
Let $\lambda\vdash N+1$ be a Young diagram, and let $\nu_1,\ldots,\nu_k\vdash N$ be obtained from $\lambda$ by removing $k$ pairwise distinct corners. Then
\begin{enumerate}
\item the weight subspace of $\mGH^{\gl}_{\nu_1,\ldots,\nu_k}$ of weight $(1^N,0^{m-N})$ has dimension $\frac{N!}{k}$;
\item the bigraded character of the common quotient is given by the intersection polynomial~\eqref{eq::SF::I}:
\[
\mathrm{ch}_q\big(\mGH^{\gl}_{\nu_1,\ldots,\nu_k}\big)=\gh_{\nu_1,\ldots,\nu_k}(X;q_0,q_1);
\]
\item \textup{(weak form)} the polynomial $\Phi_k$ is Schur positive, and there is an isomorphism of $\gl_V$-modules
\[
\mGH^{\gl}_{\nu_1,\ldots,\nu_k}\ \simeq\ M_k\otimes V^{\otimes(N-2k+2)}.
\]
In particular, the $\gl_V$-module $\mGH^{\gl}_{\nu_1,\ldots,\nu_k}$ (with the grading forgotten) depends only on $N$ and $k$, and neither on $\lambda$ nor on the choice of the corners.
\end{enumerate}
\end{conjecture}

\begin{proposition}
\label{prp::N!/k::gl::relations}
\begin{enumerate}
\item Items~(i) and~(ii) of Conjecture~\ref{conj::N!/k::gl} are equivalent, respectively, to items~(i) and~(ii) of Conjecture~\ref{conj::N!/k}.
\item If $\Phi_k$ is Schur positive (for instance, if $k\leq6$), then item~(ii) implies item~(iii).
\item Item~(iii) implies item~(i).
\item Item~(iii) is the Schur--Weyl dual of the isomorphism of $\bS_N$-modules~\eqref{eq::N!/k::ungraded}: $\mGH^{\bS}_{\nu_1}\cap\ldots\cap\mGH^{\bS}_{\nu_k}\simeq\mathrm{Ind}_{\bS_{2k-2}}^{\bS_N}W_k$.
\end{enumerate}
\end{proposition}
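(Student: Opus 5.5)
Items~(i) and~(iv) will be handled directly through the dictionary of Proposition~\ref{prp::N!/k::gl}, and items~(ii) and~(iii) by comparing specialized characters with Theorem~\ref{thm::KLO::specialization}.

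For item~(i), Proposition~\ref{prp::N!/k::gl}(i) identifies the weight subspace of $\mGH^{\gl}_{\nu_1,\ldots,\nu_k}$ of weight $(1^N,0^{m-N})$ with $\WS(\mGH^{\gl}_{\nu_1,\ldots,\nu_k})\simeq\mGH^{\bS}_{\nu_1,\ldots,\nu_k}$. By Proposition~\ref{prp::N!/k::dual}(ii), the latter is isomorphic as a bigraded $\bS_N$-module to the intersection $\mGH^{\bS}_{\nu_1}\cap\ldots\cap\mGH^{\bS}_{\nu_k}$. Hence the two dimension statements coincide. For the character statements, Proposition~\ref{prp::N!/k::gl}(i) gives $\mathrm{ch}_q(\mGH^{\gl}_{\nu_1,\ldots,\nu_k})=\mathrm{Frob}_q(\mGH^{\bS}_{\nu_1}\cap\ldots\cap\mGH^{\bS}_{\nu_k})$, so the two versions of item~(ii) are literally the same equality.

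For item~(ii), I will assume $\mathrm{ch}_q=\gh_{\nu_1,\ldots,\nu_k}$ and specialize at $q_0=q_1=1$. The specialization of the bigraded character is the ordinary $\gl_V$-character of the module with the grading forgotten, which is legitimate because the module is finite-dimensional. By Theorem~\ref{thm::KLO::specialization}, this character equals $h_1^{N-2k+2}\Phi_k=\mathrm{ch}(V^{\otimes(N-2k+2)})\,\mathrm{ch}(M_k)=\mathrm{ch}(M_k\otimes V^{\otimes(N-2k+2)})$. Polynomial $\gl_V$-modules are semisimple and determined by their characters, so the isomorphism in item~(iii) follows. The only point needing care is that $M_k$ exists, which is exactly the Schur positivity hypothesis.

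For item~(iii), the isomorphism of $\gl_V$-modules will be restricted to the weight $(1^N,0^{m-N})$. This weight space of a tensor product $M_k\otimes V^{\otimes(N-2k+2)}$ equals, via the classical functor $\WS$, the induced module $\mathrm{Ind}_{\bS_{2k-2}}^{\bS_N}W_k$. One can see this from characters, using $\mathrm{Frob}(\mathrm{Ind})=h_1^{N-2k+2}\mathrm{Frob}(W_k)$ and \eqref{eq::SW::characters}. Its dimension is $\binom{N}{2k-2}(N-2k+2)!\,\frac{(2k-2)!}{k}=\frac{N!}{k}$, which is item~(i).

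For item~(iv), the same computation together with item~(i) of the current proposition shows that applying the classical $\WS$ to the isomorphism in item~(iii) gives \eqref{eq::N!/k::ungraded}. Conversely, applying $\SW$ to \eqref{eq::N!/k::ungraded} returns item~(iii), since $\SW$ and $\WS$ are mutually inverse equivalences. No step is a genuine obstacle. The most delicate one is bookkeeping the identification $\WS(M\otimes V^{\otimes r})\simeq\mathrm{Ind}(\WS(M))$, which I would settle via characters and semisimplicity rather than by an explicit construction.
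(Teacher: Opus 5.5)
Your proposal is correct and follows essentially the same route as the paper. Item~(i) comes from the dictionary of Proposition~\ref{prp::N!/k::gl}(i). Item~(ii) comes from specializing at $q_0=q_1=1$ via Theorem~\ref{thm::KLO::specialization} and using that polynomial $\gl_V$-modules are determined by their characters. Items~(iii) and~(iv) come from identifying $\WS\big(M_k\otimes V^{\otimes(N-2k+2)}\big)$ with $\mathrm{Ind}_{\bS_{2k-2}}^{\bS_N}W_k$ through characters and \eqref{eq::SW::characters}, which gives dimension $\frac{N!}{k}$.
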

\begin{proof}
Item~(i) is a reformulation of Proposition~\ref{prp::N!/k::gl}(i).
(ii) By Theorem~\ref{thm::KLO::specialization}, item~(ii) gives $\mathrm{ch}\big(\mGH^{\gl}_{\nu_1,\ldots,\nu_k}\big)=h_1^{N-2k+2}\Phi_k=\mathrm{ch}\big(M_k\otimes V^{\otimes(N-2k+2)}\big)$, and polynomial $\gl_V$-modules are determined by their characters.
(iii) and (iv) The character of the tensor product $U\otimes V^{\otimes r}$ is $\mathrm{ch}(U)\,h_1^{r}$, and $h_1^{r}\,\mathrm{Frob}(W)=\mathrm{Frob}\big(\mathrm{Ind}_{\bS_{N-r}}^{\bS_N}W\big)$ for any $\bS_{N-r}$-module $W$. Since the Schur--Weyl duality is the identity on characters~\eqref{eq::SW::characters}, we have $\WS\big(M_k\otimes V^{\otimes(N-2k+2)}\big)\simeq\mathrm{Ind}_{\bS_{2k-2}}^{\bS_N}W_k$, whose dimension is $\frac{N!}{(2k-2)!}\cdot\frac{(2k-2)!}{k}=\frac{N!}{k}$.
\end{proof}

\begin{example}
\label{ex::N!/k::gl}
\begin{itemize}
\item For $k=2$ we have $\Phi_2=h_2$, so $M_2=S^2V$ and item~(iii) of Conjecture~\ref{conj::N!/k::gl} predicts
\[
\mGH^{\gl}_{\nu_1,\nu_2}\simeq S^2V\otimes V^{\otimes(N-2)}.
\]
For the smallest partition $\lambda=(2,1)$ we have $\nu_1=(2)$ and $\nu_2=(1,1)$. Since $\Delta_{(2)}=\pm(x_{1}-x_{2})$ and $\Delta_{(1,1)}=\pm(y_{1}-y_{2})$, the ideal $\euJ^{\bS}_{(2)}$ (resp.\ $\euJ^{\bS}_{(1,1)}$) contains all variables $y_i$ (resp.\ $x_i$). Hence $\euJ^{\gl}_{(2)}$ (resp.\ $\euJ^{\gl}_{(1,1)}$) contains all elements of $S^2(V[\bar{z}])$ of positive $y$-degree (resp.\ $x$-degree). Hence the common quotient $\mGH^{\gl}_{(2),(1,1)}$ is concentrated in multidegree $\bar{0}$ and equals $S^2V=M_2$, in agreement with $\mGH^{\bS}_{(2)}\cap\mGH^{\bS}_{(1,1)}=\bC\cdot1$.
\item For $k=3$ we have $M_3\simeq S^4V\oplus V_{(3,1)}\oplus V_{(2,2)}^{\oplus2}$. For the staircase $\lambda=(3,2,1)$, with $N=5$, Example~\ref{ex::N!/k::staircase3} gives
\[
\mathrm{ch}_q\big(\mGH^{\gl}_{(2,2,1),(3,1,1),(3,2)}\big)=s_{5}+(q_0+q_1)\,s_{41}+(q_0^2+q_0q_1+q_1^2)\,s_{32}+q_0q_1\,s_{311}+(q_0^2q_1+q_0q_1^2)\,s_{221},
\]
where $s_\mu=\mathrm{ch}(V_\mu)$. At $q_0=q_1=1$ this equals $h_1\cdot(s_4+s_{31}+2s_{22})=\mathrm{ch}(M_3\otimes V)$, as predicted.
\item More generally, for the staircase partition $\lambda=\delta_k=(k,k-1,\ldots,1)$ and all its $k$ corners we have $N=\frac{k(k+1)}{2}-1$, and item~(iii) predicts
\[
\mGH^{\gl}_{\nu_1,\ldots,\nu_k}\simeq M_k\otimes V^{\otimes\binom{k-1}{2}}.
\]
Its bigraded character $\gh_{\nu_1,\ldots,\nu_k}(X;q_0,q_1)$ is a $(q_0,q_1)$-deformation of $\mathrm{ch}(M_k)\,h_1^{\binom{k-1}{2}}$.
Note that for $k\geq3$ the degree $2k-2$ of $M_k$ is smaller than $N=\frac{k(k+1)}{2}-1$.
\end{itemize}
\end{example}

\begin{remark}
By Remark~\ref{rem::N!/k::evidence}, all three items of Conjecture~\ref{conj::N!/k::gl} hold for all choices of $k$ corners of the partitions $(2,1)$, $(3,1)$, $(3,2)$, $(2,2,1)$, $(4,1,1)$, $(3,3,1)$ (for $k=2$) and $(3,2,1)$, $(4,2,1)$, $(3,2,1,1)$ (for $k=3$).
\end{remark}

\subsection{Sign duality for Garsia--Haiman modules}
\label{sec::GH::sign}

In this subsection we use the Schur--Weyl dictionary of Theorem~\ref{thm::SW::currents} and the \emph{sign duality functor} $\bomega$ on $\gl_V[\bar{z}]$-modules.
We show that the Garsia--Haiman modules are self-dual with respect to the contravariant sign duality $U\mapsto\bomega(U)^{\dagger}$ (Theorem~\ref{thm::GH::self::dual}).
This is the last place where the symmetric group is used. All the later sections use only the statements of this subsection, and their proofs are formulated purely for $\gl_V[\bar{z}]$-modules.

\subsubsection{The opposite Garsia--Haiman modules}
The $\gl_V[\bar{z}]$-module $\Lambda^N(V[\bar{z}])$ is generated by the vector $e_1\wedge\ldots\wedge e_N$, and its head is $\Lambda^NV$, sitting in multidegree $\bar{0}$.
Similarly to~\eqref{eq::SW::Psi}, consider the linear map
\begin{gather*}
\Psi^{\wedge}\colon V^{\otimes N}\otimes\bC[\bar{z}_1,\ldots,\bar{z}_N]\longrightarrow\Lambda^N(V[\bar{z}]),
\\
e_{a_1}\otimes\ldots\otimes e_{a_N}\otimes\bar{z}_1^{\,\bar{r}_1}\cdots\bar{z}_N^{\,\bar{r}_N}\ \mapsto\
(e_{a_1}\bar{z}^{\,\bar{r}_1})\wedge\ldots\wedge(e_{a_N}\bar{z}^{\,\bar{r}_N}).
\end{gather*}
It identifies $\Lambda^N(V[\bar{z}])$ with $\SW\big(\mathrm{sgn}\otimes\bC[\bar{z}_1,\ldots,\bar{z}_N]\big)$. As in~\S\ref{sec::GH::SW}, for every $\mathcal{A}_N$-submodule $I\subset\bC[\bar{z}_1,\ldots,\bar{z}_N]$ we have $\SW(\mathrm{sgn}\otimes I)=\Psi^{\wedge}(V^{\otimes N}\otimes I)$.

Let $\lambda\vdash N$ be a Young diagram with the cells enumerated as $\bar{s}_1,\ldots,\bar{s}_N$, and let $\Delta_\lambda$ be the determinant~\eqref{eq::GH::Delta}.
For $1\leq a_1,\ldots,a_N\leq m$ we set $\wcheck{\delta}_\lambda^{a_1\ldots a_N}:=\Psi^{\wedge}\big(e_{a_1}\otimes\ldots\otimes e_{a_N}\otimes\Delta_\lambda\big)$. Explicitly,
\begin{equation}
\label{eq::GH::opposite::element}
\begin{split}
\wcheck{\delta}_\lambda^{a_1\ldots a_N}
&=\sum_{\sigma\in\bS_N}(-1)^{\sigma}\,(e_{a_1}\bar{z}^{\,\bar{s}_{\sigma(1)}})\wedge\ldots\wedge(e_{a_N}\bar{z}^{\,\bar{s}_{\sigma(N)}})\\
&=\sum_{\sigma\in\bS_N}(-1)^{\sigma}\left(\prod_{i=1}^{N}E_{a_i,i}\,\bar{z}^{\,\bar{s}_{\sigma(i)}}\right)\cdot e_1\wedge\ldots\wedge e_N
\qquad\text{if } a_1,\ldots,a_N>N.
\end{split}
\end{equation}
The second equality holds because for $a_1,\ldots,a_N>N$ the operators $E_{a_i,i}\bar{z}^{\,\bar{r}}$ pairwise commute, and $E_{a_i,i}\bar{z}^{\,\bar{r}}$ acts only on the $i$-th factor $e_i$ of $e_1\wedge\ldots\wedge e_N$, replacing it with $e_{a_i}\bar{z}^{\,\bar{r}}$.
We denote by $\wcheck{\mathcal{Q}}_\lambda\subset\Lambda^N(V[\bar{z}])$ the linear span of all elements $\wcheck{\delta}_\lambda^{a_1\ldots a_N}$.

\begin{definition}
\label{def::GH::opposite}
The \emph{opposite Garsia--Haiman module} is the quotient
\[
\wcheck{\mGH}{}^{\gl}_{\lambda}:=\Lambda^N(V[\bar{z}])\big/\wcheck{\euJ}{}^{\gl}_{\lambda},
\qquad\text{where}\quad
\wcheck{\euJ}{}^{\gl}_{\lambda}:=\Psi^{\wedge}\big(V^{\otimes N}\otimes\euJ^{\bS}_\lambda\big)=\SW\big(\mathrm{sgn}\otimes\euJ^{\bS}_\lambda\big).
\]
\end{definition}

As in Definition~\ref{not::GH::gl}, the submodule $\wcheck{\euJ}{}^{\gl}_{\lambda}$ admits an intrinsic description.
It is the largest $\gl_V[\bar{z}]$-submodule of $\Lambda^N(V[\bar{z}])$ contained in the kernel of the $\gl_V$-equivariant map
\[
\wcheck{\phi}_\lambda\colon\Lambda^N(V[\bar{z}])\to S^NV,\qquad
\Psi^{\wedge}\big(e_{a_1}\otimes\ldots\otimes e_{a_N}\otimes f\big)\mapsto\langle f,\Delta_\lambda\rangle\,e_{a_1}\cdots e_{a_N},
\]
and the proof is the same as that of Proposition~\ref{prp::GH::gl}(ii).

\begin{proposition}
\label{prp::GH::opposite}
Let $\lambda\vdash N$ be a Young diagram and let $m=\dim V\geq N$.
\begin{enumerate}
\item The elements $\wcheck{\delta}_\lambda^{a_1\ldots a_N}$ are symmetric in $a_1,\ldots,a_N$, and $\wcheck{\mathcal{Q}}_\lambda\simeq S^NV$ as a $\gl_V$-module. It is the image of the trivial representation $\mathrm{sgn}\otimes\bC\Delta_\lambda$ under the classical functor $\SW$.
\item The sign duality functor maps the Garsia--Haiman module to its opposite:
\[
\bomega\big(\mGH^{\gl}_{\lambda}\big)\simeq\wcheck{\mGH}{}^{\gl}_{\lambda}.
\]
\item The module $\wcheck{\mGH}{}^{\gl}_{\lambda}$ is cyclic and cocyclic. Its head is $\soc\big(\wcheck{\mGH}{}^{\gl}_{\lambda}\big)\simeq\Lambda^NV$, sitting in multidegree $\bar{0}$ and generated by the image of $e_1\wedge\ldots\wedge e_N$. Its socle is $\cosoc\big(\wcheck{\mGH}{}^{\gl}_{\lambda}\big)\simeq S^NV$; it is the image of $\wcheck{\mathcal{Q}}_\lambda$, sitting in the top multidegree $\mdeg(\Delta_\lambda)$.
\item $\mathrm{ch}_q\big(\wcheck{\mGH}{}^{\gl}_{\lambda}\big)=\omega\,\gh_\lambda(X;q_0,q_1)$.
\end{enumerate}
\end{proposition}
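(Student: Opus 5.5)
The plan is to transport everything through the current Schur--Weyl equivalence of Theorem~\ref{thm::SW::currents} and the lattice isomorphism for submodules of the free modules. We mirror the proofs of Proposition~\ref{prp::GH::gl}, with $\Psi$ replaced by $\Psi^{\wedge}$ and $\bC[\bar{z}_1,\ldots,\bar{z}_N]$ replaced by $\mathrm{sgn}\otimes\bC[\bar{z}_1,\ldots,\bar{z}_N]$.

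For item~(i), the permutation action of $\bS_N$ on $V^{\otimes N}\otimes\bC[\bar{z}_1,\ldots,\bar{z}_N]$ is compatible with $\Psi^{\wedge}$ only up to the sign, since $\Psi^{\wedge}$ is the projection onto $V^{\otimes N}\otimes_{\bS_N}(\mathrm{sgn}\otimes\bC[\bar{z}_1,\ldots,\bar{z}_N])$. Permuting the indices $a_1,\ldots,a_N$ by $\sigma$ in $\wcheck{\delta}_\lambda^{a_1\ldots a_N}$ amounts to permuting the variables of $\Delta_\lambda$. This produces $(-1)^\sigma$ from the alternation of $\Delta_\lambda$ and another $(-1)^\sigma$ from reordering the wedge factors, so the elements are symmetric. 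The line $\bC\Delta_\lambda$ carries the sign character, so $\mathrm{sgn}\otimes\bC\Delta_\lambda$ is trivial. Exactness of coinvariants in characteristic zero then gives $\wcheck{\mathcal{Q}}_\lambda=\Psi^{\wedge}(V^{\otimes N}\otimes\bC\Delta_\lambda)\simeq\SW(\mathrm{triv})\simeq S^NV$.

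For item~(ii), unwind Definition~\ref{def::sign::functor}. By Proposition~\ref{prp::GH::gl}(ii), $\WS(\mGH^{\gl}_\lambda)\simeq\bC[\bar{z}_1,\ldots,\bar{z}_N]/\euJ^{\bS}_\lambda$. The sign twist is exact and sends this to $(\mathrm{sgn}\otimes\bC[\bar{z}_1,\ldots,\bar{z}_N])/(\mathrm{sgn}\otimes\euJ^{\bS}_\lambda)$. Applying the exact functor $\SW$ and the identification $\SW(\mathrm{sgn}\otimes\bC[\bar{z}_1,\ldots,\bar{z}_N])\simeq\Lambda^N(V[\bar{z}])$ via $\Psi^{\wedge}$ yields $\Lambda^N(V[\bar{z}])/\wcheck{\euJ}{}^{\gl}_\lambda=\wcheck{\mGH}{}^{\gl}_\lambda$. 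The only point to check is that the identification of $\SW(\mathrm{sgn}\otimes I)$ with $\Psi^{\wedge}(V^{\otimes N}\otimes I)$ is functorial in the submodule $I$. This holds by the same coinvariant argument as for $\Psi$.

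Item~(iii) then follows from Proposition~\ref{prp::GH::gl}(iv) together with Lemma~\ref{lem::sign::properties}(i). The functor $\bomega$ is an equivalence preserving the multigrading, heads and socles, and it sends $S^NV$ to $\Lambda^NV$ and vice versa in the same multidegree. Hence the head is $\Lambda^NV$ in degree $\bar0$ and the socle is $S^NV$ in degree $\mdeg(\Delta_\lambda)$. To identify the generator and the socle explicitly, observe the following. The class of $1$ is cyclic in $\mathrm{sgn}\otimes\bC[\bar{z}_1,\ldots,\bar{z}_N]/\euJ^{\bS}_\lambda$, and its $\SW$-image is the span of $\Psi^{\wedge}(V^{\otimes N}\otimes 1)$, which is generated by $e_1\wedge\ldots\wedge e_N$. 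The class of $\Delta_\lambda$ spans the socle, and its image is the image of $\wcheck{\mathcal{Q}}_\lambda$ by item~(i). The vanishing outside multidegrees $\leq\mdeg(\Delta_\lambda)$ transfers directly from the $\bS_N$-side, as in Proposition~\ref{prp::GH::gl}(iv). Item~(iv) follows from item~(ii), from~\eqref{eq::sign::ch}, and from $\mathrm{ch}_q(\mGH^{\gl}_\lambda)=\gh_\lambda$.

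There is no serious obstacle. The only step requiring care is the sign bookkeeping in item~(i) and in matching $\Psi^{\wedge}$ with $\SW\circ(\mathrm{sgn}\otimes-)$, namely that the right $\bS_N$-action on $\mathcal{A}_N$-modules is twisted consistently on both sides.
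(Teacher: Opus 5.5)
Your proposal is correct and follows essentially the same route as the paper. You get the symmetry in (i) from the two cancelling signs and exactness of coinvariants, you prove (ii) by unwinding $\bomega=\SW\circ(\mathrm{sgn}\otimes-)\circ\WS$ with Proposition~\ref{prp::GH::gl}(ii), and you transport heads, socles and characters through $\bomega$ for (iii) and (iv), including the identification of the generator and the socle via the classes of $1$ and $\Delta_\lambda$.
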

\begin{proof}
(i) Let $\tau\in\bS_N$. Reordering the factors of the wedge product and substituting $\sigma\mapsto\sigma\tau$ in~\eqref{eq::GH::opposite::element}, we obtain two signs $(-1)^{\tau}$, which cancel. Hence $\wcheck{\delta}_\lambda^{a_{\tau(1)}\ldots a_{\tau(N)}}=\wcheck{\delta}_\lambda^{a_1\ldots a_N}$. The subspace $\bC\Delta_\lambda$ is the sign representation of $\bS_N$, so $\mathrm{sgn}\otimes\bC\Delta_\lambda$ is trivial. The restriction of $\Psi^{\wedge}$ to $V^{\otimes N}\otimes\bC\Delta_\lambda$ is the quotient map onto the space of coinvariants $V^{\otimes N}\otimes_{\bS_N}(\mathrm{sgn}\otimes\bC\Delta_\lambda)\simeq S^NV$.

(ii) By Proposition~\ref{prp::GH::gl}(ii), $\WS(\mGH^{\gl}_\lambda)\simeq\bC[\bar{z}_1,\ldots,\bar{z}_N]/\euJ^{\bS}_\lambda$. The functor $\SW$ is exact, hence
\[
\bomega\big(\mGH^{\gl}_\lambda\big)\simeq\SW\big(\mathrm{sgn}\otimes\bC[\bar{z}_1,\ldots,\bar{z}_N]\big)\big/\SW\big(\mathrm{sgn}\otimes\euJ^{\bS}_\lambda\big)
=\Lambda^N(V[\bar{z}])\big/\wcheck{\euJ}{}^{\gl}_{\lambda}.
\]

(iii) By item~(ii), this follows from Proposition~\ref{prp::GH::gl}(iv). The functor $\bomega$ preserves heads and socles and maps $S^NV$ and $\Lambda^NV$ to $\Lambda^NV$ and $S^NV$ respectively, without changing multidegrees. The socle is the image of $\wcheck{\mathcal{Q}}_\lambda=\SW(\mathrm{sgn}\otimes\bC\Delta_\lambda)$ because the socle of $\bC[\bar{z}_1,\ldots,\bar{z}_N]/\euJ^{\bS}_\lambda$ is spanned by the class of $\Delta_\lambda$.

(iv) This follows from item~(ii), \eqref{eq::sign::ch} and Proposition~\ref{prp::GH::gl}(ii).
\end{proof}

\subsubsection{Self-duality}
The main content of the sign duality is that the opposite Garsia--Haiman module is isomorphic to the contragredient dual of the Garsia--Haiman module with the reversed multigrading.

\begin{theorem}
\label{thm::GH::self::dual}
Let $\lambda\vdash N$ be a Young diagram and let $m=\dim V\geq N$. Then there are isomorphisms of multigraded modules
\begin{gather*}
\mathrm{sgn}\otimes\left(\bC[\bar{z}_1,\ldots,\bar{z}_N]/\euJ^{\bS}_\lambda\right)\simeq\left(\bC[\bar{z}_1,\ldots,\bar{z}_N]/\euJ^{\bS}_\lambda\right)^{\dagger}\big\langle\mdeg(\Delta_\lambda)\big\rangle,
\\
\wcheck{\mGH}{}^{\gl}_{\lambda}\simeq\bomega\big(\mGH^{\gl}_\lambda\big)\simeq\big(\mGH^{\gl}_\lambda\big)^{\dagger}\big\langle\mdeg(\Delta_\lambda)\big\rangle.
\end{gather*}
In other words, up to the shift of the multigrading by $\mdeg(\Delta_\lambda)$, the Garsia--Haiman module is mapped by the sign duality functor to its contravariant dual.
In particular,
\begin{equation}
\label{eq::GH::self::dual::char}
\omega\,\gh_\lambda(X;q_0,q_1)=T_\lambda\cdot\gh_\lambda\big(X;q_0^{-1},q_1^{-1}\big),
\qquad T_\lambda=q^{\mdeg(\Delta_\lambda)}.
\end{equation}
\end{theorem}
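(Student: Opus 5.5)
The plan is to prove the first isomorphism, on the $\mathcal{A}_N$ side, by an explicit pairing, and then transport it to $\gl_V[\bar{z}]$-modules with the Schur--Weyl dictionary. Write $R:=\bC[\bar{z}_1,\ldots,\bar{z}_N]/\euJ^{\bS}_\lambda$. By Proposition~\ref{prp::GH::pairing}(iv), $R$ is a multigraded Artinian Gorenstein algebra whose socle $\bC\cdot\Delta_\lambda$ sits in the top multidegree $\mdeg(\Delta_\lambda)$. We therefore use the socle functional
\[
\epsilon(f):=\langle f,\Delta_\lambda\rangle ,
\]
which is well defined on $R$ because $\euJ^{\bS}_\lambda\subset\Delta_\lambda^\perp$. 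From it we build the bilinear form $\beta(f,g):=\epsilon(fg)$ on $R$.

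First, nondegeneracy. If $\beta(f,g)=0$ for all $g$, then $\langle g,f(\partial)\Delta_\lambda\rangle=0$ for all $g$. Hence $f(\partial)\Delta_\lambda=0$, that is, $f\in\euJ^{\bS}_\lambda$. So $\beta$ is nondegenerate. Since $\epsilon$ vanishes outside multidegree $\mdeg(\Delta_\lambda)$, $\beta$ pairs $R_{\bar r}$ with $R_{\mdeg(\Delta_\lambda)-\bar r}$.

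Next, invariance. For polynomials we have $\beta(hf,g)=\beta(f,hg)$, and the anti-involution on $\mathcal{A}_N$ fixes the variables. Because $\Delta_\lambda$ is alternating and the pairing is $\bS_N$-invariant,
\[
\beta(\sigma f,\sigma g)=\epsilon(\sigma(fg))=(-1)^\sigma\beta(f,g).
\]
Consequently the map $f\mapsto\beta(f,-)$ is an isomorphism
\[
\mathrm{sgn}\otimes R\ \xrightarrow{\ \sim\ }\ R^\dagger\langle\mdeg(\Delta_\lambda)\rangle .
\]
To confirm this, check two things. The variables act compatibly: $(a\xi)(w)=\xi(a^Tw)$ with $a^T=a$ for polynomial $a$. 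The group acts compatibly: $\sigma\mapsto\sigma^{-1}$, which produces exactly the sign twist. Also check the degree bookkeeping: $R_{\bar r}$ maps to $(R_{\mdeg(\Delta_\lambda)-\bar r})^*$, which sits in degree $\bar r-\mdeg(\Delta_\lambda)$ of $R^\dagger$, i.e.\ in degree $\bar r$ of the shifted module. This gives the first displayed isomorphism.

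Then apply $\SW$. Proposition~\ref{prp::GH::gl}(ii) gives $\SW(R)\simeq\mGH^{\gl}_\lambda$. Lemma~\ref{lem::SW::dagger} gives $\SW(R^\dagger)\simeq\SW(R)^\dagger$, and $\SW$ clearly commutes with grading shifts. Hence
\[
\bomega(\mGH^{\gl}_\lambda)=\SW(\mathrm{sgn}\otimes R)\simeq(\mGH^{\gl}_\lambda)^\dagger\langle\mdeg(\Delta_\lambda)\rangle .
\]
Together with Proposition~\ref{prp::GH::opposite}(ii), this gives the second chain. For the character identity~\eqref{eq::GH::self::dual::char}, use~\eqref{eq::sign::ch} on the left-hand side. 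On the right-hand side, $\dagger$ preserves $\gl_V$-characters of components while inverting degrees, and the shift multiplies by $T_\lambda$. The main point requiring care is the sign and degree bookkeeping in the invariance step, namely that the twist by $\mathrm{sgn}$ arises precisely from the alternating property of $\Delta_\lambda$; the rest is formal.
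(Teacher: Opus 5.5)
Your proposal is correct and follows the paper's proof essentially step for step. The paper uses the same form $B(f,g)=\langle fg,\Delta_\lambda\rangle$ on $R$ and checks well-definedness, nondegeneracy, the sign twist coming from the alternating property of $\Delta_\lambda$, and the degree bookkeeping; it then transports the isomorphism by $\SW$ using Proposition~\ref{prp::GH::opposite}(ii), Proposition~\ref{prp::GH::gl}(ii) and Lemma~\ref{lem::SW::dagger}, and reads off the character identity from~\eqref{eq::sign::ch}, the inversion of degrees under $\dagger$ and the factor $T_\lambda$ from the shift.
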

\begin{proof}
Denote $R:=\bC[\bar{z}_1,\ldots,\bar{z}_N]/\euJ^{\bS}_\lambda$ and consider the bilinear form
\[
B\colon R\times R\to\bC,\qquad B(f,g):=\langle fg,\Delta_\lambda\rangle.
\]
It is well defined, because $\langle hg,\Delta_\lambda\rangle=\langle g,h(\partial)\Delta_\lambda\rangle=0$ for $h\in\euJ^{\bS}_\lambda$.
It is nondegenerate: if $B(f,g)=\langle g,f(\partial)\Delta_\lambda\rangle=0$ for all $g$, then $f(\partial)\Delta_\lambda=0$, i.e., $f\in\euJ^{\bS}_\lambda$.
It pairs the multidegree $\bar{r}$ with the multidegree $\mdeg(\Delta_\lambda)-\bar{r}$.
Since the multiplication is commutative, $B(x_if,g)=B(f,x_ig)$ and $B(y_if,g)=B(f,y_ig)$.
Since the pairing~\eqref{eq::GH::pairing} is $\bS_N$-invariant and $\Delta_\lambda$ is alternating, $B(\sigma f,\sigma g)=\langle fg,\sigma^{-1}\Delta_\lambda\rangle=(-1)^{\sigma}B(f,g)$.
Therefore the map
\[
\mathrm{sgn}\otimes R\longrightarrow R^{\dagger}\big\langle\mdeg(\Delta_\lambda)\big\rangle,\qquad f\mapsto B(f,-),
\]
is a bijection that commutes with the action of the variables. It intertwines the action of $\sigma$ on $\mathrm{sgn}\otimes R$ with the action of $\sigma$ on $R^\dagger$, because $\big(\sigma\cdot B(f,-)\big)(g)=B(f,\sigma^{-1}g)=(-1)^{\sigma}B(\sigma f,g)$. It also preserves the multidegrees: $f$ of multidegree $\bar{r}$ is sent to a functional on $R_{\mdeg(\Delta_\lambda)-\bar{r}}$, which sits in multidegree $\bar{r}-\mdeg(\Delta_\lambda)$ of $R^\dagger$, that is, in multidegree $\bar{r}$ of $R^{\dagger}\langle\mdeg(\Delta_\lambda)\rangle$. This proves the first isomorphism.

Applying $\SW$ and using Proposition~\ref{prp::GH::opposite}(ii), Proposition~\ref{prp::GH::gl}(ii) and Lemma~\ref{lem::SW::dagger}, we obtain the second one:
\[
\wcheck{\mGH}{}^{\gl}_{\lambda}\simeq\bomega\big(\mGH^{\gl}_\lambda\big)=\SW\big(\mathrm{sgn}\otimes R\big)\simeq\SW\big(R^{\dagger}\big)\big\langle\mdeg(\Delta_\lambda)\big\rangle\simeq\big(\mGH^{\gl}_\lambda\big)^{\dagger}\big\langle\mdeg(\Delta_\lambda)\big\rangle.
\]
Finally, $\mathrm{ch}_q(U^{\dagger})(q_0,q_1)=\mathrm{ch}_q(U)(q_0^{-1},q_1^{-1})$, and the shift $\langle\bar{k}\rangle$ multiplies the character by $q^{\bar{k}}$. Therefore~\eqref{eq::GH::self::dual::char} follows from~\eqref{eq::sign::ch}.
\end{proof}

\begin{remark}
By Theorem~\ref{conj::n!}, identity~\eqref{eq::GH::self::dual::char} becomes the well-known symmetry $\omega\widetilde{H}_\lambda(X;q,t)=T_\lambda\,\widetilde{H}_\lambda(X;q^{-1},t^{-1})$ of the modified Macdonald polynomials. Theorem~\ref{thm::GH::self::dual} is a module-theoretic version of this symmetry, which does not use Haiman's theorem. From the point of view of commutative algebra, it expresses the fact that $\bC[\bar{z}_1,\ldots,\bar{z}_N]/\euJ^{\bS}_\lambda$ is a Gorenstein algebra whose socle $\bC\Delta_\lambda$ is the sign representation (Proposition~\ref{prp::GH::pairing}(iv)).
\end{remark}

Applying the contragredient duality to the second isomorphism of Theorem~\ref{thm::GH::self::dual}, we get the form of self-duality that will be used in the rest of the paper. We write $y$-degree for the second component of the multidegree.

\begin{corollary}
\label{cor::GH::self::sign}
Let $\lambda\vdash N$ be a Young diagram and $m=\dim V\geq N$.
\begin{enumerate}
\item The contravariant sign duality $U\mapsto\bomega(U)^{\dagger}$ fixes the Garsia--Haiman module up to a shift of the multigrading:
\[
\bomega\big(\mGH^{\gl}_\lambda\big)^{\dagger}\simeq\mGH^{\gl}_\lambda\big\langle-\mdeg(\Delta_\lambda)\big\rangle .
\]
\item The module $\bomega(\mGH^{\gl}_\lambda)$ has the same graded dimensions as $\mGH^{\gl}_\lambda$, read in the opposite order: for all $a,b$
\[
\dim\big(\bomega(\mGH^{\gl}_\lambda)\big)_{(a,b)}=\dim\big(\mGH^{\gl}_\lambda\big)_{(n(\lambda^T)-a,\,n(\lambda)-b)},
\]
where $\mdeg(\Delta_\lambda)=(n(\lambda^T),n(\lambda))$.
\end{enumerate}
\end{corollary}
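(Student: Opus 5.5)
Both items follow formally from the second isomorphism of Theorem~\ref{thm::GH::self::dual}, namely $\bomega(\mGH^{\gl}_\lambda)\simeq(\mGH^{\gl}_\lambda)^{\dagger}\langle\mdeg(\Delta_\lambda)\rangle$, together with the elementary behaviour of $\dagger$ under shifts.

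For item~(i), apply the contragredient duality $\dagger$ to both sides of that isomorphism. The left-hand side becomes $\bomega(\mGH^{\gl}_\lambda)^\dagger$. On the right-hand side two facts are needed, both immediate from the definition of $U^\dagger$ (the dual of the component of multidegree $\bar r$ is placed in multidegree $-\bar r$). First, $\dagger$ commutes with shifts up to a sign: $(U\langle\bar k\rangle)^\dagger\simeq U^\dagger\langle-\bar k\rangle$. Indeed, $(U\langle\bar k\rangle)_{\bar r}=U_{\bar r-\bar k}$ goes to multidegree $-\bar r$, which is where $U^\dagger\langle-\bar k\rangle$ carries $(U_{\bar r-\bar k})^*$. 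Second, $U^{\dagger\dagger}\simeq U$. Together these give $\big((\mGH^{\gl}_\lambda)^\dagger\langle\mdeg(\Delta_\lambda)\rangle\big)^\dagger\simeq\mGH^{\gl}_\lambda\langle-\mdeg(\Delta_\lambda)\rangle$, which is item~(i). One should also check that $\dagger$ is a functor, so that it carries isomorphisms to isomorphisms; this is clear.

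For item~(ii), compare dimensions in the isomorphism of Theorem~\ref{thm::GH::self::dual} directly. The component of $(\mGH^{\gl}_\lambda)^\dagger\langle\mdeg(\Delta_\lambda)\rangle$ in multidegree $(a,b)$ is the component of $(\mGH^{\gl}_\lambda)^\dagger$ in multidegree $(a,b)-\mdeg(\Delta_\lambda)$. That component is the dual of $(\mGH^{\gl}_\lambda)_{\mdeg(\Delta_\lambda)-(a,b)}$, and $\mdeg(\Delta_\lambda)=(n(\lambda^T),n(\lambda))$ by~\eqref{eq::mdeg::Delta}. Dimensions of finite-dimensional spaces are preserved by duality, which gives the formula. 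Alternatively, item~(ii) can be read off from the character identity~\eqref{eq::GH::self::dual::char} by applying $\omega$, which preserves dimensions of $\gl_V$-modules up to replacing $V_\mu$ by $V_{\mu^T}$, and comparing coefficients of monomials. The direct argument avoids this subtlety, however: $\dim V_\mu\neq\dim V_{\mu^T}$ in general, so it is the module isomorphism, not the character identity, that should be used.

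There is no genuine obstacle here. The only point requiring care is the sign convention in the shift, $(U\langle\bar k\rangle)^\dagger\simeq U^\dagger\langle-\bar k\rangle$, and its consistency with the paper's convention $(M\langle k\rangle)_j=M_{j-k}$.
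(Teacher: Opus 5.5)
Your proof is correct and is the paper's argument: apply $\dagger$ to the isomorphism $\bomega(\mGH^{\gl}_\lambda)\simeq(\mGH^{\gl}_\lambda)^{\dagger}\langle\mdeg(\Delta_\lambda)\rangle$ of Theorem~\ref{thm::GH::self::dual}, use $(U^{\dagger}\langle\bar k\rangle)^{\dagger}\simeq U\langle-\bar k\rangle$ (your sign check is right), and read off item~(ii) componentwise, since $\bomega$ preserves the multigrading. Your closing caveat is unnecessary: by \eqref{eq::sign::ch} the left side of \eqref{eq::GH::self::dual::char} is already $\mathrm{ch}_q\big(\bomega(\mGH^{\gl}_\lambda)\big)$, so comparing coefficients of $q_0^aq_1^b$ and evaluating at $x_i=1$ also gives item~(ii), and the inequality $\dim V_\mu\neq\dim V_{\mu^T}$ never enters.
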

\begin{proof}
Both statements follow from Theorem~\ref{thm::GH::self::dual}, since $(U^{\dagger}\langle\bar{k}\rangle)^{\dagger}\simeq U\langle-\bar{k}\rangle$ and $\bomega$ preserves the multigrading.
\end{proof}

\section{Fusion product}
\label{sec::fusion}

In this section we recall the fusion product of cyclic modules introduced by Feigin and Loktev, and we prove a vanishing property of fusion products over abelian Lie algebras, which is used repeatedly in the rest of the paper.

\subsection{Fusion product of cyclic modules}

The \emph{fusion product} of cyclic modules was introduced in~\cite{FeLo}. It is widely used in representation theory and mathematical physics (see, e.g.,~\cite{FL2,FeMa3,Dumanski}).

Let $\fg$ be a Lie algebra and let $\fg[z]=\fg\otimes\bC[z]$ be the corresponding current algebra.
For a $\fg$-module $M$ and a number $\gamma\in\bC$, the \emph{evaluation module} $\ev_{\gamma}(M)$ is the $\fg[z]$-module obtained from $M$ by setting $z=\gamma$:
\[
(x z^k)\cdot m:=\gamma^k\, x\cdot m,\qquad x\in\fg,\ m\in M,\ k\in\bZ_{\geq0}.
\]
In other words, $\ev_\gamma(M)$ is the pullback of $M$ along the evaluation homomorphism $\fg[z]\to\fg$, $z\mapsto\gamma$. As a vector space it coincides with $M$, but its $\fg[z]$-module structure depends on $\gamma$.
Given $\fg$-modules $M_0,\ldots,M_l$ and pairwise distinct numbers $\gamma_0,\ldots,\gamma_l\in\bC$, we obtain the $\fg[z]$-module
\begin{equation}
\label{eq::ev::prod}
\ev_{\gamma_0}(M_0)\otimes\cdots\otimes\ev_{\gamma_l}(M_l),
\end{equation}
whose module structure depends on the parameters $\gamma_i$.
Denote by $x^{(i)}:=1^{\otimes i}\otimes x\otimes1^{\otimes(l-i)}$ the action of $x\in\fg$ on the $i$-th tensor factor of~\eqref{eq::ev::prod}. Then, for every polynomial $p\in\bC[z]$, the element $x\,p(z)\in\fg[z]$ acts on~\eqref{eq::ev::prod} by the operator
\begin{equation}
\label{eq::ev::action}
x\,p(z)\ \longmapsto\ \sum_{i=0}^{l}p(\gamma_i)\,x^{(i)};
\qquad\text{in particular,}\quad x z^k\ \longmapsto\ \sum_{i=0}^{l}\gamma_i^k\,x^{(i)}.
\end{equation}

The following lemma is elementary, but it plays a central role in our applications, so we give it a name.

\begin{lemma}[Vandermonde Lemma]
\label{lem::Vandermonde}
Let $\gamma_0,\ldots,\gamma_l$ be pairwise distinct and let $x\in\fg$. Then the following identities of linear spans of operators acting on~\eqref{eq::ev::prod} hold.
\begin{enumerate}
\item We have
\[
\Span\langle x^{(0)},\ldots,x^{(l)}\rangle=\Span\langle x,\,xz,\,\ldots,\,xz^{l}\rangle .
\]
In particular, $xz^k\in\Span\langle x^{(0)},\ldots,x^{(l)}\rangle$ for all $k\geq0$.
\item For all $0\leq j\leq\min(k,l+1)$ we have
\[
xz^k\in\Span\langle x,\,xz,\,\ldots,\,xz^{j-1}\rangle+\Span\langle x^{(j)},\,x^{(j+1)},\,\ldots,\,x^{(l)}\rangle .
\]
More precisely, $x\prod_{s=0}^{j-1}(z-\gamma_s)=\sum_{i=j}^{l}\prod_{s=0}^{j-1}(\gamma_i-\gamma_s)\,x^{(i)}$ acts only on the tensor factors with indices $i\geq j$.
\end{enumerate}
\end{lemma}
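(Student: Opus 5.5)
The plan is to prove both items by working in the algebra of operators generated by $x^{(0)},\ldots,x^{(l)}$ on~\eqref{eq::ev::prod}, using formula~\eqref{eq::ev::action}: for every polynomial $p$, the element $x\,p(z)$ acts by $\sum_i p(\gamma_i)x^{(i)}$. The map $p\mapsto \sum_i p(\gamma_i)x^{(i)}$ is linear in $p$. So the whole question reduces to interpolation of the values $(p(\gamma_0),\ldots,p(\gamma_l))$ by polynomials of bounded degree.

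For item~(i), the inclusion $\Span\langle x,xz,\ldots,xz^l\rangle\subset\Span\langle x^{(0)},\ldots,x^{(l)}\rangle$ is immediate from~\eqref{eq::ev::action}. For the reverse inclusion I would use Lagrange interpolation. Let $L_i(z)=\prod_{s\neq i}\frac{z-\gamma_s}{\gamma_i-\gamma_s}$, a polynomial of degree $l$; the points are pairwise distinct, so it is well defined. Since $L_i(\gamma_j)=\delta_{ij}$, the element $x\,L_i(z)$ acts as $x^{(i)}$. Hence $x^{(i)}\in\Span\langle x,\ldots,xz^l\rangle$. Equivalently, the transition matrix between the two sets is the Vandermonde matrix $(\gamma_i^k)_{0\le i,k\le l}$, which is invertible. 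The addendum $xz^k\in\Span\langle x^{(i)}\rangle$ for all $k$ is again~\eqref{eq::ev::action}.

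For item~(ii), take $p_j(z)=\prod_{s=0}^{j-1}(z-\gamma_s)$. By~\eqref{eq::ev::action}, $x\,p_j(z)$ acts by $\sum_i p_j(\gamma_i)x^{(i)}$. The coefficients vanish for $i<j$, which gives the displayed ``more precisely'' identity. Now divide $z^k$ by the monic polynomial $p_j$ of degree $j$: $z^k=q(z)p_j(z)+r(z)$ with $\deg r\le j-1$. This requires $j\le k$ only so that the statement is non-vacuous. The condition $j\le l+1$ makes sense of the range of indices, and for $j=l+1$ the second span is zero.

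The term $x\,r(z)$ lies in $\Span\langle x,\ldots,xz^{j-1}\rangle$. The term $x\,q(z)p_j(z)$ acts by $\sum_{i\ge j}q(\gamma_i)p_j(\gamma_i)x^{(i)}\in\Span\langle x^{(j)},\ldots,x^{(l)}\rangle$. This proves item~(ii).

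There is no real obstacle here. The only point requiring care is to keep the identities at the level of operators on~\eqref{eq::ev::prod}, not in $\fg[z]$. For $j=l+1$ the argument says that $x\,p_{l+1}(z)$ acts by zero, which is consistent with item~(i).
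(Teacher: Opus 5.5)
Your proposal is correct and follows the same route as the paper. For item~(i) the paper uses invertibility of the Vandermonde matrix $(\gamma_i^k)_{0\le i,k\le l}$, and your Lagrange polynomials $L_i$ simply write down its inverse explicitly. For item~(ii) the paper, like you, divides $z^k$ with remainder by $p_j(z)=\prod_{s=0}^{j-1}(z-\gamma_s)$. Your side remark is also right: the restriction $j\le k$ is not actually needed, since for $j>k$ one has $q=0$ and the claim is trivial.
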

\begin{proof}
(i) By~\eqref{eq::ev::action}, the operators $x,xz,\ldots,xz^l$ are expressed through $x^{(0)},\ldots,x^{(l)}$ by the Vandermonde matrix $(\gamma_i^k)_{0\leq i,k\leq l}$. This matrix is invertible because the numbers $\gamma_i$ are pairwise distinct.

(ii) The polynomial $p_j(z):=\prod_{s=0}^{j-1}(z-\gamma_s)$ vanishes at $\gamma_0,\ldots,\gamma_{j-1}$, so the second claim follows from~\eqref{eq::ev::action}. Now divide $z^k$ by $p_j$ with remainder: $z^k=q(z)p_j(z)+r(z)$ with $\deg r<j$. By~\eqref{eq::ev::action}, the operator $x\,q(z)p_j(z)=\sum_{i\geq j}q(\gamma_i)p_j(\gamma_i)\,x^{(i)}$ acts only on the factors with indices $i\geq j$. The operator $x\,r(z)$ is a linear combination of $x,xz,\ldots,xz^{j-1}$.
\end{proof}

\begin{corollary}
\label{cor::ev::cyclic}
Let $M_0,\ldots,M_l$ be cyclic $\fg$-modules with cyclic vectors $v_0,\ldots,v_l$, and assume that the numbers $\gamma_0,\ldots,\gamma_l$ are pairwise distinct. Then the vector $v_0\otimes\cdots\otimes v_l$ generates the tensor product~\eqref{eq::ev::prod} as a $\fg[z]$-module.
\end{corollary}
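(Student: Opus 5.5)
The plan is to show that the $\fg[z]$-submodule $U:=U(\fg[z])\cdot(v_0\otimes\cdots\otimes v_l)$ of~\eqref{eq::ev::prod} contains every vector of the form $u_0v_0\otimes u_1v_1\otimes\cdots\otimes u_lv_l$ with $u_i\in U(\fg)$. Since each $v_i$ is cyclic, such vectors span $M_0\otimes\cdots\otimes M_l$, which will finish the argument.

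The key input is the Vandermonde Lemma~\ref{lem::Vandermonde}(i). For every $x\in\fg$ and every $i$, it shows that the operator $x^{(i)}$, acting on the $i$-th tensor factor only, is a linear combination of the operators $x,xz,\ldots,xz^l$ coming from $\fg[z]$. Hence the image of $U(\fg[z])$ in $\End(\ev_{\gamma_0}(M_0)\otimes\cdots\otimes\ev_{\gamma_l}(M_l))$ contains all the operators $x^{(i)}$, $x\in\fg$, $0\le i\le l$. It is closed under products, so it contains every monomial in these operators. For $x,y\in\fg$ with $i\neq j$ the operators $x^{(i)}$ and $y^{(j)}$ commute. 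Therefore any product of the form $(x_{1}\cdots x_{p})^{(0)}(y_1\cdots y_r)^{(1)}\cdots$, that is, $u_0\otimes u_1\otimes\cdots\otimes u_l$ with $u_i$ monomials in $U(\fg)$, lies in this image.

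Applying these operators to $v_0\otimes\cdots\otimes v_l$ gives $u_0v_0\otimes\cdots\otimes u_lv_l\in U$. By linearity this holds for all $u_i\in U(\fg)$, and cyclicity of each $M_i$ yields $U=\bigotimes_i M_i$. There is no real obstacle; the only point requiring care is that the Vandermonde matrix is invertible, which needs the $\gamma_i$ to be pairwise distinct, and this is exactly the hypothesis.
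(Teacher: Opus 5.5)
Your proposal is correct and takes the same route as the paper. The Vandermonde Lemma~\ref{lem::Vandermonde}(i) puts every operator $x^{(i)}$ into the image of $U(\fg[z])$, hence the whole image of $U(\fg)^{\otimes(l+1)}$, which acts cyclically on $v_0\otimes\cdots\otimes v_l$; you just spell out the closure under products and the commutation of operators on different factors, which the paper leaves implicit.
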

\begin{proof}
By Lemma~\ref{lem::Vandermonde}(i), the image of $U(\fg[z])$ in the endomorphisms of~\eqref{eq::ev::prod} contains all the operators $x^{(i)}$. Hence it contains the image of $U(\fg)^{\otimes(l+1)}$, which acts cyclically on $v_0\otimes\cdots\otimes v_l$.
\end{proof}

The module~\eqref{eq::ev::prod} is not graded. However, it carries a natural filtration.
The universal enveloping algebra $U(\fg[z])$ is filtered by the $z$-degree: $U(\fg[z])_{\leq k}$ is spanned by the products $(x_1z^{i_1})\cdots(x_sz^{i_s})$ with $i_1+\ldots+i_s\leq k$.
For a cyclic $\fg[z]$-module $M$ with a cyclic vector $v$, this filtration induces the \emph{fusion filtration}
\begin{equation}
\label{eq::fusion::filtration}
\calF_{-1}M:=0,\qquad
\calF_{k}M:=U(\fg[z])_{\leq k}\cdot v,\quad k\geq0.
\end{equation}
Equivalently, $\calF_0M=U(\fg)\cdot v$ and $\calF_kM=\calF_{k-1}M+\sum_{i=1}^{k}(\fg z^i)\cdot\calF_{k-i}M$.
Every $\calF_kM$ is a $\fg$-submodule, $\bigcup_k\calF_kM=M$, and $(\fg z^i)\cdot\calF_kM\subset\calF_{k+i}M$.
Consequently, the associated graded space $\mathrm{gr}^{\calF}M=\bigoplus_{k\geq0}\calF_kM/\calF_{k-1}M$ is a graded $\fg[z]$-module, where $xz^i$ acts by an operator of degree $i$. It is cyclic and is generated by the image of $v$ in degree $0$.

\begin{definition}
\label{def::fusion}
Let $M_0,\ldots,M_l$ be cyclic $\fg$-modules with chosen cyclic vectors $v_0,\ldots,v_l$, and let $\gamma_0,\ldots,\gamma_l\in\bC$ be pairwise distinct.
The \emph{fusion product} is the graded $\fg[z]$-module
\[
[M_0\star_z\cdots\star_z M_l]_{\gamma_0,\ldots,\gamma_l}:=\bigoplus_{k\geq0}\calF_k\big/\calF_{k-1}\Big[\ev_{\gamma_0}(M_0)\otimes\cdots\otimes\ev_{\gamma_l}(M_l)\Big],
\]
where $\calF_\bullet$ is the fusion filtration~\eqref{eq::fusion::filtration} with respect to the cyclic vector $v_0\otimes\cdots\otimes v_l$ (see Corollary~\ref{cor::ev::cyclic}).
When the fusion product does not depend on the parameters $\gamma_i$, we omit them from the notation and write $M_0\star_z\cdots\star_z M_l$.
\end{definition}

\begin{notation}
For an element $c\in\calF_k\setminus\calF_{k-1}$ we denote by $\widehat{c}$ its image in $\calF_k/\calF_{k-1}$, the \emph{symbol} of $c$. In particular, $\widehat{v}$ denotes the symbol of $v=v_0\otimes\cdots\otimes v_l$; it is the cyclic vector of the fusion product.
By construction, for $x_1,\ldots,x_s\in\fg$ and $R:=r_1+\ldots+r_s$, the element $(x_1z^{r_1})\cdots(x_sz^{r_s})\cdot\widehat{v}$ of the fusion product is the class of $(x_1z^{r_1})\cdots(x_sz^{r_s})\cdot v$ in $\calF_R/\calF_{R-1}$. In particular, it vanishes if and only if $(x_1z^{r_1})\cdots(x_sz^{r_s})\cdot v\in\calF_{R-1}$.
\end{notation}

\begin{remark}
\label{rem::fusion::graded}
\begin{enumerate}
\item Since the fusion filtration is exhaustive, the fusion product has the same dimension as $M_0\otimes\cdots\otimes M_l$. If the $\fg$-modules $M_i$ are finite-dimensional and $\fg$ acts on them semisimply (e.g., $\fg=\gl_V$ and the $M_i$ are polynomial representations), then the fusion product is isomorphic to $M_0\otimes\cdots\otimes M_l$ as a $\fg$-module.
\item Suppose that $\fg$ carries an additional grading (for instance, $\fg=\gl_V[x]$ graded by the $x$-degree, so that $\fg[y]=\gl_V[x,y]$), the modules $M_i$ are graded, and the cyclic vectors $v_i$ are homogeneous. Then the evaluation at $z=\gamma_i$ does not affect this grading. Hence the subspaces $\calF_k$ are graded, and the fusion product is bigraded by the $z$-degree and the additional grading. This is the setting of the iterated fusion products of~\S\ref{sec::sub::fusion}.
\end{enumerate}
\end{remark}

In practice, fusion products are hard to control. There are very few cases in which one can prove that a fusion product does not depend on the evaluation parameters. Moreover, even when it does not depend on the parameters, it may still depend on the choice of the cyclic vectors. In almost all constructions of this paper, the cyclic vectors are the highest weight vectors of the corresponding representations.
The first example of a fusion product that is independent of the parameters, the local Weyl module, is recalled in~\S\ref{sec::Weyl} (Theorem~\ref{thm::Weyl::fusion}).
The main goal of this paper is to relate the Garsia--Haiman modules $\mGH_{\lambda}^{\gl}$ with certain iterated fusion products of $N$ copies of the tautological representation $V$.

\subsection{Fusion product of graded cyclic $\fg[z]$-modules}
\label{sec::fusion::graded}

The construction of Definition~\ref{def::fusion} can be applied not only to $\fg$-modules but also to graded cyclic modules over the current algebra $\fg[z]$ (see~\cite{FeLo} and~\cite[\S1.2]{Dumanski}). This allows one to iterate fusion products.

A \emph{graded} $\fg[z]$-module is a $\fg[z]$-module $M=\bigoplus_{k\geq0}M_k$ such that $(xz^s)\cdot M_k\subset M_{k+s}$ for all $x\in\fg$ and $s,k\geq0$. We say that $M$ is \emph{graded cyclic} with cyclic vector $m$ if $m\in M_0$ and $M=U(\fg[z])\cdot m$.
If such a module $M$ is finite-dimensional, then $M=\bigoplus_{k=0}^{D}M_k$ for some $D$. Hence the subalgebra $\fg\otimes z^{D+1}\bC[z]$ acts on $M$ by zero, since it raises the degree by more than $D$.
Every $\fg$-module $M$ is a graded $\fg[z]$-module concentrated in degree $0$, on which $\fg\otimes z\bC[z]$ acts by zero; this is the evaluation module $\ev_0(M)$.

For $\gamma\in\bC$ denote by $\tau_\gamma$ the automorphism of the Lie algebra $\fg[z]$ given by $x\,p(z)\mapsto x\,p(z+\gamma)$.
For a $\fg[z]$-module $M$ we denote by $M(\gamma)$ the pullback of $M$ along $\tau_\gamma$. In other words, $M(\gamma)=M$ as a vector space, and $xz^s$ acts on $M(\gamma)$ as the element $x(z+\gamma)^s$ acts on $M$.
The module $M(\gamma)$ is no longer graded for $\gamma\neq0$, but it is cyclic with the same cyclic vector. For a $\fg$-module $M$ we have $\ev_0(M)(\gamma)=\ev_\gamma(M)$.

\begin{lemma}
\label{lem::fusion::graded::cyclic}
Let $M_0,\ldots,M_l$ be finite-dimensional graded cyclic $\fg[z]$-modules with cyclic vectors $m_0,\ldots,m_l$, and let $\gamma_0,\ldots,\gamma_l\in\bC$ be pairwise distinct. Then the $\fg[z]$-module $M_0(\gamma_0)\otimes\cdots\otimes M_l(\gamma_l)$ is generated by the vector $m_0\otimes\cdots\otimes m_l$.
\end{lemma}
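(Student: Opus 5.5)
The plan is to reduce the statement to Corollary~\ref{cor::ev::cyclic} by a Vandermonde-type argument, now with graded modules in place of $\fg$-modules. The new difficulty is that $xz^s$ acts on $M_i(\gamma_i)$ as $x(z+\gamma_i)^s$ rather than by a scalar multiple of $x$. So we cannot directly isolate the operators $x^{(i)}$. Instead we aim to show that the image $A$ of $U(\fg[z])$ in $\End\big(M_0(\gamma_0)\otimes\cdots\otimes M_l(\gamma_l)\big)$ contains, for every $i$ and every element $u\in\fg[z]$, the operator $u^{(i)}$: the action of $u$ on the $i$-th factor $M_i$, with its original graded structure, and identity elsewhere. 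Once this holds, $A$ contains the image of $U(\fg[z])^{\otimes(l+1)}$. Since each $M_i$ is generated by $m_i$ over $U(\fg[z])$, it follows that $m_0\otimes\cdots\otimes m_l$ is cyclic.

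To construct $u^{(i)}$, use finite-dimensionality. Choose $D$ with $M_j=\bigoplus_{k\le D}(M_j)_k$ for all $j$. Then $\fg\otimes z^{D+1}\bC[z]$ acts by zero on each $M_j$ with its graded structure. Hence on $M_j(\gamma_j)$ the element $x\,p(z)$ acts through $x\,p(z+\gamma_j)$, and this depends only on $p$ modulo $(z-\gamma_j)^{D+1}$. The action on the tensor product is
\[
x\,p(z)\longmapsto\sum_j \big(x\,p(z+\gamma_j)\big)^{(j)} .
\]
By the Chinese remainder theorem, since the $\gamma_j$ are pairwise distinct, the ideals $(z-\gamma_j)^{D+1}$ are pairwise coprime. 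So for any target polynomial $r(z)$ and index $i$ there is $p\in\bC[z]$ with
\[
p\equiv r(z-\gamma_i)\ \mathrm{mod}\ (z-\gamma_i)^{D+1},\qquad p\equiv 0\ \mathrm{mod}\ (z-\gamma_j)^{D+1}\ \ (j\ne i).
\]
For this $p$, the element $x\,p(z)$ acts as $(x\,r(z))^{(i)}$. This plays the role of Lemma~\ref{lem::Vandermonde}, with the Vandermonde matrix replaced by a confluent one.

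With these operators in hand, the generation argument is the same as in Corollary~\ref{cor::ev::cyclic}. The operators $(x r(z))^{(i)}$ for all $i$ lie in $A$, and they generate the action of $U(\fg[z])^{\otimes(l+1)}$. Applying them to $m_0\otimes\cdots\otimes m_l$ produces $M_0\otimes\cdots\otimes M_l$.

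The only point needing care, and the main potential obstacle, is the truncation step. We must check that the shifted action genuinely factors through $\bC[z]/(z-\gamma_j)^{D+1}$, which comes from the grading bound. We must also confirm that the Chinese remainder interpolation is compatible with the Lie algebra action, not only with individual generators. This holds because we only need linear combinations of single elements $x\,p(z)$, and products of these are then automatically in $A$.
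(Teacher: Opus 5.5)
Your proposal is correct and is essentially the paper's argument: bound the grading so that $\fg\otimes z^{n}\bC[z]$ acts by zero, then use the Chinese remainder theorem for the pairwise coprime ideals $(z-\gamma_i)^{n}$ to realize each factorwise action inside the image of $U(\fg[z])$, and conclude because each $m_i$ generates its factor. The only cosmetic difference is that you recover the original (unshifted) action on $M_i$ via $p\equiv r(z-\gamma_i)$, whereas the paper recovers the shifted action on $M_i(\gamma_i)$; either works.
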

\begin{proof}
Choose $n$ such that $\fg\otimes z^{n}\bC[z]$ acts by zero on all $M_i$. Then $\fg\otimes(z-\gamma_i)^{n}\bC[z]$ acts by zero on $M_i(\gamma_i)$.
The polynomials $(z-\gamma_0)^n,\ldots,(z-\gamma_l)^n$ are pairwise coprime. By the Chinese remainder theorem, for every $i$ and every $p\in\bC[z]$ there is a polynomial $q\in\bC[z]$ with $q\equiv p$ modulo $(z-\gamma_i)^n$ and $q\equiv0$ modulo $(z-\gamma_{i'})^n$ for all $i'\neq i$.
The element $x\,q(z)$ acts on the tensor product as $x\,p(z)$ on the $i$-th factor and as zero on all other factors.
Hence the image of $U(\fg[z])$ in the endomorphisms of the tensor product contains the image of $U(\fg[z])^{\otimes(l+1)}$, acting factorwise. The latter acts cyclically on $m_0\otimes\cdots\otimes m_l$, because each $m_i$ generates $M_i(\gamma_i)$.
\end{proof}

\begin{definition}[{\cite{FeLo}, \cite[\S1.2]{Dumanski}}]
\label{def::fusion::graded}
Let $M_0,\ldots,M_l$ be finite-dimensional graded cyclic $\fg[z]$-modules with cyclic vectors $m_0,\ldots,m_l$, and let $\gamma_0,\ldots,\gamma_l\in\bC$ be pairwise distinct.
The \emph{fusion product} of $M_0,\ldots,M_l$ is the associated graded module
\[
[M_0\star_z\cdots\star_z M_l]_{\gamma_0,\ldots,\gamma_l}:=\bigoplus_{k\geq0}\calF_k\big/\calF_{k-1}\Big[M_0(\gamma_0)\otimes\cdots\otimes M_l(\gamma_l)\Big]
\]
with respect to the fusion filtration $\calF_k=U(\fg[z])_{\leq k}\cdot(m_0\otimes\cdots\otimes m_l)$ of~\eqref{eq::fusion::filtration}.
\end{definition}

For $\fg$-modules, i.e., for graded $\fg[z]$-modules concentrated in degree $0$, Definition~\ref{def::fusion::graded} coincides with Definition~\ref{def::fusion}, because $\ev_0(M_i)(\gamma_i)=\ev_{\gamma_i}(M_i)$.
As in Remark~\ref{rem::fusion::graded}, the fusion product is a graded cyclic $\fg[z]$-module generated by $\widehat{m_0\otimes\cdots\otimes m_l}$ in degree $0$, and its dimension equals $\prod_i\dim M_i$.
Note that the gradings of the modules $M_i$ are not used in the grading of the fusion product (they are destroyed by the shifts $\tau_{\gamma_i}$); the grading of the fusion product is the filtration degree.
Since the fusion product is again a finite-dimensional graded cyclic $\fg[z]$-module, it can be fused with other modules. This gives \emph{iterated fusion products}
\begin{equation}
\label{eq::fusion::iterated::same}
\Big[\,[M_{0,1}\star_z\cdots\star_z M_{0,k_0}]_{\beta_0}\ \star_z\ \cdots\ \star_z\ [M_{l,1}\star_z\cdots\star_z M_{l,k_l}]_{\beta_l}\,\Big]_{\gamma_0,\ldots,\gamma_l},
\end{equation}
where $\beta_i=(\beta_{i,1},\ldots,\beta_{i,k_i})$ are the parameters of the inner fusion products.

The fusion product does not change under affine transformations of the parameters.

\begin{lemma}
\label{lem::fusion::affine}
In the setting of Definition~\ref{def::fusion::graded}, for all $a\in\bC^{\times}$ and $b\in\bC$ there is an isomorphism of graded $\fg[z]$-modules
\[
[M_0\star_z\cdots\star_z M_l]_{a\gamma_0+b,\ldots,a\gamma_l+b}\ \simeq\ [M_0\star_z\cdots\star_z M_l]_{\gamma_0,\ldots,\gamma_l},
\]
mapping the cyclic vector to the cyclic vector.
In particular, the fusion product of two modules ($l=1$) does not depend on the parameters.
\end{lemma}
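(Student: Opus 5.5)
The plan is to realize the affine change of parameters by a Lie algebra automorphism of $\fg[z]$ that preserves the $z$-degree filtration of $U(\fg[z])$, and then pass to associated graded modules. Let $\alpha$ be the automorphism $x\,p(z)\mapsto x\,p(az+b)$ of $\fg[z]$; write $\alpha_0$ for its linear part $x\,p(z)\mapsto x\,p(az)$. First I will compare the modules before taking associated graded. Consider $M:=M_0(a\gamma_0+b)\otimes\cdots\otimes M_l(a\gamma_l+b)$ and $M':=M_0(\gamma_0)\otimes\cdots\otimes M_l(\gamma_l)$. The two constructions are not related by a pullback along $\alpha$ alone, because the modules $M_i$ are graded rather than evaluation modules: $xz^s$ acts on $M_i(\gamma)$ as $x(z+\gamma)^s$. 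So I will use in addition the grading automorphism of each $M_i$. Let $D_i$ be the operator on $M_i$ acting by $a^{k}$ on $M_i(k)$, where $M_i(k)$ denotes the degree $k$ component. Then $D_i\,(xz^s)\,D_i^{-1}=a^s\,xz^s$ on $M_i$. Put $D:=D_0\otimes\cdots\otimes D_l$; note that $D$ fixes $m_0\otimes\cdots\otimes m_l$, since each $m_i$ sits in degree $0$.

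The key step is to check that $D\colon M'\to M$ intertwines the action of $u\in\fg[z]$ on $M'$ with the action of $\alpha'(u)$ on $M$, where $\alpha'(x\,p(z))=x\,p((z-b)/a)$, the inverse affine substitution. On the $i$-th factor, $x\,p(z)$ acts on $M_i(\gamma_i)$ as $x\,p(z+\gamma_i)$ on $M_i$. On the other side, $x\,p((z-b)/a)$ acts on $M_i(a\gamma_i+b)$ as $x\,p((z+a\gamma_i)/a)=x\,p(z/a+\gamma_i)$ on $M_i$. Conjugation by $D_i$ turns $xz^s$ into $a^s xz^s$, so $D_i$ converts $x\,p(z+\gamma_i)$ into $x\,p(az+\gamma_i)$. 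The variable must be rescaled in the correct direction for these to match, so I will choose $D_i$ to act by $a^{-k}$ or $a^{k}$ as needed. This verification is the main (though routine) obstacle: getting the direction of the scaling right. Since both $x$ and the shifts act factorwise via the coproduct, the factorwise identity implies the identity on the tensor product.

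Next I will use the filtration. The automorphism $\alpha'$ maps $U(\fg[z])_{\le k}$ onto itself, because an affine substitution with $a\neq0$ preserves degree in $z$. Its associated graded is the scaling $xz^s\mapsto a^{-s}xz^s$, which is again an automorphism. Hence $D$ maps $\calF_k(M')=U_{\le k}\cdot v$ onto $\alpha'(U_{\le k})\cdot v=U_{\le k}\cdot v=\calF_k(M)$. Therefore $D$ induces a linear isomorphism of the associated graded spaces, sending the cyclic vector to the cyclic vector. It intertwines the action of $xz^s$ with the action of $a^{-s}xz^s$; the lower-order terms of $\alpha'$ die in $\mathrm{gr}$. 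Composing with the grading automorphism of the fusion product, which acts by $a^{k}$ in degree $k$, removes the factor $a^{-s}$ and gives a genuine isomorphism of graded $\fg[z]$-modules.

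Finally, for $l=1$ any pair $(\gamma_0,\gamma_1)$ of distinct numbers is the image of $(0,1)$ under the affine map $t\mapsto(\gamma_1-\gamma_0)t+\gamma_0$. The independence of the parameters for two modules then follows from the first part.
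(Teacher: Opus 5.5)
Your proposal is correct and is essentially the paper's proof: the paper uses the automorphism $x\,p(z)\mapsto x\,p(az+b)$ together with the operator $D$ acting by $a^{k_0+\ldots+k_l}$. It shows $\rho_{a\gamma+b}(u)=D^{-1}\rho_\gamma(\theta(u))D$ and rescales by $a^{-k}$ on the associated graded, which is your argument run in the opposite direction. In your direction the correct choice is $D_i$ acting by $a^{-k}$ in degree $k$: then $D_i\,x\,p(z+\gamma_i)\,D_i^{-1}=x\,p(z/a+\gamma_i)$, which is exactly the action of $\alpha'(x\,p(z))$ on $M_i(a\gamma_i+b)$.
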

\begin{proof}
Let $\theta$ be the automorphism of $U(\fg[z])$ induced by $x\,p(z)\mapsto x\,p(az+b)$. Both $\theta$ and $\theta^{-1}$ preserve the $z$-degree filtration, and $\theta(xz^s)=a^s\,xz^s+(\text{terms of $z$-degree}<s)$.
Let $D$ be the operator on $M_0\otimes\cdots\otimes M_l$ acting on $(M_0)_{k_0}\otimes\cdots\otimes(M_l)_{k_l}$ by $a^{k_0+\ldots+k_l}$. Since $xz^r$ raises the degree by $r$, we have $D^{-1}(xz^r)D=a^{-r}xz^r$ on each factor.
Denote by $\rho_\gamma$ the action of $U(\fg[z])$ on $M_0(\gamma_0)\otimes\cdots\otimes M_l(\gamma_l)$. A direct computation on the generators $x\,p(z)$ shows that
\[
\rho_{a\gamma+b}(u)=D^{-1}\,\rho_{\gamma}\big(\theta(u)\big)\,D\qquad\text{for all } u\in U(\fg[z]).
\]
Indeed, on the $i$-th factor, $\rho_\gamma(\theta(x\,p(z)))=x\,p(az+a\gamma_i+b)$, and conjugation by $D$ replaces $z$ by $z/a$.
Since $D$ fixes $v=m_0\otimes\cdots\otimes m_l$ and $\theta$ preserves the filtration, we get $\calF^{a\gamma+b}_k=D^{-1}\calF^{\gamma}_k$.
Define $\Phi\colon\calF^{a\gamma+b}_k/\calF^{a\gamma+b}_{k-1}\to\calF^{\gamma}_k/\calF^{\gamma}_{k-1}$ by $\Phi[w]:=a^{-k}[Dw]$. For $u=xz^s$ we obtain
\[
\Phi(u\cdot[w])=a^{-k-s}\big[\rho_\gamma(\theta(u))\,Dw\big]=a^{-k-s}a^{s}\big[\rho_\gamma(xz^s)\,Dw\big]=u\cdot\Phi[w],
\]
because the terms of $\theta(u)$ of lower $z$-degree map $Dw\in\calF^\gamma_k$ into $\calF^\gamma_{k+s-1}$. Thus $\Phi$ is an isomorphism of graded $\fg[z]$-modules. Finally, any two pairs of distinct numbers are related by an affine transformation.
\end{proof}

\subsubsection{Independence of parameters and associativity}
Feigin and Loktev conjectured~\cite{FeLo} that the fusion product does not depend on the parameters and that it is associative. In particular, the iterated fusion product~\eqref{eq::fusion::iterated::same} should be isomorphic to the fusion product of all the modules $M_{i,j}$ at once. Explicitly, one expects the following.

\begin{conjecture}[associativity of the fusion product]
\label{conj::fusion::associativity}
In the notation of~\eqref{eq::fusion::iterated::same}, there is an isomorphism of graded $\fg[z]$-modules
\[
\Big[\,\big[\star_{j}M_{0,j}\big]_{\beta_0}\ \star_z\ \cdots\ \star_z\ \big[\star_{j}M_{l,j}\big]_{\beta_l}\,\Big]_{\gamma_0,\ldots,\gamma_l}
\ \simeq\
\Big[\star_{i=0}^{l}\star_{j=1}^{k_i}M_{i,j}\Big]_{(c_{i,j})}
\]
for an appropriate choice of pairwise distinct parameters $c_{i,j}$; for instance, $c_{i,j}=\gamma_i+\varepsilon\beta_{i,j}$ with generic $\varepsilon\in\bC$.
\end{conjecture}

In this generality, both the independence of the parameters and the associativity are open problems even for irreducible $M_i$'s (see~\cite[\S1.2]{Dumanski}); they are known only for special families of modules (see, e.g.,~\cite{CL,N,Dumanski} and the references therein).
The following simple principle, based on an upper bound for the dimension, proves Conjecture~\ref{conj::fusion::associativity} in all cases that we need.

\begin{proposition}
\label{prp::fusion::associativity}
Let $M_{i,j}$ ($0\leq i\leq l$, $1\leq j\leq k_i$) be finite-dimensional graded cyclic $\fg[z]$-modules.
Suppose that there exists a graded cyclic $\fg[z]$-module $W$ with a cyclic vector $w\in W_0$ such that
\begin{enumerate}
\item $\dim W\leq\prod_{i,j}\dim M_{i,j}$;
\item every fusion product of the modules $M_{i,j}$ under consideration is a quotient of $W$. More precisely, we require this for the fusion products $[\star_{i,j}M_{i,j}]_{c}$ with arbitrary pairwise distinct parameters $c=(c_{i,j})$, and for the iterated fusion products~\eqref{eq::fusion::iterated::same} with arbitrary admissible parameters. Here ``quotient'' means that there is a surjective homomorphism of $\fg[z]$-modules sending $w$ to the cyclic vector.
\end{enumerate}
Then all these fusion products are isomorphic to $W$. In particular, Conjecture~\ref{conj::fusion::associativity} holds for the modules $M_{i,j}$ and all choices of parameters.
\end{proposition}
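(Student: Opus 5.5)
The argument is a dimension squeeze: every fusion product under consideration receives a surjection from $W$ and has dimension equal to the product of the dimensions of the factors, which is at least $\dim W$. So each surjection must be an isomorphism.

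First I record the dimension of each fusion product. For the fusion product $[\star_{i,j}M_{i,j}]_c$ with pairwise distinct parameters, Definition~\ref{def::fusion::graded} says it is the associated graded space of $\bigotimes_{i,j}M_{i,j}(c_{i,j})$ with respect to an exhaustive filtration. Exhaustiveness holds because, by Lemma~\ref{lem::fusion::graded::cyclic}, the tensor product is generated by the tensor product of the cyclic vectors. Hence its dimension is $\prod_{i,j}\dim M_{i,j}$.

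For the iterated products~\eqref{eq::fusion::iterated::same} I argue inductively. Each inner product $[\star_j M_{i,j}]_{\beta_i}$ is a finite-dimensional graded cyclic $\fg[z]$-module of dimension $\prod_j\dim M_{i,j}$, with cyclic vector in degree $0$. So the outer fusion product is defined, and by the same argument its dimension is $\prod_i\prod_j\dim M_{i,j}$. For deeper nestings I repeat the step: at each stage the result is a finite-dimensional graded cyclic module whose dimension is the product of the dimensions of its constituents.

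Now let $F$ be any of these fusion products and let $p\colon W\twoheadrightarrow F$ be the surjection from hypothesis~(ii), sending $w$ to the cyclic vector. Then
\[
\dim W\ \ge\ \dim F=\prod_{i,j}\dim M_{i,j}\ \ge\ \dim W,
\]
where the last inequality is hypothesis~(i). So $\dim W=\dim F$, and $p$ is a surjection between finite-dimensional spaces of equal dimension, hence a bijection. Being a homomorphism of $\fg[z]$-modules, $p$ is an isomorphism.

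Grading needs a short check, since ``quotient'' in~(ii) only asks for a $\fg[z]$-homomorphism. If $p$ is homogeneous of degree zero, it is already an isomorphism of graded modules. If not, I use that $W=U(\fg[z])\cdot w$ with $w\in W_0$, that $p(w)=\widehat{v}$ has degree $0$, and that $p$ is $U(\fg[z])$-linear. Then $p$ sends $U(\fg[z])_{k}\cdot w$ into the degree $k$ part, so it is automatically graded.

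Consequently every fusion product in question, for all parameters and all bracketings, is isomorphic to $W$ via isomorphisms matching cyclic vectors. Any two of them are therefore isomorphic to each other, which is Conjecture~\ref{conj::fusion::associativity} for these modules. No step is a serious obstacle; the only point needing care is the dimension count for iterated products, namely checking that each intermediate fusion product is again finite-dimensional and graded cyclic, so that Lemma~\ref{lem::fusion::graded::cyclic} applies at the next level.
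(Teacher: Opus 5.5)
Your proof is correct and is the same dimension squeeze the paper uses: every fusion product, iterated or not, has dimension $\prod_{i,j}\dim M_{i,j}$, so hypothesis~(i) forces each surjection from $W$ supplied by~(ii) to be a bijection. Your extra check that this surjection is automatically degree-preserving is correct, and the paper leaves it implicit: it holds because $W_k=U(\fg[z])_k\cdot w$ when $w\in W_0$.
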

\begin{proof}
Every fusion product, iterated or not, has dimension $\prod_{i,j}\dim M_{i,j}$, because at each step the dimension of the fusion product equals the product of the dimensions of the factors. By~(ii), each of them is a quotient of $W$, and by~(i), $\dim W$ is not bigger than the dimension of this quotient. Therefore every surjection in~(ii) is an isomorphism.
\end{proof}

\begin{corollary}
\label{cor::fusion::Weyl::upper}
Let $\fg=\gl_V$ with $\dim V$ sufficiently large. Let $M_{i,j}$ be finite-dimensional graded cyclic $\gl_V[z]$-modules whose graded components are polynomial $\gl_V$-modules, generated by vectors $m_{i,j}$ of $\gl_V$-weight $\mu^{(i,j)}$ such that
\[
\fn_+[z]\cdot m_{i,j}=0,\qquad (h z^r)\cdot m_{i,j}=0\quad\text{for all } h\in\fh,\ r>0 .
\]
Then every fusion product of the modules $M_{i,j}$, iterated or not, is a quotient of the local Weyl module $\sW^z_{\mu}$ with $\mu=\sum_{i,j}\mu^{(i,j)}$ (see~\S\ref{sec::Weyl}). Consequently, if $\dim\sW^z_{\mu}=\prod_{i,j}\dim M_{i,j}$, then all these fusion products are isomorphic to $\sW^z_\mu$ and Conjecture~\ref{conj::fusion::associativity} holds for them.
\end{corollary}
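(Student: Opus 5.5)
The proof reduces to Proposition~\ref{prp::fusion::associativity} with $W=\sW^z_\mu$ and $w$ its cyclic generator. The main work is to check hypothesis~(ii): every fusion product of the $M_{i,j}$, iterated or not and with any admissible parameters, is a quotient of $\sW^z_\mu$ sending the generator to the cyclic vector. Hypothesis~(i) is exactly the assumed equality $\dim\sW^z_\mu=\prod\dim M_{i,j}$.

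I will use the presentation of the local Weyl module from \S\ref{sec::Weyl}. It is the graded cyclic $\gl_V[z]$-module generated by a vector $w$ of weight $\mu$ with relations
\[
\fn_+[z]\,w=0,\qquad (hz^r)\,w=0 \ (r>0),\qquad h\,w=\mu(h)w,\qquad (f_\alpha)^{\langle\mu,\alpha^\vee\rangle+1}w=0
\]
for simple roots $\alpha$. Given a cyclic module $M$ whose cyclic vector $\widehat m$ satisfies these relations, the universal property gives a surjection $\sW^z_\mu\twoheadrightarrow M$ with $w\mapsto\widehat m$.

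For $M=[\star M_{i,j}]_c$, let $m=\otimes m_{i,j}$ inside $\otimes M_{i,j}(c_{i,j})$. The relations are checked as follows.
\begin{enumerate}
\item \emph{Weight.} $m$ has $\gl_V$-weight $\mu$, since the diagonal $\fh$ acts through the coproduct.
\item \emph{$\fn_+[z]$.} On the $i$-th factor, $ez^s$ acts by $e(z+c)^s$, a combination of $ez^t$. These kill each $m_{i,j}$, so $\fn_+[z]\,m=0$ exactly, before passing to the associated graded.
\item \emph{Cartan currents.} Likewise $hz^r$ acts on $m$ by the scalar $\sum_{i,j}\mu^{(i,j)}(h)c_{i,j}^r$, since only $r=0$ survives on each factor. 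This scalar lies in $\calF_0$. So $(hz^r)m\in\calF_0\subset\calF_{r-1}$ for $r>0$, and the symbol $(hz^r)\widehat m$ in degree $r$ vanishes.
\item \emph{Lowering relation.} $\widehat m$ is a highest weight vector of weight $\mu$ in the $\gl_V$-module $\calF_0=U(\gl_V)m$. Since the components are polynomial, hence integrable, we get $f_\alpha^{\langle\mu,\alpha^\vee\rangle+1}m=0$.
\end{enumerate}

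The same argument applies verbatim to the fusion product itself. Its cyclic vector again has weight $\mu$, is killed by $\fn_+[z]$ and by $\fh\otimes z\bC[z]$, and its graded components are polynomial $\gl_V$-modules of the same degree, since the fusion product is isomorphic to $\otimes M_{i,j}$ as a $\gl_V$-module (Remark~\ref{rem::fusion::graded}). So the hypotheses are inherited, and induction on the nesting depth covers iterated fusion products~\eqref{eq::fusion::iterated::same}. The outer weight is the sum of the inner weights, namely $\mu$.

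The delicate point is the Cartan relation: the scalar $\sum\mu^{(i,j)}(h)c^r_{i,j}$ is nonzero in the filtered module, and it vanishes only because it lies in strictly lower filtration. One must also make sure the Weyl module presentation of \S\ref{sec::Weyl} for $\gl_V$ requires only these relations; the central part $\mathrm{Id}\otimes z\bC[z]$ is covered by the Cartan condition. Once~(ii) holds, Proposition~\ref{prp::fusion::associativity} gives all isomorphisms and hence Conjecture~\ref{conj::fusion::associativity} for these modules.
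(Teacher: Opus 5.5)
Your proposal is correct and follows the paper's proof essentially step by step. You verify the defining relations of $\sW^z_\mu$ on the symbol of $v=\bigotimes m_{i,j}$; for the Cartan currents the point is that $(hz^r)\cdot v$ is a multiple of $v$, so it lies in $\calF_0\subset\calF_{r-1}$ and its symbol in degree $r$ vanishes. You then observe that the cyclic vector of a fusion product again satisfies the hypotheses, so the argument iterates, and you conclude with Proposition~\ref{prp::fusion::associativity}, exactly as in the paper.
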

\begin{proof}
Consider a fusion product $[M_0\star_z\cdots\star_z M_l]_{\gamma}$ of modules $M_i$ satisfying the assumptions, with cyclic vectors $m_i$ of weights $\mu^{(i)}$, and put $v=m_0\otimes\cdots\otimes m_l$ and $\mu=\sum_i\mu^{(i)}$. By the presentation of local Weyl modules recalled in~\S\ref{sec::Weyl}, it suffices to check that $\widehat{v}$ satisfies the defining relations of $\sW^z_\mu$. These relations are $\fn_+[z]\cdot\widehat{v}=0$, $(hz^r)\cdot\widehat{v}=\delta_{r,0}\,\mu(h)\,\widehat{v}$, and the integrability relations for the lowering operators.
Every element of $\fn_+[z]$ acts on the tensor product $\bigotimes_i M_i(\gamma_i)$ as a sum of elements of $\fn_+[z]$ acting on the individual factors, and hence kills $v$.
For $h\in\fh$ and $r>0$ the element $hz^r$ acts on the $i$-th factor by $h(z+\gamma_i)^r$, which maps $m_i$ to $\gamma_i^r\mu^{(i)}(h)\,m_i$. Hence $(hz^r)\cdot v\in\calF_0\subset\calF_{r-1}$, and therefore $(hz^r)\cdot\widehat{v}=0$ in the fusion product. Clearly, $h\cdot\widehat{v}=\mu(h)\,\widehat{v}$.
The integrability relations $E_{a+1,a}^{\,\mu_a-\mu_{a+1}+1}\cdot\widehat{v}=0$ hold because $\widehat{v}$ is a highest weight vector of weight $\mu$ in a finite-dimensional $\gl_V$-module.
Moreover, the cyclic vector of a fusion product again satisfies the assumptions of the corollary, so the argument applies to iterated fusion products as well. The last claim follows from Proposition~\ref{prp::fusion::associativity}.
\end{proof}

For instance, by Theorem~\ref{thm::Weyl::fusion}, Corollary~\ref{cor::fusion::Weyl::upper} applies to arbitrary fusion products (iterated or not) of fundamental representations. For example, $(V\star_zV)\star_zV\simeq V\star_zV\star_zV\simeq\sW^z_{3\varepsilon_1}$.

%\begin{remark} \label{rem::fusion::geometric}
%A geometric criterion for the independence of the parameters and for associativity is given in~\cite[Theorem~2.1]{Dumanski} (see also~\cite[Proposition~A.1]{DFF}). Suppose that the dual of the fusion product appears as the zero fiber of the global sections of a sheaf that is flat over the space of parameters. Suppose also that the generic fibers are the tensor products $M_1(c_1)\otimes\cdots\otimes M_k(c_k)$ and that higher cohomology vanishes. Then the fusion product does not depend on the parameters and is associative.
%\end{remark}

\subsection{The fusion filtration and a vanishing lemma in the abelian case}

In this subsection we establish a vanishing property for the action of abelian current algebras on fusion products.

Let $\fa$ be an abelian Lie algebra. Then $U(\fa)$ and $U(\fa[z])$ are the symmetric algebras of $\fa$ and $\fa[z]$; in particular, they are commutative.
Consider a nondecreasing sequence of subspaces (automatically, Lie subalgebras)
\begin{equation}
\label{eq::abel::subalg}
\fa_{\bullet}:=\ \fa_0\subseteq\fa_1\subseteq\fa_2\subseteq\cdots\subseteq\fa .
\end{equation}
It defines the Lie subalgebra
\[
\fa_{\bullet}[z]:=\Big\{\sum_{r\geq0}x_rz^r\ \colon\ x_r\in\fa_r\Big\}\subset\fa[z]
\]
(all sums are finite). In other words, the coefficient of $z^r$ is allowed to take values in $\fa_r$.

\begin{proposition}
\label{prp::fusion::abelian}
Let $\fa$ be an abelian Lie algebra and let $\fa_{\bullet}$ be a nondecreasing sequence of subspaces as in~\eqref{eq::abel::subalg}.
Let $M_0,\ldots,M_l$ be cyclic $\fa$-modules with cyclic vectors $v_i\in M_i$, and let $\eta_0,\ldots,\eta_l\geq0$ be integers such that, for all $i=0,\ldots,l$, any product of $\eta_i+1$ elements of $\fa_i$ annihilates $v_i$:
\[
x_0x_1\cdots x_{\eta_i}\cdot v_i=0\qquad\text{for all } x_0,\ldots,x_{\eta_i}\in\fa_i.
\]
Set $K:=\eta_0+\ldots+\eta_l$, and let $\widehat{v}=\widehat{v_0\otimes\cdots\otimes v_l}$ be the cyclic vector of the fusion product $[M_0\star_z\cdots\star_z M_l]_{\gamma_0,\ldots,\gamma_l}$ over $\fa$, with arbitrary pairwise distinct parameters $\gamma_i$. Then:
\begin{enumerate}
\item any product of $K+1$ elements of $\fa_{\bullet}[z]$ annihilates $\widehat{v}$:
\[
b_0b_1\cdots b_K\cdot\widehat{v}=0\qquad\text{for all } b_0,\ldots,b_K\in\fa_{\bullet}[z];
\]
\item more precisely, let $r_1,\ldots,r_s\geq0$ and let $x_t\in\fa_{r_t}$ for $t=1,\ldots,s$. If the element $(x_1z^{r_1})\cdots(x_sz^{r_s})\cdot\widehat{v}$ is nonzero, then $r_t\leq l$ for all $t$, and
\begin{equation}
\label{eq::vanish::action}
\#\{t\colon r_t\geq j\}\ \leq\ \eta_j+\eta_{j+1}+\ldots+\eta_l\qquad\text{for all } j=0,\ldots,l.
\end{equation}
\end{enumerate}
\end{proposition}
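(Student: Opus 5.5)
We will prove item~(ii) first and deduce item~(i) from it. The idea is to replace each generator $x_tz^{r_t}$ by a monic modification of the same $z$-degree which, by the Vandermonde Lemma~\ref{lem::Vandermonde}(ii), acts only on the tensor factors where $x_t$ is allowed by the hypothesis. Throughout, $v=v_0\otimes\cdots\otimes v_l$ and $R=r_1+\ldots+r_s$. By the Notation following Definition~\ref{def::fusion}, the element $(x_1z^{r_1})\cdots(x_sz^{r_s})\cdot\widehat{v}$ vanishes if and only if $(x_1z^{r_1})\cdots(x_sz^{r_s})\cdot v\in\calF_{R-1}$.

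\emph{Step 1: modification of the generators.} For each $t$, put $p_{r_t}(z):=\prod_{s=0}^{r_t-1}(z-\gamma_s)$, a monic polynomial of degree $r_t$. Then $x_tz^{r_t}-x_tp_{r_t}(z)$ lies in the span of the $x_tz^{r'}$ with $r'<r_t$. Since $U(\fa[z])$ is commutative, we may expand the product $\prod_t x_tz^{r_t}$ as $\prod_t x_tp_{r_t}(z)$ plus terms in which at least one factor has been replaced by an element of lower $z$-degree. Each such term lies in $U(\fa[z])_{\leq R-1}$, so applied to $v$ it lands in $\calF_{R-1}$. Hence it suffices to show that $\prod_t x_tp_{r_t}(z)\cdot v=0$ whenever the conditions of~(ii) fail.

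\emph{Step 2: support of the modified operators.} By~\eqref{eq::ev::action}, $x_tp_{r_t}(z)$ acts as $\sum_{i\geq r_t}p_{r_t}(\gamma_i)\,x_t^{(i)}$. In particular it is zero if $r_t>l$, which gives the first claim $r_t\leq l$. For $i\geq r_t$ we have $x_t\in\fa_{r_t}\subseteq\fa_i$, and this is exactly why we use $p_{r_t}$ with the index equal to $r_t$ itself rather than an arbitrary $j$. Expanding the product, we obtain a sum over assignments $t\mapsto i(t)\geq r_t$. Each term applies to $v_i$ a product of $\#\{t:i(t)=i\}$ elements of $\fa_i$. By hypothesis, such a term vanishes unless $\#\{t:i(t)=i\}\leq\eta_i$ for every $i$.

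\emph{Step 3: counting.} If some assignment survives, then every $t$ with $r_t\geq j$ is sent to some $i\geq j$. Hence $\#\{t:r_t\geq j\}\leq\eta_j+\ldots+\eta_l$, which is~\eqref{eq::vanish::action}. So if~\eqref{eq::vanish::action} fails, every term of the expansion vanishes, the original vector lies in $\calF_{R-1}$, and the symbol is zero.

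\emph{Step 4: item~(i).} Each $b\in\fa_\bullet[z]$ is a finite sum of elements $xz^r$ with $x\in\fa_r$. Expanding $b_0\cdots b_K\cdot\widehat{v}$ multilinearly gives a sum of vectors $(x_0z^{r_0})\cdots(x_Kz^{r_K})\cdot\widehat{v}$, which are homogeneous in the graded module. Each such vector has $K+1$ factors, so~\eqref{eq::vanish::action} with $j=0$ fails, since $K+1>\eta_0+\ldots+\eta_l=K$. By~(ii) each vector is zero.

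The only delicate point is the choice of the polynomial in Step~1. One must use the monic polynomial $p_{r_t}$ of degree exactly $r_t$ and index $r_t$, so that the lower-order correction drops into $\calF_{R-1}$ and, at the same time, the remaining operator lives on factors $i$ with $\fa_{r_t}\subseteq\fa_i$. Once this is in place, the rest is bookkeeping.
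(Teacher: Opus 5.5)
Your proof is correct and is essentially the paper's argument: you replace each $x_tz^{r_t}$ by $x_tp_{r_t}(z)$ with $p_{r_t}$ monic of degree $r_t$ vanishing at $\gamma_0,\ldots,\gamma_{\min(r_t,l+1)-1}$ (so the error lands in $\calF_{R-1}$), then expand over assignments $t\mapsto i(t)\geq r_t$ and count. The only slip is that $\prod_{s=0}^{r_t-1}(z-\gamma_s)$ is undefined when $r_t\geq l+2$, since only $\gamma_0,\ldots,\gamma_l$ exist (and the letter $s$ is already used for the number of factors); the paper instead takes $p_r(z)=z^{\max(r-l-1,0)}\prod_{i=0}^{\min(r,l+1)-1}(z-\gamma_i)$, and in fact any monic degree-$r_t$ multiple of $\prod_{i<\min(r_t,l+1)}(z-\gamma_i)$ would do.
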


Informally, by the Vandermonde Lemma~\ref{lem::Vandermonde}(ii), in the fusion product the element $xz^r$ acts only on the tensor factors with indices $i\geq r$, and $x\in\fa_r\subseteq\fa_i$ for such $i$. Since at most $\eta_i$ elements of $\fa_i$ can act nontrivially on the $i$-th factor, at most $\eta_j+\ldots+\eta_l$ elements $xz^r$ with $r\geq j$ can act nontrivially on $\widehat{v}$. The proof makes this argument precise.

\begin{proof}
We first prove item~(ii). Let $R:=r_1+\ldots+r_s$ and $v:=v_0\otimes\cdots\otimes v_l$.
For $r\geq0$ consider the monic polynomial of degree $r$
\[
p_r(z):=z^{\max(r-l-1,\,0)}\prod_{i=0}^{\min(r,l+1)-1}(z-\gamma_i)
\]
(so $p_0=1$). It vanishes at $\gamma_i$ for all $i<\min(r,l+1)$.
The difference $x_tz^{r_t}-x_tp_{r_t}(z)$ is a linear combination of the elements $x_tz^{r'}$ with $r'<r_t$.
Since $U(\fa[z])$ is commutative, the difference
\[
(x_1z^{r_1})\cdots(x_sz^{r_s})-\big(x_1p_{r_1}(z)\big)\cdots\big(x_sp_{r_s}(z)\big)
\]
is a linear combination of monomials of $z$-degree less than $R$, and such monomials map $v$ into $\calF_{R-1}$. Hence $(x_1z^{r_1})\cdots(x_sz^{r_s})\cdot\widehat{v}$ is the class of $\prod_t\big(x_tp_{r_t}(z)\big)\cdot v$ in $\calF_R/\calF_{R-1}$.
By~\eqref{eq::ev::action}, the element $x_tp_{r_t}(z)$ acts on the tensor product of evaluation modules by $\sum_{i\geq r_t}p_{r_t}(\gamma_i)\,x_t^{(i)}$, where $i$ runs over $\{0,\ldots,l\}$; this sum is empty if $r_t>l$.
Expanding the product, we get
\[
\prod_{t=1}^{s}\big(x_tp_{r_t}(z)\big)\cdot v=\sum_{\varphi}\ \Big(\prod_{t=1}^{s}p_{r_t}(\gamma_{\varphi(t)})\Big)\ \bigotimes_{i=0}^{l}\Big(\prod_{t\in\varphi^{-1}(i)}x_t\Big)\cdot v_i,
\]
where the sum runs over the maps $\varphi\colon\{1,\ldots,s\}\to\{0,\ldots,l\}$ such that $\varphi(t)\geq r_t$ for all $t$.
In the term corresponding to $\varphi$, the $i$-th factor is acted upon by the elements $x_t$ with $\varphi(t)=i$. All of them belong to $\fa_{r_t}\subseteq\fa_i$, because $r_t\leq i$ and the sequence $\fa_\bullet$ is nondecreasing. Hence, by the assumption, this term vanishes as soon as $|\varphi^{-1}(i)|>\eta_i$ for some $i$.
Thus, if the product is nonzero, then there exists a map $\varphi$ with $r_t\leq\varphi(t)\leq l$ for all $t$ and $|\varphi^{-1}(i)|\leq\eta_i$ for all $i$. For such a map and every $j$ we have
\[
\#\{t\colon r_t\geq j\}\leq\#\{t\colon\varphi(t)\geq j\}=\sum_{i\geq j}|\varphi^{-1}(i)|\leq\sum_{i\geq j}\eta_i ,
\]
which proves~\eqref{eq::vanish::action}; the inequality $r_t\leq\varphi(t)\leq l$ gives the first condition.

Item~(i) follows from item~(ii). Indeed, each $b_i\in\fa_{\bullet}[z]$ is a sum of monomials $xz^r$ with $x\in\fa_r$. Expanding the product $b_0\cdots b_K$, we obtain a sum of products of $s=K+1$ such monomials. For each of them the condition~\eqref{eq::vanish::action} with $j=0$ would give $K+1\leq\eta_0+\ldots+\eta_l=K$, which is impossible.
\end{proof}

\begin{remark}
The most frequent application of Proposition~\ref{prp::fusion::abelian} is the case of the constant sequence $\fa_i=\fa$ for all $i$. In this case the condition~\eqref{eq::vanish::action} says that, for every $j$, at most $\eta_j+\ldots+\eta_l$ of the elements $x_tz^{r_t}$ may have $r_t\geq j$. For example, if all $M_i$ are equal to a module $M$ such that $\fa^2\cdot v_i=0$ (so that $\eta_i=1$), then a monomial $(x_1z^{r_1})\cdots(x_sz^{r_s})\cdot\widehat{v}$ vanishes unless $s\leq l+1$ and $\#\{t\colon r_t\geq j\}\leq l+1-j$ for all $j$.
\end{remark}

\subsection{Fusion product preserves surjections}
\begin{proposition}
\label{prp::fusion::surjection}
The fusion product preserves surjections.
Namely, let $M$, $M'$, $M_1$, \ldots, $M_k$ be finite-dimensional graded cyclic $\fg[z]$-modules with cyclic vectors $m$, $m'$, $m_1$, \ldots, $m_k$ respectively. Let $\varphi\colon M\twoheadrightarrow M'$ be a surjective homomorphism of $\fg[z]$-modules such that $\varphi(m)=m'$.
Then, for all pairwise distinct parameters $\gamma_0,\gamma_1,\ldots,\gamma_k\in\bC$, the map $\varphi$ induces a surjective homomorphism of graded $\fg[z]$-modules
\[
[M\star_z M_1\star_z\cdots\star_z M_k]_{\gamma_0,\ldots,\gamma_k}\ \twoheadrightarrow\ [M'\star_z M_1\star_z\cdots\star_z M_k]_{\gamma_0,\ldots,\gamma_k},
\]
which maps the cyclic vector $\widehat{v}$ to the cyclic vector $\widehat{v'}$, where $v:=m\otimes m_1\otimes\cdots\otimes m_k$ and $v':=m'\otimes m_1\otimes\cdots\otimes m_k$.
\end{proposition}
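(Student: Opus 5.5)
The plan is to realize the map at the level of the ungraded tensor products and then pass to associated graded spaces. Put $T:=M(\gamma_0)\otimes M_1(\gamma_1)\otimes\cdots\otimes M_k(\gamma_k)$ and $T':=M'(\gamma_0)\otimes M_1(\gamma_1)\otimes\cdots\otimes M_k(\gamma_k)$. The map $\varphi$ is a homomorphism of $\fg[z]$-modules $M\to M'$. Pulling back along the shift automorphism $\tau_{\gamma_0}$ preserves homomorphisms, so $\varphi$ is also a homomorphism $M(\gamma_0)\to M'(\gamma_0)$. Consequently
\[
\Phi:=\varphi\otimes\mathrm{id}\otimes\cdots\otimes\mathrm{id}\colon T\to T'
\]
is a homomorphism of $\fg[z]$-modules, because $\fg[z]$ acts on tensor products through the coproduct. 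This map is surjective and satisfies $\Phi(v)=v'$.

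The next step is to compare the fusion filtrations. For every $s$ we have
\[
\Phi(\calF_sT)=\Phi\big(U(\fg[z])_{\leq s}\cdot v\big)=U(\fg[z])_{\leq s}\cdot\Phi(v)=U(\fg[z])_{\leq s}\cdot v'=\calF_sT',
\]
using that $\Phi$ commutes with the action of $U(\fg[z])$. So $\Phi$ is a filtered map that is strictly compatible with the filtrations: it maps each $\calF_s$ onto the corresponding $\calF_s$. It therefore induces well-defined maps
\[
\mathrm{gr}_s\Phi\colon \calF_sT/\calF_{s-1}T\to\calF_sT'/\calF_{s-1}T',
\]
and each of these is surjective because $\calF_sT\to\calF_sT'$ is onto.

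It then remains to check that $\mathrm{gr}\,\Phi$ is a homomorphism of graded $\fg[z]$-modules. An element $xz^r$ acts on the associated graded space by sending the class of $c\in\calF_s$ to the class of $xz^r c\in\calF_{s+r}$. Since $\Phi(xz^r c)=xz^r\Phi(c)$, the map $\mathrm{gr}\,\Phi$ intertwines these actions. It is also homogeneous of degree zero. Finally, $\mathrm{gr}\,\Phi$ sends the symbol $\widehat{v}$ to $\widehat{v'}$.

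I do not expect any genuine obstacle. The only point needing care is the equality $\Phi(\calF_s)=\calF_s$, rather than a mere inclusion. This equality is exactly what makes the graded map surjective, and it follows because both filtrations are generated by the cyclic vectors, with $\Phi(v)=v'$.
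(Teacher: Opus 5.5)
Your proposal is correct and follows essentially the same route as the paper: you lift $\varphi$ to the surjection $\Phi=\varphi\otimes\mathrm{id}$ of the twisted tensor products, show $\Phi(\calF_sT)=\calF_sT'$ because both filtrations are generated from the cyclic vectors and $\Phi(v)=v'$, and then pass to the associated graded map. That map is surjective, $\fg[z]$-equivariant, and sends $\widehat{v}$ to $\widehat{v'}$.
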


\begin{proof}
Set
\[
T:=M(\gamma_0)\otimes M_1(\gamma_1)\otimes\cdots\otimes M_k(\gamma_k),
\qquad
T':=M'(\gamma_0)\otimes M_1(\gamma_1)\otimes\cdots\otimes M_k(\gamma_k),
\]
and recall that $v\in T$ and $v'\in T'$ are cyclic vectors by Lemma~\ref{lem::fusion::graded::cyclic}.

\emph{Step 1: a surjection of the tensor products.}
Recall that $M(\gamma_0)$ and $M'(\gamma_0)$ are the pullbacks of $M$ and $M'$ along the same automorphism $\tau_{\gamma_0}$ of $\fg[z]$. Since $\varphi$ commutes with the action of $\fg[z]$, it also commutes with the action of $\tau_{\gamma_0}(\fg[z])=\fg[z]$. Hence $\varphi$ is a homomorphism of $\fg[z]$-modules $M(\gamma_0)\to M'(\gamma_0)$.
Consequently, the map
\[
\Phi:=\varphi\otimes\mathrm{id}_{M_1}\otimes\cdots\otimes\mathrm{id}_{M_k}\colon T\longrightarrow T'
\]
is a homomorphism of $\fg[z]$-modules, because $\fg[z]$ acts on the tensor products factorwise. The map $\Phi$ is surjective because $\varphi$ is surjective, and $\Phi(v)=v'$ because $\varphi(m)=m'$.

\emph{Step 2: the fusion filtrations.}
Let $\calF_n:=U(\fg[z])_{\leq n}\cdot v\subset T$ and $\calF'_n:=U(\fg[z])_{\leq n}\cdot v'\subset T'$ be the fusion filtrations~\eqref{eq::fusion::filtration}. Since $\Phi$ is a homomorphism of $U(\fg[z])$-modules, we get
\[
\Phi(\calF_n)=\Phi\big(U(\fg[z])_{\leq n}\cdot v\big)=U(\fg[z])_{\leq n}\cdot\Phi(v)=U(\fg[z])_{\leq n}\cdot v'=\calF'_n
\qquad\text{for all } n\geq-1 .
\]
In other words, the image of the fusion filtration of $T$ is exactly the fusion filtration of $T'$.

\emph{Step 3: the associated graded map.}
Since $\Phi(\calF_{n-1})\subset\calF'_{n-1}$, the map
\[
\mathrm{gr}_n\Phi\colon\calF_n/\calF_{n-1}\longrightarrow\calF'_n/\calF'_{n-1},\qquad [w]\mapsto[\Phi(w)],
\]
is well defined for every $n\geq0$. It is surjective, because every element of $\calF'_n/\calF'_{n-1}$ is the class of some element of $\calF'_n=\Phi(\calF_n)$.
Let $\mathrm{gr}\,\Phi:=\bigoplus_{n\geq0}\mathrm{gr}_n\Phi$. This is a surjective linear map of graded spaces from $[M\star_z M_1\star_z\cdots\star_z M_k]_{\gamma}$ to $[M'\star_z M_1\star_z\cdots\star_z M_k]_{\gamma}$ preserving the degree.
It is a homomorphism of $\fg[z]$-modules. Indeed, let $x\in\fg$, $s\geq0$ and $w\in\calF_n$. The element $xz^s$ acts on the class $[w]\in\calF_n/\calF_{n-1}$ by $[w]\mapsto[(xz^s)\cdot w]\in\calF_{n+s}/\calF_{n+s-1}$, and similarly on the target. Hence
\[
\mathrm{gr}\,\Phi\big((xz^s)\cdot[w]\big)=\big[\Phi\big((xz^s)\cdot w\big)\big]=\big[(xz^s)\cdot\Phi(w)\big]=(xz^s)\cdot\mathrm{gr}\,\Phi\big([w]\big).
\]
Finally, $\mathrm{gr}\,\Phi(\widehat{v})=[\Phi(v)]=\widehat{v'}$.
\end{proof}

\begin{remark}
\begin{enumerate}
\item The same proof applies to any position of $M$ among the factors of the fusion product, and to several surjections $M_i\twoheadrightarrow M_i'$ at the same time (apply the proposition to each factor in turn). It also applies to the fusion product of $\fg$-modules of Definition~\ref{def::fusion}, i.e., to the case where all modules are concentrated in degree $0$.
\item The map $\varphi$ is not required to preserve the gradings of $M$ and $M'$, because these gradings are not used in the construction of the fusion product. If $\fg$ carries an additional grading as in Remark~\ref{rem::fusion::graded} and $\varphi$ preserves it, then $\mathrm{gr}\,\Phi$ preserves the resulting bigrading.
\item The surjection $\mathrm{gr}\,\Phi$ is an isomorphism if and only if $\varphi$ is an isomorphism. Indeed, the two fusion products have dimensions $\dim M\cdot\prod_i\dim M_i$ and $\dim M'\cdot\prod_i\dim M_i$ respectively.
\end{enumerate}
\end{remark}

\begin{corollary}
\label{cor::fusion::surjection::iterated}
Iterated fusion products~\eqref{eq::fusion::iterated::same} also preserve surjections. Namely, if $\varphi\colon M_{i,j}\twoheadrightarrow M'_{i,j}$ maps the cyclic vector to the cyclic vector, then replacing $M_{i,j}$ by $M'_{i,j}$ in~\eqref{eq::fusion::iterated::same} (with the same parameters) gives a quotient of the iterated fusion product, and the quotient map sends the cyclic vector to the cyclic vector.
\end{corollary}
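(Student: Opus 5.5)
The plan is to reduce the iterated statement to Proposition~\ref{prp::fusion::surjection}, applied level by level. We argue by induction on the nesting depth of the iterated fusion product~\eqref{eq::fusion::iterated::same}. For simplicity, first treat a single surjection $\varphi\colon M_{i,j}\twoheadrightarrow M'_{i,j}$. The case of several surjections then follows by composing the resulting quotient maps, since a composition of surjections sending cyclic vectors to cyclic vectors is again such a surjection.

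\emph{Inner level.} Apply Proposition~\ref{prp::fusion::surjection}, together with item~(i) of the subsequent remark, which allows the modified factor to sit in any position. With the same parameters $\beta_i$, this gives a surjective homomorphism of graded $\fg[z]$-modules
\[
\psi\colon\big[M_{i,1}\star_z\cdots\star_z M_{i,k_i}\big]_{\beta_i}\twoheadrightarrow\big[M_{i,1}\star_z\cdots\star_z M'_{i,j}\star_z\cdots\star_z M_{i,k_i}\big]_{\beta_i}.
\]
It maps the cyclic vector $\widehat{m_{i,1}\otimes\cdots\otimes m_{i,k_i}}$ to the corresponding cyclic vector of the target. Both modules are finite-dimensional graded cyclic $\fg[z]$-modules with cyclic vectors in degree $0$, so they are admissible inputs for the next fusion.

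\emph{Outer level.} Apply Proposition~\ref{prp::fusion::surjection} once more, now to $\psi$ placed in the $i$-th position of the outer fusion product, with the same parameters $\gamma_0,\ldots,\gamma_l$. This yields a surjection of the iterated fusion products that sends the cyclic vector to the cyclic vector. For deeper nestings we repeat the same step at each level, which is the induction on depth.

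The only point that needs care is that Proposition~\ref{prp::fusion::surjection} requires the surjection to respect the chosen cyclic vectors. This is exactly what the inner step guarantees, so I expect no real obstacle. As noted in item~(ii) of the remark, the map $\psi$ does not even need to preserve gradings. It does preserve them here, so if an additional grading is present, the bigrading is respected as well.
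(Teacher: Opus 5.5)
Your proposal is correct and matches the paper's proof: apply Proposition~\ref{prp::fusion::surjection} to the inner fusion product containing $M_{i,j}$ to get a surjection that respects the cyclic vectors, then apply the same proposition once more at the outer level. Your additional remarks on arbitrary positions of the modified factor, several simultaneous surjections, and deeper nestings are straightforward extensions of the same argument.
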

\begin{proof}
By Proposition~\ref{prp::fusion::surjection}, the inner fusion product containing $M_{i,j}$ surjects onto the one containing $M'_{i,j}$, and this surjection maps the cyclic vector to the cyclic vector. Applying Proposition~\ref{prp::fusion::surjection} once more to the outer fusion product, we obtain the claim.
\end{proof}

\subsubsection{Fusion products and common quotients}

Let $N$ be a cyclic module over a Lie algebra $\mathfrak{L}$ (in our applications $\mathfrak{L}=\fg[z]$) with a cyclic vector $n$, and let
\begin{equation}
\label{eq::star::diagram}
\pi_i\colon N\twoheadrightarrow N_i,\qquad i=1,\ldots,k,
\end{equation}
be surjective homomorphisms. Put $n_i:=\pi_i(n)$; these are cyclic vectors of $N_i$. Denote $K_i:=\ker\pi_i$.

\begin{lemma}
\label{lem::cyclic::colimit}
The colimit of the diagram~\eqref{eq::star::diagram} in the category of $\mathfrak{L}$-modules is the common quotient
\[
N':=N\big/(K_1+\ldots+K_k),
\]
together with the natural projections $\pi'_i\colon N_i=N/K_i\twoheadrightarrow N'$. In particular, $N'$ is cyclic with the cyclic vector $n'$, the image of $n$, and $\pi'_i(n_i)=n'$ for all $i$.
\end{lemma}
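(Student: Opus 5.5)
The plan is to verify the universal property of the colimit directly. First I would check that $N'$ together with the maps $\pi'_i$ is a cocone over the diagram~\eqref{eq::star::diagram}. The diagram consists of the object $N$ and the arrows $\pi_i\colon N\to N_i$. Since $K_i\subset K_1+\ldots+K_k$, the projection $p\colon N\twoheadrightarrow N'$ factors through $N/K_i=N_i$. This gives the surjections $\pi'_i\colon N_i\twoheadrightarrow N'$, and they satisfy $\pi'_i\circ\pi_i=p$ for all $i$. In particular, all composites $N\to N_i\to N'$ coincide, which is the cocone condition.

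Next I would prove universality. Let $Q$ be an $\mathfrak{L}$-module with homomorphisms $f_i\colon N_i\to Q$ such that $f_i\circ\pi_i=f$ is independent of $i$. Then $f$ vanishes on each $K_i=\ker\pi_i$, hence on $K_1+\ldots+K_k$. Therefore $f$ descends uniquely to a homomorphism $\bar f\colon N'\to Q$ with $\bar f\circ p=f$. Because each $\pi_i$ is surjective, the identity $\bar f\circ\pi'_i\circ\pi_i=f_i\circ\pi_i$ gives $\bar f\circ\pi'_i=f_i$. Uniqueness of $\bar f$ follows from the surjectivity of $p$.

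Finally, the remaining claims are immediate. The vector $n'=p(n)$ generates $N'$ because $p$ is surjective and $n$ is cyclic in $N$. Moreover, $\pi'_i(n_i)=\pi'_i(\pi_i(n))=p(n)=n'$.

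No step is a real obstacle. The only point needing care is the convention that the diagram includes $N$ itself, so that a cocone means maps agreeing after composition with the $\pi_i$. Surjectivity of the $\pi_i$ is what makes the induced maps unique.
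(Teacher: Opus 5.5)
Your proposal is correct and follows essentially the same argument as the paper. Both proofs use that a cocone is a map $f\colon N\to Q$ killing every $K_i$, hence $K_1+\ldots+K_k$, and both use surjectivity of the $\pi_i$ (and of $N\to N'$) to see that the maps out of $N_i$ and $N'$ are determined by $f$.
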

\begin{proof}
A cocone over~\eqref{eq::star::diagram} consists of a module $X$ and homomorphisms $f\colon N\to X$ and $f_i\colon N_i\to X$ with $f_i\circ\pi_i=f$ for all $i$. Since the maps $\pi_i$ are surjective, the maps $f_i$ are determined by $f$, and such $f_i$ exist if and only if $f$ vanishes on every $K_i$, i.e., on $K_1+\ldots+K_k$. Therefore cocones are in bijection with homomorphisms $N'\to X$.
\end{proof}

In the rest of this subsection the modules $N,N_1,\ldots,N_k$ are finite-dimensional graded cyclic $\fg[z]$-modules, and the maps $\pi_i$ are surjective homomorphisms of $\fg[z]$-modules. Then $N'$ is a finite-dimensional cyclic $\fg[z]$-module; we assume that it is graded, which is the case if all $\pi_i$ are graded.
Fix further finite-dimensional graded cyclic $\fg[z]$-modules $M_1,\ldots,M_l$ with cyclic vectors $m_1,\ldots,m_l$ and pairwise distinct parameters $\gamma_0,\gamma_1,\ldots,\gamma_l$. Consider the fusion products
\[
\mathsf{P}:=[N\star_z M_1\star_z\cdots\star_z M_l]_{\gamma},\qquad
\mathsf{P}_i:=[N_i\star_z M_1\star_z\cdots\star_z M_l]_{\gamma},\qquad
\mathsf{P}':=[N'\star_z M_1\star_z\cdots\star_z M_l]_{\gamma}.
\]
By Proposition~\ref{prp::fusion::surjection}, the maps $\pi_i$ induce surjections $\mathrm{gr}\,\Pi_i\colon\mathsf{P}\twoheadrightarrow\mathsf{P}_i$, compatible with the cyclic vectors. Hence we get the diagram of surjections $\mathsf{P}\twoheadrightarrow\mathsf{P}_i$, $i=1,\ldots,k$, which is of the same form as~\eqref{eq::star::diagram}. By Lemma~\ref{lem::cyclic::colimit}, its colimit is the common quotient
\[
\mathrm{colim}_i\,\mathsf{P}_i=\mathsf{P}\Big/\sum_{i=1}^{k}\ker\big(\mathrm{gr}\,\Pi_i\big).
\]

To describe these kernels, let $T$ be a finite-dimensional vector space with an increasing filtration $\calF_{-1}T=0\subset\calF_0T\subset\calF_1T\subset\ldots$, exhausting $T$. For a subspace $A\subset T$ we define the associated graded subspace
\[
\mathrm{gr}\,A:=\bigoplus_{n\geq0}\big((A\cap\calF_nT)+\calF_{n-1}T\big)\big/\calF_{n-1}T\ \subset\ \mathrm{gr}\,T .
\]
Clearly, $\dim\mathrm{gr}\,A=\dim A$, and $A\subset B$ implies $\mathrm{gr}\,A\subset\mathrm{gr}\,B$. In particular, $\mathrm{gr}(A_1+\ldots+A_k)\supset\mathrm{gr}\,A_1+\ldots+\mathrm{gr}\,A_k$, but this inclusion can be strict (see Example~\ref{ex::fusion::colimit}).

\begin{lemma}
\label{lem::gr::kernel}
Let $\Phi\colon T\twoheadrightarrow S$ be a surjective linear map of filtered spaces such that $\Phi(\calF_nT)=\calF_nS$ for all $n$. Then $\ker\big(\mathrm{gr}\,\Phi\colon\mathrm{gr}\,T\to\mathrm{gr}\,S\big)=\mathrm{gr}(\ker\Phi)$.
\end{lemma}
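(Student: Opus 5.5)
We need to show that $\ker(\mathrm{gr}\,\Phi)=\mathrm{gr}(\ker\Phi)$ for a surjective filtered map $\Phi$ with $\Phi(\calF_nT)=\calF_nS$ for all $n$. The plan is to prove the inclusion $\mathrm{gr}(\ker\Phi)\subset\ker(\mathrm{gr}\,\Phi)$ directly, and then get equality by a dimension count. Everything is finite-dimensional, so it suffices to work degree by degree.

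\emph{Inclusion.} Take $w\in\ker\Phi\cap\calF_nT$. Its class in $\calF_nT/\calF_{n-1}T$ is sent by $\mathrm{gr}_n\Phi$ to $[\Phi(w)]=0$. Hence the $n$-th component $\big((\ker\Phi\cap\calF_nT)+\calF_{n-1}T\big)/\calF_{n-1}T$ of $\mathrm{gr}(\ker\Phi)$ lies in $\ker(\mathrm{gr}_n\Phi)$.

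\emph{Dimension count.} Since $\Phi(\calF_nT)=\calF_nS$ for every $n$, each map $\mathrm{gr}_n\Phi\colon\calF_nT/\calF_{n-1}T\to\calF_nS/\calF_{n-1}S$ is surjective. Therefore
\[
\dim\ker(\mathrm{gr}_n\Phi)=\dim\calF_nT-\dim\calF_{n-1}T-\dim\calF_nS+\dim\calF_{n-1}S.
\]
On the other side, the $n$-th component of $\mathrm{gr}(\ker\Phi)$ is isomorphic to $(\ker\Phi\cap\calF_nT)/(\ker\Phi\cap\calF_{n-1}T)$. The restriction $\Phi|_{\calF_nT}$ has image $\calF_nS$ and kernel $\ker\Phi\cap\calF_nT$, so
\[
\dim(\ker\Phi\cap\calF_nT)=\dim\calF_nT-\dim\calF_nS.
\]
Subtracting the same identity for $n-1$ shows that the $n$-th component of $\mathrm{gr}(\ker\Phi)$ has the same dimension as $\ker(\mathrm{gr}_n\Phi)$. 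Combined with the inclusion, this gives equality in each degree $n$.

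There is no real obstacle. The only point that needs care is that the hypothesis $\Phi(\calF_nT)=\calF_nS$ must be used at every level $n$: it is needed both for the surjectivity of $\mathrm{gr}_n\Phi$ and for computing $\dim(\ker\Phi\cap\calF_nT)$. Mere surjectivity of $\Phi$ together with $\Phi(\calF_nT)\subset\calF_nS$ would not be enough.
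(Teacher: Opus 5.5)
Your proof is correct, but it takes a different route from the paper's.

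The paper proves the nontrivial inclusion $\ker(\mathrm{gr}\,\Phi)\subset\mathrm{gr}(\ker\Phi)$ by a direct lift. Take $w\in\calF_nT$ with $\Phi(w)\in\calF_{n-1}S$. Since $\calF_{n-1}S=\Phi(\calF_{n-1}T)$, there is $w'\in\calF_{n-1}T$ with $\Phi(w')=\Phi(w)$. Then $w-w'\in\ker\Phi\cap\calF_nT$ has the same class as $w$ modulo $\calF_{n-1}T$.

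You instead prove only the easy inclusion and close the gap by a dimension count in each degree. This is legitimate here, because $T$ is assumed finite-dimensional in the setting of the lemma, and hence so is $S=\Phi(T)$. Your two formulas are right: the surjectivity of $\mathrm{gr}_n\Phi$, and $\dim(\ker\Phi\cap\calF_nT)=\dim\calF_nT-\dim\calF_nS$, together with the identification of the $n$-th component of $\mathrm{gr}(\ker\Phi)$ with $(\ker\Phi\cap\calF_nT)/(\ker\Phi\cap\calF_{n-1}T)$.

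The paper's element chase is shorter and needs no finiteness at all. It also shows that the only real input is the strictness $\calF_{n-1}S\subset\Phi(\calF_{n-1}T)$. Your version uses the same hypothesis, but hides it inside rank computations. As written, it would not apply to infinite-dimensional filtered spaces.
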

\begin{proof}
The inclusion $\supset$ is clear. Conversely, let $w\in\calF_nT$ be such that $\Phi(w)\in\calF_{n-1}S$. Since $\calF_{n-1}S=\Phi(\calF_{n-1}T)$, there is $w'\in\calF_{n-1}T$ with $\Phi(w')=\Phi(w)$. Then $w-w'\in\ker\Phi\cap\calF_nT$, so the class of $w$ in $\calF_nT/\calF_{n-1}T$ belongs to $\mathrm{gr}(\ker\Phi)$.
\end{proof}

\begin{proposition}
\label{prp::fusion::colimit}
In the above setting, put $T:=N(\gamma_0)\otimes M_1(\gamma_1)\otimes\cdots\otimes M_l(\gamma_l)$ with its fusion filtration, and $A_i:=K_i\otimes M_1\otimes\cdots\otimes M_l\subset T$.
\begin{enumerate}
\item There is a canonical surjection of graded $\fg[z]$-modules
\[
\mathrm{colim}_i\,\mathsf{P}_i\ \twoheadrightarrow\ \mathsf{P}'=\Big[\big(\mathrm{colim}_i\,N_i\big)\star_z M_1\star_z\cdots\star_z M_l\Big]_{\gamma},
\]
compatible with the maps from $\mathsf{P}$ and with the cyclic vectors. In other words, the fusion product with the colimit is a quotient of the colimit of the fusion products.
\item Explicitly, $\ker(\mathsf{P}\twoheadrightarrow\mathsf{P}_i)=\mathrm{gr}\,A_i$ and $\ker(\mathsf{P}\twoheadrightarrow\mathsf{P}')=\mathrm{gr}(A_1+\ldots+A_k)$, so that
\[
\mathrm{colim}_i\,\mathsf{P}_i=\mathsf{P}\Big/\sum_i\mathrm{gr}\,A_i\ \twoheadrightarrow\ \mathsf{P}\big/\mathrm{gr}(A_1+\ldots+A_k)=\mathsf{P}'.
\]
\item The following conditions are equivalent:
\begin{enumerate}
\item the surjection of item~(i) is an isomorphism, i.e., the fusion product commutes with the colimit of the diagram~\eqref{eq::star::diagram};
\item $\mathrm{gr}(A_1+\ldots+A_k)=\mathrm{gr}\,A_1+\ldots+\mathrm{gr}\,A_k$;
\item $\dim\mathrm{colim}_i\,\mathsf{P}_i=\dim N'\cdot\prod_{j=1}^{l}\dim M_j$.
\end{enumerate}
\end{enumerate}
\end{proposition}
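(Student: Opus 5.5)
The plan is to reduce everything to Lemma~\ref{lem::gr::kernel} applied to the surjections of filtered spaces $T\twoheadrightarrow T_i$ and $T\twoheadrightarrow T'$. Here
\[
T_i:=N_i(\gamma_0)\otimes M_1(\gamma_1)\otimes\cdots\otimes M_l(\gamma_l),
\qquad
T':=N'(\gamma_0)\otimes M_1(\gamma_1)\otimes\cdots\otimes M_l(\gamma_l).
\]
Let $\Pi_i:=\pi_i\otimes\mathrm{id}$ and $\Pi':=\pi'\otimes\mathrm{id}$, where $\pi'\colon N\twoheadrightarrow N'$ is the projection. By Step~2 of the proof of Proposition~\ref{prp::fusion::surjection}, these maps send the fusion filtration of $T$ exactly onto the fusion filtrations of $T_i$ and of $T'$. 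Their kernels are
\[
\ker\Pi_i=K_i\otimes M_1\otimes\cdots\otimes M_l=A_i,
\qquad
\ker\Pi'=(K_1+\ldots+K_k)\otimes M_1\otimes\cdots\otimes M_l=A_1+\ldots+A_k,
\]
since tensoring with fixed vector spaces is exact.

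Lemma~\ref{lem::gr::kernel} then gives $\ker(\mathsf{P}\twoheadrightarrow\mathsf{P}_i)=\mathrm{gr}\,A_i$ and $\ker(\mathsf{P}\twoheadrightarrow\mathsf{P}')=\mathrm{gr}(A_1+\ldots+A_k)$. These are the first two claims of item~(ii). The description $\mathrm{colim}_i\mathsf{P}_i=\mathsf{P}/\sum_i\mathrm{gr}\,A_i$ follows from Lemma~\ref{lem::cyclic::colimit}.

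For item~(i), I use the inclusion $\sum_i\mathrm{gr}\,A_i\subset\mathrm{gr}(\sum_iA_i)$, which holds by monotonicity of $\mathrm{gr}$. This gives a surjection $\mathsf{P}/\sum_i\mathrm{gr}\,A_i\twoheadrightarrow\mathsf{P}/\mathrm{gr}(\sum_iA_i)=\mathsf{P}'$. It is a $\fg[z]$-homomorphism because both kernels are submodules: $\mathrm{gr}\,A_i$ and $\mathrm{gr}(\sum A_i)$ are kernels of module maps. By construction it is compatible with the maps from $\mathsf{P}$ and therefore with the cyclic vectors. Equivalently, $\mathsf{P}'$ is a cocone over the diagram $\mathsf{P}\twoheadrightarrow\mathsf{P}_i$, since each $\mathsf{P}_i\twoheadrightarrow\mathsf{P}'$ is induced by the surjection $N_i\twoheadrightarrow N'$ via Proposition~\ref{prp::fusion::surjection}. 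The universal property of the colimit then produces the map, and it is surjective because $\mathsf{P}\twoheadrightarrow\mathsf{P}'$ is.

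For item~(iii), the equivalence (a)$\Leftrightarrow$(b) holds because both sides are quotients of $\mathsf{P}$ by nested submodules, $\sum\mathrm{gr}A_i\subset\mathrm{gr}(\sum A_i)$. The quotient map is an isomorphism exactly when these submodules are equal. For (a)$\Leftrightarrow$(c), note that
\[
\dim\mathsf{P}'=\dim N'\cdot\prod_j\dim M_j,
\]
since fusion preserves dimension. A surjection between finite-dimensional spaces is an isomorphism exactly when the dimensions agree.

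The only delicate point is the exact compatibility $\Pi(\calF_n)=\calF'_n$ needed in Lemma~\ref{lem::gr::kernel}. This is supplied by Step~2, provided the gradedness assumption on $N'$ makes $\mathsf{P}'$ well defined. I expect no real obstacle here.
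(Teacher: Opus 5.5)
Your proposal is correct and follows the paper's proof essentially step by step. Both arguments use the same maps $\Pi_i$ and $\Pi$, compute their kernels $A_i$ and $A_1+\ldots+A_k$ via exactness of $\otimes_{\bC}$, and apply Lemma~\ref{lem::gr::kernel}; they then combine the inclusion $\sum_i\mathrm{gr}\,A_i\subset\mathrm{gr}(\sum_iA_i)$ with Lemma~\ref{lem::cyclic::colimit} and finish item~(iii) by comparing dimensions. Your extra cocone/universal-property description of the map in item~(i) is a harmless equivalent reformulation.
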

\begin{proof}
Put $M:=M_1(\gamma_1)\otimes\cdots\otimes M_l(\gamma_l)$, and let $T_i:=N_i(\gamma_0)\otimes M$ and $T':=N'(\gamma_0)\otimes M$. Consider the maps
\[
\Pi_i:=\pi_i\otimes\mathrm{id}_M\colon T\twoheadrightarrow T_i,\qquad
\Pi:=\pi\otimes\mathrm{id}_M\colon T\twoheadrightarrow T',
\]
where $\pi\colon N\twoheadrightarrow N'$ is the projection. By the proof of Proposition~\ref{prp::fusion::surjection}, these are surjective homomorphisms of $\fg[z]$-modules, which map the fusion filtration of $T$ onto the fusion filtrations of $T_i$ and $T'$ respectively. The associated graded maps are the surjections $\mathsf{P}\twoheadrightarrow\mathsf{P}_i$ and $\mathsf{P}\twoheadrightarrow\mathsf{P}'$.

The tensor product over $\bC$ is exact. Hence
\[
\ker\Pi_i=K_i\otimes M=A_i,\qquad
\ker\Pi=(K_1+\ldots+K_k)\otimes M=A_1+\ldots+A_k .
\]
By Lemma~\ref{lem::gr::kernel}, $\ker(\mathsf{P}\twoheadrightarrow\mathsf{P}_i)=\mathrm{gr}\,A_i$ and $\ker(\mathsf{P}\twoheadrightarrow\mathsf{P}')=\mathrm{gr}(A_1+\ldots+A_k)$. These kernels are $\fg[z]$-submodules of $\mathsf{P}$. Since $A_i\subset A_1+\ldots+A_k$, we have $\sum_i\mathrm{gr}\,A_i\subset\mathrm{gr}(A_1+\ldots+A_k)$. Together with Lemma~\ref{lem::cyclic::colimit}, this proves items~(i) and~(ii); the compatibility with the maps from $\mathsf{P}$ holds by construction.

The surjection $\mathsf{P}/\sum_i\mathrm{gr}\,A_i\twoheadrightarrow\mathsf{P}/\mathrm{gr}(\sum_iA_i)$ is an isomorphism if and only if the two subspaces coincide. Since one of them contains the other, this happens if and only if their dimensions coincide. Finally, $\dim\mathsf{P}'=\dim T'=\dim N'\cdot\prod_j\dim M_j$. This proves item~(iii).
\end{proof}

In general the surjection of Proposition~\ref{prp::fusion::colimit}(i) is not an isomorphism, even for abelian Lie algebras and evaluation modules.

\begin{example}
\label{ex::fusion::colimit}
Let $\fa=\bC x\oplus\bC y$ be the two-dimensional abelian Lie algebra, and let $N=\bC[x,y]/(x,y)^2$ with the basis $1,x,y$ and the cyclic vector $1$. Consider the submodules $K_1=\bC x$ and $K_2=\bC y$, so that $N_1=N/K_1$, $N_2=N/K_2$ and $N'=N/(K_1+K_2)\simeq\bC$. Take $l=1$, $M_1=N$ with the cyclic vector $1$, and the parameters $\gamma_0=0$, $\gamma_1=1$.
In $T=\ev_0(N)\otimes\ev_1(N)$, the element $a z^s$ ($a\in\fa$) acts by $a\otimes1+1\otimes a$ for $s=0$ and by $1\otimes a$ for $s>0$. Let $v=1\otimes1$. A direct computation gives
\[
\calF_0T=U(\fa)\cdot v=\Span\langle 1\otimes1,\ x\otimes1+1\otimes x,\ y\otimes1+1\otimes y,\ x\otimes x,\ y\otimes y,\ x\otimes y+y\otimes x\rangle ,
\]
and $\calF_1T=T$, since $(xz)\cdot v=1\otimes x$, $(yz)\cdot v=1\otimes y$ and $(xz)(y\otimes1+1\otimes y)=y\otimes x$.
We have $A_1=x\otimes N$ and $A_2=y\otimes N$, and one checks that
\[
A_1\cap\calF_0T=\bC\,x\otimes x,\qquad A_2\cap\calF_0T=\bC\,y\otimes y,\qquad
(A_1+A_2)\cap\calF_0T=\Span\langle x\otimes x,\ y\otimes y,\ x\otimes y+y\otimes x\rangle .
\]
Hence in degree $0$ the subspace $\mathrm{gr}(A_1+A_2)$ is three-dimensional, while $\mathrm{gr}\,A_1+\mathrm{gr}\,A_2$ is two-dimensional. In degree $1$, both subspaces are equal to $T/\calF_0T$, which is three-dimensional.
Therefore $\dim\mathrm{colim}_i\,\mathsf{P}_i=9-5=4$, whereas $\dim\,(N'\star_z N)=\dim N'\cdot\dim N=3$.
\end{example}

\begin{remark}
In the situation of Proposition~\ref{prp::fusion::colimit}, the fusion product with the colimit is always a quotient of the colimit of the fusion products. Hence, to prove that they coincide, it suffices to show that $\dim\mathrm{colim}_i\,\mathsf{P}_i\leq\dim N'\cdot\prod_j\dim M_j$. For instance, this holds if one knows a graded $\fg[z]$-module of dimension $\dim N'\cdot\prod_j\dim M_j$ that surjects onto $\mathrm{colim}_i\,\mathsf{P}_i$ compatibly with the cyclic vectors. 
%Via the Schur--Weyl duality, this is the type of statement needed for the common quotients $\mGH^{\gl}_{\nu_1,\ldots,\nu_k}$ of~\S\ref{sec::GH::SW}.
\end{remark}

\section{Local Weyl modules $\sW_{\lambda}^{z}$}
\label{sec::Weyl}

In this section, we recollect some properties of local Weyl modules that will be useful for the proofs in the next section.

\subsection{Definition of local Weyl modules}
The main examples of fusion products are local Weyl modules well known in the representation theory of semisimple Lie algebras $\fg$ (\cite{CP,CL}).  They appear in various contexts in the study of the category of representations of current Lie algebras \cite{CP,BBCKL,Kh}, semi-infinite flag varieties \cite{BF1,BF2,BF3}, double affine Hecke algebras and Macdonald polynomials \cite{KKhM}, arc varieties and vertex algebras \cite{FeMa1,FeMa2}.
In this paper, we mainly focus on the cases $\gl_V$ and $\msl_V$, and we begin by recalling some standard notations.

Throughout, we suppose that the dimension of the tautological $\gl_V$ representation $V$ is sufficiently large. We consider polynomial $\gl_V$-modules such that the weight of each representation is strictly less than $\dim V$.  
The Cartan subalgebra $\fh \subset \gl_V$ consists of diagonal matrices. We denote by $\varepsilon_i \in \fh^*$ the standard weight which evaluates to $1$ on the $i$-th diagonal entry and vanishes elsewhere.  
The fundamental weights are given by $\omega_i := \varepsilon_1 + \cdots + \varepsilon_i$, which coincide with the highest weights of the fundamental representations $\Lambda^i V$.

To each partition $\lambda = (\lambda_0 \geq \ldots \geq \lambda_l) \vdash N < \dim V$, we associate the irreducible $\gl_V$-module $V_\lambda$ with highest weight
\[
\lambda = \lambda_0 \varepsilon_1 + \cdots + \lambda_l \varepsilon_{l+1} = \lambda_l \omega_{l+1} + (\lambda_{l-1} - \lambda_l) \omega_l + \cdots + (\lambda_0 - \lambda_1) \omega_1.
\]
We use the  \emph{standard partial order "$\leq$"} on the set of dominant weights (indexed by partitions in the case of $\gl_V$):
\begin{equation}
\label{eq::dominance::order}
\lambda \geq \mu \ \stackrel{\mathsf{def}}{\Leftrightarrow} \ |\lambda| = |\mu| \ \text{ and } \
\begin{cases}
\lambda_0 \geq \mu_0, \\
\lambda_0 + \lambda_1 \geq \mu_0 + \mu_1, \\
\ldots
\end{cases}
\end{equation}
Note that any $\gl_V[z]$-integrable module $M$ which is a polynomial $\gl_V$-representation of degree $N$ admits a direct sum decomposition  of $\gl_V$-modules called \emph{$\gl_V$-weight decomposition}:
$$M=\bigoplus_{\mu \vdash N} V_{\mu}^{m_\mu}.$$
Let us introduce a family of modules that play a central role in this paper:
\begin{definition}
    The \emph{local Weyl module} $\sW_{\lambda}^{z}$ is the maximal integrable $\gl_V[z]$-module whose all weights are less or equal then $\lambda$ and its $\lambda$ weight space is spanned by a cyclic vector $v_{\lambda}$.
In particular, it has the following $\gl_V$-weight decomposition:
$$
\sW_{\lambda}^{z} \simeq_{\gl_V} V_{\lambda}\oplus \bigoplus_{\mu<\lambda} V_{\mu} ^{m_\mu}.
$$
\end{definition}

Local Weyl modules admit the following presentation by generators and relations as explained in~\cite{CP} (for $\dim V>N$):
$$
\sW_{\lambda}^{z} \simeq U(\gl_V[z]) v_{\lambda}\left/
\left(
\begin{array}{c}
\forall j>i, \forall r \ \ E_{ij} z^r\cdot v_\lambda = 0, \\
E_{ii} v_\lambda = \lambda_i v_{\lambda}, \ \forall r>0\, E_{ii} z^r v_{\lambda} = 0, \\
E_{i+1,i}^{\lambda_i+1} v_{\lambda} = 0.
\end{array}
\right)
\right.
$$

Local Weyl modules $\sW_{\lambda}^{z}$ possess a number of remarkable properties (see for example \cite{I,Kh}).  
One of the key properties we will use is the following theorem, proved for $\msl_m$ in \cite{CL} and later extended to other semisimple Lie algebras \cite{FL2,N,FeMa3}:
\begin{theorem}
\label{thm::Weyl::fusion}
    The fusion product of fundamental $\gl_V$-modules
    is independent of the fusion parameters $\gamma_i$'s and is isomorphic to the local Weyl module:
\begin{equation}
\label{eq::Weyl=fusion}
    [V_{\omega_{i_0}} \star_z \cdots \star_z V_{\omega_{i_l}}]_{\gamma_0,\ldots,\gamma_l}
    \simeq \sW_{\sum_{j=0}^{l} \omega_{i_j}}^{z} \text{ for all } \gamma_0,\ldots,\gamma_l.
\end{equation}    
Moreover, this fusion product is associative:
$$
    [\sW_{\lambda^0} \star_z \cdots \star_z \sW_{\lambda^l}]_{\gamma_0,\ldots,\gamma_l}
    \simeq \sW_{\sum_{j=0}^{l} \lambda^j}^{z}
$$
\end{theorem}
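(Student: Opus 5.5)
The plan is to reduce everything to the dimension principle of Proposition~\ref{prp::fusion::associativity}, with the local Weyl module $\sW^z_\mu$, $\mu=\sum_j\omega_{i_j}$, playing the role of the universal module $W$.

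\emph{Step 1 (quotient property).} Each $V_{\omega_{i_j}}=\Lambda^{i_j}V$, viewed as a $\gl_V[z]$-module concentrated in degree $0$, is generated by its highest weight vector $e_1\wedge\ldots\wedge e_{i_j}$. This vector is killed by $\fn_+[z]$ and by $\fh\otimes z\bC[z]$. Hence Corollary~\ref{cor::fusion::Weyl::upper} applies: every fusion product $[V_{\omega_{i_0}}\star_z\cdots\star_z V_{\omega_{i_l}}]_{\gamma}$, for any pairwise distinct parameters and any bracketing into iterated fusion products, is a quotient of $\sW^z_\mu$, compatibly with the cyclic vectors.

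\emph{Step 2 (dimension bound, the main obstacle).} Proposition~\ref{prp::fusion::associativity} now requires
\[
\dim\sW^z_\mu\leq\prod_j\dim\Lambda^{i_j}V=\prod_j\binom{m}{i_j}.
\]
This is the genuinely nontrivial input of Chari--Loktev~\cite{CL}. I would follow their route.
\begin{enumerate}
\item Use the defining relations of $\sW^z_\mu$, namely $\fn_+[z]v_\mu=0$, $(hz^r)v_\mu=0$ for $r>0$, and the integrability relations $E_{i+1,i}^{\mu_i+1}v_\mu=0$.
\item From these, derive the additional relations on $v_\mu$ coming from $\msl_2[z]$-subalgebras. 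These are of Garland type: vanishing of suitable coefficients of $\big(\sum_r E_{a,b}z^r u^r\big)^{s}$ acting on $v_\mu$.
\item Use these relations to produce a spanning set of $U(\fn_-[z])v_\mu$ by ordered monomials $\prod(E_{ab}z^{r})$. Its elements are indexed by Gelfand--Tsetlin-type patterns, with the exponents $r$ bounded by the pattern data.
\item Count the patterns and obtain exactly $\prod_j\binom{m}{i_j}$, proceeding by induction on $m$ via restriction to $\gl_{m-1}[z]$.
\end{enumerate}
An alternative would be to identify $\sW^z_\mu$ with a level-one Demazure module and invoke the Demazure character formula, which gives the same product. For $\gl_V$ rather than $\msl_V$, the relation $(E_{ii}z^r)v_\mu=0$ for $r>0$ ensures that the centre contributes nothing beyond the $\msl$ case, so the $\msl$ count transfers.

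\emph{Step 3 (conclusion).} By Proposition~\ref{prp::fusion::associativity}, all these fusion products are isomorphic to $\sW^z_\mu$. In particular they are independent of the $\gamma_i$, which proves~\eqref{eq::Weyl=fusion}.

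For the associativity statement, write each $\lambda^j$ as a sum of fundamental weights. By the first part, $\sW^z_{\lambda^j}$ is the fusion product of the corresponding fundamentals. Hence $[\sW_{\lambda^0}\star_z\cdots\star_z\sW_{\lambda^l}]_\gamma$ is an iterated fusion product of fundamental modules. By Steps 1--2 it is a quotient of $\sW^z_{\sum_j\lambda^j}$ of the same dimension, so it is isomorphic to $\sW^z_{\sum_j\lambda^j}$.
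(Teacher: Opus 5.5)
Your proposal is correct and is essentially the argument the paper relies on. The paper gives no proof of its own and cites Chari--Loktev~\cite{CL}, whose strategy is yours: every (iterated) fusion product is a quotient of $\sW^z_\mu$ (Corollary~\ref{cor::fusion::Weyl::upper}), the spanning-set argument gives $\dim\sW^z_\mu\leq\prod_j\binom{m}{i_j}$, and Proposition~\ref{prp::fusion::associativity} then gives parameter independence and associativity at once.

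One caveat: your Demazure ``alternative'' is a citation rather than an independent route. The surjection $\sW^z_\mu\twoheadrightarrow D$ onto the level-one Demazure module only gives a lower bound on $\dim\sW^z_\mu$, and the identification $\sW^z_\mu\simeq D$ is itself derived from the very upper bound you need.
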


\subsection{Properties of Local Weyl Modules}

Local Weyl modules possess many elegant properties and admit various equivalent descriptions, which we briefly recall below. In particular, their characters are given by the $t = 0$ specialization of Macdonald polynomials (see e.g.\cite{S,I, N,FeMa3}):
\begin{equation}
\label{eq::Weyl::Character}
\ch_q(\sW_\lambda^{z}) = P_\lambda(\overline{x}, q, 0).
\end{equation}
Here, the variable $q$ tracks the current algebra grading (i.e., the $z$-degree), while $\overline{x}$ corresponds to the $\gl_V$-character ($x_i:=e^{\varepsilon_i}$). These polynomials are also known as $q$-Whittaker functions or $q$-Hermite polynomials (see e. g. \cite{CO}).

Below we recall one more characterization of local Weyl modules, due to \cite{CL} in type $A$, and to \cite{I} for simply-laced types and dual nontwisted non-simply-laced types. This description is grounded in the representation theory of Demazure modules for affine Kac–Moody Lie algebras.
First, we recall some basic definitions and notations (we refer for the details to the textbooks~\cite{Kac,Carter}). Let $\fg$ be a simply-laced simple Lie algebra, and let $\widehat{\fg}$ denote the corresponding untwisted affine Lie algebra, which is a central extension of the loop algebra $\fg[[z, z^{-1}]]$. In particular, there is a natural embedding $\fg[z] \hookrightarrow \widehat{\fg}$, and the image is a parabolic subalgebra corresponding to the inclusion of the finite Dynkin diagram into its affine extension.

Let $\delta$ denote the basic imaginary root, and let $d$ be the degree operator (or scaling element), characterized by:
\[
\langle d, \alpha \rangle = 0 \quad \text{for all roots } \alpha \in \fg, \qquad \langle d, \delta \rangle = 1.
\]

Let $\nu$ be a minuscule weight of $\fg$ (in type $A$, this means $\nu = \omega_k$ is a fundamental weight). Then the affine weight $\nu + d$ is dominant of level one. Consider the corresponding irreducible highest weight module $L_{\nu + d}$ over $\widehat{\fg}$.

The following properties hold for simply-laced Lie algebras and follow from the representation theory of affine Lie algebras (see \cite{Kac}) and the isomorphism between Weyl modules and affine Demazure modules (see \cite{I}):
\begin{enumerate}[label=(D\arabic*)]
    \item For any integral weight $\tau$ of $\fg$, there exist a unique minuscule weight $\nu_\tau$, an element $\sigma$ in the affine Weyl group, and an integer $|\tau|$ such that:
    \[
    \sigma(\nu_\tau + d) = \tau + d - |\tau|\delta.
    \]
    
    \item The weight space of $L_{\nu + d}$ corresponding to $\tau + d - |\tau|\delta$ is one-dimensional. We denote its generator by $v_{\tau + d - |\tau|\delta}$.
    
    \item Two weights $\tau$ and $\tau'$ satisfy $\nu_\tau = \nu_{\tau'}$ if and only if $\tau - \tau'$ lies in the root lattice of $\fg$. In particular, for $\tau = \sum_{i=1}^{\dim V} \tau_i \varepsilon_i$ (a weight of $\msl_V$), we have $\nu_\tau = \omega_{\sum_{i=1}^{\dim V} \tau_i}$.
    
    \item \label{item::Weyl=Demazure}
    If $\tau$ is dominant, then the cyclic $\fg[z]$-module generated by $v_{\tau + d - |\tau|\delta}$ is isomorphic to the local Weyl module:
    \[
    \U(\fg[z]) \cdot v_{\tau + d - |\tau|\delta} \simeq \sW_\tau(z).
    \]
    
    \item If $\tau < \tau'$ are dominant weights in the dominance order, then:
    \[
    v_{\tau + d - |\tau|\delta} \in \U(\fg[z]) \cdot v_{\tau' + d - |\tau'|\delta}.
    \]
    
    \item \label{item::WeylInclusion}
    Let $\gamma + p\delta$ be an affine root such that:
    \[
    \langle \tau + d, \gamma + p\delta \rangle = -1.
    \]
    Then:
    \[
    v_{\tau - \gamma + d - |\tau - \gamma|\delta} = f_{\gamma} z^p \cdot v_{\tau + d - |\tau|\delta}.
    \]
    In particular, the submodule of $\sW_\tau$ generated by $f_\gamma z^{\langle \tau, \gamma \rangle - 1} v_\tau$ is isomorphic to $\sW_{\tau - \gamma}$, provided $\tau - \gamma$ is a dominant weight.
    (Here $f_{\gamma}\in\fg$ is the negative generator of the negative Borel subalgebra of $\fg$ associated with the root $\gamma$).
\end{enumerate}

By definition, local Weyl modules are cyclic. Surprisingly, they are also \emph{cocyclic}:

\begin{proposition}
\label{prp::Weyl::top}
For any partition $\lambda \vdash N$, the highest $z$-degree component of the local Weyl module $\sW_{\lambda}^{z}$ is isomorphic to $\Lambda^N V$, provided that $\dim V > N$.
Moreover, this component forms the \emph{socle} of the module: every nonzero $\gl_V[z]$-submodule of $\sW_{\lambda}^{z}$ contains it.
\end{proposition}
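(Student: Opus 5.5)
The approach is to realize $\sW^z_\lambda$ as a fusion product and then locate both its top degree and its socle. By Theorem~\ref{thm::Weyl::fusion} we have $\sW^z_\lambda\simeq [V\star_z\cdots\star_z V]$ with $N$ factors when $\lambda=N\varepsilon_1$. For a general $\lambda$ we write it as a fusion of fundamental modules $\Lambda^{\lambda^T_i}V$. Thus $\sW^z_\lambda$ is a quotient of the associated graded of an evaluation tensor product that is isomorphic to $V^{\otimes N}$ as a $\gl_V$-module. Its character is $P_\lambda(x;q,0)$ by~\eqref{eq::Weyl::Character}.

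\textbf{Step 1: the top degree is $\Lambda^NV$.} I would use the known $t=0$ Macdonald expansion $P_\lambda(x;q,0)=\sum_\mu K_{\mu\lambda}(q)\,$-type coefficients. A cleaner route is the $q$-Kostka fact that the coefficient of $s_{(1^N)}$ in $P_\lambda(x;q,0)$ is a single monomial $q^{n(\lambda^T)}$-type power of maximal degree. Every other $s_\mu$ occurs only in strictly smaller degrees, since the degree of $s_\mu$ is bounded by $n(\mu^T)$-charge statistics, maximized uniquely at $\mu=(1^N)$.

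Alternatively, and closer to the paper's tools, I would argue via the Demazure description (D4)--(D6). The vector $v_{\tau}$ with $\tau=\omega_N$ is reached from $v_\lambda$ by a chain of operators $f_\gamma z^{p}$ from (D6), through dominant weights decreasing to $\omega_N$. Each step strictly raises the $z$-degree by $\langle\tau,\gamma\rangle-1$. This gives a copy of $\sW_{\omega_N}=\Lambda^NV$ inside $\sW_\lambda$ of degree $|\omega_N|-|\lambda|$, which is the largest possible degree because $d$-eigenvalues in $L_{\nu+d}$ are bounded by the extremal weights. For $\mu\neq(1^N)$ dominant, $|\mu|<|\omega_N|$, so $V_\mu$-components cannot reach that degree. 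Combined with the fact that $\Lambda^NV$ occurs in $V^{\otimes N}$ once, this shows the top component is exactly $\Lambda^NV$.

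\textbf{Step 2: the socle.} I would take a nonzero submodule $U\subset\sW^z_\lambda$. It suffices to show that $U$ meets the top component. Since $\sW_\lambda$ is graded and finite-dimensional, I would take a nonzero homogeneous vector in $U$ of maximal degree $k$. It is annihilated by $\fg\otimes z\bC[z]$ in the sense that its images land in $U\cap(\text{degree}>k)=0$. So it spans a $\gl_V$-submodule killed by positive currents, i.e.\ a simple $V_\mu$ in the socle. The key claim is then that the only $\gl_V$-highest weight vectors of $\sW_\lambda$ annihilated by $\fg\otimes z\bC[z]$ lie in $\Lambda^NV$.

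For this claim I would use the dual picture. Via $\bomega$ and $\dagger$, this is equivalent to cyclicity of the dual module from the top, in the spirit of Lemma~\ref{lm::socle::head::iso}. Concretely, I would check the claim using the explicit realization $\sW_{N\varepsilon_1}\simeq S^N(V[z])/(\text{symmetric }z\text{-relations})$. Its $\WS$-image is the coinvariant algebra $\bC[z_1,\ldots,z_N]/(\text{sym}_+)$, which is Gorenstein with socle spanned by the Vandermonde determinant, of sign type (Example~\ref{ex::GH::row}). By Theorem~\ref{thm::SW::currents}, socles correspond under $\SW$, giving simple socle $\Lambda^NV$ for $\lambda=(N)$. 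For general $\lambda$ I would embed $\sW_\lambda$ into $\sW_{N\varepsilon_1}$ via (D5)/(D6). This embedding makes $\sW_\lambda$ a nonzero submodule of $\sW_{N\varepsilon_1}$, so every nonzero submodule of $\sW_\lambda$ is a nonzero submodule of $\sW_{N\varepsilon_1}$ and contains its socle $\Lambda^NV$.

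The main obstacle is justifying the embedding $\sW_\lambda\hookrightarrow\sW_{N\varepsilon_1}$ (injectivity, not just a map) from (D5). I would try to get it from (D4)--(D5) by viewing both as Demazure submodules of the same $L_{\omega_N+d}$ (same minuscule class by (D3)), which are nested.
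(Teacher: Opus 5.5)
Your argument is correct, but the work is distributed differently from the paper. The paper stays inside the level-one module. By (D3)--(D4), every $\sW^z_\lambda$ with $\lambda\vdash N$ is a $\fg[z]$-submodule of the same $L_{\omega_N+d}$. The restriction of this irreducible module to the parabolic subalgebra $\fg[z]$ is cocyclic: a $\gl_V$-highest weight vector killed by $\fg\otimes z\bC[z]$ is killed by every raising operator of $\widehat{\fg}$, the affine one included, so it is the highest weight vector. Hence every nonzero submodule of $\sW^z_\lambda$, graded or not, contains $V_{\omega_N}=\Lambda^NV$. The top $z$-degree component is a nonzero submodule on which $\fg\otimes z\bC[z]$ acts by zero, so it is semisimple and equals this socle.

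You instead get the simple socle only for $\lambda=(N)$, from the current Schur--Weyl duality and the Gorenstein property of the coinvariant algebra. That ring is local Artinian, so every nonzero ideal contains $\Delta_{(N)}$; this also covers the non-graded submodules that your reduction via a homogeneous vector of maximal degree misses. You then use the affine module only to nest $\sW^z_\lambda\subseteq\sW^z_{N\varepsilon_1}$. That nesting is not an obstacle: by (D3) both Demazure modules sit in the same $L_{\omega_N+d}$, and (D5) with $\lambda\leq(N)$ is an inclusion of subspaces, so injectivity is automatic.

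Your route identifies the socle concretely as the Schur--Weyl image of the Vandermonde line, and treats $\lambda=(N)$ without affine input. The paper's route is shorter and uniform in $\lambda$. Once you have the embedding into $L_{\omega_N+d}$, it makes both the coinvariant detour and your Step~1 unnecessary.

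Two cautions. First, you cannot take the identification $\sW^z_{N\varepsilon_1}\simeq S^N(V[z])/(\text{symmetric relations})$ from the paper. There it is obtained through Corollary~\ref{cor::sWH::GH} and Theorem~\ref{thm::top::fusion}, which depend on the present proposition. Prove it directly instead:
\begin{enumerate}
\item[(a)] In the quotient of $S^N(V[z])$ by $\sum_{r>0}(\mathrm{Id}\otimes z^r)\,S^N(V[z])$, the image of $e_1^N$ satisfies the defining relations of $\sW^z_{N\varepsilon_1}$; use $(E_{11}z^r)e_1^N=(\mathrm{Id}\otimes z^r)e_1^N$.
\item[(b)] This quotient is $\SW$ of the coinvariant algebra, hence has dimension $(\dim V)^N$.
\item[(c)] By Theorem~\ref{thm::Weyl::fusion}, $(\dim V)^N$ is also $\dim\sW^z_{N\varepsilon_1}$, so the surjection is an isomorphism.
\end{enumerate}

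Second, your Step~1 has the energy inequalities reversed. In the normalization of (D1), $|\omega_N|=0<|\mu|$ for every other dominant $\mu\vdash N$. The top $z$-degree of $\sW^z_\lambda$ is the integer $|\lambda|$ of (D1), which equals $n(\lambda^T)$. Also, for general $\lambda$ the underlying $\gl_V$-module is $\bigotimes_s\Lambda^{\lambda^T_s}V$, not $V^{\otimes N}$.
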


\begin{proof}
The restriction of an integrable highest weight representation of the affine Lie algebra to the current algebra is cocyclic, with the cocyclic generator being the highest weight vector. This is because $\fg[z]$ embeds into $\widehat{\fg}$ as a parabolic subalgebra. Therefore, we have
\[
\cosoc\left(\mathrm{Res}_{\fg[z]}^{\widehat{\fg}} L_{\nu + d}\right) \simeq V_{\nu},
\]
so the socle is irreducible. Consequently, the socle of any $\fg[z]$-submodule of $\mathrm{Res}_{\fg[z]}^{\widehat{\fg}} L_{\nu + d}$ coincides with that of the full restriction.
Using property~\ref{item::Weyl=Demazure}, which identifies the local Weyl module $\sW_\lambda^z$ with a Demazure submodule in $L_{\nu + d}$, we obtain:
\[
\cosoc \sW_\lambda^z \simeq V_{\omega_{|\lambda|}} \simeq \Lambda^N V.
\]
\end{proof}

Let us now provide more detailed information about the structure of the local Weyl module, its socle, and how the socle can be accessed.
In Proposition~\ref{prop:LineWeylMudulesProperties}, we gather results concerning the special case $\lambda = (N)$. Subsequently, in Proposition~\ref{lem:CocyclicWeylProperties}, we address the general case of arbitrary $\lambda$.

\begin{proposition}\label{prop:LineWeylMudulesProperties}
The local Weyl module $\sW_{N\varepsilon_1}^{z}$ (generated by the highest weight vector $v_{N\varepsilon_1}$) possesses the following properties:
\begin{itemize}
    \item $\sW_{N\varepsilon_1}^{z}$ is generated from the cyclic vector $v_{N\varepsilon_1}$ by the action of the abelian subalgebra $\fa[z]$, where
    $\fa:=\Span\langle E_{a 1} \colon a>1\rangle$, and, as an $\fa[z]$-module, it coincides with the fusion product 
    \begin{equation}
    \label{eq::weyl::ab::fusion}
    \underbrace{\left(\Bbbk\oplus\fa\right)\star_z\ldots\star_z \left(\Bbbk\oplus\fa\right)}_{N};
    \end{equation}
    \item for any pairwise different indices $a_1,\ldots,a_K>1$ and any integers $0\leq r_1\leq \ldots\leq r_K$, the monomial element
    $\left(\prod_{i=1}^{K} E_{a_i 1} z^{r_i} \right)v_{N\varepsilon_1}$ is nonzero in $\sW_{N\varepsilon_1}^{z}$ only if $K\leq N$ and $r_i\leq i+N-K-1$ for all $i=1,\ldots,K$;
    \item the top ${z}$-degree component of $\sW_{N\varepsilon_1}^{z}$ has ${z}$-degree equal to $\frac{N(N-1)}{2}$ and is generated by the element $\left(\prod_{i=1}^{N} E_{a_i 1} z^{i-1}\right) v_{N\varepsilon_1}$ (with $1<a_1<\ldots<a_N$).
\end{itemize}
\end{proposition}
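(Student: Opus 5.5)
We prove the three items of Proposition~\ref{prop:LineWeylMudulesProperties} in order, using Theorem~\ref{thm::Weyl::fusion} to realize $\sW^z_{N\varepsilon_1}$ as the fusion product $V\star_z\cdots\star_z V$ of $N$ copies of $V$ with cyclic vectors $e_1$.

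\emph{First item.} In each evaluation factor $\ev_{\gamma_i}(V)$, the $\fa$-submodule generated by $e_1$ is $\bC e_1\oplus\fa e_1\simeq\Bbbk\oplus\fa$; it is all of $V$, and $\fa^2 e_1=0$. By the Vandermonde Lemma~\ref{lem::Vandermonde}(i), $U(\fa[z])$ already contains all factorwise operators $x^{(i)}$ with $x\in\fa$, so $U(\fa[z])\cdot(e_1^{\otimes N})$ is the whole tensor product $\bigotimes_i\ev_{\gamma_i}(V)$. We would then compare the two fusion filtrations $U(\gl_V[z])_{\le k}v$ and $U(\fa[z])_{\le k}v$. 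Write $U(\gl_V[z])=U(\fn_-[z])\,U(\fh[z])\,U(\fn_+[z])$ (PBW, compatible with the $z$-degree filtration). Then $\fn_+[z]$ kills $v$, and $\fh z^r$ acts on $v$ by scalars, so these contribute only lower filtration. The remaining lowering operators $E_{ab}z^r$ with $a>b>1$ are handled by writing them as commutators $[E_{a1}z^r,E_{1b}]$ and pushing $E_{1b}$ to the right, where it hits $v$. So the key step is the identity of filtrations
\[
U(\gl_V[z])_{\le k}\cdot v=U(\fa[z])_{\le k}\cdot v,
\]
proved by induction on $k$ together with the PBW degree. Given this identity, the associated graded modules agree as $\fa[z]$-modules, which gives~\eqref{eq::weyl::ab::fusion}.

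\emph{Second item.} We apply Proposition~\ref{prp::fusion::abelian}(ii) with the constant sequence $\fa_i=\fa$ and $\eta_i=1$. It says that a nonzero monomial needs $K\le N$ and $\#\{t\colon r_t\ge j\}\le N-j$ for every $j$. For sorted $r_1\le\cdots\le r_K$, take $j=r_i$. Then at least $K-i+1$ indices satisfy $r_t\ge r_i$, so $K-i+1\le N-r_i$, which rearranges to $r_i\le i+N-K-1$. Pairwise distinctness of the $a_i$ is not needed for this bound.

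\emph{Third item.} The bound from the second item gives $z$-degree $\le\sum_{i=1}^{K}(i+N-K-1)$, which is maximized at $K=N$ with value $\binom N2$. It remains to show that the monomial with distinct $a_i$ and $r_i=i-1$ is nonzero. We would use the explicit formula from the proof of Proposition~\ref{prp::fusion::abelian} with $p_r(z)=\prod_{s<r}(z-\gamma_s)$. Only maps $\varphi$ with $\varphi(t)\ge t-1$ and fibres of size $\le 1$ survive, which forces $\varphi(t)=t-1$. The class of the monomial is therefore
\[
\prod_t p_{t-1}(\gamma_{t-1})\; e_{a_1}\otimes\cdots\otimes e_{a_N}\neq0 .
\]
This vector is not in $\calF_{\binom N2-1}$, because that subspace is spanned by vectors obtained from shorter or lower-degree monomials. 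We would make this rigorous by a dimension argument rather than directly. The top component is nonzero, since its degree $\binom N2$ is attained; it is then $\Lambda^NV$ by Proposition~\ref{prp::Weyl::top}; and it is spanned by images of such monomials, whose $\gl_V$-weights are the generic weights $\sum\varepsilon_{a_i}$. Hence the given element generates the top component.

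\emph{Main obstacle.} We expect the hardest step to be the filtration identity in the first item, that is, controlling the $E_{ab}z^r$ terms with $a>b>1$ without raising filtration degree.
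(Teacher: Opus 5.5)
\textbf{There is a genuine gap in the third item.} The first two items are essentially right. In the first, the step you flag as the main obstacle is not one. The subalgebra $\fb=\Span\langle E_{ij}\colon j\geq2\rangle\oplus\bC E_{11}$ satisfies $\gl_V=\fa\oplus\fb$, and $\fb[z]$ acts on $v$ by scalars. So PBW gives $U(\gl_V[z])_{\leq k}v=U(\fa[z])_{\leq k}v$ at once, and the commutator manipulations are unnecessary.

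In the third item, you show that the representative $\prod_t\big(E_{a_t1}p_{t-1}(z)\big)v$ is a nonzero vector of $T=\bigotimes_i\ev_{\gamma_i}(V)$. But the statement needs its class in $\calF_{\binom{N}{2}}/\calF_{\binom{N}{2}-1}$ to be nonzero, i.e.\ the vector must not lie in $\calF_{\binom{N}{2}-1}$. Saying that this subspace ``is spanned by vectors obtained from lower-degree monomials'' does not prove this. Your fallback dimension argument is circular: it opens with ``the top component is nonzero, since its degree $\binom{N}{2}$ is attained'', which is exactly the claim to be proved. Proposition~\ref{prp::Weyl::top} identifies the top component with $\Lambda^NV$ but says nothing about its degree. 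The vanishing lemma also cannot give a lower bound by counting. For $\mu=\sum_i\varepsilon_{a_i}$ with distinct $a_i$, the exponent tuples it permits are the parking functions. For $N\geq2$ there are $(N+1)^{N-1}>N!=\dim T_\mu$ of them, so nothing is forced into degree $\binom{N}{2}$.

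\textbf{How the lower bound can be supplied.} The paper takes it from the character formula~\eqref{eq::Weyl::Character}. The $\mu$-weight space of $\sW^z_{N\varepsilon_1}$ has graded dimension $[N]_q!$, whose top degree is $\binom{N}{2}$. The necessary condition of the second item then forces $\{r_i\}=\{0,\ldots,N-1\}$.

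A self-contained alternative goes as follows. Let $b_\varphi\in T_\mu$ be the tensor with $e_{a_t}$ in position $\varphi(t)$, for bijections $\varphi\colon\{1,\ldots,N\}\to\{0,\ldots,N-1\}$. Define $\chi\colon T_\mu\to\bC$ by $\chi(b_\varphi)=(-1)^{\varphi}$. For any polynomials $f_1,\ldots,f_N$, your expansion gives
\[
\chi\Big(\prod_tE_{a_t1}f_t(z)\,v\Big)=\det\big(f_t(\gamma_j)\big)_{t,j}.
\]
This is an alternating polynomial in $\gamma_0,\ldots,\gamma_{N-1}$ of degree at most $\sum_t\deg f_t$. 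It is therefore identically zero when $\sum_t\deg f_t<\binom{N}{2}$, since it is divisible by the Vandermonde determinant.

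By the first item, $\calF_{\binom{N}{2}-1}\cap T_\mu$ is spanned by such vectors, so $\chi$ kills it. On the other hand, $\chi\big(\prod_tE_{a_t1}z^{t-1}v\big)=\prod_{s<j}(\gamma_j-\gamma_s)\neq0$. This places the monomial in degree exactly $\binom{N}{2}$. Irreducibility of the top component (Proposition~\ref{prp::Weyl::top}) then gives generation.
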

\begin{proof}
By Theorem~\ref{thm::Weyl::fusion}, $\sW_{N\varepsilon_1}^{z}\simeq\underbrace{V\star_z\cdots\star_zV}_{N}$.
The tautological module $V=V_{\varepsilon_1}$ is spanned by $v_{\varepsilon_1}$ and the vectors $E_{a1}v_{\varepsilon_1}$, $a>1$. The elements $E_{a1}$, $a>1$, span a commutative subalgebra $\fa\subset\gl_V$, and as an $\fa$-module $V$ is isomorphic to the quotient $S(\fa)/S^{\geq2}(\fa)=\Bbbk\oplus\fa$ of the symmetric algebra $S(\fa)=U(\fa)$ by the ideal of the elements of degree at least $2$. Consequently, as an $\fa[z]$-module the local Weyl module $\sW_{N\varepsilon_1}^{z}$ is isomorphic to the fusion product~\eqref{eq::weyl::ab::fusion}. This proves the first item.

By Proposition~\ref{prp::fusion::abelian} (applied to the constant sequence $\fa_i=\fa$ and $\eta_i=1$), we have for pairwise different $a,a_1,\ldots,a_K>1$:
\begin{itemize}[itemsep=0pt,topsep=0pt]
\item $E_{a1}z^r v_{N\varepsilon_1}=0$ whenever $r\geq N$;
\item $(E_{a_1 1} z^{r_1}) (E_{a_2 1} z^{r_2}) v_{N\varepsilon_1}=0$ whenever $r_1, r_2\geq N-1$;
\item $(E_{a_1 1}z^{r_1})\cdots(E_{a_K 1}z^{r_K}) v_{N\varepsilon_1} =0$ whenever $r_1,\ldots,r_K \geq N-K+1$.
\end{itemize}
More precisely, the monomial $(E_{a_1 1}z^{r_1})\cdots(E_{a_K 1}z^{r_K}) v_{N\varepsilon_1}$ vanishes unless $K\leq N$ and for every $m\geq0$ there are at most $m$ indices $i$ with $r_i\geq N-m$. By the pigeonhole principle, for $r_1\leq\ldots\leq r_K$ this is equivalent to the inequalities $r_i\leq i+N-K-1$. This is the necessary condition of the second item.

To show that the condition is also sufficient for pairwise different $a_i>1$, it is enough to show that the element $\prod_{i=1}^{N}(E_{a_i 1}z^{i-1}) v_{N\varepsilon_1}$ is nonzero and belongs to the top $z$-graded component. Consider the weight component of $\sW_{N\varepsilon_1}^{z}$ of weight $\sum_{i=1}^N \varepsilon_{a_i}$. By the character formula~\eqref{eq::Weyl::Character}, its graded dimension is the coefficient of the monomial $\prod_{i=1}^N x_{a_i}$ in $P_{N\varepsilon_1}(\overline{x},q,0)$, which is the $q$-factorial $[N]_q!=\prod_{i=1}^N\frac{1-q^i}{1-q}$. Hence the component of this weight and of $z$-degree $\frac{N(N-1)}{2}$ (the top degree of $[N]_q!$) is nonzero. It is spanned by the elements $\prod_{i=1}^NE_{a_i1}z^{r_i}v_{N\varepsilon_1}$ with $\sum_ir_i=\frac{N(N-1)}{2}$, and by the necessary condition this is possible only for $\{r_1,\ldots,r_N\}=\{0,1,\ldots,N-1\}$. This proves the sufficiency and the third item. Finally, for a permutation $\sigma\in\mathfrak{S}_N$ of the indices we get $\prod_{i=1}^NE_{a_i1}z^{r_i}v_{N\varepsilon_1}=(-1)^{\sigma}\prod_{i=1}^NE_{a_{\sigma(i)}1}z^{r_i}v_{N\varepsilon_1}$, because the top degree component of the Weyl module is isomorphic to $V_{\omega_N}=\Lambda^NV$.
\end{proof}

\begin{proposition}
\label{lem:CocyclicWeylProperties}
Let $\lambda=(\lambda_0\geq \ldots \geq \lambda_{l(\lambda)})\vdash N$ be a Young diagram, and denote by $\eta=(\eta_0\geq\ldots\geq \eta_l)\vdash N$ its transposed partition $\lambda^T$.
Then the following properties hold for the local Weyl $\gl_V[z]$-module $\sW_{\lambda}^{z}$ with highest weight vector $v_\lambda$ (assuming $\dim V$ is sufficiently large): 
\begin{enumerate}[label=(W\arabic*)]
\item
\label{item::socle:1}
The socle of $\sW_{\lambda}^{z}$ is generated by the following element:
\begin{equation}\label{eq:WeylModuleNonzeroMonomialCococle}
\prod_{k=0}^{l}\left(\prod_{j=1}^{\eta_k}\left(E_{a_{jk}, j}z^k\right) \right) v_{\lambda}.
\end{equation}
Here, $\{a_{jk} \colon 1\leq j\leq \eta_k\}$ is a set of $N$ pairwise distinct integers between $1$ and $\dim V$. In particular, this element is nonzero, and the highest $z$-degree in $\sW_{\lambda}^{z}$ equals $\sum_{k=0}^l k \eta_k$.

\item
\label{item::socle:2}
For a pairwise distinct $N$-tuple $\{a_{ij}\colon (ij)\in \lambda\}$, the following monomial of maximal $z$-degree
\[\left(\prod_{(i,j)\in\lambda} E_{a_{ij},j} z^{r_{ij}}\right) v_{\lambda}, \quad \text{where } \sum_{(ij)\in\lambda} r_{ij} = \sum_{k=0}^l k\eta_k,\]
is nonzero if and only if, for all $j=1,\ldots,l(\lambda)$, the multiset
$$\bar{r}_j:=\{r_{ij}\colon i=1,\ldots,\lambda_j\}$$
coincides with the multiset $\{0,1,\ldots,\lambda_j-1\}$.
\end{enumerate}
\end{proposition}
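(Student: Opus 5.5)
The plan is to realize $\sW_\lambda^z$ as an iterated fusion product of one-row Weyl modules and then reduce everything to the abelian vanishing lemma and to Proposition~\ref{prop:LineWeylMudulesProperties}. By Theorem~\ref{thm::Weyl::fusion}, $\sW_\lambda^z\simeq\sW^z_{\omega_{\eta_0}}\star_z\cdots\star_z\sW^z_{\omega_{\eta_l}}$, i.e. a fusion of the fundamental modules $\Lambda^{\eta_k}V$, one for each column $k$ of $\lambda$. This can be regrouped by rows instead, as $\sW^z_{\lambda}\simeq\star_j\sW^z_{\lambda_j\varepsilon_j}$, where $\sW^z_{\lambda_j\varepsilon_j}$ is the row Weyl module generated by $e_j^{\lambda_j}$. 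Either description is allowed by the associativity in Theorem~\ref{thm::Weyl::fusion}, and the right choice depends on the item.

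For (W1) I use the column description. The operators $E_{a_{jk},j}z^k$ with distinct $a_{jk}$ outside $\{1,\ldots,l(\lambda)\}$ pairwise commute and span an abelian subalgebra. I apply the Vandermonde Lemma~\ref{lem::Vandermonde}(ii) in the form used in Proposition~\ref{prp::fusion::abelian}. Modulo lower filtration, $E_{a,j}z^k$ acts only on the tensor factors with index $\ge k$, and on $\Lambda^{\eta_i}V$ the operator $E_{a,j}$ is nonzero on the highest vector only if $j\le\eta_i$. Expanding the product as in the proof of Proposition~\ref{prp::fusion::abelian}, the terms surviving in top filtration degree correspond to assignments $\varphi$ of the factor $(j,k)$ to a column $i\ge k$ with $j\le\eta_i$. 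Each column $i$ can absorb each index $j$ only once. A counting (Hall-type) argument then forces $\varphi(j,k)=k$. The unique surviving term is therefore $\pm\bigotimes_k\big(e_{a_{1k}}\wedge\cdots\wedge e_{a_{\eta_k k}}\big)$ times a nonzero Vandermonde-type constant, which is nonzero.

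Next I need its symbol to be nonzero in filtration degree $\sum_k k\eta_k$, i.e. the vector must not lie in $\calF_{R-1}$. Here I use the top degree from the character~\eqref{eq::Weyl::Character}: $P_\lambda(x,q,0)$ has top $q$-degree $n(\lambda^T)$-type equal to $\sum_k k\eta_k$, with top component $\Lambda^NV$ by Proposition~\ref{prp::Weyl::top}. I then compare in the weight space $\sum\varepsilon_{a_{jk}}$, whose top-degree part is one-dimensional. The cleanest route is the Demazure description (D6): apply the elements $f_\gamma z^p$ successively, each lowering the affine weight by one extremal step, to reach the extremal vector of weight $\omega_N$. The element~\eqref{eq:WeylModuleNonzeroMonomialCococle} is exactly such a product, so it equals the extremal vector spanning the socle.

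For (W2) I use the row description. For each row $j$ the operators $E_{a,j}z^r$ form an abelian algebra $\fa^{(j)}$, and Proposition~\ref{prop:LineWeylMudulesProperties} gives the constraint $r\le$ (position) within row $j$. Summing these constraints over the rows shows that the total degree is at most $\sum_j\binom{\lambda_j}{2}=\sum_k k\eta_k$. Equality forces each multiset $\bar r_j$ to be exactly $\{0,\ldots,\lambda_j-1\}$, which gives necessity. Sufficiency follows by permuting within each row, using the sign-antisymmetry as at the end of the proof of Proposition~\ref{prop:LineWeylMudulesProperties}, and comparing with (W1) after regrouping: the element of (W1) has row $j$ exponents $\{0,\ldots,\lambda_j-1\}$. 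The main obstacle is controlling cross terms between different rows, since $\fa^{(j)}$ do not commute with one another. I would first try to handle this by ordering the product by rows and passing to the associated graded of the outer fusion, where the vanishing lemma applies rowwise.
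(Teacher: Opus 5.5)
Your argument for (W1) via (D6) is sound and is essentially the paper's idea. The paper runs an induction on the dominance order using (D6); your direct chain of extremal steps is cleaner. To make it precise, for each $j$ apply the factors $E_{a,j}z^p$ in the order $p=\lambda_{j-1}-1,\ldots,1,0$, where $\lambda_{j-1}$ is the coefficient of $\varepsilon_j$ in $\lambda$. Then at every step $p=\langle\tau,\varepsilon_j-\varepsilon_a\rangle-1$ for the current weight $\tau$, which is the condition in (D6). This reordering is legitimate because $[E_{a,j},E_{a',j'}]=0$ whenever $a,a'>\eta_0\geq j,j'$.

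The genuine gap is in the necessity part of (W2). The regrouping by rows, $\sW^z_\lambda\simeq\star_j\sW^z_{\lambda_j\varepsilon_j}$, is false. The associativity in Theorem~\ref{thm::Weyl::fusion} only merges groups of the column factors $\Lambda^{\eta_k}V$ into Weyl modules of dominant weights $\sum\omega_{\eta_k}$. A row weight $h\varepsilon_j$ with $j\geq2$ is not of this form, and the tensor product of the row cyclic vectors is not a highest weight vector. The dimensions already disagree: $\dim\sW^z_\lambda=\prod_k\binom{\dim V}{\eta_k}$, while a fusion of row Weyl modules has dimension $(\dim V)^N$. For $\lambda=(1,1)$ this is $\Lambda^2V$ against the fusion of two copies of $V$ (at $e_1$ and $e_2$). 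Hence neither the per-row use of Proposition~\ref{prop:LineWeylMudulesProperties} nor the ``associated graded of the outer fusion'' you propose for the cross terms is justified. The passage from row constraints to the whole monomial is therefore left open.

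Both points are repaired inside the column description, which is what the paper does.

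\emph{No cross terms.} Take all $a_{ij}>\eta_0$. Then all factors $E_{a_{ij},j}z^{r_{ij}}$ commute in $U(\gl_V[z])$. Writing the monomial as $\prod_jP_j$ with $P_j:=\prod_iE_{a_{ij},j}z^{r_{ij}}$, you may let any $P_j$ act first. So the monomial vanishes as soon as $P_jv_\lambda=0$ for a single $j$.

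\emph{The per-row constraint.} Apply Proposition~\ref{prp::fusion::abelian} to the abelian algebra $\Span\langle E_{a,j}\colon a>\eta_0\rangle$ acting on $\sW^z_\lambda\simeq\Lambda^{\eta_0}V\star_z\cdots\star_z\Lambda^{\eta_l}V$. This algebra kills the highest weight vector of the $k$-th factor unless $\eta_k\geq j$, i.e.\ unless $k<\lambda_{j-1}$, and its square always kills it. Hence $P_jv_\lambda\neq0$ forces $\#\{i\colon r_{ij}\geq m\}\leq\max(0,\lambda_{j-1}-m)$ for all $m$, which is exactly the one-row constraint. In particular $\deg P_j\leq\binom{\lambda_{j-1}}{2}$, with equality only if $\bar r_j=\{0,\ldots,\lambda_{j-1}-1\}$. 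Alternatively, $v_\lambda$ generates over $\gl(\bC e_j\oplus\Span\langle e_a\colon a>\eta_0\rangle)[z]$ a quotient of the one-row local Weyl module, to which Proposition~\ref{prop:LineWeylMudulesProperties} applies directly.

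\emph{Conclusion.} Since $\sum_j\binom{\lambda_{j-1}}{2}=\sum_kk\eta_k$, your summation over rows then gives necessity.
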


\begin{proof}
To prove item~\ref{item::socle:1}, we proceed by induction with respect to the partial order~\eqref{eq::dominance::order} on partitions of size $N$. For the minimal partition $\lambda=(1,\ldots,1)$ of size $N$, the module $\sW_{(1^N)}^{z}$ is irreducible and isomorphic to the fundamental representation $\Lambda^N V$. In this case, all the claims of the proposition hold trivially since no current action is possible.

For the induction step, assume the claims hold for all partitions $\lambda' < \lambda$ and consider the case of $\lambda$. Let us locate the first cell in $\lambda$ that can be moved to reduce the partition with respect to the partial order~\eqref{eq::dominance::order}. That is, take $i:=\max\{i\ \colon \ \lambda_i>1\}$ and consider the positive root 
$\gamma=\varepsilon_i-\varepsilon_{\eta_0+1}$.
Then,
\[\sum_{k=0}^l \omega_{\eta_k}>\sum_{k=0}^l \omega_{\eta_k}-\gamma=s_\gamma\left(\sum_{k=0}^l \omega_{\eta_k}\right)\in \Delta_+.\]

By property \ref{item::WeylInclusion}, the submodule of $\sW_{\sum_{k=0}^l \omega_{\eta_k}}^{z}$ generated by the element $E_{\eta_0+1,1}z^{l}v_{\sum_{k=0}^l \omega_{\eta_k}}$ is isomorphic to $\sW_{\sum_{k=0}^l \omega_{\eta_k}-\varepsilon_1+\varepsilon_{\eta_0+1}}^{z}$, and
\[\cosoc \left(\sW_{\sum_{k=0}^l \omega_{\eta_k}}^{z}\right)=\cosoc \left(\sW_{\sum_{k=0}^l \omega_{\eta_k}-\varepsilon_1+\varepsilon_{\eta_0+1}}^{z}\right).\]
Hence, by induction on the dominance order of the highest weight, we obtain
\[0 \neq \prod_{i=0}^{l}\prod_{r=1}^{\eta_i}E_{a_{ij}, j} z^i v_{\sum_{k=0}^l \omega_{\eta_k}} \in \cosoc \left(\sW_{\sum_{k=0}^l \omega_{\eta_k}}^{z}\right),\]
for a collection of integers $\{a_{k,j}\}$ matching the multiset $\{1,\dots,|\lambda|\}$. 

The “if” direction in claim~\ref{item::socle:2} follows directly from item~\ref{item::socle:1}. For the “only if” direction, consider the family of abelian subalgebras:
$$\fa_j:=\Span\left\langle E_{a j} \colon a > j \right\rangle \ \subset \ \gl_V.$$
Note that $\fa_j$ acts on the highest weight vector $v_{\omega_{\eta}}$ of the fundamental representation $V_{\omega_\eta}$ non-trivially only if $\eta \geq j$, and $\fa_j^2v_{\omega_{\eta}}=0$.
Using the isomorphism~\eqref{eq::Weyl=fusion}
$$
\sW_{\lambda} \simeq V_{\omega_{\eta_0}} \star_z\ldots\star_z V_{\omega_{\eta_l}}
$$
we fall under the assumptions of Proposition~\ref{prp::fusion::abelian}, applied to the trivial filtration of $\fa_j$ (i.e., all filtration components equal to $\fa_j$). Consequently, since $\lambda_{j-1}=\eta_{j-1}^{T}=|\{m\colon \eta_m\geq j\}|$ the element:
\begin{equation}
\label{eq::elem::prod}
\left(
\prod_{i=0}^{\lambda_{j-1}}E_{a_{ij}, j}z^{r_{ij}} \right) v_{\sum_{k=0}^l \omega_{\eta_k}} \text{ with }a_{ij}>j
\end{equation}
can be nonzero only if, for each $m$, the following inequality holds:
 $$|\{i \colon \ r_{ij}>m\}| \ \leq 
|\{i \geq m \ \colon\ \eta_i\geq j\}|.$$
In particular, under our vanishing conditions, the maximal possible $z$-degree of a nonzero element of the form~\eqref{eq::elem::prod} is equal to
$$
\sum_{i \colon \eta_i \geq j} i
= \binom{\lambda_{j-1}}{2}.$$
Thus, the only nonzero monomials of that degree must have the form:
\begin{equation}\label{eq::factorSpecialForm}
   \left( \prod_{i=0}^{\lambda_{j-1}-1}E_{a_{ij}, j}z^{i}\right) v_{\sum_{i=0}^l \omega_{\eta_i}} \quad \text{ with } a_{ij}\geq j.   
\end{equation}
Note, that we are dealing with the case $\dim V$ being sufficiently large; in particular, we can restrict to the case $a_{ij}> \eta_0=l(\lambda)$ for all $i,j$.  
Consequently, the elements $E_{a_{ij},j}$ pairwise commute for all $i,j$, and hence the product 
$$
\left(\prod_{j=0}^{l(\lambda)}\left(\prod_{i} E_{a_{ij},j} z^{r_{ij}}\right)\right) v_{\lambda}
$$
is nonzero only if each factor~\eqref{eq::elem::prod} is nonzero.
However, 
$$\sum_{j=0}^{\eta_0}\binom{\lambda_{j-1}}{2} = 
\sum_{j=0}^{\eta_0} \sum_{i=0}^{\lambda_j} i = 
\sum_{i=0}^{\lambda_j} i \eta_i.
$$
Finally, by the pigeonhole principle, if 
\[\left(\prod_{(i,j)\in\lambda} E_{a_{ij},j} z^{r_{ij}}\right) v_{\lambda} \neq 0,\]
then for each $j$ the factor \eqref{eq::elem::prod} has the degree $\binom{\lambda_{j-1}}{2}$ and is of the form \eqref{eq::factorSpecialForm}. This
concludes the proof of claim~\ref{item::socle:2}.
\end{proof}

\section{Iterated fusion product}
\label{sec::iterated::fusion}

In this section we work with two current variables $x$ and $y$ and with Young diagrams. To every Young diagram $\lambda\vdash N$ we associate a bigraded $\gl_V[x,y]$-module $\sWH_{\lambda}$, obtained as an iterated fusion product of $N$ copies of the tautological representation $V$. The cyclic vectors are always the highest weight vectors.
We describe the top-degree components of these modules using the properties of local Weyl modules. As a consequence, we construct a surjection of $\sWH_\lambda$ onto the Garsia--Haiman module $\mGH^{\gl}_\lambda$.

Throughout this section $\dim V$ is assumed to be sufficiently large (for instance, $\dim V\geq 3N+1$ suffices for all arguments below).

\subsection{Definition of the iterated fusion product}
\label{sec::sub::fusion}

\subsubsection{Conventions}
We use the notation for Young diagrams of \S\ref{sec::partitions}, in particular the numbers $n(\lambda)$ and $n(\lambda^T)$ of~\eqref{eq::n::lambda}. Recall that $\bar{z}=(x,y)$. Then a cell $\bar{s}=(s_x,s_y)$ corresponds to the monomial $\bar{z}^{\bar{s}}=x^{s_x}y^{s_y}$, and, by~\eqref{eq::mdeg::Delta},
\[
\mdeg(\Delta_\lambda)=\sum_{\bar{s}\in\lambda}\bar{s}=\big(n(\lambda^T),\,n(\lambda)\big).
\]
The Garsia--Haiman module of~\S\ref{sec::GH::SW} is $\mGH^{\gl}_\lambda=S^N(V[x,y])/\euJ^{\gl}_\lambda$.

\subsubsection{The modules $\sWH_\lambda$}
For $n\geq1$ consider the fusion product of $n$ copies of the tautological representation $V=V_{\varepsilon_1}$ with respect to the variable $x$:
\[
\sWH^{x}_{n}:=\underbrace{V\star_x\cdots\star_x V}_{n}\ \simeq\ \sW^{x}_{n\varepsilon_1}.
\]
By Theorem~\ref{thm::Weyl::fusion}, it does not depend on the fusion parameters and is isomorphic to the local Weyl module $\sW^x_{n\varepsilon_1}$. Its cyclic vector $v_{n\varepsilon_1}$ is the highest weight vector (the symbol of $v_{\varepsilon_1}\otimes\cdots\otimes v_{\varepsilon_1}$). The module $\sWH^x_n$ is a graded $\gl_V[x]$-module.

\begin{definition}
\label{def::sWH}
Let $\lambda=(\lambda_0\geq\ldots\geq\lambda_l)$ be a Young diagram and let $\gamma_0,\ldots,\gamma_l\in\bC$ be pairwise distinct. The \emph{iterated fusion product} $\sWH_\lambda=\sWH_\lambda^{y,x}$ is the fusion product with respect to the variable $y$ of the $\gl_V[x]$-modules $\sWH^x_{\lambda_j}$ attached to the rows of $\lambda$:
\[
\sWH_{\lambda}^{y,x}:=\Big[\sWH_{\lambda_0}^{x}\star_y\sWH_{\lambda_1}^{x}\star_{y}\cdots\star_{y}\sWH_{\lambda_l}^{x}\Big]_{\gamma_0,\ldots,\gamma_l}.
\]
Here we apply Definition~\ref{def::fusion} to the Lie algebra $\fg=\gl_V[x]$, whose current algebra is $\fg[y]=\gl_V[x,y]$. The cyclic vectors are the highest weight vectors $v_{\lambda_j\varepsilon_1}\in\sWH^x_{\lambda_j}$.
We denote by $v:=v_{\lambda_0\varepsilon_1}\otimes\cdots\otimes v_{\lambda_l\varepsilon_1}$ the cyclic vector of the tensor product of evaluation modules
\begin{equation}
\label{eq::tensor::weyl}
T_\lambda:=\ev_{\gamma_0}\big(\sWH^{x}_{\lambda_0}\big)\otimes\cdots\otimes\ev_{\gamma_l}\big(\sWH^{x}_{\lambda_l}\big),
\end{equation}
by $\calF_n=U(\gl_V[x,y])_{\leq n}\cdot v$ its fusion filtration (by the $y$-degree), and by $\widehat{v}\in\sWH_\lambda$ the cyclic vector of the fusion product.
\end{definition}

A priori the module $\sWH_\lambda$ may depend on the parameters $\gamma_j$. All the statements below hold for every choice of pairwise distinct parameters, and we omit them from the notation (see Corollary~\ref{cor::sWH::GH} for the independence).

By Remark~\ref{rem::fusion::graded}, the module $\sWH_\lambda$ is \emph{bigraded}: the $x$-grading comes from the gradings of the factors $\sWH^x_{\lambda_j}$ (the fusion filtration consists of $x$-graded subspaces), and the $y$-grading is the fusion degree. The element $g x^ry^t$ ($g\in\gl_V$) shifts the bidegree by $(r,t)$, and $\widehat{v}$ has bidegree $(0,0)$ and weight $N\varepsilon_1$.
Moreover, by Remark~\ref{rem::fusion::graded}(i) and Theorem~\ref{thm::Weyl::fusion},
\begin{equation}
\label{eq::sWH::glV}
\sWH_\lambda\simeq\bigotimes_{j=0}^{l}\sWH^x_{\lambda_j}\simeq V^{\otimes N}\qquad\text{as $\gl_V$-modules}.
\end{equation}

\begin{example}
\label{ex::sWH::row::column}
For the one-row diagram $\lambda=(N)$ we get $\sWH_{(N)}=\sWH^x_N\simeq\sW^x_{N\varepsilon_1}$. For the one-column diagram $\lambda=(1^N)$ we get $\sWH_{(1^N)}=V\star_y\cdots\star_yV\simeq\sW^y_{N\varepsilon_1}$.
\end{example}

The fusion product is generated from the cyclic vector by an abelian subalgebra. Let
\[
\fa:=\Span\langle E_{a1}\colon a\geq2\rangle\subset\gl_V,\qquad
\fb:=\Span\langle E_{ij}\colon j\geq2\rangle\oplus\bC E_{11}\subset\gl_V .
\]
Then $\fa$ is an abelian subalgebra, and $\fb$ is a subalgebra: $[E_{ij},E_{km}]=\delta_{jk}E_{im}-\delta_{mi}E_{kj}\in\fb$ for $j,m\geq2$, and $[E_{11},E_{km}]=\delta_{1k}E_{1m}\in\fb$ for $m\geq2$. Moreover, $\gl_V=\fa\oplus\fb$.

\begin{lemma}
\label{Non1Vanishing}
For all $i\geq1$, $j\geq2$ and $r,t\geq0$ we have $(E_{ij}x^ry^t)\cdot v=0$ in $T_\lambda$. Moreover, $(E_{11}x^ry^t)\cdot v=\delta_{r,0}\big(\sum_{m}\gamma_m^{t}\lambda_m\big)\,v$. Consequently, $\fb[x,y]$ acts on $v$ by scalars, and $(E_{ij}x^ry^t)\cdot\widehat{v}=0$ in $\sWH_\lambda$ for $j\geq2$.
\end{lemma}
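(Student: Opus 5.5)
The computation reduces to the action of a single current element on the evaluation tensor product $T_\lambda$. By~\eqref{eq::ev::action} applied to $\fg=\gl_V[x]$, the element $E_{ij}x^ry^t$ acts on $T_\lambda$ by $\sum_{m=0}^{l}\gamma_m^t\,(E_{ij}x^r)^{(m)}$. It is therefore enough to know how $E_{ij}x^r$ acts on the highest weight vector $v_{\lambda_m\varepsilon_1}$ of each factor $\sWH^x_{\lambda_m}\simeq\sW^x_{\lambda_m\varepsilon_1}$.

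For this we use the defining relations of the local Weyl module recalled in~\S\ref{sec::Weyl}, with highest weight $\lambda_m\varepsilon_1$.
\begin{enumerate}
\item If $j>i$, then $E_{ij}x^r$ lies in $\fn_+[x]$, so it kills $v_{\lambda_m\varepsilon_1}$.
\item If $i=j\geq2$, then $E_{ii}x^r v_{\lambda_m\varepsilon_1}=\delta_{r,0}\cdot 0=0$, since the $i$-th coordinate of the weight $\lambda_m\varepsilon_1$ vanishes and $E_{ii}x^r$ with $r>0$ acts by zero.
\item If $i>j\geq2$, then $E_{ij}x^r v_{\lambda_m\varepsilon_1}$ has weight $\lambda_m\varepsilon_1+\varepsilon_i-\varepsilon_j$. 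The $\varepsilon_j$-coordinate of this weight is $-1$, and no weight of a polynomial representation has a negative coordinate. Equivalently, the weight is not of the form $\lambda_m\varepsilon_1$ minus a sum of positive roots, so it is not a weight of $\sW^x_{\lambda_m\varepsilon_1}$. Hence this vector is zero.
\item If $i=j=1$, then $E_{11}x^r v_{\lambda_m\varepsilon_1}=\delta_{r,0}\lambda_m v_{\lambda_m\varepsilon_1}$.
\end{enumerate}
Summing over the factors $m$ gives $(E_{ij}x^ry^t)\cdot v=0$ for $j\geq2$, and $(E_{11}x^ry^t)\cdot v=\delta_{r,0}\bigl(\sum_m\gamma_m^t\lambda_m\bigr)v$.

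Since $\fb[x,y]$ is spanned by the elements $E_{ij}x^ry^t$ with $j\geq2$ together with $E_{11}x^ry^t$, every element of $\fb[x,y]$ acts on $v$ by a scalar.

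It remains to pass to the symbol. The element $(E_{ij}x^ry^t)\cdot\widehat{v}$ is the class of $(E_{ij}x^ry^t)\cdot v$ in $\calF_t/\calF_{t-1}$. For $j\geq2$ this vector is $0$, so its class vanishes in $\sWH_\lambda$.

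The only point requiring any care is case~(iii), the lowering operators $E_{ij}$ with $i>j\geq 2$. The weight argument above handles it directly. Alternatively, one can argue that $V^{\otimes\lambda_m}$ contains no vector of that weight, and $\sW^x_{\lambda_m\varepsilon_1}$ is a polynomial $\gl_V$-module isomorphic to $V^{\otimes\lambda_m}$.
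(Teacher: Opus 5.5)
Your proof is correct and follows the paper's route. You reduce via the evaluation formula to the action of $E_{ij}x^r$ on each $v_{\lambda_m\varepsilon_1}$, settle each factor, sum, and pass to the symbol. The only difference is that the paper uses the negative $\varepsilon_j$-coordinate for all $i\neq j$, and one-dimensionality of the $\lambda_m\varepsilon_1$-weight space (sitting in $x$-degree $0$) for $i=j$, where you invoke the defining relations of the local Weyl module instead.

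One slip: the ``Equivalently'' sentence in case (iii) is false. For $i>j$, $\varepsilon_i-\varepsilon_j$ is a negative root, so $\lambda_m\varepsilon_1+\varepsilon_i-\varepsilon_j$ is exactly $\lambda_m\varepsilon_1$ minus the positive root $\varepsilon_j-\varepsilon_i$. Drop that sentence and keep the negative-coordinate argument, or your closing remark via $V^{\otimes\lambda_m}$.
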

\begin{proof}
The element $E_{ij}x^ry^t$ acts on $T_\lambda$ by $\sum_m\gamma_m^t\,(E_{ij}x^r)^{(m)}$, where the superscript $(m)$ denotes the action on the $m$-th factor. So it suffices to compute $E_{ij}x^r\cdot v_{\lambda_m\varepsilon_1}$ in the Weyl module $\sW^x_{\lambda_m\varepsilon_1}$.
This vector has weight $\lambda_m\varepsilon_1+\varepsilon_i-\varepsilon_j$. For $j\geq2$ and $i\neq j$, this weight has a negative coefficient at $\varepsilon_j$, which is impossible in a polynomial representation.
For $i=j\geq2$ and $r=0$ we get $E_{jj}v_{\lambda_m\varepsilon_1}=0$. For $i=j$ and $r>0$ the vector $E_{jj}x^r v_{\lambda_m\varepsilon_1}$ has weight $\lambda_m\varepsilon_1$ and positive $x$-degree. It vanishes, because the weight space of weight $\lambda_m\varepsilon_1$ of the Weyl module is one-dimensional and sits in degree $0$.
The same argument gives $E_{11}x^rv_{\lambda_m\varepsilon_1}=\delta_{r,0}\lambda_m v_{\lambda_m\varepsilon_1}$.
\end{proof}

\begin{proposition}
\label{prp::fusion::abelianGL}
\begin{enumerate}
\item For all $n\geq0$ we have $\calF_n=U(\fa[x,y])_{\leq n}\cdot v$. In particular, $\sWH_\lambda$ is generated from $\widehat{v}$ by the action of the abelian Lie algebra $\fa[x,y]$.
\item As an $\fa[x,y]$-module, $\sWH_\lambda$ coincides with the fusion product over the abelian Lie algebra $\fa[x]$ of the $\fa[x]$-modules $\sWH^x_{\lambda_j}$. These modules are generated by $v_{\lambda_j\varepsilon_1}$ over $\fa[x]$.
\end{enumerate}
\end{proposition}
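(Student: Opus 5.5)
The plan is to exploit the decomposition $\gl_V[x,y]=\fa[x,y]\oplus\fb[x,y]$ together with Lemma~\ref{Non1Vanishing}, which says that $\fb[x,y]$ acts on $v$ by scalars. We use a PBW-type ordering in $U(\gl_V[x,y])$ that places the $\fa$-factors on the left and the $\fb$-factors on the right.

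For item~(i), the inclusion $U(\fa[x,y])_{\leq n}\cdot v\subset\calF_n$ is clear. For the reverse inclusion, fix a monomial $u=(g_1x^{r_1}y^{t_1})\cdots(g_sx^{r_s}y^{t_s})$ of total $y$-degree $\leq n$, with each $g_i$ in $\fa$ or in $\fb$. We show by induction on $s$ that $u\cdot v\in U(\fa[x,y])_{\leq n}\cdot v$. Since $\gl_V[x,y]$ is graded by the $y$-degree, commutators preserve the $y$-degree. By the PBW theorem we can therefore rewrite $u$ as a combination of ordered products $a\cdot b$, where $a\in U(\fa[x,y])$, $b\in U(\fb[x,y])$, and the $y$-degree of each product is at most that of $u$.

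The filtration $U(\fg[y])_{\le n}$ is spanned by products of homogeneous elements of total $y$-degree $\le n$. Reordering such a product produces products of fewer factors with the same total degree, because the bracket of homogeneous elements is homogeneous of the summed degree. So the rewriting stays inside $U_{\le n}$, and in fact inside the span of ordered monomials of total degree $\le n$.

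Now $b\cdot v$ is a scalar multiple of $v$. This holds because $\fb[x,y]$ acts on $v$ by scalars, and a product of elements acting by scalars on $v$ also acts by a scalar on $v$: each factor returns $v$ to a multiple of itself. Hence $u\cdot v$ lies in the span of $a\cdot v$ with $a\in U(\fa[x,y])_{\le n}$, which proves~(i). Passing to the associated graded gives that $\widehat{v}$ generates $\sWH_\lambda$ over $\fa[x,y]$.

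For item~(ii), we compare filtrations. Regard each $\sWH^x_{\lambda_j}$ as an $\fa[x]$-module; it is generated by $v_{\lambda_j\varepsilon_1}$ over $\fa[x]$ by the first item of Proposition~\ref{prop:LineWeylMudulesProperties}. The tensor product $T_\lambda$ of evaluation modules then carries two fusion filtrations:
\begin{itemize}
\item the one defined by $U(\gl_V[x,y])_{\le n}$, which is the filtration $\calF_n$;
\item the one defined by $U(\fa[x][y])_{\le n}$, the fusion filtration over the abelian Lie algebra $\fa[x]$.
\end{itemize}
The restriction of the evaluation action to $\fa[x,y]\subset\gl_V[x,y]$ is exactly the evaluation action of $\fa[x][y]$. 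By~(i) the two filtrations coincide termwise, so their associated graded spaces are equal. The action of $\fa[x,y]$ on $\mathrm{gr}$ is computed in the same way in both settings, so $\sWH_\lambda$ equals the abelian fusion product as an $\fa[x,y]$-module.

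The main obstacle is the degree bookkeeping in the PBW reordering. One has to check that moving $\fb$-factors to the right never increases the $y$-degree, and that the intermediate $\fb$-terms created by commutators $[\fa,\fb]\subset\gl_V$ are handled correctly. Both are covered by running the induction on the number of factors while keeping the grading by the $y$-degree. No property of $\fa$ beyond being a complement subalgebra is needed.
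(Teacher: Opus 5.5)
Your proposal is correct and follows the paper's argument. The paper likewise uses the PBW factorization $U(\gl_V[x,y])_{\leq n}=\sum_{p+q\leq n}U(\fa[x,y])_p\,U(\fb[x,y])_q$ together with $U(\fb[x,y])\cdot v=\bC v$ from Lemma~\ref{Non1Vanishing}, and then gets item~(ii) by observing that the two fusion filtrations on $T_\lambda$ coincide. The one difference is minor: you obtain the $\fa[x]$-generation of $\sWH^x_{\lambda_j}$ by citing the first item of Proposition~\ref{prop:LineWeylMudulesProperties}, whereas the paper reruns the same PBW argument for $\gl_V[x]=\fa[x]\oplus\fb[x]$ on the Weyl modules.
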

\begin{proof}
The algebra $U(\gl_V[x,y])$ is graded by the $y$-degree, and $\gl_V[x,y]=\fa[x,y]\oplus\fb[x,y]$ with both summands being graded subalgebras. By the PBW theorem, the multiplication map $U(\fa[x,y])\otimes U(\fb[x,y])\to U(\gl_V[x,y])$ is an isomorphism of graded vector spaces. Hence
\[
U(\gl_V[x,y])_{\leq n}=\sum_{p+q\leq n}U(\fa[x,y])_{p}\,U(\fb[x,y])_{q}.
\]
By Lemma~\ref{Non1Vanishing}, $U(\fb[x,y])\cdot v=\bC v$. Therefore $\calF_n=U(\fa[x,y])_{\leq n}\cdot v$, which proves item~(i).
The same argument applied to $\gl_V[x]=\fa[x]\oplus\fb[x]$ and to the Weyl modules $\sW^x_{\lambda_j\varepsilon_1}$ shows that $\sWH^x_{\lambda_j}=U(\fa[x])\cdot v_{\lambda_j\varepsilon_1}$. Item~(ii) now follows from item~(i) and Definition~\ref{def::fusion}.
\end{proof}

\subsection{Top-degree component of $\sWH_\lambda$}

\begin{definition}
Let $M=\bigoplus_{p,q}M_{p,q}$ be a finite-dimensional bigraded vector space, where $p$ is the $x$-degree and $q$ is the $y$-degree. We define
\begin{align*}
\mtop^{x}(M)&:=\bigoplus_{q}M_{p_{\max},q}, & p_{\max}&:=\max\{p\colon M_{p,q}\neq0\text{ for some }q\};\\
\mtop^{x,y}(M)&:=\bigoplus_{p+q=k_{\max}}M_{p,q}, & k_{\max}&:=\max\{p+q\colon M_{p,q}\neq0\}.
\end{align*}
The component $\mtop^y(M)$ is defined similarly.
\end{definition}

We emphasize the variables with respect to which we take the top-degree component. The component $\mtop^x(M)$ remains graded with respect to the other variable $y$.

\begin{theorem}
\label{thm::top::fusion}
Let $\lambda\vdash N$ be a Young diagram. Then, for every choice of pairwise distinct parameters:
\begin{enumerate}
\item The module $\sWH_\lambda$ is concentrated in the bidegrees $(p,q)$ with $p\leq n(\lambda^T)$ and $q\leq n(\lambda)$.
\item The top $x$-degree component $\mtop^x(\sWH_\lambda)$ has $x$-degree $n(\lambda^T)$. It is a $\gl_V[x,y]$-submodule on which $x\,\gl_V[x,y]$ acts by zero. As a graded $\gl_V[y]$-module, it is isomorphic to the local Weyl module:
\[
\mtop^x(\sWH_\lambda)\simeq\sW^{y}_{\lambda^T}.
\]
\item The top bidegree component $\mtop^{x,y}(\sWH_\lambda)$ is an irreducible $\gl_V$-module isomorphic to $\Lambda^NV$. It is concentrated in the bidegree $\mdeg(\Delta_\lambda)=\big(n(\lambda^T),n(\lambda)\big)$.
\item Let $a_1,\ldots,a_N\geq2$ be pairwise distinct, and let $(r_1,t_1),\ldots,(r_N,t_N)\in\bZ^2_{\geq0}$ be such that $\sum_{i=1}^{N}(r_i,t_i)=\big(n(\lambda^T),n(\lambda)\big)$. Then the element
\begin{equation}
\label{eq::monom::top}
\left(\prod_{i=1}^{N}E_{a_i1}x^{r_i}y^{t_i}\right)\widehat{v}\in\sWH_\lambda
\end{equation}
is nonzero if and only if $\{(r_1,t_1),\ldots,(r_N,t_N)\}$ is the set of cells of $\lambda$, i.e., the map $i\mapsto(r_i,t_i)$ is a bijection onto the cells of $\lambda$.
\end{enumerate}
\end{theorem}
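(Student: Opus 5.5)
The plan is to reduce everything to the abelian picture of Proposition~\ref{prp::fusion::abelianGL}. By that proposition, $\sWH_\lambda$ is the fusion product, with respect to $y$, of the $\fa[x]$-modules $\sWH^x_{\lambda_j}=U(\fa[x])\,v_{\lambda_j\varepsilon_1}$. So it is spanned by monomials $\big(\prod_i E_{a_i1}x^{r_i}y^{t_i}\big)\widehat{v}$ with $a_i\geq2$. Since $\widehat v$ has weight $N\varepsilon_1$, at most $N$ such factors can act nontrivially, and for pairwise distinct $a_i$ exactly $N$ factors are needed to reach the weight of $\Lambda^NV$.

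For item~(i), the $x$-bound comes from Remark~\ref{rem::fusion::graded}(ii). The subspaces $\calF_n$ are $x$-graded, so the $x$-degrees of $\sWH_\lambda$ are those of $T_\lambda$. By Proposition~\ref{prop:LineWeylMudulesProperties} these are at most $\sum_j\binom{\lambda_j}{2}=n(\lambda^T)$. For the $y$-bound I apply Proposition~\ref{prp::fusion::abelian} over the abelian Lie algebra $\fa[x]$, with the constant sequence and $\eta_j=\lambda_j$. The hypothesis holds because any product of $\lambda_j+1$ elements of $\fa[x]$ kills $v_{\lambda_j\varepsilon_1}$, by weight reasons. Condition~\eqref{eq::vanish::action} then gives $\#\{i\colon t_i\geq j\}\leq\lambda_j+\ldots+\lambda_l$, and summing over $j$ yields $\sum_i t_i\leq\sum_j j\lambda_j=n(\lambda)$.

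For the necessity part of item~(iv), I redo the expansion from the proof of Proposition~\ref{prp::fusion::abelian}. The monomial~\eqref{eq::monom::top} is the class of $\prod_i\big(E_{a_i1}x^{r_i}p_{t_i}(y)\big)v$, which expands as a sum over maps $\varphi$ with $\varphi(i)\geq t_i$ and $|\varphi^{-1}(m)|\leq\lambda_m$. Since $\sum_i t_i=n(\lambda)=\sum_m m\lambda_m$, every surviving $\varphi$ must satisfy $\varphi(i)=t_i$ and $|\varphi^{-1}(m)|=\lambda_m$. Hence row $m$ receives exactly the $\lambda_m$ factors with $t_i=m$, and their $x$-degrees sum to at most $\binom{\lambda_m}{2}$. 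The total $x$-degree $n(\lambda^T)$ forces equality in every row. Proposition~\ref{prop:LineWeylMudulesProperties} then forces $\{r_i\colon t_i=m\}=\{0,\ldots,\lambda_m-1\}$, so $\{(r_i,t_i)\}$ is the set of cells of $\lambda$. For the converse there is exactly one such $\varphi$. Its term is a nonzero scalar times a tensor product of top vectors of the Weyl modules, so it is nonzero in $T_\lambda$. What remains is to show that this element does not lie in $\calF_{n(\lambda)-1}$, and I will get this from item~(ii).

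For item~(ii), the subspace of $T_\lambda$ of maximal $x$-degree is $\bigotimes_j\ev_{\gamma_j}(\Lambda^{\lambda_j}V)$, by Proposition~\ref{prp::Weyl::top}. It is a $\gl_V[x,y]$-submodule killed by $x\,\gl_V[x,y]$. Choosing the indices $a_{i}$ suitably (distinct within each row and increasing), I get a highest-weight cyclic vector $u=A\cdot v$ of weight $\sum_j\omega_{\lambda_j}$, with $A\in U(\fa[x])$ of $y$-degree $0$, so $u\in\calF_0$. Since the $x$-grading of $\sWH_\lambda$ equals that of $T_\lambda$, the component $\mtop^x(\sWH_\lambda)$ is $\mathrm{gr}$ of this subspace for the induced filtration $\calF_n\cap\mtop^x$, and it contains the symbol $\widehat u=A\widehat v$. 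Its own fusion filtration $G_n=U(\gl_V[y])_{\leq n}u$ satisfies $G_n\subseteq\calF_n\cap\mtop^x$. By Theorem~\ref{thm::Weyl::fusion}, $\mathrm{gr}^G=\Lambda^{\lambda_0}V\star_y\cdots\star_y\Lambda^{\lambda_l}V\simeq\sW^y_{\lambda^T}$. The main obstacle is the reverse inclusion $\calF_n\cap\mtop^x\subseteq G_n$, equivalently that $\widehat u$ generates $\mtop^x(\sWH_\lambda)$. I would attack it by a dimension count in each $y$-degree. By Corollary~\ref{cor::fusion::Weyl::upper}, the submodule generated by $\widehat u$ is a quotient of $\sW^y_{\lambda^T}$. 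I would then show that its $y$-graded character dominates that of $\mathrm{gr}(\calF\cap\mtop^x)$, by proving, via the abelian expansion above, that every monomial lying in $\calF_n\cap\mtop^x$ is congruent modulo $\calF_{n-1}$ to an element of $U(\fa[y])_{\leq n}u$. If this succeeds, then $\mtop^x(\sWH_\lambda)\simeq\sW^y_{\lambda^T}$.

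Given item~(ii), Proposition~\ref{prp::Weyl::top} and Proposition~\ref{lem:CocyclicWeylProperties} show that $\sW^y_{\lambda^T}$ has simple socle $\Lambda^NV$ in $y$-degree $\sum_k k\lambda_k=n(\lambda)$. This gives item~(iii), using item~(i) to know that no other bidegree reaches total degree $n(\lambda^T)+n(\lambda)$. It also gives the missing half of item~(iv): the surviving cell monomial is a nonzero element of this socle, by \ref{item::socle:2} applied to $\sW^y_{\lambda^T}$ with rows and columns exchanged.
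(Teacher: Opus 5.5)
Your proposal has a genuine gap, exactly at the step you call the main obstacle: the reverse inclusion $\calF_n\cap\mtop^x(T_\lambda)\subseteq G_n$ in item~(ii). You leave it conditional (``if this succeeds''), yet item~(iii) and the sufficiency half of~(iv) rest on it.

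The mechanism you sketch also cannot work as written once $\lambda_0\geq2$. Every vector in $U(\fa[y])\,u$ has weight with coefficient at least $\lambda^T_s$ at $\varepsilon_{s+1}$ for all $s\geq1$, because $E_{a1}y^t$ only moves weight out of $\varepsilon_1$. But $\calF_n\cap\mtop^x(T_\lambda)$ contains weight vectors of weight $\varepsilon_{a_1}+\ldots+\varepsilon_{a_N}$ with all $a_i>\lambda_0$; for $n=n(\lambda)$ it is all of $\mtop^x(T_\lambda)$. With $\gl_V[y]$ in place of $\fa[y]$ the statement becomes correct, but then ``via the abelian expansion'' gives no way to produce the required element. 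The dimension count does not help either: you already know both associated graded spaces have dimension $\dim\mtop^x(T_\lambda)$, and what is needed is equality in each filtration degree, which is precisely the unknown.

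The missing idea is a commutator factorization that separates the $x$-part from the $y$-part.
\begin{enumerate}
\item Reduce to a single weight $\mu=\varepsilon_{a_1}+\ldots+\varepsilon_{a_N}$ with pairwise distinct large indices. Every irreducible polynomial $\gl_V$-module of degree $N$ has a nonzero $\mu$-weight space, and $G_n\subseteq\calF_n\cap\mtop^x(T_\lambda)$ are both $\gl_V$-submodules, so it suffices to compare $\mu$-weight spaces.
\item The $\mu$-weight space of $\calF_n\cap\mtop^x(T_\lambda)$ is spanned by $m=\big(\prod_iE_{a_i1}x^{r_i}y^{t_i}\big)v$ with $\sum_ir_i=n(\lambda^T)$ and $\sum_it_i\leq n$.
\item Pick fresh indices $b_i$. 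Using $[E_{a_ib_i}y^t,E_{b_k1}x^r]=\delta_{ik}E_{a_i1}x^ry^t$ and $E_{a_ib_i}y^t\cdot v=0$ (Lemma~\ref{Non1Vanishing}), you get $m=\big(\prod_iE_{a_ib_i}y^{t_i}\big)w$ with $w=\big(\prod_iE_{b_i1}x^{r_i}\big)v$. This $w$ has $y$-degree zero, so it lies in $\calF_0\cap\mtop^x(T_\lambda)$.
\item A second commutator computation, applying $\prod_iE_{b_ic_{r_i}}$ to $\prod_s(E_{c_s1}x^s)^{\lambda^T_s}\,v$, shows $w\in U(\gl_V)u=G_0$. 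Hence $m\in U(\gl_V[y])_{\leq n}G_0=G_n$.
\end{enumerate}

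The same factorization is what makes your appeal to (W2) in item~(iv) an actual argument. It identifies the cell monomial, up to a permutation matrix and a nonzero scalar, with $\big(\prod_iE_{a_i,r_i+1}y^{t_i}\big)$ applied to the highest weight vector of $\sW^y_{\lambda^T}$. Alternatively, once (ii) is known, you can finish sufficiency differently. The space $\calF_{n(\lambda)-1}\cap\mtop^x(T_\lambda)$ has no $\Lambda^NV$-isotypic part, while your nonzero representative $\bigotimes_m\big(\prod_{t_i=m}E_{a_i1}x^{r_i}\big)v_m$ has nonzero $\Lambda^NV$-projection, so it cannot lie in $\calF_{n(\lambda)-1}$.

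Your item~(i) is fine. So is the necessity half of~(iv) via the expansion over maps $\varphi$, which forces $\varphi(i)=t_i$ and is a slightly more direct route than the paper's.
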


\begin{proof}
Put $T:=T_\lambda$ and $M_j:=\sWH^x_{\lambda_j}\simeq\sW^x_{\lambda_j\varepsilon_1}$, $v_j:=v_{\lambda_j\varepsilon_1}$. We use the following facts about the Weyl modules $M_j$ (Propositions~\ref{prp::Weyl::top} and~\ref{prop:LineWeylMudulesProperties}):
\begin{enumerate}[label=(\alph*)]
\item the top $x$-degree of $M_j$ is $\binom{\lambda_j}{2}$, and $\mtop^x(M_j)\simeq\Lambda^{\lambda_j}V$;
\item for pairwise distinct $c_1,\ldots,c_k\geq2$ and $r_1\leq\ldots\leq r_k$ the vector $\big(\prod_{i=1}^kE_{c_i1}x^{r_i}\big)v_j$ vanishes unless $k\leq\lambda_j$ and $r_i\leq i+\lambda_j-k-1$ for all $i$. In particular, for $k=\lambda_j$ this vector lies in the top $x$-degree $\binom{\lambda_j}{2}$ if and only if $\{r_1,\ldots,r_k\}=\{0,1,\ldots,\lambda_j-1\}$, and then it is nonzero.
\end{enumerate}

\emph{Step 0: the bounds on the bidegrees.}
The fusion filtration consists of $x$-graded subspaces. Hence $\sWH_\lambda$ and $T$ have the same $x$-graded dimensions, and the maximal $x$-degree of $\sWH_\lambda$ equals $\sum_j\binom{\lambda_j}{2}=n(\lambda^T)$ by~(a).
For the $y$-degree, we use Proposition~\ref{prp::fusion::abelianGL}(ii) and apply Proposition~\ref{prp::fusion::abelian} to the abelian Lie algebra $\fa[x]$ with the constant sequence of subspaces $\fa[x]\subseteq\fa[x]\subseteq\cdots$ and $\eta_j=\lambda_j$. The hypothesis holds: any product of $\lambda_j+1$ elements of $\fa[x]$ kills $v_j$, because the resulting weight would have a negative coefficient at $\varepsilon_1$.
By~\eqref{eq::vanish::action}, a monomial $\prod_{i}(E_{a_i1}x^{r_i}y^{t_i})\cdot\widehat{v}$ vanishes unless $\#\{i\colon t_i\geq k\}\leq\sum_{m\geq k}\lambda_m$ for all $k\geq1$. In that case
\[
\sum_it_i=\sum_{k\geq1}\#\{i\colon t_i\geq k\}\leq\sum_{k\geq1}\sum_{m\geq k}\lambda_m=\sum_m m\lambda_m=n(\lambda).
\]
Since such monomials span $\sWH_\lambda$, item~(i) follows.

\emph{Step 1: the top $x$-component of $T$ and its cyclic vector.}
Let $Q:=\mtop^x(T)$ be the component of $T$ of $x$-degree $n(\lambda^T)$. By~(a), $Q=\bigotimes_j\mtop^x(M_j)$. The elements $Xy^t$ preserve the $x$-degree, and the elements $Xx^ry^t$ with $r>0$ act on $Q$ by zero. Therefore $Q$ is a $\gl_V[x,y]$-submodule, and as a $\gl_V[y]$-module
\[
Q\simeq\ev_{\gamma_0}\big(\Lambda^{\lambda_0}V\big)\otimes\cdots\otimes\ev_{\gamma_l}\big(\Lambda^{\lambda_l}V\big).
\]
Since $\calF_n$ is $x$-graded, $\mtop^x(\sWH_\lambda)=\bigoplus_n(\calF_n\cap Q)/(\calF_{n-1}\cap Q)$ as a $\gl_V[y]$-module.

Fix pairwise distinct indices $c_0,c_1,\ldots,c_{\lambda_0-1}$ with $c_s>\lambda_0$, and put
\[
\beta'_j:=\Big(\prod_{s=0}^{\lambda_j-1}E_{c_s1}x^{s}\Big)v_j\in\mtop^x(M_j),\qquad
\beta':=\beta'_0\otimes\cdots\otimes\beta'_l\in Q .
\]
By~(b), $\beta'_j\neq0$. We claim that $\beta'\in\calF_0$. Indeed, $\calF_0=U(\gl_V[x])\cdot v$, where $\gl_V[x]$ acts on $T$ diagonally, $Xx^r\mapsto\sum_m(Xx^r)^{(m)}$. Consider
\[
\tilde\beta:=\prod_{s=0}^{\lambda_0-1}\big(E_{c_s1}x^s\big)^{\lambda^T_s}\cdot v\in\calF_0 .
\]
All factors commute, since $\fa$ is abelian. Expanding each $E_{c_s1}x^s$ as $\sum_j(E_{c_s1}x^s)^{(j)}$, we obtain a sum of terms $\bigotimes_jw_j$, where $w_j$ is obtained by applying to $v_j$ the factors assigned to the $j$-th tensor position.
The total $x$-degree of $\tilde\beta$ is $\sum_s s\lambda^T_s=n(\lambda^T)$, which is the maximal $x$-degree of $T$. Hence in every nonzero term each $w_j$ lies in $\mtop^x(M_j)\simeq\Lambda^{\lambda_j}V$.
Since the weights of $\Lambda^{\lambda_j}V$ have all coefficients at most $1$ and at least $0$, and the total number of factors is $N=\sum_j\lambda_j$, every nonzero term receives exactly $\lambda_j$ factors with pairwise distinct indices $c_s$ in the position $j$. By~(b), their $x$-degrees form the set $\{0,\ldots,\lambda_j-1\}$. Since the factor $E_{c_s1}$ always comes with $x^s$, the position $j$ receives exactly the factors $E_{c_s1}x^s$ with $s<\lambda_j$, i.e., $w_j=\beta'_j$.
For each $s$, the $\lambda^T_s$ copies of $E_{c_s1}x^s$ must be distributed bijectively among the $\lambda^T_s$ rows $j$ with $\lambda_j>s$, which can be done in $\lambda^T_s!$ ways. Therefore
\[
\tilde\beta=\Big(\prod_{s}\lambda^T_s!\Big)\,\beta',\qquad\text{so}\quad\beta'\in\calF_0\cap Q.
\]
Let $\beta_j\in\mtop^x(M_j)$ be a highest weight vector of $\Lambda^{\lambda_j}V$, and $\beta:=\beta_0\otimes\cdots\otimes\beta_l$, a highest weight vector of $Q$ of weight $\sum_j\omega_{\lambda_j}$ (the weight corresponding to $\lambda^T$). Let $g\in\GL_V$ be the permutation matrix exchanging $e_{s+1}$ and $e_{c_s}$ for $s=0,\ldots,\lambda_0-1$. The weight spaces of $\Lambda^{\lambda_j}V$ are one-dimensional, so $\beta'_j\in\bC^\times g\beta_j$ and hence $\beta'\in\bC^\times g\beta$. In particular, $\beta$ and $\beta'$ generate the same $\gl_V$-submodule $U(\gl_V)\beta=U(\gl_V)\beta'\subset Q$, namely the Cartan component $V_{\lambda^T}\subset\bigotimes_j\Lambda^{\lambda_j}V$.
By Corollary~\ref{cor::ev::cyclic}, $\beta$ generates $Q$ as a $\gl_V[y]$-module.

\emph{Step 2: comparison of the filtrations.}
Consider the fusion filtration of the $\gl_V[y]$-module $Q$ with respect to $\beta$:
\[
G_n:=U(\gl_V[y])_{\leq n}\cdot\beta=U(\gl_V[y])_{\leq n}\cdot\beta' .
\]
By definition and Theorem~\ref{thm::Weyl::fusion},
\[
\bigoplus_nG_n/G_{n-1}=\big[\Lambda^{\lambda_0}V\star_y\cdots\star_y\Lambda^{\lambda_l}V\big]_{\gamma}\simeq\sW^y_{\sum_j\omega_{\lambda_j}}=\sW^y_{\lambda^T}.
\]
We show that $G_n=\calF_n\cap Q$ for all $n$; then item~(ii) follows.
Since $\beta'\in\calF_0$, we have $G_n\subset U(\gl_V[x,y])_{\leq n}\calF_0\subset\calF_n$, so $G_n\subset\calF_n\cap Q$.

Both $G_n$ and $\calF_n\cap Q$ are $\gl_V$-submodules of the finite-dimensional polynomial $\gl_V$-module $Q$. Fix pairwise distinct indices $a_1,\ldots,a_N$ and $b_1,\ldots,b_N$, all bigger than $\lambda_0$ and different from the $c_s$, and let $\mu:=\varepsilon_{a_1}+\ldots+\varepsilon_{a_N}$. Every irreducible polynomial $\gl_V$-module of degree $N$ has a nonzero $\mu$-weight space. Hence it suffices to prove that $(\calF_n\cap Q)_\mu\subset G_n$.
By Proposition~\ref{prp::fusion::abelianGL}(i), the space $(\calF_n)_\mu$ is spanned by the vectors $m=\big(\prod_{i=1}^NE_{a_i1}x^{r_i}y^{t_i}\big)v$ with $\sum_it_i\leq n$. Since $\calF_n$ is $x$-graded, $(\calF_n\cap Q)_\mu$ is spanned by those of them with $\sum_ir_i=n(\lambda^T)$.
Fix such an element $m$. We claim that in $T$
\begin{equation}
\label{eq::gen::mon}
m=\Big(\prod_{i=1}^NE_{a_ib_i}y^{t_i}\Big)\,w \text{ with } w:=\Big(\prod_{i=1}^NE_{b_i1}x^{r_i}\Big)v .
\end{equation}
Indeed, all the elements $E_{b_k1}x^{r}$ and $E_{a_k1}x^ry^t$ commute with each other, and
\[
[E_{a_ib_i}y^{t},E_{b_k1}x^{r}]=\delta_{ik}\,E_{a_i1}x^ry^t .
\] Moreover, $E_{a_ib_i}y^t$ commutes with $E_{a_k1}x^ry^{t'}$, and $E_{a_ib_i}y^t\cdot v=0$ by Lemma~\ref{Non1Vanishing}. Applying the operators $E_{a_ib_i}y^{t_i}$ to $w$ one by one, each of them replaces the factor $E_{b_i1}x^{r_i}$ by $E_{a_i1}x^{r_i}y^{t_i}$, which proves~\eqref{eq::gen::mon}.

The vector $w$ belongs to $\calF_0\cap Q$. We show that $w\in G_0$.
Expanding $w$ as in Step~1, a nonzero term receives in each position $j$ exactly $\lambda_j$ factors, whose $x$-degrees form the set $\{0,\ldots,\lambda_j-1\}$. Hence, if $w\neq0$, then $\#\{i\colon r_i=s\}=\lambda^T_s$ for all $s$.
In this case, the same computation as in~\eqref{eq::gen::mon} shows that
\[
\Big(\prod_{i=1}^NE_{b_ic_{r_i}}\Big)\tilde\beta=\Big(\prod_s\lambda^T_s!\Big)\,w .
\]
Indeed, $E_{bc_s}$ kills $v$, commutes with all $E_{c_{s'}1}x^{s'}$ for $s'\neq s$ and with all $E_{b'1}x^r$, and $[E_{bc_s},E_{c_s1}x^s]=E_{b1}x^s$. So the $\lambda^T_s$ operators $E_{b_ic_s}$ with $r_i=s$ turn the $\lambda^T_s$ factors $E_{c_s1}x^s$ of $\tilde\beta$ into the factors $E_{b_i1}x^s$, producing the coefficient $\lambda^T_s!$.
Therefore $w=\big(\prod_iE_{b_ic_{r_i}}\big)\beta'\in U(\gl_V)\beta'=G_0$.
By~\eqref{eq::gen::mon}, $m\in U(\gl_V[y])_{\leq n}G_0=G_n$. This proves $\calF_n\cap Q=G_n$ and item~(ii).

\emph{Step 3: the top bidegree.}
By item~(i), the top bidegree component of $\sWH_\lambda$ is contained in the bidegree $(n(\lambda^T),n(\lambda))$, provided the latter is nonzero. By item~(ii), this component is the top $y$-degree component of $\sW^y_{\lambda^T}$. By Proposition~\ref{lem:CocyclicWeylProperties}\ref{item::socle:1}, it sits in the $y$-degree $\sum_kk\lambda_k=n(\lambda)$, and by Proposition~\ref{prp::Weyl::top} it is isomorphic to $\Lambda^NV$. This proves item~(iii).

\emph{Step 4: the monomials of top degree.}
Let $g'\in\GL_V$ be a permutation matrix fixing $e_1$. Then $g'\widehat{v}=\widehat{v}$ and $g'(E_{a1}x^ry^t)g'^{-1}=E_{g'(a)1}x^ry^t$. Hence the vanishing of~\eqref{eq::monom::top} does not change if we replace the indices $a_i$ by $g'(a_i)$. We may therefore assume that $a_i>\lambda_0$ and choose $b_i$ and $c_s$ as in Step~2.
Let $m$ be the element~\eqref{eq::gen::mon} of $T$; its class in $\calF_{n(\lambda)}/\calF_{n(\lambda)-1}$ is the element~\eqref{eq::monom::top}.
If the multiset $\{r_i\}$ does not satisfy $\#\{i\colon r_i=s\}=\lambda^T_s$ for all $s$, then $w=0$ by Step~2, so~\eqref{eq::monom::top} vanishes. In this case $\{(r_i,t_i)\}$ is not the set of cells of $\lambda$, because the cells in the $s$-th column number $\lambda^T_s$.
Otherwise, by Step~2 and the identification $\mtop^x(\sWH_\lambda)\simeq\bigoplus_nG_n/G_{n-1}=\sW^y_{\lambda^T}$, the element~\eqref{eq::monom::top} equals
\[
\Big(\prod_iE_{a_ib_i}y^{t_i}\Big)\Big(\prod_iE_{b_ic_{r_i}}\Big)\widehat{\beta'}\in\sW^y_{\lambda^T},
\]
where $\widehat{\beta'}\in\bC^\times g\widehat{\beta}$ and $\widehat{\beta}$ is the highest weight vector of $\sW^y_{\lambda^T}$. Conjugating by $g=g^{-1}$, which fixes all $a_i$ and $b_i$ and exchanges $c_s$ with $s+1$, and then applying the identity~\eqref{eq::gen::mon} in the opposite direction (now for the highest weight vector $\widehat{\beta}$, which is killed by $E_{a_ib_i}y^t$ since $b_i>\lambda_0$), we find that~\eqref{eq::monom::top} is a nonzero multiple of
\[
g\cdot\Big(\prod_{i=1}^NE_{a_i,\,r_i+1}\,y^{t_i}\Big)\widehat{\beta}.
\]
The highest weight of $\sW^y_{\lambda^T}$ is $\sum_s\lambda^T_s\varepsilon_{s+1}$. By Proposition~\ref{lem:CocyclicWeylProperties}\ref{item::socle:2}, applied to $\sW^y_{\lambda^T}$, this element is nonzero if and only if for each $s$ the multiset $\{t_i\colon r_i=s\}$ equals $\{0,1,\ldots,\lambda^T_s-1\}$.
This condition means precisely that $\{(r_i,t_i)\}=\{(s,t)\colon t<\lambda^T_s\}$, which is the set of cells of $\lambda$. This proves item~(iv).
\end{proof}

\begin{remark}
The proof shows that the bidegrees of $\sWH_\lambda$ are bounded componentwise by $\mdeg(\Delta_\lambda)$, and that the top $x$-degree component is controlled by the rows of $\lambda$ while the $y$-direction is controlled by the local Weyl module $\sW^y_{\lambda^T}$. The roles of $x$ and $y$ are not symmetric in the construction of $\sWH_\lambda$. By Lemma~\ref{lem::GH::symmetry}, the Garsia--Haiman modules satisfy $\mGH^{\gl}_{\lambda^T}\simeq\mGH^{\gl}_\lambda$ up to the exchange of $x$ and $y$.
\end{remark}

\subsection{Surjection onto Garsia--Haiman modules}

The following observation is due to B.~Feigin and S.~Loktev~\cite{Feigin_Loktev::multi::Weyl}.

\begin{proposition}
\label{prp::proj::cov}
The module $S^N(V[x,y])$ is the projective cover of $S^NV$ in the category $\RepGLz{V}{N}$ of polynomial $\gl_V[x,y]$-modules of degree $N$. More precisely, for every $U\in\RepGLz{V}{N}$ and every vector $u\in U$ of weight $N\varepsilon_1$ there exists a unique homomorphism $S^N(V[x,y])\to U$ mapping $e_1^N$ to $u$.
\end{proposition}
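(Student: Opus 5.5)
The plan is to deduce both claims from the current Schur--Weyl duality of Theorem~\ref{thm::SW::currents} together with Proposition~\ref{prop::characters::free}. By the example at the end of \S\ref{sec::SW}, $S^N(V[x,y])=\SW\big(\bC[\bar{z}_1,\ldots,\bar{z}_N]\big)=\SW(\mathcal{A}_N\otimes_{\bS_N}\bC)$. By Proposition~\ref{prop::characters::free}(i), applied to $\lambda=(N)$, it is the projective cover of $S^NV$. So only the universal property with respect to vectors of weight $N\varepsilon_1$ needs an argument, and I would establish it directly rather than through the weight $(1^N,0^{m-N})$.

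The first step is a hom-space computation. Since $\bC[\bar{z}_1,\ldots,\bar{z}_N]=\mathcal{A}_N\otimes_{\bS_N}\bC$ is induced from the trivial representation, we have, for $U\in\RepGLz{V}{N}$,
\[
\mathrm{Hom}_{\gl_V[x,y]}\big(S^N(V[x,y]),U\big)\simeq\mathrm{Hom}_{\mathcal{A}_N}\big(\mathcal{A}_N\otimes_{\bS_N}\bC,\WS(U)\big)\simeq\WS(U)^{\bS_N}.
\]
Here the first isomorphism uses that $\SW,\WS$ are mutually inverse equivalences. The second step identifies $\WS(U)^{\bS_N}=\big(U_{(1^N,0^{m-N})}\big)^{\bS_N}$ with $U_{N\varepsilon_1}$. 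For a finite-dimensional polynomial $\gl_V$-module of degree $N$ this is the classical statement $\mathrm{Hom}_{\gl_V}(S^NV,U)\simeq U_{N\varepsilon_1}$. It holds because $S^NV=V_{(N)}$, and the multiplicity of $V_{(N)}$ in $U$ equals $\dim U_{N\varepsilon_1}$, since $N\varepsilon_1$ is a weight only of $V_{(N)}$ among degree-$N$ irreducibles, with multiplicity one. Under classical Schur--Weyl duality, $\mathrm{Hom}_{\gl_V}(S^NV,U)\simeq\mathrm{Hom}_{\bS_N}(\mathrm{triv},\WS(U))$, which gives the identification.

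The step I expect to be the main obstacle is checking that the resulting bijection is really ``evaluate the homomorphism at $e_1^N$''. I would argue this directly. Uniqueness holds because $e_1^N$ generates $S^N(V[x,y])$; indeed $E_{a1}\bar{z}^{\bar r}$ act as derivations, as in~\eqref{eq::GH::determinant}, and produce all monomials $(e_{a_1}\bar{z}^{\bar r_1})\cdots(e_{a_N}\bar{z}^{\bar r_N})$ up to $N!$. Existence is a dimension argument. Evaluation at $e_1^N$ gives an injective map $\mathrm{Hom}(S^N(V[x,y]),U)\to U_{N\varepsilon_1}$, and we must show it is onto. Since $U$ is finitely generated and graded, I would reduce to the graded components where needed and work multidegree by multidegree. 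Both sides are identified with $\WS(U)^{\bS_N}\simeq U_{N\varepsilon_1}$ compatibly. Concretely, the $\bS_N$-invariant vector $\sum_\sigma g_\sigma u'$ corresponds to $u$ via the $\gl_V$-isomorphism between the $(1^N)$-invariants and the $N\varepsilon_1$-weight space of the $S^NV$-isotypic part. Hence evaluation is a bijection, and every $u$ of weight $N\varepsilon_1$ is the image of $e_1^N$ under a unique homomorphism.
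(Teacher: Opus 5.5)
Your argument is correct and has the same structure as the paper's proof: uniqueness because $e_1^N$ generates $S^N(V[x,y])$, and existence because evaluation at $e_1^N$ is an injective map $\mathrm{Hom}(S^N(V[x,y]),U)\to U_{N\varepsilon_1}$ between spaces of equal dimension. You differ only in how you compute $\dim\mathrm{Hom}$.

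The paper uses the general fact that $\mathrm{Hom}$ out of the projective cover of $S^NV$ counts the composition multiplicity $[U:S^NV]$. This equals $\dim U_{N\varepsilon_1}$, because $V_{(N)}$ is the only degree-$N$ simple module with the weight $N\varepsilon_1$. The argument is short, but it is phrased for $U$ with a composition series, i.e.\ finite-dimensional $U$, which is the only case used later.

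You instead pass through $\SW$ and Frobenius reciprocity for $\mathcal{A}_N\otimes_{\bS_N}\bC$ to get $\WS(U)^{\bS_N}$, and then apply classical Schur--Weyl duality in each degree. This works componentwise, so it also covers infinite-dimensional finitely generated graded $U$. For that you need morphisms of degree $k$ (from the shift of $S^N(V[x,y])$ by $k$) to reach a homogeneous $u$ of degree $k$, as you indicate.

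Two small remarks.
\begin{itemize}
\item Your ``compatibility'' step is superfluous: injectivity together with equality of the finite dimensions in each degree already gives surjectivity. If you want it anyway, the $\SW$-bijection is evaluation at $e_1e_2\cdots e_N=\Psi(e_1\otimes\cdots\otimes e_N\otimes1)$, and $e_1^N=E_{1N}\cdots E_{13}E_{12}\,(e_1e_2\cdots e_N)$.
\item The derivation formula~\eqref{eq::GH::determinant} only yields monomials with all indices $\geq2$. The remaining monomials are reached by applying $\gl_V$. Alternatively, note that the projective cover is generated by its degree-zero part $S^NV=U(\gl_V)\,e_1^N$.
\end{itemize}
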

\begin{proof}
By Proposition~\ref{prop::characters::free}(i) with $\lambda=(N)$, the module $S^N(V[x,y])$ is the projective cover of $S^NV$ in $\RepGLz{V}{N}$, generated by $e_1^N$. Since $e_1^N$ is a generator, a homomorphism is determined by the image of $e_1^N$, which has weight $N\varepsilon_1$. The dimension of $\mathrm{Hom}\big(S^N(V[x,y]),U\big)$ is the multiplicity of $S^NV$ in the composition series of $U$. It equals $\dim U_{N\varepsilon_1}$, because $N\varepsilon_1$ is a weight of $S^NV$ with multiplicity one and of no other irreducible polynomial representation of degree $N$. Hence the injective map $\mathrm{Hom}(S^N(V[x,y]),U)\to U_{N\varepsilon_1}$, $\theta\mapsto\theta(e_1^N)$, is bijective.
\end{proof}

The vector $\widehat{v}\in\sWH_\lambda$ has weight $N\varepsilon_1$ and generates $\sWH_\lambda$. Hence Proposition~\ref{prp::proj::cov} gives a surjective homomorphism of bigraded $\gl_V[x,y]$-modules
\begin{equation}
\label{eq::project::H}
\begin{tikzcd}
\theta_{\lambda}\colon S^N(V[x,y]) \ar[r,two heads] & \sWH_{\lambda},
\end{tikzcd}
\qquad e_1^N\mapsto\widehat{v}.
\end{equation}

\begin{proposition}
\label{prp::fusion->GH}
The projection $\Theta_{\lambda}\colon S^N(V[x,y])\twoheadrightarrow\mGH^{\gl}_\lambda$ of Definition~\ref{not::GH::gl} factors through~\eqref{eq::project::H}:
\[
\begin{tikzcd}
S^{N}(V[x,y]) \arrow[r,"\theta_\lambda", two heads] \arrow[rr,bend right, "\Theta_{\lambda}"', two heads ]& \sWH_{\lambda} \arrow[r,dashed, two heads, "\exists\, \bar{\theta}_{\lambda}"]& \mGH^{\gl}_{\lambda}
\end{tikzcd}
\]
The surjection $\bar{\theta}_\lambda$ is bigraded, and it is an isomorphism on the top bidegree components $\mtop^{x,y}(\sWH_\lambda)\simeq\Lambda^NV\simeq\mtop^{x,y}(\mGH^{\gl}_\lambda)$.
\end{proposition}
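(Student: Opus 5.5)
To prove that $\Theta_\lambda$ factors through $\theta_\lambda$, we show $\ker\theta_\lambda\subset\euJ^{\gl}_\lambda$. Since $\euJ^{\gl}_\lambda$ is the largest submodule of $S^N(V[x,y])$ contained in $\ker\phi_\lambda$ (Definition~\ref{not::GH::gl}), and $\ker\theta_\lambda$ is a submodule, it suffices to check that $\phi_\lambda(\ker\theta_\lambda)=0$. The map $\phi_\lambda$ vanishes outside the multidegree $\mdeg(\Delta_\lambda)$, and $\theta_\lambda$ is bigraded. So we only need to look at the component $\ker\theta_\lambda\cap S^N(V[x,y])_{\mdeg(\Delta_\lambda)}$.

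Since $\phi_\lambda$ is $\gl_V$-equivariant with target $\Lambda^NV$, it is enough to test the weight $\mu=\varepsilon_{a_1}+\ldots+\varepsilon_{a_N}$ with pairwise distinct $a_i\geq2$. Every copy of $\Lambda^NV$ in the kernel has a nonzero vector of this weight, so nothing is lost. The weight-$\mu$ part of the relevant component of $S^N(V[x,y])$ is spanned by the monomials
\[
(e_{a_1}x^{r_1}y^{t_1})\cdots(e_{a_N}x^{r_N}y^{t_N})=\tfrac{1}{N!}\Big(\prod_i E_{a_i1}x^{r_i}y^{t_i}\Big)e_1^N,\qquad \sum_i(r_i,t_i)=\mdeg(\Delta_\lambda).
\]
Under $\theta_\lambda$ such a monomial goes to the element~\eqref{eq::monom::top}. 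By Theorem~\ref{thm::top::fusion}(iv), this element is nonzero exactly when $i\mapsto(r_i,t_i)$ is a bijection onto the cells of $\lambda$. In that case it lies in the one-dimensional $\mu$-weight space of $\mtop^{x,y}(\sWH_\lambda)\simeq\Lambda^NV$, by Theorem~\ref{thm::top::fusion}(iii).

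For a bijection $\sigma$ onto the cells, the skew-symmetry in the last part of the proof of Proposition~\ref{prop:LineWeylMudulesProperties} applies here as well, since the top component is $\Lambda^NV$. It gives that the image equals $(-1)^\sigma c$ for a fixed nonzero vector $c$. Hence $\theta_\lambda$ restricted to the $\mu$-weight space of this component is, up to the scalar $c$, the functional $f\mapsto\langle f,\Delta_\lambda\rangle$ in the Schur--Weyl picture. Indeed, $\phi_\lambda$ sends the monomial indexed by $\sigma$ to $\langle\bar z^{\sigma},\Delta_\lambda\rangle e_{a_1}\wedge\ldots\wedge e_{a_N}$. This equals $(-1)^\sigma\prod(\text{cell})!\,e_{a_1}\wedge\ldots\wedge e_{a_N}$, and the factor $\prod_{\bar s\in\lambda}\bar s!$ does not depend on $\sigma$. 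It vanishes on all non-bijective monomials, because they do not occur in $\Delta_\lambda$.

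So on the $\mu$-weight space of the relevant component, $\theta_\lambda$ and $\phi_\lambda$ are proportional linear forms that are both nonzero. Their kernels therefore coincide, and $\phi_\lambda(\ker\theta_\lambda)=0$ in weight $\mu$, hence everywhere. This gives $\bar\theta_\lambda$, which is bigraded and surjective since $\Theta_\lambda$ is.

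For the isomorphism on top components, note that $\mtop^{x,y}(\mGH^{\gl}_\lambda)$ is the socle $\Theta_\lambda(\mathcal Q_\lambda)\simeq\Lambda^NV$ in bidegree $\mdeg(\Delta_\lambda)$ (Proposition~\ref{prp::GH::gl}(iv)). Its source $\mtop^{x,y}(\sWH_\lambda)$ is also $\Lambda^NV$ in that bidegree. The map $\bar\theta_\lambda$ between them is surjective, because $\Theta_\lambda$ is surjective in that bidegree. A surjection between two irreducible $\gl_V$-modules is an isomorphism.

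The main obstacle is the sign bookkeeping: showing that the images of the bijective monomials are exactly $(-1)^\sigma$ times one fixed vector, with the same sign convention as $\Delta_\lambda$. This relies on the top component being the alternating representation $\Lambda^NV$, applied through permutations of the indices $a_i$ by permutation matrices fixing $e_1$.
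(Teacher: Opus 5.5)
Your proposal is correct and follows the paper's proof essentially step by step. You reduce to $\phi_\lambda(\ker\theta_\lambda)=0$ on the component of weight $\mu=\varepsilon_{a_1}+\ldots+\varepsilon_{a_N}$ and bidegree $\mdeg(\Delta_\lambda)$, and use Theorem~\ref{thm::top::fusion}(iii),(iv) together with the sign action of permutation matrices fixing $e_1$ to show that $\phi_\lambda$ and $\theta_\lambda$ are proportional there. You then get the isomorphism on top components from the irreducibility of $\Lambda^NV$. The paper phrases the reduction via the $\gl_V$-submodule $\phi_\lambda(\ker\theta_\lambda)\subset\Lambda^NV$ rather than via copies of $\Lambda^NV$ inside $\ker\theta_\lambda$, which amounts to the same thing.
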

\begin{proof}
We have to show that $K:=\ker\theta_\lambda$ is contained in $\euJ^{\gl}_\lambda=\ker\Theta_\lambda$. By Definition~\ref{not::GH::gl}, $\euJ^{\gl}_\lambda$ is the largest submodule contained in $\ker\phi_\lambda$, where $\phi_\lambda\colon S^N(V[x,y])\to\Lambda^NV$ is the $\gl_V$-equivariant map~\eqref{eq::phi::lambda}. Since $K$ is a submodule, it suffices to show that $\phi_\lambda(K)=0$.
The image $\phi_\lambda(K)$ is a $\gl_V$-submodule of $\Lambda^NV$. Fix pairwise distinct $a_1,\ldots,a_N\geq2$ and put $\mu=\varepsilon_{a_1}+\ldots+\varepsilon_{a_N}$. It suffices to check that the $\mu$-weight space of $\phi_\lambda(K)$ vanishes.
Since $\phi_\lambda$ is $\gl_V$-equivariant, vanishes outside the bidegree $\mdeg(\Delta_\lambda)$, and $K$ is bigraded, this weight space equals $\phi_\lambda(K\cap W)$. Here $W$ is the component of $S^N(V[x,y])$ of weight $\mu$ and bidegree $\mdeg(\Delta_\lambda)$.

The space $W$ has the basis
\[
p_{\mathbf{r}}:=\prod_{i=1}^N\big(e_{a_i}x^{r_i}y^{t_i}\big)=\Psi\Big(e_{a_1}\otimes\cdots\otimes e_{a_N}\otimes\prod_{i=1}^N\bar{z}_i^{(r_i,t_i)}\Big),
\qquad \mathbf{r}=\big((r_1,t_1),\ldots,(r_N,t_N)\big),
\]
where $\mathbf{r}$ runs over the tuples with $\sum_i(r_i,t_i)=\mdeg(\Delta_\lambda)$.
Fix an enumeration $\bar{s}_1,\ldots,\bar{s}_N$ of the cells of $\lambda$ and, for $\sigma\in\bS_N$, let $\mathbf{r}_\sigma:=(\bar{s}_{\sigma(1)},\ldots,\bar{s}_{\sigma(N)})$.

\emph{The map $\phi_\lambda$ on $W$.} By~\eqref{eq::GH::Delta} and~\eqref{eq::phi::lambda}, $\phi_\lambda(p_{\mathbf{r}})=\big\langle\prod_i\bar{z}_i^{(r_i,t_i)},\Delta_\lambda\big\rangle\,e_{a_1}\wedge\cdots\wedge e_{a_N}$. The monomial $\prod_i\bar{z}_i^{(r_i,t_i)}$ occurs in $\Delta_\lambda$ if and only if $\mathbf{r}=\mathbf{r}_\sigma$ for some (unique) $\sigma$, and then with the coefficient $(-1)^\sigma$. Hence
\[
\phi_\lambda(p_{\mathbf{r}_\sigma})=(-1)^{\sigma}A\,e_{a_1}\wedge\cdots\wedge e_{a_N},\quad A:=\prod_{\bar{s}\in\lambda}s_x!\,s_y!\neq0,\qquad
\phi_\lambda(p_{\mathbf{r}})=0\ \text{ for all other } \mathbf{r}.
\]

\emph{The map $\theta_\lambda$ on $W$.} Since $a_i\geq2$, the operators $E_{a_i1}x^{r}y^{t}$ commute and act on $S^N(V[x,y])$ by derivations. Hence $p_{\mathbf{r}}=\frac{1}{N!}\big(\prod_iE_{a_i1}x^{r_i}y^{t_i}\big)e_1^N$ (see~\eqref{eq::GH::determinant}), and therefore
\[
\theta_\lambda(p_{\mathbf{r}})=\frac{1}{N!}\,m_{\mathbf{r}},\qquad m_{\mathbf{r}}:=\Big(\prod_iE_{a_i1}x^{r_i}y^{t_i}\Big)\widehat{v}.
\]
By Theorem~\ref{thm::top::fusion}(iv), $m_{\mathbf{r}}\neq0$ if and only if $\mathbf{r}=\mathbf{r}_\sigma$ for some $\sigma$. All the elements $m_{\mathbf{r}}$ lie in the $\mu$-weight space of $\mtop^{x,y}(\sWH_\lambda)\simeq\Lambda^NV$, which is one-dimensional.
Moreover, $m_{\mathbf{r}_\sigma}=(-1)^\sigma m_{\mathbf{r}_{\mathrm{id}}}$. Indeed, reordering the commuting factors gives $m_{\mathbf{r}_\sigma}=\big(\prod_jE_{a_{\sigma^{-1}(j)}1}\bar{z}^{\bar{s}_j}\big)\widehat{v}=g_\sigma\cdot m_{\mathbf{r}_{\mathrm{id}}}$, where $g_\sigma\in\GL_V$ is the permutation matrix $e_{a_j}\mapsto e_{a_{\sigma^{-1}(j)}}$ fixing all other basis vectors. Since $g_\sigma$ fixes $e_1$, we have $g_\sigma\widehat{v}=\widehat{v}$, and $g_\sigma$ acts on the $\mu$-weight space of $\Lambda^NV$ by the sign $(-1)^\sigma$.

\emph{Conclusion.} Let $\ell$ be the linear map from the line $\bC\,m_{\mathbf{r}_{\mathrm{id}}}$ to $\bC\,e_{a_1}\wedge\cdots\wedge e_{a_N}$ sending $\frac{1}{N!}m_{\mathbf{r}_{\mathrm{id}}}$ to $A\,e_{a_1}\wedge\cdots\wedge e_{a_N}$. The computations above show that $\phi_\lambda|_W=\ell\circ\theta_\lambda|_W$. Hence $\phi_\lambda$ vanishes on $K\cap W=\ker(\theta_\lambda|_W)$. Thus $\phi_\lambda(K)=0$, $K\subset\euJ^{\gl}_\lambda$, and $\bar{\theta}_\lambda$ exists. It is surjective and bigraded because $\Theta_\lambda$ and $\theta_\lambda$ are.
Finally, by Theorem~\ref{thm::top::fusion}(iii) and Proposition~\ref{prp::GH::gl}(iv), both top bidegree components are isomorphic to $\Lambda^NV$ and sit in the bidegree $\mdeg(\Delta_\lambda)$. A bigraded surjection restricts to a surjection $\Lambda^NV\twoheadrightarrow\Lambda^NV$, which is an isomorphism.
\end{proof}

For instance, the element
\begin{equation}
\label{eq::monom::topMain}
\left(\prod_{i=1}^{N}E_{a_i1}\bar{z}^{\bar{s}_i}\right)\widehat{v}\in\sWH_\lambda,\qquad 1<a_1<\ldots<a_N,
\end{equation}
is mapped by $\bar{\theta}_\lambda$ to the element $\Theta_\lambda\big(\delta^{a_1\ldots a_N}_\lambda\big)$ of the socle $\Theta_\lambda(\mathcal{Q}_\lambda)$ of $\mGH^{\gl}_\lambda$. Indeed, this element equals $\theta_\lambda(N!\,p_{\mathbf{r}_{\mathrm{id}}})$, and $\delta^{a_1\ldots a_N}_\lambda=\sum_\sigma(-1)^\sigma p_{\mathbf{r}_\sigma}$ with $\Theta_\lambda(p_{\mathbf{r}_\sigma})=(-1)^\sigma\Theta_\lambda(p_{\mathbf{r}_{\mathrm{id}}})$ by the proof above.

\begin{corollary}
\label{cor::sWH::GH}
Let $\lambda\vdash N$ be a Young diagram.
\begin{enumerate}
\item The $\gl_V$-module $\mGH^{\gl}_\lambda$ is a quotient of $\sWH_\lambda\simeq V^{\otimes N}$. In particular, the specialization of $\mathrm{ch}_q(\mGH^{\gl}_\lambda)$ at $q_0=q_1=1$ is bounded by $h_1^N$ coefficientwise in the Schur basis, and the weight subspace of weight $(1^N,0^{m-N})$ of $\mGH^{\gl}_\lambda$ has dimension at most $N!$.
\item The surjection $\bar{\theta}_\lambda$ is an isomorphism if and only if the weight subspace of weight $(1^N,0^{m-N})$ of $\mGH^{\gl}_\lambda$ has dimension $N!$, i.e., if and only if the $N!$-conjecture holds for $\lambda$ (see Corollary~\ref{cor::N!::gl}).
\item By Theorem~\ref{conj::n!}, $\bar\theta_\lambda\colon\sWH_\lambda\to\mGH^{\gl}_\lambda$ is an isomorphism for every choice of pairwise distinct parameters. In particular, $\sWH_\lambda$ does not depend on the parameters, and $\mathrm{ch}_q(\sWH_\lambda)=\widetilde{H}_\lambda(X;q_0,q_1)$, where $q_0$ and $q_1$ count the $x$- and $y$-degrees respectively.
\end{enumerate}
\end{corollary}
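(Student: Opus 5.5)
The three items follow from Proposition~\ref{prp::fusion->GH} together with dimension counting; I treat them in order.

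For item~(i), Proposition~\ref{prp::fusion->GH} gives the surjection $\bar\theta_\lambda\colon\sWH_\lambda\twoheadrightarrow\mGH^{\gl}_\lambda$ of $\gl_V[x,y]$-modules. In particular it is a surjection of $\gl_V$-modules. By~\eqref{eq::sWH::glV} we have $\sWH_\lambda\simeq V^{\otimes N}$ as $\gl_V$-modules. Polynomial $\gl_V$-modules are semisimple, so $\mGH^{\gl}_\lambda$ is isomorphic to a direct summand of $V^{\otimes N}$. Its ungraded character, which is $\mathrm{ch}_q(\mGH^{\gl}_\lambda)$ specialized at $q_0=q_1=1$, is therefore bounded coefficientwise in the Schur basis by $h_1^N=\mathrm{ch}(V^{\otimes N})$. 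Taking the weight space of weight $(1^N,0^{m-N})$, whose dimension in $V^{\otimes N}$ is $N!$, gives the bound $N!$.

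For item~(ii), suppose first that $\bar\theta_\lambda$ is an isomorphism. Then the weight space of $\mGH^{\gl}_\lambda$ has the same dimension as that of $V^{\otimes N}$, namely $N!$. Conversely, suppose the weight space of $\mGH^{\gl}_\lambda$ has dimension $N!$. Write $\mGH^{\gl}_\lambda\simeq\bigoplus_\mu V_\mu^{c_\mu}$ with $c_\mu\le f^\mu$, using the summand description from item~(i). Each $V_\mu$ has a nonzero $(1^N,0^{m-N})$-weight space of dimension $f^\mu$ by Lemma~\ref{lem::Schur::full}. Hence $\sum_\mu c_\mu f^\mu=N!=\sum_\mu (f^\mu)^2$, which forces $c_\mu=f^\mu$ for all $\mu$. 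So $\dim\mGH^{\gl}_\lambda=\dim\sWH_\lambda$, and the surjection $\bar\theta_\lambda$ is an isomorphism. The identification of this weight-space condition with the $N!$-conjecture is Corollary~\ref{cor::N!::gl} together with Proposition~\ref{prp::GH::gl}(ii).

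For item~(iii), Haiman's Theorem~\ref{conj::n!} verifies the condition of item~(ii). Since the earlier results hold for every choice of pairwise distinct parameters $\gamma_j$, $\bar\theta_\lambda$ is an isomorphism for all such parameters. The target $\mGH^{\gl}_\lambda$ is defined without any parameters, so $\sWH_\lambda$ does not depend on them. Because $\bar\theta_\lambda$ is bigraded, the character identity follows: $\mathrm{ch}_q(\sWH_\lambda)=\mathrm{ch}_q(\mGH^{\gl}_\lambda)=\widetilde H_\lambda$, by Proposition~\ref{prp::GH::gl}(ii) and Theorem~\ref{conj::n!}. There is no real obstacle here; the only point needing care is the converse in item~(ii), where one must turn an equality of weight-space dimensions into equality of total dimensions. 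That step is handled by the positivity of every $f^\mu$, exactly as in Lemma~\ref{lem::Schur::full}.
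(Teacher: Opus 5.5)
Your proof is correct and follows the paper's argument: item~(i) comes from the surjection $\bar\theta_\lambda$ together with $\sWH_\lambda\simeq V^{\otimes N}$, and item~(iii) from Haiman's theorem and the bigradedness of $\bar\theta_\lambda$. The only difference is in the converse of item~(ii). There the paper invokes the current Schur--Weyl equivalence $\WS$, so that a surjection is an isomorphism if and only if it is one on the weight spaces of weight $(1^N,0^{m-N})$. You instead compare $\gl_V$-multiplicities using $K_{\mu,(1^N)}=f^\mu>0$, which is the same underlying fact as Lemma~\ref{lem::Schur::full}, applied directly at the classical level.
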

\begin{proof}
Item~(i) follows from the surjection $\bar{\theta}_\lambda$ and from~\eqref{eq::sWH::glV}; the weight subspace of weight $(1^N,0^{m-N})$ of $V^{\otimes N}$ has dimension $N!$. A surjection of finite-dimensional modules is an isomorphism if and only if the dimensions agree. Since $\bar{\theta}_\lambda$ is surjective and the functor $\WS$ is an equivalence of categories (Theorem~\ref{thm::SW::currents}), it is an isomorphism if and only if it induces an isomorphism of the weight subspaces of weight $(1^N,0^{m-N})$, i.e., if and only if the dimension of this subspace of $\mGH^{\gl}_\lambda$ is $N!$; this proves item~(ii). Item~(iii) follows from item~(ii), Theorem~\ref{conj::n!} and Corollary~\ref{cor::N!::gl}.
\end{proof}

\begin{remark}
\label{rem::N!:reformulation}
Corollary~\ref{cor::sWH::GH}(iii) relies on Haiman's theorem. An independent proof of the $N!$-conjecture would follow from Lemma~\ref{lm::socle::head::iso} if one shows that $\sWH_\lambda$ is cocyclic with the socle $\mtop^{x,y}(\sWH_\lambda)\simeq\Lambda^NV$. Indeed, $\bar{\theta}_\lambda$ is nonzero on this component by Proposition~\ref{prp::fusion->GH}, and $\mGH^{\gl}_\lambda$ is cyclic with a simple head.
\end{remark}

\subsection{Associativity of the $y$-fusion product}
\label{sec::sWH::associativity}

The modules $\sWH_\lambda$ are finite-dimensional graded cyclic $\gl_V[x][y]$-modules with respect to the $y$-grading, and their cyclic vectors $\widehat{v}$ have $y$-degree $0$. Hence they can be fused with respect to the variable $y$ in the sense of Definition~\ref{def::fusion::graded}, applied to the Lie algebra $\fg=\gl_V[x]$.

Let $\lambda^{1},\ldots,\lambda^{k}$ be Young diagrams with $|\lambda^i|=N_i$, and let $\lambda=\lambda^1\cup\ldots\cup\lambda^k$ be their union. Its rows are the rows of all the $\lambda^i$, reordered decreasingly, and $|\lambda|=N=N_1+\ldots+N_k$. Note that
\[
n(\lambda^T)=\sum_{\text{rows } \lambda_j\text{ of }\lambda}\binom{\lambda_j}{2}=\sum_{i=1}^{k}n\big((\lambda^{i})^T\big),
\qquad
\lambda^T=\sum_{i=1}^{k}(\lambda^i)^T
\]
(the latter as $\gl_V$-weights). For pairwise distinct $\delta_1,\ldots,\delta_k\in\bC$ consider the fusion product
\[
\mathsf{P}:=\Big[\sWH_{\lambda^1}\star_y\sWH_{\lambda^2}\star_y\cdots\star_y\sWH_{\lambda^k}\Big]_{\delta_1,\ldots,\delta_k},
\]
with the cyclic vector $\widehat{u}$, the symbol of $u:=\widehat{v}_1\otimes\cdots\otimes\widehat{v}_k$, where $\widehat{v}_i$ is the cyclic vector of $\sWH_{\lambda^i}$. The parameters used in the definition of the modules $\sWH_{\lambda^i}$ are arbitrary.
As for $\sWH_\lambda$, the module $\mathsf{P}$ is bigraded by the $x$-degree and the $y$-degree, and by Remark~\ref{rem::fusion::graded} and~\eqref{eq::sWH::glV},
\begin{equation}
\label{eq::P::glV}
\mathsf{P}\simeq\bigotimes_{i=1}^{k}\sWH_{\lambda^i}\simeq V^{\otimes N}\qquad\text{as $\gl_V$-modules.}
\end{equation}

\begin{theorem}
\label{thm::sWH::associativity}
In the above notation, and for every choice of parameters:
\begin{enumerate}
\item The component $\mtop^x(\mathsf{P})$ of top $x$-degree sits in the $x$-degree $n(\lambda^T)$. It is a $\gl_V[x,y]$-submodule, and the subalgebra $x\,\gl_V[x,y]$ acts on it by zero. As a graded $\gl_V[y]$-module,
\[
\mtop^x(\mathsf{P})\simeq\sW^{y}_{\lambda^T}.
\]
\item The component of $\mathsf{P}$ of bidegree $\mdeg(\Delta_\lambda)=\big(n(\lambda^T),n(\lambda)\big)$ is isomorphic to $\Lambda^NV$. Moreover, let $a_1,\ldots,a_N\geq2$ be pairwise distinct, and let $\sum_i(r_i,t_i)=\mdeg(\Delta_\lambda)$. Then the element
\[
\Big(\prod_{i=1}^NE_{a_i1}x^{r_i}y^{t_i}\Big)\widehat{u}\in\mathsf{P}
\]
is nonzero if and only if $\{(r_1,t_1),\ldots,(r_N,t_N)\}$ is the set of cells of $\lambda$.
\item The projection $\Theta_\lambda\colon S^N(V[x,y])\twoheadrightarrow\mGH^{\gl}_\lambda$ factors through the surjection
\[
\theta_{\mathsf{P}}\colon S^N(V[x,y])\twoheadrightarrow\mathsf{P},\qquad e_1^N\mapsto\widehat{u},
\]
of Proposition~\ref{prp::proj::cov}. The resulting surjection $\bar{\theta}_{\mathsf{P}}\colon\mathsf{P}\twoheadrightarrow\mGH^{\gl}_\lambda$ is bigraded and is an isomorphism in the bidegree $\mdeg(\Delta_\lambda)$.
\item The surjection $\bar{\theta}_{\mathsf{P}}$ is an isomorphism. Consequently, the $y$-fusion product of the modules $\sWH_{\lambda^i}$ is associative and does not depend on the parameters:
\[
\Big[\sWH_{\lambda^1}\star_y\cdots\star_y\sWH_{\lambda^k}\Big]_{\delta_1,\ldots,\delta_k}\ \simeq\ \sWH_{\lambda^1\cup\ldots\cup\lambda^k}\ \simeq\ \mGH^{\gl}_{\lambda^1\cup\ldots\cup\lambda^k},
\]
with the cyclic vectors corresponding to each other. In particular, the component of bidegree $\mdeg(\Delta_\lambda)$ is the top total-degree component of $\mathsf{P}$.
\end{enumerate}
\end{theorem}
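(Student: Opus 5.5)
We follow the proof of Theorem~\ref{thm::top::fusion} and Proposition~\ref{prp::fusion->GH} almost verbatim, replacing the Weyl modules $\sWH^x_{\lambda_j}$ by the modules $\sWH_{\lambda^i}$. Item~(iv) then comes from a dimension count, using Haiman's theorem through Corollary~\ref{cor::sWH::GH}(iii).

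\emph{Item (i).} The fusion filtration is $x$-graded, so $\mathsf{P}$ and $T:=\bigotimes_i\sWH_{\lambda^i}(\delta_i)$ have the same $x$-graded dimensions. By Theorem~\ref{thm::top::fusion}(i),(ii), the top $x$-degree of $T$ is $\sum_in((\lambda^i)^T)=n(\lambda^T)$. Moreover $Q:=\mtop^x(T)=\bigotimes_i\mtop^x(\sWH_{\lambda^i})$ is a $\gl_V[x,y]$-submodule killed by $x\gl_V[x,y]$, and as a $\gl_V[y]$-module it is $\bigotimes_i\sW^y_{(\lambda^i)^T}(\delta_i)$. For the highest weight vector $\beta$ of $Q$, Theorem~\ref{thm::Weyl::fusion} gives
\[
\bigoplus_n G_n/G_{n-1}\simeq\sW^y_{\sum_i(\lambda^i)^T}=\sW^y_{\lambda^T},\qquad G_n:=U(\gl_V[y])_{\le n}\beta .
\]
It remains to prove $G_n=\calF_n\cap Q$; this is the main work. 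As in Step~1, we first find a vector $\beta'\in\calF_0\cap Q$ that is a permutation translate of $\beta$. For this we take $\tilde\beta=\prod_s(E_{c_s1}x^s)^{\lambda^T_s}\cdot u$, which is now computed inside each $\sWH_{\lambda^i}$. Since $\sWH_{\lambda^i}$ is itself generated by the abelian algebra $\fa[x,y]$ (Proposition~\ref{prp::fusion::abelianGL}), the expansion argument still applies. Its top $x$-component $\sW^y_{(\lambda^i)^T}$ is hit only by the factors $E_{c_s1}x^s$ distributed row by row. Next, the commutator trick~\eqref{eq::gen::mon}, with auxiliary indices $b_i$, reduces $(\calF_n\cap Q)_\mu$ to $U(\gl_V[y])_{\le n}G_0$. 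Lemma~\ref{Non1Vanishing} also holds for $u$, because $\fb[x,y]$ acts by scalars on each $\widehat v_i$.

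\emph{Items (ii) and (iii).} For the upper bound $n(\lambda)$ on the $y$-degree of $\mathsf{P}$, Proposition~\ref{prp::fusion::abelian} does not apply directly, since the factors are not evaluation modules. We instead bound the top $y$-degree through item~(i) and the socle of $\sW^y_{\lambda^T}$: Proposition~\ref{lem:CocyclicWeylProperties} gives $y$-degree $\sum_kk\lambda_k=n(\lambda)$. The monomial criterion is then exactly Step~4 of Theorem~\ref{thm::top::fusion}, via Proposition~\ref{lem:CocyclicWeylProperties}\ref{item::socle:2}. Item~(iii) repeats Proposition~\ref{prp::fusion->GH} word for word. It uses only the monomial criterion and the fact that $\phi_\lambda$ is supported in bidegree $\mdeg(\Delta_\lambda)$, provided that $\mathsf{P}$ has no nonzero component in bidegree $\mdeg(\Delta_\lambda)$ beyond the top $x$-part. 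This holds because the $x$-degree $n(\lambda^T)$ there is already maximal.

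\emph{Item (iv).} $\mathsf{P}\twoheadrightarrow\mGH^{\gl}_\lambda$ is surjective, and $\dim\mathsf{P}=\dim V^{\otimes N}$ by~\eqref{eq::P::glV}. By Corollary~\ref{cor::sWH::GH}(iii), $\mGH^{\gl}_\lambda\simeq\sWH_\lambda\simeq V^{\otimes N}$, so the dimensions agree and the map is an isomorphism. Independence of parameters and associativity follow, and the top total degree statement follows from Proposition~\ref{prp::GH::gl}(iv).

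The expected obstacle is the identification $G_n=\calF_n\cap Q$ in item~(i): one must control $\calF_0\cap Q$ when the factors are the non-evaluation modules $\sWH_{\lambda^i}$.
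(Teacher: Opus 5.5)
Your outline is the paper's proof: Steps~1--5 of Theorem~\ref{thm::top::fusion} and Proposition~\ref{prp::fusion->GH} transported to $\mathsf{P}$, followed by a dimension count via Haiman's theorem. Citing the associativity part of Theorem~\ref{thm::Weyl::fusion} to get $\bigoplus_nG_n/G_{n-1}\simeq\sW^y_{\lambda^T}$ is a legitimate substitute for the paper's use of Corollary~\ref{cor::fusion::Weyl::upper} plus a dimension count.

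However, the step you yourself flag as the main obstacle is not closed by the reason you give. Abelian generation of $\sWH_{\lambda^i}$ by $\fa[x,y]$ only says that $\sWH_{\lambda^i}$ is spanned by monomials applied to $\widehat v_i$. The expansion argument of Step~1 needs the analogue of fact~(b): you must know exactly which products $\prod_{s\in S}(E_{c_s1}x^s)\,\widehat v_i$ are nonzero in top $x$-degree. The same input is needed in Step~2, to show that $w=(\prod_iE_{b_i1}x^{r_i})u\neq0$ forces $\#\{i\colon r_i=s\}=\lambda^T_s$.

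The missing observation, which is the one the paper uses, is that all these vectors have $y$-degree $0$. The $y$-degree $0$ part of $\sWH_{\lambda^i}$ is $\calF_0(T_{\lambda^i})=U(\gl_V[x])v_{\lambda^i}\subset T_{\lambda^i}$, with the diagonal action. Moreover, $\gl_V[x]$ is untouched by the shifts $y\mapsto y+\delta_i$. Hence the inclusion $\bigotimes_i\calF_0(T_{\lambda^i})\subset T_\lambda$ gives a $\gl_V[x]$-equivariant, $x$-graded identification $\calF'_0=U(\gl_V[x])u\simeq\calF_0(T_\lambda)$ with $u\mapsto v_\lambda$. Every computation inside $\calF_0$ in Steps~1--2 of Theorem~\ref{thm::top::fusion} then transfers verbatim. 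This covers $\tilde\beta=(\prod_s\lambda^T_s!)\beta'$, the vanishing criterion for $w$, and $w\in U(\gl_V)\beta'$. The expansion now runs over all rows of $\lambda$, not over the blocks $\lambda^i$. With this in place your argument goes through.

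Separately, item~(i) only controls the $y$-degrees inside $\mtop^x(\mathsf{P})$, so it does not give an upper bound $n(\lambda)$ on the $y$-degree of all of $\mathsf{P}$. No such global bound is needed for (ii)--(iii), because the bidegree $\mdeg(\Delta_\lambda)$ already lies in the top $x$-degree. The ``top total degree'' claim follows only after (iv), from Proposition~\ref{prp::GH::gl}(iv), as you say.
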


\begin{proof}
We follow the proof of Theorem~\ref{thm::top::fusion} and indicate the necessary modifications. Denote by $\sWH_{\lambda^i}(\delta_i)$ the shifted modules (the pullbacks along $y\mapsto y+\delta_i$). Put
\[
T':=\sWH_{\lambda^1}(\delta_1)\otimes\cdots\otimes\sWH_{\lambda^k}(\delta_k),\qquad
\calF'_n:=U(\gl_V[x,y])_{\leq n}\cdot u .
\]

\emph{Preliminaries.}
The shift $y\mapsto y+\delta_i$ does not affect the elements of $y$-degree $0$, i.e., the subalgebra $\gl_V[x]$.
(i) The subalgebra $\fb[x,y]$ acts on $u$ by scalars. Indeed, an element $Xx^ry^t$ with $X\in\fb$ acts on the $i$-th factor as $Xx^r(y+\delta_i)^t$, which is a combination of the elements $Xx^ry^{t'}$. These act on $\widehat{v}_i$ by scalars by Lemma~\ref{Non1Vanishing}: they kill it if $X=E_{ab}$ with $b\geq2$, and $E_{11}x^ry^{t'}\widehat{v}_i=\delta_{r,0}\delta_{t',0}N_i\widehat{v}_i$ in the fusion product $\sWH_{\lambda^i}$.
Hence, exactly as in Proposition~\ref{prp::fusion::abelianGL}, $\calF'_n=U(\fa[x,y])_{\leq n}\cdot u$, and $(E_{ab}y^t)\cdot u=0$ for all $b\geq2$ and $t\geq0$.
(ii) Let $T_\lambda$ be the tensor product~\eqref{eq::tensor::weyl} for the diagram $\lambda$, where the rows are taken in the order $\lambda^1,\ldots,\lambda^k$ (inside each $\lambda^i$ in the decreasing order), and let $v_\lambda$ be its cyclic vector.
The $y$-degree $0$ component of $\sWH_{\lambda^i}$ is the $\gl_V[x]$-module $\calF_0(T_{\lambda^i})=U(\gl_V[x])\cdot v_{\lambda^i}\subset T_{\lambda^i}$, on which $\gl_V[x]$ acts diagonally.
Therefore the embedding $\bigotimes_i\calF_0(T_{\lambda^i})\subset\bigotimes_iT_{\lambda^i}=T_\lambda$ is $\gl_V[x]$-equivariant and maps $u$ to $v_\lambda$. It identifies
\[
\calF'_0=U(\gl_V[x])\cdot u\ \simeq\ U(\gl_V[x])\cdot v_\lambda=\calF_0(T_\lambda)
\]
as $x$-graded $\gl_V[x]$-modules.
Steps~1 and~2 of the proof of Theorem~\ref{thm::top::fusion} only involve computations inside $\calF_0(T_\lambda)$, and they do not use the order of the rows. Hence all the statements about $\calF_0(T_\lambda)$ proved there hold for $\calF'_0$.

\emph{Step 1: the top $x$-component.}
By Theorem~\ref{thm::top::fusion}(ii), the top $x$-degree of $\sWH_{\lambda^i}$ is $n((\lambda^i)^T)$, and $\mtop^x(\sWH_{\lambda^i})\simeq\sW^y_{(\lambda^i)^T}$ is a graded $\gl_V[y]$-module on which $x\,\gl_V[x,y]$ acts by zero.
Hence the top $x$-degree of $T'$ is $\sum_in((\lambda^i)^T)=n(\lambda^T)$, and
\[
Q':=\mtop^x(T')=\bigotimes_{i=1}^k\mtop^x\big(\sWH_{\lambda^i}\big)(\delta_i)\simeq\bigotimes_{i=1}^k\sW^y_{(\lambda^i)^T}(\delta_i)
\]
as $\gl_V[y]$-modules, while $x\,\gl_V[x,y]$ acts on $Q'$ by zero. Since the filtration $\calF'_\bullet$ is $x$-graded, $\mtop^x(\mathsf{P})=\bigoplus_n(\calF'_n\cap Q')/(\calF'_{n-1}\cap Q')$.
Let $\widehat{\beta}_i\in\mtop^x(\sWH_{\lambda^i})$ be the highest weight vector of $\sW^y_{(\lambda^i)^T}$; it has $y$-degree $0$. We put $\beta'':=\widehat{\beta}_1\otimes\cdots\otimes\widehat{\beta}_k$ and consider the fusion filtration
\[
G'_n:=U(\gl_V[y])_{\leq n}\cdot\beta''\subset Q' .
\]
By Definition~\ref{def::fusion::graded}, $\bigoplus_nG'_n/G'_{n-1}$ is the fusion product
\[
\big[\sW^y_{(\lambda^1)^T}\star_y\cdots\star_y\sW^y_{(\lambda^k)^T}\big]_{\delta_1,\ldots,\delta_k}.
\] By Corollary~\ref{cor::fusion::Weyl::upper}, it is a quotient of $\sW^y_{\lambda^T}$. By Theorem~\ref{thm::Weyl::fusion}, both have dimension $\prod_{\text{rows }\lambda_j\text{ of }\lambda}\dim\Lambda^{\lambda_j}V$. Hence
\[
\bigoplus_nG'_n/G'_{n-1}\simeq\sW^y_{\lambda^T},
\]
with $\widehat{\beta''}$ corresponding to the highest weight vector.

\emph{Step 2: comparison of filtrations.}
The $y$-degree $0$ component of $\mtop^x(\sWH_{\lambda^i})$ is the Cartan component $U(\gl_V)\beta^{(i)}\subset\calF_0(T_{\lambda^i})$ (Step~2 of the proof of Theorem~\ref{thm::top::fusion} with $n=0$). Hence, under the identification~(ii), $\beta''$ corresponds to a highest weight vector $\beta$ of the Cartan component $U(\gl_V)\beta\subset\calF_0(T_\lambda)\cap\mtop^x(T_\lambda)$. In particular, $\beta''\in\calF'_0$, and therefore $G'_n\subset\calF'_n\cap Q'$.
Conversely, as in Step~2 of the proof of Theorem~\ref{thm::top::fusion}, it suffices to show that $(\calF'_n\cap Q')_\mu\subset G'_n$ for $\mu=\varepsilon_{a_1}+\ldots+\varepsilon_{a_N}$ with auxiliary indices $a_i,b_i,c_s$ chosen as there. By~(i), this space is spanned by the elements $m=\big(\prod_iE_{a_i1}x^{r_i}y^{t_i}\big)u$ with $\sum_ir_i=n(\lambda^T)$ and $\sum_it_i\leq n$.
The identity~\eqref{eq::gen::mon}, $m=\big(\prod_iE_{a_ib_i}y^{t_i}\big)w$ with $w=\big(\prod_iE_{b_i1}x^{r_i}\big)u$, holds in $T'$ with the same proof, because it only uses commutation relations in $\gl_V[x,y]$ and the vanishing $(E_{a_ib_i}y^t)\cdot u=0$ from~(i).
The vector $w$ lies in $\calF'_0\cap Q'$. By~(ii) and Step~2 of the proof of Theorem~\ref{thm::top::fusion}, we have $w\in U(\gl_V)\beta=U(\gl_V)\beta''=G'_0$. Hence $m$ belongs to $U(\gl_V[y])_{\leq n}G'_0=G'_n$. Therefore $\calF'_n\cap Q'=G'_n$ for all $n$, and item~(i) follows from Step~1.

\emph{Step 3: the bidegree $\mdeg(\Delta_\lambda)$.}
By item~(i), the component of bidegree $(n(\lambda^T),n(\lambda))$ of the module $\mathsf{P}$ is the component of $y$-degree $n(\lambda)$ of $\sW^y_{\lambda^T}$. This is the top $y$-degree component of $\sW^y_{\lambda^T}$ by Proposition~\ref{lem:CocyclicWeylProperties}\ref{item::socle:1}, and it is isomorphic to $\Lambda^NV$ by Proposition~\ref{prp::Weyl::top}.
The criterion for the monomials is proved word by word as Step~4 of the proof of Theorem~\ref{thm::top::fusion}. We use the identity~\eqref{eq::gen::mon} in $T'$, the description of $w$ from~(ii), and the identification of Step~1; under the latter, the vector $\widehat{\beta''}$ is the highest weight vector of $\sW^y_{\lambda^T}$. We also use Proposition~\ref{lem:CocyclicWeylProperties}\ref{item::socle:2} for $\sW^y_{\lambda^T}$. The permutation matrices fixing $e_1$ fix $\widehat{u}$, since they fix every $\widehat{v}_i$. This proves item~(ii).

\emph{Step 4: the surjection onto the Garsia--Haiman module.}
The vector $\widehat{u}$ has weight $N\varepsilon_1$ and generates $\mathsf{P}$. By Proposition~\ref{prp::proj::cov}, there is a surjection $\theta_{\mathsf{P}}\colon S^N(V[x,y])\twoheadrightarrow\mathsf{P}$ with $e_1^N\mapsto\widehat{u}$.
The proof of Proposition~\ref{prp::fusion->GH} uses only the following properties of the target of $\theta_\lambda$: its cyclic vector is fixed by the permutation matrices fixing $e_1$; its component of bidegree $\mdeg(\Delta_\lambda)$ is isomorphic to $\Lambda^NV$; and the criterion for the monomials $\big(\prod_iE_{a_i1}x^{r_i}y^{t_i}\big)\widehat{v}$ holds. By item~(ii), these properties hold for $\mathsf{P}$ and $\widehat{u}$. Hence the same argument shows that $\ker\theta_{\mathsf{P}}\subset\euJ^{\gl}_\lambda$, which gives the surjection $\bar{\theta}_{\mathsf{P}}$. It is an isomorphism in the bidegree $\mdeg(\Delta_\lambda)$ because both components are isomorphic to $\Lambda^NV$ (Proposition~\ref{prp::GH::gl}(iv)). This proves item~(iii).

\emph{Step 5: the dimension count.}
By~\eqref{eq::P::glV}, $\mathsf{P}\simeq V^{\otimes N}$ as a $\gl_V$-module. By Corollary~\ref{cor::N!::gl} (which relies on Haiman's Theorem~\ref{conj::n!}), $\mGH^{\gl}_\lambda\simeq V^{\otimes N}$ as a $\gl_V$-module as well. Hence the surjection $\bar{\theta}_{\mathsf{P}}$ is a surjection between spaces of the same dimension, i.e., an isomorphism.
By Corollary~\ref{cor::sWH::GH}(iii), $\bar{\theta}_\lambda\colon\sWH_\lambda\to\mGH^{\gl}_\lambda$ is an isomorphism as well. Both isomorphisms map the cyclic vectors $\widehat{u}$ and $\widehat{v}$ to the image of $e_1^N$, so they are compatible with the cyclic vectors.
Finally, by Proposition~\ref{prp::GH::gl}(iv), all bidegrees of $\mGH^{\gl}_\lambda$ are bounded componentwise by $\mdeg(\Delta_\lambda)$. Hence the component of bidegree $\mdeg(\Delta_\lambda)$ is the top total-degree component. This proves item~(iv).
\end{proof}

\begin{remark}
\begin{enumerate}
\item If every $\lambda^i=(\lambda_i)$ is a single row, then $\sWH_{\lambda^i}=\sW^x_{\lambda_i\varepsilon_1}$ is concentrated in $y$-degree $0$, and $\mathsf{P}$ is the module $\sWH_\lambda$ of Definition~\ref{def::sWH} (up to the order of the rows). Theorem~\ref{thm::sWH::associativity}(iv) shows, in particular, that $\sWH_\lambda$ does not depend on the order in which the rows are fused.
\item Items~(i)--(iii) of Theorem~\ref{thm::sWH::associativity} do not use Haiman's theorem. Only the last step, the equality of dimensions, uses it. Without Haiman's theorem one obtains a surjection $\mathsf{P}\twoheadrightarrow\mGH^{\gl}_\lambda$, and hence the bound $\dim(\mGH^{\gl}_\lambda)_{(1^N,0^{m-N})}\leq N!$, for every decomposition $\lambda=\lambda^1\cup\ldots\cup\lambda^k$.
\item Theorem~\ref{thm::sWH::associativity} verifies Conjecture~\ref{conj::fusion::associativity} for the iterated $y$-fusion products of the modules $\sW^x_{n\varepsilon_1}$.
\end{enumerate}
\end{remark}

\section{Cofusion products and sign duality}
\label{sec::cofusion}

In this section we develop the construction dual to the fusion product, the \emph{cofusion product}, and we show that the sign duality of \S\ref{sec::GH::sign} exchanges the two.
Throughout, $\dim V\geq N$, we write $\mathfrak{G}:=\gl_V[x,y]$, and for $z\in\{x,y\}$ we let $U(\mathfrak{G})^z_{\leq s}$ be the span of the products $(X_1f_1)\cdots(X_kf_k)$ with monomials $f_i$ such that $\sum_i\deg_zf_i\leq s$. We also put $U(\mathfrak{G})^z_{<s}:=U(\mathfrak{G})^z_{\leq s-1}$, so that $U(\mathfrak{G})^z_{<0}=0$.
For a $\mathfrak{G}$-module $M$ and $c\in\bC$ we denote by $M(c)$ its twist by the automorphism $z\mapsto z+c$. A $\gl_V[x]$-module is considered as a $\mathfrak{G}$-module on which $y$ acts by zero.
Recall that all modules are finite-dimensional and belong to the category $\RepGLz{V}{N}$ for $\bar{z}=(x,y)$.

\subsection{Cocyclic vectors and the cofusion filtration}
\label{sec::cofusion::def}

\begin{definition}
\label{def::cocyclic}
Let $M$ be a finite-dimensional $\mathfrak{G}$-module. Let $L\subset M$ be an irreducible $\gl_V$-submodule that occurs in $M$ with multiplicity one, and let $w\in L$ be its highest weight vector. Denote by $\pi_w\colon M\to\bC$ the composition of the $\gl_V$-isotypic projection $M\to L$ with the coordinate of $w$ in a weight basis of $L$. The vector $w$ is called \emph{cocyclic} if every nonzero $\mathfrak{G}$-submodule of $M$ contains $w$.
\end{definition}

A $\mathfrak{G}$-submodule $K\subset M$ is $\gl_V$-stable, so its $L$-isotypic part is either $0$ or $L$. Hence $w\in K$ if and only if $\pi_w(K)\neq0$.
In terms of the contragredient dual $M^\dagger$ of \S\ref{sec::GH::sign}, we have the following criterion.

\begin{lemma}
\label{lem::cocyclic::dual}
In the situation of Definition~\ref{def::cocyclic}, the functional $\pi_w\in M^\dagger$ is a highest weight vector of $M^\dagger$, and $w$ is cocyclic if and only if $\pi_w$ is a cyclic vector of $M^\dagger$.
\end{lemma}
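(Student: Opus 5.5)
We need to prove that $\pi_w$ is a highest weight vector of $M^\dagger$ and that $w$ is cocyclic iff $\pi_w$ generates $M^\dagger$.

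\textbf{Step 1: $\pi_w$ is a highest weight vector.} First I check that $\pi_w$ is a weight vector. The isotypic projection $M\to L$ is $\gl_V$-equivariant, and the coordinate of $w$ in a weight basis is a weight functional. Hence $\pi_w$ vanishes on every weight space of $M$ except the weight $\mathrm{wt}(w)$. Since the transposition fixes the Cartan subalgebra pointwise, $\pi_w$ has weight $\mathrm{wt}(w)$ in $M^\dagger$.

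Next, for $E_{ij}$ with $i<j$, we have $(E_{ij}\pi_w)(m)=\pi_w(E_{ji}m)$. The operator $E_{ji}$ is lowering. Decompose $m$ into $\gl_V$-isotypic parts; only the $L$-part matters for $\pi_w$. Inside $L$, the vector $E_{ji}m_L$ has no component along the highest weight vector $w$, because $w$ is not in the image of any lowering operator. So $E_{ij}\pi_w=0$, and $\pi_w$ is a highest weight vector.

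\textbf{Step 2: the key equivalence.} The plan is to use the standard duality between submodules of $M$ and submodules of $M^\dagger$: $K\mapsto K^\perp$ is an inclusion-reversing bijection, since $M$ is finite-dimensional and $u\mapsto u^T$ is an anti-involution. Let $A:=U(\mathfrak{G})\cdot\pi_w\subset M^\dagger$ and consider its annihilator $A^\perp\subset M$, which is a submodule. I claim that
\[
A^\perp=\{m\in M\colon \pi_w(U(\mathfrak{G})m)=0\},
\]
which is the largest submodule of $M$ on which $\pi_w$ vanishes. This follows from $(u\pi_w)(m)=\pi_w(u^Tm)$, together with the fact that $u\mapsto u^T$ is a bijection of $U(\mathfrak{G})$.

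\textbf{Step 3: conclude.} By the remark after Definition~\ref{def::cocyclic}, a submodule $K$ contains $w$ iff $\pi_w(K)\neq0$. So $w$ is cocyclic iff every nonzero submodule $K$ has $\pi_w(K)\neq0$. Equivalently, the largest submodule contained in $\ker\pi_w$ is zero, i.e. $A^\perp=0$. Since $M$ is finite-dimensional, $A^\perp=0$ holds iff $A=M^\dagger$, that is, iff $\pi_w$ is cyclic.

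The only delicate point is the highest-weight claim in Step 1, which relies on the multiplicity-one hypothesis. Without it, the isotypic projection, and hence $\pi_w$, would not be well defined.
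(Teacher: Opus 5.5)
Your proposal is correct and follows essentially the same route as the paper: via $(u\pi_w)(m)=\pi_w(u^Tm)$ you identify the annihilator of $U(\mathfrak{G})\cdot\pi_w$ with the largest submodule of $M$ contained in $\ker\pi_w$, and you get the highest weight property from a weight argument on the $L$-isotypic part of $E_{ji}m$. You also make explicit the finite-dimensionality step ($A^\perp=0\Rightarrow A=M^\dagger$), which the paper leaves implicit.
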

\begin{proof}
For $u\in U(\mathfrak{G})$ the annihilator in $M$ of $U(\mathfrak{G})\cdot\pi_w\subset M^\dagger$ is $\{m\colon\pi_w(u^Tm)=0\ \forall u\}$. As $u^T$ runs over all of $U(\mathfrak{G})$, this is the largest $\mathfrak{G}$-submodule of $M$ contained in $\ker\pi_w$. It vanishes if and only if every nonzero submodule $K$ satisfies $\pi_w(K)\neq0$, i.e., contains $w$. This proves the second claim.
For the first claim, let $i<j$ and $m\in M$. Then $(E_{ij}\pi_w)(m)=\pi_w(E_{ji}m)$. The functional $\pi_w$ factors through the projection onto the isotypic component of $L$, which commutes with $E_{ji}$. If $E_{ji}m_L$ had a nonzero coordinate along $w$, then the $L$-isotypic part $m_L$ of $m$ would have a weight strictly higher than the highest weight of $L$, which is impossible. Hence $E_{ij}\pi_w=0$.
\end{proof}

\begin{example}
\label{ex::cocyclic}
Let $M_0,\ldots,M_p$ be finite-dimensional graded $\mathfrak{G}$-modules with simple socles $S_i$, and let $c_0,\ldots,c_p$ be pairwise distinct. By the Chinese remainder theorem, $\mathfrak{G}$ acts on $M_{\mathbf{c}}:=\bigotimes_iM_i(c_i)$ through a direct sum of Lie algebras, one for each point $c_i$. Hence the socle of $M_{\mathbf{c}}$ is the simple module $\bigotimes_iS_i(c_i)$. Consequently, the highest weight vector of any irreducible $\gl_V$-submodule of $\bigotimes_iS_i(c_i)$ that occurs in $M_{\mathbf{c}}$ with multiplicity one is cocyclic. 
\end{example}

\begin{definition}
\label{def::cofusion}
Let $w\in M$ be a cocyclic vector and $z\in\{x,y\}$. The \emph{cofusion filtration} of $M$ with respect to $z$ and $w$ is the decreasing filtration
\[
G^t_z(M;w):=\big\{u\in M\colon\ \pi_w\big(U(\mathfrak{G})^z_{<t}\,u\big)=0\big\},\qquad t\geq0 .
\]
In words, $u\in G^t_z$ if $u$ cannot reach $w$ by elements of $U(\mathfrak{G})$ of $z$-degree less than $t$.
If $M=M_{\mathbf{c}}=\bigotimes_iM_i(c_i)$ is a tensor product of twisted modules at pairwise distinct points, the graded space
\[
M_0\circledast_zM_1\circledast_z\cdots\circledast_zM_p:=\mathrm{gr}_GM_{\mathbf{c}}=\bigoplus_{t\geq0}G^t/G^{t+1}
\]
(with $G^t/G^{t+1}$ placed in degree $-t$) is called the \emph{cofusion product}.
\end{definition}

The filtration $G=G_z(M;w)$ has the following properties.
\begin{enumerate}
\item $G^0=M$, and $G$ is decreasing.
\item Each $G^t$ is stable under $U(\mathfrak{G})^z_{\leq0}$; in particular it is a $\gl_V$-submodule.
\item If $f$ is a monomial of $z$-degree $k$, then $(Xf)\,G^t\subset G^{t-k}$ for $t\geq k$ (we put $G^s:=M$ for $s\leq0$).
\item $\bigcap_tG^t$ is the largest submodule of $M$ contained in $\ker\pi_w$, so it is zero by cocyclicity. Hence $G^t=0$ for $t\gg0$.
\end{enumerate}
By (iii), $\mathrm{gr}_GM$ is a graded $\mathfrak{G}$-module, in which an element of $z$-degree $k$ raises the degree by $k$. The class of $w$ lies in degree $0$, and $\mathrm{gr}_GM$ is cocyclic with this cocyclic vector; this follows from Lemma~\ref{lem::cofusion::dual} below, since $\mathrm{gr}_GM$ is dual to the fusion product $\mathrm{gr}_FM^\dagger$, which is cyclic. In particular, the socle of $\mathrm{gr}_GM$ is the irreducible $\gl_V$-module generated by $w$.

\begin{lemma}[Cofusion is dual to fusion]
\label{lem::cofusion::dual}
Let $w\in M$ be cocyclic and $\xi:=\pi_w\in M^\dagger$; by Lemma~\ref{lem::cocyclic::dual} it is a cyclic vector of $M^\dagger$. Let $F^\dagger_s:=F^z_s(M^\dagger;\xi)=U(\mathfrak{G})^z_{\leq s}\,\xi$ be the fusion filtration of $M^\dagger$. Then
\[
G^t_z(M;w)=\big(F^\dagger_{t-1}\big)^{\perp}\qquad\text{for all } t\geq0.
\]
Consequently, the pairing between $M$ and $M^\dagger$ induces perfect pairings $G^t/G^{t+1}\otimes F^\dagger_t/F^\dagger_{t-1}\to\bC$, and there is an isomorphism of graded $\mathfrak{G}$-modules
\[
\mathrm{gr}_{G_z}M\ \simeq\ \big(\mathrm{gr}_{F^z}M^\dagger\big)^{\dagger},
\]
under which the class of $w$ corresponds to the functional dual to the class of $\xi$ in degree $0$.
\end{lemma}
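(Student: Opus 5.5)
The plan is to unwind both definitions and compare them through the transpose anti-involution $u\mapsto u^T$. The pairing between $M$ and $M^\dagger$ is $\langle\eta,m\rangle=\eta(m)$, and $(u\cdot\eta)(m)=\eta(u^Tm)$.

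The first step is to check that the transposition $X\otimes f\mapsto X^T\otimes f$ does not touch the polynomial part $f$. Hence it preserves the $z$-degree filtration: $\big(U(\mathfrak{G})^z_{\leq s}\big)^T=U(\mathfrak{G})^z_{\leq s}$. With this in hand, for $u\in M$ we have $u\in(F^\dagger_{t-1})^\perp$ if and only if $(a\cdot\xi)(u)=\xi(a^Tu)=0$ for all $a\in U(\mathfrak{G})^z_{\leq t-1}$. That is exactly $\pi_w\big(U(\mathfrak{G})^z_{<t}u\big)=0$, i.e. $u\in G^t$. For $t=0$ both sides are $M$, since $F^\dagger_{-1}=0$. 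This gives the identity $G^t=(F^\dagger_{t-1})^\perp$.

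The second step gives the perfect pairings. Everything is finite-dimensional and the pairing $M\times M^\dagger\to\bC$ is perfect. Taking annihilators reverses inclusions, so $G^t/G^{t+1}=(F^\dagger_{t-1})^\perp/(F^\dagger_t)^\perp$, and this is canonically dual to $F^\dagger_t/F^\dagger_{t-1}$. Thus the pairing induces perfect pairings on the graded pieces, placed in degrees $-t$ and $t$ respectively.

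The third step, and the only place needing care, is equivariance of the induced map $\mathrm{gr}_GM\to(\mathrm{gr}_FM^\dagger)^\dagger$. Take $u\in G^t$, $\eta\in F^\dagger_{s}$, and $X f$ with $\deg_zf=k$ and $s+k=t$. We need $\langle \eta, Xf\cdot u\rangle$ to agree modulo lower terms with $\langle (Xf)^T\eta,u\rangle$. This holds because $(Xf)^T=X^Tf$ and $(X^Tf)\eta\in F^\dagger_{s+k}$, while $Xf\cdot u\in G^{t-k}$. The required well-definedness on graded pieces follows from properties (iii) of $G$ and the multiplicativity of the fusion filtration, since both filtrations shift by exactly $k$.

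Finally, we track the distinguished vectors. The class of $w$ lies in $G^0\setminus G^1$, because $\pi_w(w)=1$. Its pairing with the degree-$0$ class of $\xi$ is $\xi(w)=1$. So under the isomorphism it corresponds to the functional dual to $\widehat{\xi}$. The main, though mild, obstacle is this sign and degree bookkeeping in the equivariance check.
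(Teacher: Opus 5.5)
Your proposal is correct and follows essentially the same route as the paper: the transposition preserves $U(\mathfrak{G})^z_{<t}$, which turns the defining condition of $G^t$ into annihilation by $F^\dagger_{t-1}$, and the perfect pairings and the equivariance then come from $\langle (Xf)a,b\rangle=\langle a,(X^Tf)b\rangle$. One minor remark: in your third step this identity holds exactly, not just modulo lower filtration terms, so the ``sign and degree bookkeeping'' you flag requires no extra work.
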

\begin{proof}
By definition of the action on $M^\dagger$ we have $\pi_w(Xu)=(X^T\xi)(u)$ for $X\in U(\mathfrak{G})$ and $u\in M$. The transposition preserves $U(\mathfrak{G})^z_{<t}$. Hence $u\in G^t$ if and only if $u$ is annihilated by $U(\mathfrak{G})^z_{<t}\xi=F^\dagger_{t-1}$. The pairing therefore induces perfect pairings $G^t/G^{t+1}\otimes F^\dagger_t/F^\dagger_{t-1}\to\bC$, which are compatible with the actions of $\mathfrak{G}$ in the sense that $\langle(Xf)a,b\rangle=\langle a,(X^Tf)b\rangle$.
\end{proof}

In particular, whenever the fusion product of the duals does not depend on the points $c_i$, neither does the cofusion product.

\subsection{Sign duality exchanges fusion and cofusion}
\label{sec::cofusion::sign}

For a $\mathfrak{G}$-module $M$ in $\RepGLz{V}{N}$ we write
\[
M^{\circ}:=\bomega(M)^{\dagger}
\]
for its contravariant sign dual (see Lemma~\ref{lem::sign::properties}(v)). It is an exact contravariant involution, and by Lemma~\ref{lem::sign::properties}(iii),(iv),(v) it satisfies
\[
\big(M(c)\big)^{\circ}\simeq M^{\circ}(c),\qquad \big(M\otimes M'\big)^{\circ}\simeq M^{\circ}\otimes M'^{\circ}.
\]

Fix one of the variables $z\in\{x,y\}$, and let $U(\gl_V[\bar{z}])^{z}_{\leq s}$ be the span of the products $(X_1f_1)\cdots(X_kf_k)$ with monomials $f_i$ such that $\sum_i\deg_zf_i\leq s$. For a cyclic $\gl_V[\bar{z}]$-module $M$ with a cyclic vector $v$ we denote
\[
F_s^{z}(M;v):=U(\gl_V[\bar{z}])^{z}_{\leq s}\cdot v,\qquad F_{-1}^z(M;v)=0,
\]
the \emph{fusion filtration} of $M$ with respect to $z$ (cf.~\eqref{eq::fusion::filtration}). Each $F^z_s$ is a $\gl_V$-submodule, since $U(\gl_V)\subset U(\gl_V[\bar{z}])^z_{\leq0}$.

\begin{lemma}
	\label{lem::sign::fusion}
	Let $M\in\RepGLz{V}{N}$ be a cyclic $\gl_V[\bar{z}]$-module with a cyclic vector $v$. Assume that $L:=U(\gl_V)v$ is an irreducible $\gl_V$-module $V_\nu$ that occurs in $M$ with multiplicity one. Let $L':=\bomega(L)\simeq V_{\nu^T}\subset\bomega(M)$ and let $u\in L'$ be a highest weight vector. Then:
	\begin{enumerate}
		\item the $\gl_V[\bar{z}]$-module $\bomega(M)$ is cyclic with the cyclic vector $u$;
		\item $F_s^z\big(\bomega(M);u\big)=\bomega\big(F_s^z(M;v)\big)$ for all $s$, where on the right-hand side $\bomega$ is the classical sign duality functor applied to the $\gl_V$-submodule $F_s^z(M;v)\subset M$;
		\item consequently, the associated graded modules satisfy $\mathrm{gr}_{F^z}\big(\bomega(M)\big)\simeq\bomega\big(\mathrm{gr}_{F^z}M\big)$ as graded $\gl_V[\bar{z}]$-modules, and the cyclic vectors correspond to each other.
	\end{enumerate}
\end{lemma}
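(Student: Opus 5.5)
The whole argument rests on one fact: $\bomega$ is an exact autoequivalence of $\RepGLz{V}{N}$. It carries every $\gl_V[\bar{z}]$-submodule to a submodule and respects sums and images. It is also compatible with the action maps, in the following sense. For a fixed element $X f\in\gl_V[\bar{z}]$, the action map $\gl_V[\bar{z}]\otimes M\to M$ does not itself live in the category. So I will instead use the action of the current Schur algebra $\mathcal{S}(V[\bar{z}],N)$, which by Lemma~\ref{lem::current::Schur} is the image of $U(\gl_V[\bar{z}])$.

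The plan is to work through the Schur--Weyl dictionary. Put $R:=\WS(M)$. Then $\bomega(M)=\SW(\mathrm{sgn}\otimes R)$, and a subspace of the form $U(\gl_V[\bar{z}])^z_{\leq s}\cdot(\text{$\gl_V$-submodule})$ corresponds under $\WS$ to $(\mathcal{A}_N)^z_{\leq s}\cdot(\text{$\bS_N$-submodule of }R)$. Here $(\mathcal{A}_N)^z_{\leq s}$ denotes $\bC[\bS_N]$ times the polynomials of $z$-degree at most $s$.

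The key claim is that $\pi\,U(\gl_V[\bar{z}])^z_{\leq s}\,\pi$ maps onto $(\mathcal{A}_N)^z_{\leq s}$ under Lemma~\ref{lem::weight::idempotent}(iii), and that $F^z_s(M;v)=U^z_{\leq s}\pi\, U(\gl_V)L$. Granting this, there is a lattice identity
\[
\WS\big(F^z_s(M;v)\big)=(\mathcal{A}_N)^z_{\leq s}\cdot\WS(L),
\]
since $F^z_s$ is generated over $U^z_{\leq s}$ by the $\gl_V$-module $L$, and $L=\mathcal{S}(V,N)\pi L$ by fullness (Lemma~\ref{lem::Schur::full}).

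To justify the key claim, I would check via the polarization argument in Lemma~\ref{lem::current::Schur} that the image of $U(\gl_V[\bar{z}])^z_{\leq s}$ in $\mathcal{S}(V[\bar{z}],N)$ equals the span of symmetrized tensors of total $z$-degree at most $s$. Newton's identities preserve the filtration, because $\Delta(b^k)$ has the same $z$-degree as $b^{\otimes k}$ placed in commuting factors. Compressing by $\pi$ then gives $(\mathcal{A}_N)^z_{\leq s}$.

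The twist $\mathrm{sgn}\otimes-$ leaves both the variables and the filtration $(\mathcal{A}_N)^z_{\leq s}$ unchanged. It only twists the $\bS_N$-action, and this preserves $\bS_N$-stable subspaces. Hence
\[
(\mathcal{A}_N)^z_{\leq s}\cdot(\mathrm{sgn}\otimes\WS(L))=\mathrm{sgn}\otimes\big((\mathcal{A}_N)^z_{\leq s}\WS(L)\big).
\]
Applying $\SW$ yields
\[
U^z_{\leq s}\cdot\bomega(L)=\bomega\big(F^z_s(M;v)\big).
\]
Since $\bomega(L)\simeq V_{\nu^T}$ is irreducible, it equals $U(\gl_V)u$, so the left side is $F^z_s(\bomega(M);u)$. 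This is item~(ii).

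Item~(i) is the case $s\gg0$, using $\bomega(M)=\bigcup_s\bomega(F^z_s)$. Multiplicity one of $L$ guarantees that $L'$ is a well-defined $\gl_V$-submodule of $\bomega(M)$, namely the isotypic component of $V_{\nu^T}$. For item~(iii), exactness of $\bomega$ gives
\[
\bomega(F_s)/\bomega(F_{s-1})\simeq\bomega(F_s/F_{s-1}).
\]
These identifications are compatible with the action of degree-$k$ elements $F_s\to F_{s+k}/F_{s+k-1}$, because that action is induced by $\mathcal{S}$-elements of filtration degree $k$ and the correspondence above is functorial. Finally, the class of $v$ goes to the class of $u$.

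The main obstacle is the filtered compatibility of the Schur--Weyl dictionary. Concretely, one must show that $\WS$ identifies $U^z_{\leq s}K$ with $(\mathcal{A}_N)^z_{\leq s}\WS(K)$ for $\gl_V$-submodules $K$. Equivalently, one needs $U^z_{\leq s}K=U^z_{\leq s}\pi K$, together with the identification of $\pi U^z_{\leq s}\pi$ above. I would first attempt this via the filtered version of the polarization/Newton argument, with a degree bookkeeping check.
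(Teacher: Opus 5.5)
Your proof is correct, and for items (ii) and (iii) it is the paper's argument. You pass to $\WS$ and use fullness of $\pi$ (so $L=\mathcal{S}(V,N)\pi L$) to get $\pi F^z_s(M;v)=\pi\mathcal{S}_{\le s}\pi\cdot\pi L=(\mathcal{A}_N)_{\le s}\,\pi L$, where $\mathcal{S}_{\le s}$ is the image of $U(\gl_V[\bar z])^z_{\le s}$. You then observe that the twist by $\mathrm{sgn}$ does not change $(\mathcal{A}_N)_{\le s}$-spans of $\bS_N$-stable subspaces, and transport back through the lattice correspondence.

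The genuine difference is item (i). You deduce the cyclicity of $u$ from (ii): $F^z_s(M;v)=M$ for $s\gg0$, and the classical $\bomega$ is a lattice isomorphism between $\gl_V$-submodules of $M$ and of $\bomega(M)$, so $F^z_s(\bomega(M);u)=\bomega(M)$. The paper proves (i) independently of the filtration. There $M$ is a quotient of the projective cover $P_\nu$, via the map extending $L\hookrightarrow M$, and $\bomega(P_\nu)\simeq P_{\nu^T}$ is generated by its degree-zero component, whose image is $L'$. Your route is more economical: no projective covers, and (i) becomes the case $s\gg0$ of (ii). The paper's route makes (i) a structural statement that holds before any filtered analysis.

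Two small points.

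First, the step you call the main obstacle needs no filtered polarization argument. The map $U(\gl_V[\bar z])\to\mathcal{S}(V[\bar z],N)$ is a surjection of $z$-graded spaces, hence surjective in each degree. So the image of $U(\gl_V[\bar z])^z_{\le s}$ is exactly the part of $z$-degree at most $s$. Compressing by the degree-zero idempotent $\pi$ then gives $(\mathcal{A}_N)_{\le s}$, because the isomorphism of Lemma~\ref{lem::weight::idempotent}(iii) is graded. Note that the polarization identity in Lemma~\ref{lem::current::Schur} uses the inhomogeneous tensors $(\sum_{i\in I}b_i)^{\otimes N}$, whose high-degree parts cancel only in the total sum. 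It would therefore respect the filtration only in its multilinear form.

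Second, for (iii), functoriality of the classical $\bomega$ alone is not enough, because the action of $\mathcal{S}(V[\bar z],N)$ between graded pieces is not by $\gl_V$-module maps. The clean statement uses (ii): the filtrations correspond under $\WS$, so
$\WS(\mathrm{gr}_{F^z}\bomega(M))=\mathrm{gr}(\mathrm{sgn}\otimes R)=\mathrm{sgn}\otimes\mathrm{gr}\,R=\WS(\bomega(\mathrm{gr}_{F^z}M))$
as graded $\mathcal{A}_N$-modules. Then apply $\SW$, which is how the paper concludes.
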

\begin{proof}
	Put $R:=\WS(M)$, $\pi$ for the projector onto the weight space of weight $(1^N,0^{m-N})$, and $\mathcal{S}:=\mathcal{S}(V[\bar{z}],N)$. Let $\mathcal{S}_{\leq s}\subset\mathcal{S}$ be the image of $U(\gl_V[\bar{z}])^z_{\leq s}$; this is the part of $\mathcal{S}=\big((\mathrm{End}(V)\otimes\bC[\bar{z}])^{\otimes N}\big)^{\bS_N}$ of $z$-degree at most $s$, by the proof of Lemma~\ref{lem::current::Schur}. Under the isomorphism $\pi\mathcal{S}\pi\simeq\mathcal{A}_N$ of Lemma~\ref{lem::weight::idempotent}(iii) we have $\pi\mathcal{S}_{\leq s}\pi=(\mathcal{A}_N)_{\leq s}$, the polynomials of $z$-degree at most $s$ (with $\bS_N$).
	
	(i) The module $M$ is a quotient of the projective cover $P_\nu$ of $V_\nu$, because the homomorphism $P_\nu\to M$ extending $L\hookrightarrow M$ is surjective as its image contains the cyclic vector $v$ (Proposition~\ref{prop::characters::free}). Applying the exact functor $\bomega$ and using $\bomega(P_\nu)\simeq P_{\nu^T}$, we see that $\bomega(M)$ is a quotient of $P_{\nu^T}$, which is generated by its degree zero component $V_{\nu^T}$. Hence $\bomega(M)$ is generated by the image of this component, which is $L'$, and so by $u$.
	
	(ii) We claim that $\pi F^z_s(M;v)=(\mathcal{A}_N)_{\leq s}\cdot\pi L$. The inclusion $\supset$ is clear. For the converse, since the idempotent $\pi$ is full (Lemma~\ref{lem::Schur::full}), we can write $1=\sum_kx_k\pi y_k$ with $x_k,y_k\in\mathcal{S}(V,N)$. Then, for $a\in\mathcal{S}_{\leq s}$,
	\[
	\pi a v=\sum_k(\pi ax_k\pi)(\pi y_kv),\qquad \pi ax_k\pi\in\pi\mathcal{S}_{\leq s}\pi=(\mathcal{A}_N)_{\leq s},\quad \pi y_kv\in\pi L,
	\]
	because $\mathcal{S}_{\leq s}\mathcal{S}(V,N)\subset\mathcal{S}_{\leq s}$ and $y_kv\in L$.
	Since $\WS(\bomega(M))=\mathrm{sgn}\otimes R$ and $\WS(L')=\mathrm{sgn}\otimes\pi L$, the same computation for $\bomega(M)$ gives $\pi F_s^z(\bomega(M);u)=(\mathcal{A}_N)_{\leq s}\cdot(\mathrm{sgn}\otimes\pi L)=\mathrm{sgn}\otimes\pi F^z_s(M;v)$, because the polynomials act on $\mathrm{sgn}\otimes R$ as on $R$. The $\gl_V$-submodules of $M$ and $\bomega(M)$ are in bijection with the $\bS_N$-stable subspaces of $R$ and of $\mathrm{sgn}\otimes R$, and under this bijection the classical functor $\bomega$ corresponds to the tensoring with $\mathrm{sgn}$. This proves~(ii).
	
	(iii) The graded module $\mathrm{gr}_{F^z}M$ is the Schur--Weyl dual of the graded $\mathcal{A}_N$-module $\bigoplus_s(\mathcal{A}_N)_{\leq s}\pi L\big/(\mathcal{A}_N)_{\leq s-1}\pi L$, and similarly for $\bomega(M)$. These two $\mathcal{A}_N$-modules differ by the tensor product with $\mathrm{sgn}$.
\end{proof}

\begin{proposition}
\label{prp::sign::cofusion}
Let $M\in\RepGLz{V}{N}$ be a cyclic $\mathfrak{G}$-module with a cyclic vector $v$, such that $L:=U(\gl_V)v$ is an irreducible $\gl_V$-module $V_\nu$ occurring in $M$ with multiplicity one. Then:
\begin{enumerate}
\item the module $M^\circ$ contains the irreducible $\gl_V$-module $V_{\nu^T}$ with multiplicity one, and its highest weight vector $w$ is cocyclic;
\item for $z\in\{x,y\}$ there is an isomorphism of graded $\mathfrak{G}$-modules
\[
\mathrm{gr}_{G_z}\big(M^{\circ};w\big)\ \simeq\ \big(\mathrm{gr}_{F^z}(M;v)\big)^{\circ}.
\]
\end{enumerate}
In particular, if $M=M_0(c_0)\otimes\cdots\otimes M_p(c_p)$ is a tensor product of twisted modules with the cyclic vector $v=v_0\otimes\cdots\otimes v_p$, then the contravariant sign dual of the fusion product $M_0\star_zM_1\star_z\cdots\star_zM_p$ is the cofusion product $M_0^{\circ}\circledast_zM_1^\circ\circledast_z\cdots\circledast_zM_p^\circ$ with respect to $w$.
\end{proposition}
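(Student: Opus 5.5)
The plan is to combine the two lemmas that immediately precede the statement: Lemma~\ref{lem::sign::fusion} (the sign duality $\bomega$ carries fusion filtrations to fusion filtrations) and Lemma~\ref{lem::cofusion::dual} (cofusion filtrations are orthogonal complements of fusion filtrations of the contragredient dual). Write $M':=\bomega(M)$. By Lemma~\ref{lem::sign::fusion}(i), $M'$ is cyclic, and its cyclic vector $u$ is the highest weight vector of $L'=\bomega(L)\simeq V_{\nu^T}$. Since $\bomega$ is an exact autoequivalence sending simples $V_\mu$ to $V_{\mu^T}$ (Lemma~\ref{lem::sign::properties}(i)), the multiplicity of $V_{\nu^T}$ in $M'$ equals the multiplicity of $V_\nu$ in $M$, which is one. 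The contragredient dual $\dagger$ does not change $\gl_V$-characters, so $M^\circ=(M')^\dagger$ also contains $V_{\nu^T}$ with multiplicity one.

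Next I identify $w$ with $\pi_w$ for the dual pair $(M^\circ,M')$. The highest weight vector $w\in V_{\nu^T}\subset M^\circ$ is, up to a scalar, the functional on $M'$ dual to $u$: it vanishes on the other isotypic components and on the other weight vectors of $L'$. I then apply Lemma~\ref{lem::cocyclic::dual} to the module $M^\circ$, whose contragredient dual is $(M^\circ)^\dagger\simeq M'$. Under this identification $\pi_w$ corresponds (up to scalar) to $u$, which is cyclic. So $w$ is cocyclic, proving~(i). This bookkeeping, namely checking that $\pi_w$ is really $u$ and not merely some vector in $L'$, is the main point needing care. It works because $\pi_w$ is a highest weight vector of weight $\nu^T$, and that weight space in $M'$ is one-dimensional by multiplicity one.

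For (ii), Lemma~\ref{lem::cofusion::dual} gives $\mathrm{gr}_{G_z}(M^\circ;w)\simeq(\mathrm{gr}_{F^z}(M';u))^\dagger$. Lemma~\ref{lem::sign::fusion}(iii) gives $\mathrm{gr}_{F^z}(M';u)\simeq\bomega(\mathrm{gr}_{F^z}(M;v))$ compatibly with cyclic vectors. Composing the two yields $\bomega(\mathrm{gr}_{F^z}M)^\dagger=(\mathrm{gr}_{F^z}M)^\circ$. I also need to check that the grading conventions match: the cofusion piece $G^t/G^{t+1}$ sits in degree $-t$, and $\dagger$ negates degrees, so the two sides agree.

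The final claim is obtained by specializing to $M=\bigotimes_i M_i(c_i)$. By the compatibilities $(M(c))^\circ\simeq M^\circ(c)$ and $(M\otimes M')^\circ\simeq M^\circ\otimes M'^\circ$ recorded before the statement, we get $M^\circ\simeq\bigotimes_i M_i^\circ(c_i)$. Hence $\mathrm{gr}_{G_z}$ of it is by definition the cofusion product $M_0^\circ\circledast_z\cdots\circledast_z M_p^\circ$. The hypothesis that $U(\gl_V)v$ is irreducible with multiplicity one holds when $v=\bigotimes v_i$ consists of highest weight vectors, since $U(\gl_V)v$ is then the Cartan component. I take this as implicit in the statement, as in all applications, where the $v_i$ are highest weight vectors.
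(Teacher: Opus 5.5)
Your proposal is correct and follows the paper's proof step by step. Lemma~\ref{lem::sign::fusion}(i) makes $\bomega(M)$ cyclic with cyclic vector $u$. You identify $\pi_w\in(M^\circ)^\dagger\simeq\bomega(M)$ with a multiple of $u$, so Lemma~\ref{lem::cocyclic::dual} gives cocyclicity. Lemma~\ref{lem::cofusion::dual} composed with Lemma~\ref{lem::sign::fusion}(iii) gives (ii), and the compatibility of $\circ$ with twists and tensor products gives the last claim.

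One small precision: multiplicity one makes the space of \emph{highest weight vectors} of weight $\nu^T$ in $\bomega(M)$ one-dimensional, not the full $\nu^T$-weight space, which can be larger. That is exactly what the identification $\pi_w\in\bC^\times u$ needs.
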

\begin{proof}
By Lemma~\ref{lem::sign::fusion}(i), the module $\bomega(M)$ is cyclic with a cyclic vector $u$, a highest weight vector of the unique copy of $V_{\nu^T}$. Put $N:=M^\circ=\bomega(M)^\dagger$. Then $N\simeq\bomega(M)$ as $\gl_V$-modules, which proves the first part of~(i). Let $w\in N$ be a highest weight vector of $V_{\nu^T}$. Under the canonical isomorphism $N^\dagger\simeq\bomega(M)$ the functional $\pi_w$ goes to a highest weight vector of $V_{\nu^T}$ (Lemma~\ref{lem::cocyclic::dual}), i.e., to a multiple of $u$. As $u$ is a cyclic vector, $\pi_w$ is a cyclic vector of $N^\dagger$, and $w$ is cocyclic by Lemma~\ref{lem::cocyclic::dual}.
By Lemma~\ref{lem::cofusion::dual},
\[
\mathrm{gr}_{G_z}(N;w)\simeq\big(\mathrm{gr}_{F^z}(N^\dagger;\pi_w)\big)^\dagger=\big(\mathrm{gr}_{F^z}(\bomega(M);u)\big)^\dagger\simeq\big(\bomega(\mathrm{gr}_{F^z}(M;v))\big)^\dagger,
\]
where the last isomorphism is Lemma~\ref{lem::sign::fusion}(iii). This proves~(ii). The last claim follows from~(ii) and from the compatibility of $\circ$ with twists and tensor products.
\end{proof}

\subsection{Sign duals of local Weyl modules}
\label{sec::cofusion::Weyl}

Let $\lambda\vdash N$ be a Young diagram and let $\lambda^T=(\lambda^T_0\geq\lambda^T_1\geq\ldots)$ be the transposed diagram; thus $\lambda=\sum_s\omega_{\lambda^T_s}$ as a weight.

\begin{theorem}
\label{thm::sign::Weyl}
\begin{enumerate}
\item In $S^{\lambda^T_0}V\otimes S^{\lambda^T_1}V\otimes\cdots$ the module $V_{\lambda^T}$ occurs with multiplicity one, and $\lambda^T$ is the minimal diagram occurring in this tensor product in the dominance order. Its highest weight vector $w_{\lambda^T}$ is cocyclic in $S^{\lambda^T_0}V(c_0)\otimes S^{\lambda^T_1}V(c_1)\otimes\cdots$ for pairwise distinct $c_s$.
\item There is an isomorphism of graded $\gl_V[x]$-modules
\[
\bomega\big(\sW^{x}_{\lambda}\big)^{\dagger}\ \simeq\ S^{\lambda^T_0}V\circledast_xS^{\lambda^T_1}V\circledast_x\cdots,
\]
where the cofusion product is taken with respect to $w_{\lambda^T}$. In particular, the cofusion product does not depend on the points $c_s$, it has the simple socle $V_{\lambda^T}$, and its $\gl_V$-character is that of $\bigotimes_sS^{\lambda^T_s}V$.
\item For the one-row diagram $\lambda=(h)$ we get
\[
\bomega\big(\sW^{x}_{h\varepsilon_1}\big)^\dagger\simeq\underbrace{V\circledast_x\cdots\circledast_xV}_{h}\ \simeq\ \sW^x_{h\varepsilon_1}\big\langle-\tbinom{h}{2}\big\rangle,
\]
where the cofusion product is taken with respect to the highest weight vector $v_{\omega_h}$ of $\Lambda^hV$.
\end{enumerate}
\end{theorem}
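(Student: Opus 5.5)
The natural route is to apply Proposition~\ref{prp::sign::cofusion} to the local Weyl module realized as a fusion product of fundamental representations. By Theorem~\ref{thm::Weyl::fusion}, $\sW^x_\lambda\simeq\Lambda^{\lambda^T_0}V\star_x\Lambda^{\lambda^T_1}V\star_x\cdots$ for all pairwise distinct points. This is the associated graded of $M:=\bigotimes_s\Lambda^{\lambda^T_s}V(c_s)$ for its fusion filtration with respect to $v=\bigotimes_s v_{\omega_{\lambda^T_s}}$. Here $L=U(\gl_V)v$ is the Cartan component $V_\lambda$, which occurs in $\bigotimes_s\Lambda^{\lambda^T_s}V$ with multiplicity one, since $\lambda=\sum_s\omega_{\lambda^T_s}$ is the highest weight. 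The hypotheses of Proposition~\ref{prp::sign::cofusion} therefore hold, and it remains to identify $M^\circ$.

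\textbf{Step 1: computing $M^\circ$.} By Lemma~\ref{lem::sign::properties}(ii),(iv), $\bomega(\Lambda^kV)\simeq S^kV$ in multidegree zero, and $\bomega$ is compatible with tensor products and twists. Evaluation modules are self-dual under $\dagger$ up to the twist, since the transposition fixes the Cartan subalgebra and $S^kV^\dagger\simeq S^kV$. Hence
\[
M^\circ\simeq\bigotimes_s S^{\lambda^T_s}V(c_s).
\]
I would check directly that $\ev_c(S^kV)^\dagger\simeq\ev_c(S^kV)$: the transposition commutes with evaluation, so this reduces to the classical isomorphism $(S^kV)^*\simeq S^kV$ twisted by transpose.

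\textbf{Step 2: item~(i).} Proposition~\ref{prp::sign::cofusion}(i) then gives that $V_{\nu^T}=V_{\lambda^T}$ occurs in $M^\circ$ with multiplicity one and that its highest weight vector $w_{\lambda^T}$ is cocyclic. The minimality statement is the $\bomega$-image of the maximality of $\lambda$ among the components of $\bigotimes_s\Lambda^{\lambda^T_s}V$. Indeed, $\bomega$ acts on characters by $\omega$ and reverses the dominance order through $\mu\mapsto\mu^T$. Alternatively, the Pieri/Kostka description $h_{\lambda^T}=\sum_\mu K_{\mu\lambda^T}s_\mu$ shows directly that $K_{\mu\lambda^T}\neq0$ forces $\mu\geq\lambda^T$, and that $K_{\lambda^T\lambda^T}=1$.

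\textbf{Step 3: items~(ii) and~(iii).} Proposition~\ref{prp::sign::cofusion}(ii) with $z=x$ yields
\[
\mathrm{gr}_{G_x}(M^\circ;w)\simeq\big(\mathrm{gr}_{F^x}M\big)^\circ\simeq\bomega(\sW^x_\lambda)^\dagger,
\]
which is item~(ii). Independence of the points follows because the left-hand side of Theorem~\ref{thm::Weyl::fusion} is independent of the $c_s$. The socle is simple, being the image under the exact contravariant involution $\circ$ of the simple head $V_\lambda$ of $\sW^x_\lambda$. The character statement follows from $\bomega$ acting by $\omega$ and $\dagger$ preserving $\gl_V$-characters.

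For item~(iii), take $\lambda=(h)$, so that $\lambda^T=(1^h)$ and the cofusion product is $V\circledast_x\cdots\circledast_x V$ with respect to the highest weight vector of $V_{(1^h)}=\Lambda^hV$. For the second isomorphism, apply the same argument to the fusion presentation $\sW^x_{h\varepsilon_1}\simeq V\star_x\cdots\star_x V$, which gives $\bomega(\sW^x_{(1^h)})^\dagger\simeq V^{\circ}\circledast\cdots$. The remaining identification I would obtain from Theorem~\ref{thm::GH::self::dual} and Corollary~\ref{cor::GH::self::sign}(i) for the one-row diagram. By Corollary~\ref{cor::sWH::GH}(iii), $\mGH^{\gl}_{(h)}\simeq\sWH_{(h)}=\sW^x_{h\varepsilon_1}$ and $\mdeg(\Delta_{(h)})=(\binom h2,0)$, so
\[
\bomega(\sW^x_{h\varepsilon_1})^\dagger\simeq\sW^x_{h\varepsilon_1}\big\langle-\tbinom h2\big\rangle.
\]
This last point is the main obstacle, since it routes through Haiman's theorem only in the trivial one-row case (the coinvariant algebra, Example~\ref{ex::GH::row}). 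Failing that, I would argue directly: the coinvariant algebra is Gorenstein with socle the Vandermonde, so the pairing $B$ from the proof of Theorem~\ref{thm::GH::self::dual} applies verbatim.
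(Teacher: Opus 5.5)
Your proposal is correct and is essentially the paper's proof: apply Proposition~\ref{prp::sign::cofusion} to $M=\bigotimes_s\Lambda^{\lambda^T_s}V(c_s)$ with $\mathrm{gr}_{F^x}M\simeq\sW^x_\lambda$, identify $M^\circ\simeq\bigotimes_sS^{\lambda^T_s}V(c_s)$, and obtain the second isomorphism in (iii) from Corollary~\ref{cor::GH::self::sign}(i) via $\mGH^{\gl}_{(h)}\simeq\sW^x_{h\varepsilon_1}$. The paper gets cocyclicity in (i) from Example~\ref{ex::cocyclic} rather than from Proposition~\ref{prp::sign::cofusion}(i), which is only a cosmetic difference.

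You should drop the garbled line about $\bomega(\sW^x_{(1^h)})^\dagger$. Your ``main obstacle'' is also not one: Theorem~\ref{thm::GH::self::dual} never uses Haiman's theorem, and $\mGH^{\gl}_{(h)}\simeq\sW^x_{h\varepsilon_1}$ only needs the classical fact that the coinvariant algebra has dimension $h!$. So your fallback via the pairing $B$ targets the wrong step, though it is not needed.
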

\begin{proof}
(i) By the Pieri rule, the multiplicity of $V_\mu$ in $\bigotimes_sS^{\lambda^T_s}V$ is the Kostka number $K_{\mu,\lambda^T}$. It vanishes unless $\mu\geq\lambda^T$ in the dominance order, and $K_{\lambda^T\lambda^T}=1$. The tensor product of the simple evaluation modules at pairwise distinct points is simple, so Example~\ref{ex::cocyclic} applies.

(ii) Let $M:=\Lambda^{\lambda^T_0}V(c_0)\otimes\Lambda^{\lambda^T_1}V(c_1)\otimes\cdots$ with the cyclic vector $v$, the tensor product of the highest weight vectors; it is cyclic by Corollary~\ref{cor::ev::cyclic}. The submodule $U(\gl_V)v$ is the Cartan component $V_\lambda$, which occurs in $M$ with multiplicity one. By Theorem~\ref{thm::Weyl::fusion}, $\mathrm{gr}_{F^x}M\simeq\sW^x_\lambda$. Since $\bomega(\Lambda^kV)=S^kV$ and $S^kV$ is isomorphic to its contragredient dual, $M^\circ\simeq\bigotimes_sS^{\lambda^T_s}V(c_s)$. Now apply Proposition~\ref{prp::sign::cofusion}.
The cofusion product is cocyclic with the cocyclic vector $w_{\lambda^T}$ (see the properties of the cofusion filtration in \S\ref{sec::cofusion::def}), hence its socle is the simple module $V_{\lambda^T}$. Its $\gl_V$-character is that of $\bigotimes_sS^{\lambda^T_s}V$, because passing to an associated graded space does not change the $\gl_V$-character.

(iii) The first isomorphism is~(ii) for $\lambda=(h)$, since $\lambda^T=(1^h)$. For the second one, recall that $\mGH^{\gl}_{(h)}\simeq\sW^x_{h\varepsilon_1}$ (Example~\ref{ex::sWH::row::column} and Corollary~\ref{cor::sWH::GH}). The statement is Corollary~\ref{cor::GH::self::sign}(i) for $\lambda=(h)$, for which $\mdeg(\Delta_{(h)})=(\binom{h}{2},0)$.
\end{proof}

\subsection{Coincidence of the fusion and cofusion filtrations}
\label{sec::cofusion::coincide}

In this subsection we use Haiman's theorem (Theorem~\ref{conj::n!}) through its consequence $\sWH_\lambda\simeq\mGH^{\gl}_\lambda\simeq V^{\otimes N}$ as $\gl_V$-modules (Corollary~\ref{cor::sWH::GH}).
We consider fusion and cofusion with respect to $y$, and we omit $y$ from the notation.

Let $\lambda^1,\ldots,\lambda^k$ be Young diagrams with $|\lambda^i|=N_i$, $N=\sum_iN_i$, and let $\lambda=\lambda^1\cup\ldots\cup\lambda^k$ be the diagram whose rows are all the rows of the $\lambda^i$. Put $d:=n(\lambda)$, the $y$-degree of $\Delta_\lambda$. For pairwise distinct numbers $c_1,\ldots,c_k$ consider
\[
M:=\mGH^{\gl}_{\lambda^1}(c_1)\otimes\cdots\otimes\mGH^{\gl}_{\lambda^k}(c_k),
\]
with the cyclic vector $v$, the tensor product of the images of $e_1^{N_i}$ (cyclic by Lemma~\ref{lem::fusion::graded::cyclic}). Let $F_s:=F^y_s(M;v)$ be the fusion filtration, and let $w\in M$ be a highest weight vector of $\Lambda^NV$.

\begin{theorem}
\label{thm::fusion::cofusion}
\begin{enumerate}
\item The module $M$ is isomorphic to $V^{\otimes N}$ as a $\gl_V$-module, the vector $w$ is cocyclic, and $M\simeq M^{\circ}$ with $w$ corresponding to the cocyclic vector of Proposition~\ref{prp::sign::cofusion}.
\item The fusion filtration and the cofusion filtration of $M$ coincide up to reindexing:
\[
F_s=G^{\,d-s}_y(M;w)\qquad\text{for all }s .
\]
\item There are isomorphisms of graded modules
\[
\mGH^{\gl}_{\lambda^1}\star_y\cdots\star_y\mGH^{\gl}_{\lambda^k}\ \simeq\ \mGH^{\gl}_{\lambda}\ \simeq\ \mGH^{\gl}_{\lambda^1}\circledast_y\cdots\circledast_y\mGH^{\gl}_{\lambda^k}\ \big\langle\mdeg(\Delta_\lambda)\big\rangle ,
\]
where the cofusion product is taken with respect to $w$.
\end{enumerate}
\end{theorem}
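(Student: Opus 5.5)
We first establish the identification of $\mathrm{gr}_F M$. By Corollary~\ref{cor::sWH::GH}(iii) each $\mGH^{\gl}_{\lambda^i}$ is isomorphic to $\sWH_{\lambda^i}$, compatibly with the cyclic vectors. Theorem~\ref{thm::sWH::associativity}(iv) then gives
\[
\mathrm{gr}_FM=\mGH^{\gl}_{\lambda^1}\star_y\cdots\star_y\mGH^{\gl}_{\lambda^k}\simeq\mGH^{\gl}_\lambda .
\]
This is the first isomorphism of~(iii). The same identification, together with Corollary~\ref{cor::N!::gl} and the compatibility of $\bomega$ with tensor products, gives $M\simeq V^{\otimes N}$ as a $\gl_V$-module. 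In particular $\Lambda^NV$ occurs in $M$ with multiplicity one, and $S^NV=U(\gl_V)v$ occurs with multiplicity one.

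\emph{Self-duality and cocyclicity.} By Corollary~\ref{cor::GH::self::sign}(i), $(\mGH^{\gl}_{\lambda^i})^\circ\simeq\mGH^{\gl}_{\lambda^i}\langle-\mdeg(\Delta_{\lambda^i})\rangle$. A grading shift is irrelevant after the twists $y\mapsto y+c_i$, since only the $x$-grading survives and it is shifted uniformly. Because $\circ$ commutes with twists and tensor products, we obtain $M^\circ\simeq M$ up to an overall $x$-shift. Proposition~\ref{prp::sign::cofusion}(i), applied to $M$ with $\nu=(N)$, shows that the highest weight vector of $\Lambda^NV$ in $M^\circ$ is cocyclic. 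Transporting through $M^\circ\simeq M$, and using that $\Lambda^NV$ has multiplicity one, this vector is $w$ up to a scalar. This proves~(i). Alternatively, cocyclicity follows from Example~\ref{ex::cocyclic}, since each $\mGH^{\gl}_{\lambda^i}$ has simple socle $\Lambda^{N_i}V$ and $\Lambda^NV$ is the Cartan-type component of $\bigotimes\Lambda^{N_i}V$ of multiplicity one.

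\emph{Coincidence of the filtrations.} This step is the heart of the proof. First we show $F_s\subset G^{d-s}$. If $u\in F_s$ and $a\in U(\mathfrak{G})^y_{<d-s}$, then $au\in F_{d-1}$. We then need $\pi_w(F_{d-1})=0$. This holds because in $\mathrm{gr}_FM\simeq\mGH^{\gl}_\lambda$ the unique copy of $\Lambda^NV$ sits in $y$-degree $d$ (Proposition~\ref{prp::GH::gl}(iv)), and $\gl_V$-isotypic multiplicities are preserved under passing to the associated graded. Hence the $\Lambda^NV$-isotypic part of $F_{d-1}$ vanishes.

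For the reverse inclusion we compare dimensions degree by degree. By Proposition~\ref{prp::sign::cofusion}(ii), $\mathrm{gr}_G(M^\circ;w)\simeq(\mathrm{gr}_FM)^\circ\simeq(\mGH^{\gl}_\lambda)^\circ$. By Corollary~\ref{cor::GH::self::sign}, the latter is $\mGH^{\gl}_\lambda\langle-\mdeg(\Delta_\lambda)\rangle$. So $\dim G^t/G^{t+1}=\dim(\mGH^{\gl}_\lambda)_{y=d-t}=\dim F_{d-t}/F_{d-t-1}$, the $y$-degrees reading in reverse order. Summing over degrees gives $\dim G^{d-s}=\sum_{t\ge d-s}\dim F_{d-t}/F_{d-t-1}=\dim F_s$. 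Together with the inclusion $F_s\subset G^{d-s}$, this yields equality and proves~(ii).

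\emph{Conclusion.} With~(ii) established, $\mathrm{gr}_GM$ and $\mathrm{gr}_FM$ are the same graded pieces with the degree reversed, $t\leftrightarrow d-s$. The $\mathfrak{G}$-actions agree because both are induced from the action on $M$. Shifting by $\mdeg(\Delta_\lambda)$ then gives the second isomorphism of~(iii), with the cofusion product taken with respect to $w$.

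The main obstacle is the bookkeeping of degree conventions. The cofusion product is placed in nonpositive degrees, the $x$-shifts disappear under twisting, and only the $y$-shift $d$ matters for the filtration indices, while the full $\mdeg(\Delta_\lambda)$ is needed for the graded isomorphism.
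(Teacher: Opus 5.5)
Your proposal is correct and follows the paper's proof essentially step by step. It gets $\mathrm{gr}_FM\simeq\mGH^{\gl}_\lambda$ from Theorem~\ref{thm::sWH::associativity}(iv) and $M\simeq M^\circ$ from Corollary~\ref{cor::GH::self::sign}(i), proves $F_s\subset G^{d-s}$ from $\pi_w(F_{d-1})=0$, and obtains equality by the dimension count through Proposition~\ref{prp::sign::cofusion}(ii).

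The deviations are cosmetic. You derive cocyclicity of $w$ from Proposition~\ref{prp::sign::cofusion}(i) transported along $M^\circ\simeq M$, whereas the paper uses Example~\ref{ex::cocyclic}. You also read the second isomorphism of (iii) off the coincidence $F_s=G^{d-s}$, instead of from the chain $\mathrm{gr}_GM\simeq(\mathrm{gr}_FM)^\circ\simeq\mGH^{\gl}_\lambda\langle-\mdeg(\Delta_\lambda)\rangle$. With the $x$-grading inherited from $M$, your route literally gives only a $y$-shift by $d$; the $x$-component of $\mdeg(\Delta_\lambda)$ comes from grading the cofusion product via $M^\circ$, as the paper implicitly does.
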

\begin{proof}
Put $H_i:=\mGH^{\gl}_{\lambda^i}$. By Corollary~\ref{cor::N!::gl}, $H_i\simeq V^{\otimes N_i}$ as $\gl_V$-modules, hence $M\simeq V^{\otimes N}$. By Proposition~\ref{prp::GH::gl}(iv) the socle of $H_i$ is $\Lambda^{N_i}V$. By Example~\ref{ex::cocyclic}, the socle of $M$ is the simple module $\bigotimes_i\Lambda^{N_i}V(c_i)$, which contains $\Lambda^NV$; the latter occurs in $V^{\otimes N}$ with multiplicity one. Hence $w$ is cocyclic.
By Corollary~\ref{cor::GH::self::sign}(i), $H_i^\circ\simeq H_i$ up to a shift of the grading. Therefore, by Lemma~\ref{lem::sign::properties}(iii),(iv),(v), $M^\circ\simeq M$ as $\mathfrak{G}$-modules. This isomorphism maps the unique copy of $\Lambda^NV$ to itself, hence the highest weight vectors correspond to each other. This proves~(i).

Put $G^t:=G^t_y(M;w)$. By Theorem~\ref{thm::sWH::associativity}(iv) and Corollary~\ref{cor::sWH::GH}(iii),
\begin{equation}
\label{eq::grF::M}
\mathrm{gr}_FM\simeq\mGH^{\gl}_\lambda ,
\end{equation}
where the $y$-degree is the filtration degree. This proves the first isomorphism of~(iii).
By Proposition~\ref{prp::GH::gl}(iv) and Theorem~\ref{thm::top::fusion}(iii), the module $\Lambda^NV$ occurs in $\mathrm{gr}_FM\simeq\mGH^{\gl}_\lambda\simeq V^{\otimes N}$ exactly once, in $y$-degree $d$. Hence $F_{d-1}$ contains no copy of $\Lambda^NV$, and $\pi_w(F_{d-1})=0$.

\emph{The inclusion $F_s\subset G^{d-s}$.}
Let $u\in F_s$ and $X\in U(\mathfrak{G})^y_{<d-s}$. Then $Xu\in U(\mathfrak{G})^y_{\leq d-s-1}U(\mathfrak{G})^y_{\leq s}\,v\subset F_{d-1}$, hence $\pi_w(Xu)=0$. By definition, $u\in G^{d-s}$.

\emph{The equality of dimensions.}
By Proposition~\ref{prp::sign::cofusion}(ii), applied to $M$ and to the isomorphism $M\simeq M^\circ$ of~(i), and by~\eqref{eq::grF::M},
\[
\mathrm{gr}_GM\simeq\big(\mathrm{gr}_FM\big)^\circ\simeq\bomega\big(\mGH^{\gl}_\lambda\big)^\dagger\simeq\mGH^{\gl}_\lambda\big\langle-\mdeg(\Delta_\lambda)\big\rangle ,
\]
the last isomorphism being Corollary~\ref{cor::GH::self::sign}(i). Here $G^t/G^{t+1}$ sits in $y$-degree $-t$ and the right-hand side sits in $y$-degrees $\leq 0$. Therefore
\[
\dim G^t/G^{t+1}=\dim\big(\mGH^{\gl}_\lambda\big)_{y=d-t},\qquad\text{and}\qquad G^{d+1}=0,
\]
where $(\cdot)_{y=a}$ denotes the sum of all components with $y$-degree $a$. On the other hand, $\dim F_s/F_{s-1}=\dim(\mGH^{\gl}_\lambda)_{y=s}$ by~\eqref{eq::grF::M}. Hence
\[
\dim G^{d-s}=\sum_{t\geq d-s}\dim G^t/G^{t+1}=\sum_{s'\leq s}\dim(\mGH^{\gl}_\lambda)_{y=s'}=\dim F_s .
\]
Together with the inclusion $F_s\subset G^{d-s}$, this proves~(ii). The second isomorphism of~(iii) is the isomorphism of graded modules $\mathrm{gr}_GM\simeq\mGH^{\gl}_\lambda\langle-\mdeg(\Delta_\lambda)\rangle$ proved above.
\end{proof}

Note that for $k=1$ the theorem says that the fusion and cofusion filtrations of $\mGH^{\gl}_\lambda$ itself are related by $F_s=G^{d-s}$.
In the special case of one-row diagrams $\lambda^i=(\lambda_i)$ the theorem identifies the iterated fusion product and the iterated cofusion product of the local Weyl modules:

\begin{corollary}
\label{cor::fusion::cofusion::rows}
Let $\lambda=(\lambda_0\geq\ldots\geq\lambda_l)\vdash N$. Then, for all pairwise distinct parameters,
\[
\sW^x_{\lambda_0\varepsilon_1}\star_y\cdots\star_y\sW^x_{\lambda_l\varepsilon_1}\ \simeq\ \mGH^{\gl}_\lambda\ \simeq\ \sW^x_{\lambda_0\varepsilon_1}\circledast_y\cdots\circledast_y\sW^x_{\lambda_l\varepsilon_1}\big\langle\mdeg(\Delta_\lambda)\big\rangle .
\]
Combining with Theorem~\ref{thm::sign::Weyl}(iii), $\mGH^{\gl}_\lambda$ is also an iterated cofusion of the tautological modules: first in the variable $x$, and then in the variable $y$.
\end{corollary}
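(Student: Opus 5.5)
This corollary is Theorem~\ref{thm::fusion::cofusion} in the special case where every diagram is a single row, $\lambda^i=(\lambda_i)$ for $i=0,\ldots,l$. Their union $\lambda^0\cup\ldots\cup\lambda^l$ is then the diagram $\lambda$ itself.

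\textbf{Step 1: identify the factors.} By Example~\ref{ex::sWH::row::column} and Corollary~\ref{cor::sWH::GH}(iii), $\mGH^{\gl}_{(\lambda_i)}\simeq\sWH_{(\lambda_i)}=\sW^x_{\lambda_i\varepsilon_1}$. The cyclic vector, the image of $e_1^{\lambda_i}$, corresponds to the highest weight vector $v_{\lambda_i\varepsilon_1}$. These modules are concentrated in $y$-degree $0$, so the twists $M(c_i)$ in $y$ are just the evaluation modules $\ev_{c_i}$ in the sense of Definition~\ref{def::fusion} for $\fg=\gl_V[x]$. The tensor product $M$ of Theorem~\ref{thm::fusion::cofusion} is therefore exactly $T_\lambda$ of~\eqref{eq::tensor::weyl}, with cyclic vector $v$.

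\textbf{Step 2: the two isomorphisms.} Theorem~\ref{thm::fusion::cofusion}(iii) with these factors gives
\[
\mGH^{\gl}_{(\lambda_0)}\star_y\cdots\star_y\mGH^{\gl}_{(\lambda_l)}\simeq\mGH^{\gl}_\lambda\simeq\mGH^{\gl}_{(\lambda_0)}\circledast_y\cdots\circledast_y\mGH^{\gl}_{(\lambda_l)}\big\langle\mdeg(\Delta_\lambda)\big\rangle .
\]
Replacing each factor by $\sW^x_{\lambda_i\varepsilon_1}$ via Step~1 yields the displayed chain, for all pairwise distinct parameters. The left-hand side is $\sWH_\lambda$ by definition, and Step~1 requires the replacement to respect the chosen cyclic and cocyclic vectors. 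For the cofusion side, the cocyclic vector $w$ is the highest weight vector of the unique copy of $\Lambda^NV$, which is intrinsic, so no vector choice issue arises.

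\textbf{Step 3: the iterated cofusion statement.} Theorem~\ref{thm::sign::Weyl}(iii) writes each factor as
\[
\sW^x_{\lambda_i\varepsilon_1}\simeq\big(V\circledast_x\cdots\circledast_xV\big)\big\langle\tbinom{\lambda_i}{2}\big\rangle ,
\]
an $x$-cofusion of $\lambda_i$ copies of $V$ up to an $x$-shift. The shifts are only in the $x$-grading, so they commute with forming $y$-twists and the $y$-cofusion filtration. Hence they combine into the total shift, and $\mGH^{\gl}_\lambda$ is an iterated cofusion, first in $x$ and then in $y$, up to grading shift.

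\textbf{Main obstacle.} The only delicate point is this last step: checking that replacing the cofusion factors by isomorphic modules is legitimate. The replacement changes neither the $y$-twisted tensor product nor its cocyclic vector, so the cofusion filtration $G_y$ is unchanged.
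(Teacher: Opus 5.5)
Your proposal is correct and is the paper's own argument: the corollary is Theorem~\ref{thm::fusion::cofusion}(iii) specialized to the one-row diagrams $\lambda^i=(\lambda_i)$, using $\mGH^{\gl}_{(h)}\simeq\sW^x_{h\varepsilon_1}$ with matching cyclic vectors, followed by Theorem~\ref{thm::sign::Weyl}(iii) for the iterated-cofusion remark. One bookkeeping caveat if you make the shifts of Step~3 explicit: the $y$-cofusion keeps the $x$-grading of the factors and $F_s=G^{n(\lambda)-s}$, so with the natural grading of $\sW^x_{\lambda_i\varepsilon_1}$ the correct shift is $\langle(0,n(\lambda))\rangle$; the shift $\langle\mdeg(\Delta_\lambda)\rangle$ is right only when each factor is normalized as $V\circledast_x\cdots\circledast_xV\simeq\sW^x_{\lambda_i\varepsilon_1}\langle-\tbinom{\lambda_i}{2}\rangle$ (socle in $x$-degree $0$), so the $x$-shifts $\tbinom{\lambda_i}{2}$ are absorbed into $\mdeg(\Delta_\lambda)$ rather than added on top of it.
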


The proof of Theorem~\ref{thm::fusion::cofusion} shows that the inclusion $F_s\subset G^{d-s}$ holds without Haiman's theorem, and that Haiman's theorem for a given diagram is equivalent to the coincidence of the two filtrations on the tensor product of the Weyl modules corresponding to its rows.

\begin{theorem}
\label{thm::N!::equiv::coincidence}
Let $\lambda=(\lambda_0\geq\ldots\geq\lambda_l)\vdash N$ be a Young diagram, $d=n(\lambda)$, and let $c_0,\ldots,c_l$ be pairwise distinct numbers. Put $W_h:=\sW^x_{h\varepsilon_1}$ and
\[
M_\lambda:=W_{\lambda_0}(c_0)\otimes\cdots\otimes W_{\lambda_l}(c_l),
\]
with the cyclic vector $v$, the tensor product of the highest weight vectors, and the highest weight vector $w$ of $\Lambda^NV$. Let $F_s=F^y_s(M_\lambda;v)$ and $G^t=G^t_y(M_\lambda;w)$. Then $M_\lambda\simeq V^{\otimes N}$ as a $\gl_V$-module, $v$ is cyclic, $w$ is cocyclic, and
\[
F_s\subset G^{\,d-s}\qquad\text{for all } s .
\]
Let $C_\lambda:=\mathrm{gr}_GM_\lambda=W_{\lambda_0}\circledast_y\cdots\circledast_yW_{\lambda_l}$ be the cofusion product. Then there is a canonical (up to a nonzero scalar) injective homomorphism of bigraded $\mathfrak{G}$-modules
\[
\iota_\lambda\colon\ \mGH^{\gl}_\lambda\hookrightarrow C_\lambda\big\langle\mdeg(\Delta_\lambda)\big\rangle ,
\]
which is an isomorphism on the bottom component of bidegree $(0,0)$ (the module $S^NV$). Moreover, the following conditions are equivalent:
\begin{enumerate}
\item[(a)] the $N!$-conjecture holds for $\lambda$, i.e., $\mGH^{\bS}_\lambda$ is the regular representation of $\bS_N$;
\item[(b)] the surjection $\bar{\theta}_\lambda\colon\sWH_\lambda\twoheadrightarrow\mGH^{\gl}_\lambda$ of Proposition~\ref{prp::fusion->GH} is an isomorphism;
\item[(b$'$)] the embedding $\iota_\lambda$ is an isomorphism;
\item[(c)] the fusion and cofusion filtrations coincide: $F_s=G^{\,d-s}$ for all $s$.
\end{enumerate}
\end{theorem}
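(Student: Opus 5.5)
We prove this without Haiman's theorem, reusing the parts of the proof of Theorem~\ref{thm::fusion::cofusion} that are independent of it.

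\emph{Basic properties.} That $M_\lambda\simeq V^{\otimes N}$ as a $\gl_V$-module follows from Theorem~\ref{thm::Weyl::fusion}, since each $W_h\simeq V^{\otimes h}$. Cyclicity of $v$ is Lemma~\ref{lem::fusion::graded::cyclic}. For cocyclicity of $w$ we use Example~\ref{ex::cocyclic}: by Proposition~\ref{prp::Weyl::top} each $W_{\lambda_j}$ has simple socle $\Lambda^{\lambda_j}V$. Hence the socle of $M_\lambda$ is $\bigotimes_j\Lambda^{\lambda_j}V(c_j)$, which contains $\Lambda^NV$, and $\Lambda^NV$ has multiplicity one in $V^{\otimes N}$.

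\emph{The inclusion $F_s\subset G^{d-s}$.} By Definition~\ref{def::sWH}, $\mathrm{gr}_FM_\lambda=\sWH_\lambda$. By Theorem~\ref{thm::top::fusion}(i),(iii), $\Lambda^NV$ occurs in $\sWH_\lambda\simeq V^{\otimes N}$ exactly once, in $y$-degree $d$. So $F_{d-1}$ contains no copy of $\Lambda^NV$, and $\pi_w(F_{d-1})=0$. Now take $u\in F_s$ and $X\in U(\mathfrak{G})^y_{<d-s}$. Then $Xu\in F_{d-1}$, so $\pi_w(Xu)=0$, which gives $u\in G^{d-s}$.

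\emph{Construction of $\iota_\lambda$.} We dualize the surjection $\bar\theta_\lambda\colon\sWH_\lambda\twoheadrightarrow\mGH^{\gl}_\lambda$ of Proposition~\ref{prp::fusion->GH} by applying the exact contravariant involution $U\mapsto U^\circ$. This gives an injection $(\mGH^{\gl}_\lambda)^\circ\hookrightarrow(\sWH_\lambda)^\circ$. By Corollary~\ref{cor::GH::self::sign}(i) the left side is $\mGH^{\gl}_\lambda\langle-\mdeg(\Delta_\lambda)\rangle$. For the right side we apply Proposition~\ref{prp::sign::cofusion}(ii) to $M_\lambda$: this is legitimate because $U(\gl_V)v\simeq S^NV$ has multiplicity one. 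It gives $(\sWH_\lambda)^\circ=(\mathrm{gr}_FM_\lambda)^\circ\simeq\mathrm{gr}_G(M_\lambda^\circ)$. Finally we identify $M_\lambda^\circ\simeq M_\lambda$ via Theorem~\ref{thm::sign::Weyl}(iii), $W_h^\circ\simeq W_h\langle-\binom h2\rangle$, together with compatibility with twists and tensor products. Under this identification the cocyclic vector goes to $w$, since both are highest weight vectors of the unique copy of $\Lambda^NV$. The identification is canonical up to the scalar normalizing this correspondence. After shifting we obtain $\iota_\lambda$.

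On bidegree $(0,0)$, the map $\iota_\lambda$ is dual to $\bar\theta_\lambda$ on the top component $\Lambda^NV$. There $\bar\theta_\lambda$ is an isomorphism by Proposition~\ref{prp::fusion->GH}, and $\bomega$ exchanges $\Lambda^NV$ and $S^NV$. So $\iota_\lambda$ is an isomorphism on the bottom component $S^NV$.

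\emph{Equivalences.} (a)$\Leftrightarrow$(b) is Corollary~\ref{cor::sWH::GH}(ii). (b)$\Leftrightarrow$(b$'$) holds because $\iota_\lambda$ is $\bar\theta_\lambda$ dualized, and $\circ$ is an exact involution. For (b)$\Leftrightarrow$(c): $\dim F_s/F_{s-1}$ equals $\dim(\sWH_\lambda)_{y=s}$. By Lemma~\ref{lem::cofusion::dual} and the duality above, $\dim G^t/G^{t+1}=\dim(\sWH_\lambda)_{y=d-t}$ as well. Hence $\dim F_s\le\dim G^{d-s}$, and given the inclusion, (c) is equivalent to $\dim(\sWH_\lambda)_{y\le s}=\dim(\sWH_\lambda)_{y\ge d-s}$ for all $s$.

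\emph{Main obstacle: (c)$\Rightarrow$(b).} Under (c), $\mathrm{gr}_F=\mathrm{gr}_G$ as graded spaces. I would try to upgrade this to an equality of modules: $\sWH_\lambda$ is then simultaneously the fusion product and, after the shift, the cofusion product $C_\lambda$, so it is cocyclic with socle $\Lambda^NV$. Then Lemma~\ref{lm::socle::head::iso} applies to $\bar\theta_\lambda$, as suggested in Remark~\ref{rem::N!:reformulation}, and shows it is an isomorphism. The step to check carefully is that the module structures on $\mathrm{gr}_F$ and $\mathrm{gr}_G$ agree when the filtrations coincide. This should hold because both are induced from the same action on $M_\lambda$, with $X$ of $y$-degree $k$ mapping $F_s\to F_{s+k}$ and $G^{d-s}\to G^{d-s-k}$.

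For the converse (b)$\Rightarrow$(c): under (b), $\sWH_\lambda\simeq\mGH^{\gl}_\lambda$ has the palindromic $y$-grading of Corollary~\ref{cor::GH::self::sign}(ii). This gives the dimension equality above, hence (c).
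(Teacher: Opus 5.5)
Most of your proposal coincides with the paper's proof. This covers the basic properties, the inclusion $F_s\subset G^{d-s}$, the construction of $\iota_\lambda$ by applying $\circ$ to $\bar\theta_\lambda$ together with $C_\lambda\simeq\sWH_\lambda^\circ$ (Proposition~\ref{prp::sign::cofusion}), the equivalences (a)$\Leftrightarrow$(b)$\Leftrightarrow$(b$'$), and (c)$\Rightarrow$(b). For the last one, under (c) the two associated graded spaces are literally the same subquotients of $M_\lambda$ with the same induced action. So $\sWH_\lambda$ is cocyclic, and $\bar\theta_\lambda$ is injective on its socle $\Lambda^NV$. Your concern about the module structures is resolved exactly as you suggest, and the paper argues the same way.

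The step that fails as written is the dimension bookkeeping for (b)$\Leftrightarrow$(c).

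\emph{Your formula for $G^t/G^{t+1}$ is unjustified.} Without (b), the only available input is $C_\lambda\simeq\sWH_\lambda^\circ$, i.e. $G^t/G^{t+1}\simeq(\bomega(\sWH_\lambda)_{y=t})^*$. This does not give $\dim G^t/G^{t+1}=\dim(\sWH_\lambda)_{y=d-t}$. If it did, summing would give $\dim G^{d-s}=\dim F_s$, and the inclusion would force (c), hence Haiman's theorem, for free.

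\emph{Your criterion is false for total dimensions.} It also does not give $\dim G^t/G^{t+1}=\dim(\sWH_\lambda)_{y=t}$, which is what your criterion ``(c) iff $\dim(\sWH_\lambda)_{y\le s}=\dim(\sWH_\lambda)_{y\ge d-s}$'' actually needs. The functor $\bomega$ replaces $V_\mu$ by $V_{\mu^T}$ and changes dimensions. For $\lambda=(1,1)$ one has $\sWH_\lambda=S^2V\oplus\Lambda^2V\cdot y$ and $d=1$, and (c) holds. Yet at $s=0$ your criterion compares $\binom{m+1}{2}$ with $\binom{m}{2}$. For the same reason, Corollary~\ref{cor::GH::self::sign}(ii) is not a palindromicity of $\mGH^{\gl}_\lambda$ itself; it compares $\bomega(\mGH^{\gl}_\lambda)$ with $\mGH^{\gl}_\lambda$.

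\emph{The repair.} The paper's route is this: assuming (b), chain honest module isomorphisms
$C_\lambda\simeq\sWH_\lambda^\circ\simeq(\mGH^{\gl}_\lambda)^\circ\simeq\mGH^{\gl}_\lambda\langle-\mdeg(\Delta_\lambda)\rangle$ (Corollary~\ref{cor::GH::self::sign}(i)). This gives $\dim G^t/G^{t+1}=\dim(\mGH^{\gl}_\lambda)_{y=d-t}$, hence $\dim G^{d-s}=\dim F_s$, and (c) follows from the inclusion.

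Alternatively, run your palindromic criterion on the $(1^N,0^{m-N})$-weight spaces. There $\bomega$ preserves dimensions, and equality of the $\gl_V$-submodules $F_s\subset G^{d-s}$ can be detected, since every irreducible of degree $N$ has a nonzero such weight space. In that form the criterion is correct, and (b) makes it hold.
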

\begin{proof}
The module $W_h$ is isomorphic to $V^{\otimes h}$ as a $\gl_V$-module (Theorem~\ref{thm::Weyl::fusion}), so $M_\lambda\simeq V^{\otimes N}$; the vector $v$ is cyclic by Lemma~\ref{lem::fusion::graded::cyclic}. By Proposition~\ref{prp::Weyl::top}, the socle of $W_h$ is $\Lambda^hV$; by Example~\ref{ex::cocyclic} the socle of $M_\lambda$ is $\bigotimes_j\Lambda^{\lambda_j}V(c_j)$, which contains $\Lambda^NV$ with multiplicity one. Hence $w$ is cocyclic.
By definition, $\mathrm{gr}_FM_\lambda=\sWH_\lambda$. By Theorem~\ref{thm::top::fusion}(iii), the module $\Lambda^NV$ sits in $\sWH_\lambda$ in the bidegree $\mdeg(\Delta_\lambda)$ only, in particular in the $y$-degree $d$. Hence $F_{d-1}$ contains no copy of $\Lambda^NV$ and $\pi_w(F_{d-1})=0$. The inclusion $F_s\subset G^{d-s}$ follows exactly as in the proof of Theorem~\ref{thm::fusion::cofusion}.

\emph{The embedding $\iota_\lambda$.} The module $W_h=\mGH^{\gl}_{(h)}$ is isomorphic to $W_h^\circ$ up to a shift (Corollary~\ref{cor::GH::self::sign}(i)), hence $M_\lambda\simeq M_\lambda^\circ$ by Lemma~\ref{lem::sign::properties}, and the isomorphism maps the unique copy of $\Lambda^NV$ to itself. By Proposition~\ref{prp::sign::cofusion}(ii), $C_\lambda=\mathrm{gr}_GM_\lambda\simeq(\mathrm{gr}_FM_\lambda)^\circ=\sWH_\lambda^\circ$; note that this does not use Haiman's theorem. The functor $\circ$ is exact, so $\bar{\theta}_\lambda$ induces an injection $(\mGH^{\gl}_\lambda)^\circ\hookrightarrow\sWH_\lambda^\circ\simeq C_\lambda$, and $(\mGH^{\gl}_\lambda)^\circ\simeq\mGH^{\gl}_\lambda\langle-\mdeg(\Delta_\lambda)\rangle$ by Corollary~\ref{cor::GH::self::sign}(i). This gives $\iota_\lambda$; it is unique up to a scalar, because a homomorphism from a cyclic module is determined by the image of the cyclic vector, and the weight space of weight $N\varepsilon_1$ of $C_\lambda$ is one-dimensional. By Proposition~\ref{prp::fusion->GH}, $\bar{\theta}_\lambda$ is an isomorphism on the top component $\Lambda^NV$ of bidegree $\mdeg(\Delta_\lambda)$; the contravariant sign duality turns it into an isomorphism on the bottom component $S^NV$ of bidegree $(0,0)$. Finally, $\iota_\lambda$ is obtained from $\bar{\theta}_\lambda$ by the exact contravariant autoequivalence $\circ$, hence $\iota_\lambda$ is an isomorphism if and only if $\bar{\theta}_\lambda$ is, i.e., (b)$\Leftrightarrow$(b$'$).

(a)$\Leftrightarrow$(b) is Corollary~\ref{cor::sWH::GH}(ii),(iii).

(a)$\Rightarrow$(c). This is the proof of Theorem~\ref{thm::fusion::cofusion} for the diagrams $\lambda^i=(\lambda_i)$. In that proof Haiman's theorem is used only for the diagram $\lambda$: for one-row diagrams it is the classical statement about the coinvariant algebra.

(c)$\Rightarrow$(b). Suppose that $F_s=G^{d-s}$ for all $s$. Then $\mathrm{gr}_FM_\lambda=\mathrm{gr}_GM_\lambda$ as $\mathfrak{G}$-modules, with $F_s/F_{s-1}=G^{d-s}/G^{d-s+1}$. The module $\mathrm{gr}_GM_\lambda$ is cocyclic with the cocyclic vector $[w]\in G^0/G^1$ (\S\ref{sec::cofusion::def}). Hence every nonzero submodule of $\sWH_\lambda=\mathrm{gr}_FM_\lambda$ contains the class $[w]$ and, therefore, the whole component $\Lambda^NV$ of bidegree $\mdeg(\Delta_\lambda)$. By Proposition~\ref{prp::fusion->GH}, the map $\bar{\theta}_\lambda$ is injective on this component. Thus $\ker\bar{\theta}_\lambda$ contains no nonzero submodule, i.e., $\ker\bar{\theta}_\lambda=0$.
\end{proof}

\section{Butler's conjecture and the $\frac{N!}{2}$-conjecture via fusion products}
\label{sec::Butler}

Throughout this section $\dim V\geq N$, $\mathfrak{G}=\gl_V[x,y]$, and we write
\[
\mGH_\lambda:=\mGH^{\gl}_\lambda,\qquad
\mGH_{\mu,\nu}:=\mGH^{\gl}_{\mu,\nu}=S^N(V[x,y])\big/\big(\euJ^{\gl}_\mu+\euJ^{\gl}_\nu\big)
\]
(see~\eqref{eq::GHgl::common::quotient}). We use Haiman's theorem (Theorem~\ref{conj::n!}) through the identification $\sWH_\lambda\simeq\mGH_\lambda\simeq V^{\otimes N}$ of Corollary~\ref{cor::sWH::GH}. All fusion products and filtrations are taken with respect to $y$, and the images of $e_1^N$ in the quotients of $S^N(V[x,y])$ are called the \emph{cyclic vectors}. We write
\[
W_h:=\mGH_{(h)}\simeq\sW^x_{h\varepsilon_1}\quad(h\geq1),\qquad W_0:=\bC,
\]
for the local Weyl module attached to the one-row diagram $(h)$ (Example~\ref{ex::sWH::row::column}).

Let $\mu,\nu\vdash N$ be two different Young diagrams obtained from a diagram $\lambda\vdash N+1$ by removing two different corners. Equivalently, $\mu=\rho\cup\{a\}$ and $\nu=\rho\cup\{b\}$, where $\rho=\mu\cap\nu$ and $a\neq b$ are two addable cells of $\rho$. We say that $\mu$ and $\nu$ \emph{differ in two corners}. The two cells $a,b$ lie in different rows and in different columns.
By Proposition~\ref{prp::N!/k::gl}(i), the $\frac{N!}{2}$-conjecture (Conjecture~\ref{conj::N!/k} for $k=2$) says that the weight subspace of weight $(1^N,0^{m-N})$ of the common quotient $\mGH_{\mu,\nu}$ has dimension $\frac{N!}{2}$.
In this section we construct a quotient of $\mGH_{\mu,\nu}$ by a fusion product with this property, which gives the lower bound for the dimension. We show that the $\frac{N!}{2}$-conjecture is equivalent to the isomorphism of $\mGH_{\mu,\nu}$ with this quotient, and we compute its character, which is Butler's polynomial. This gives a representation-theoretic proof of Butler's conjecture.

\subsection{Preliminaries}
\label{sec::Butler::prelim}

For two Young diagrams $\sigma$ and $\rho$ we denote by $\sigma\cup\rho$ the diagram whose rows are the rows of $\sigma$ and of $\rho$ (as in \S\ref{sec::sWH::associativity}).

\begin{proposition}[Fusion of rows]
\label{prp::Butler::rows}
Let $\sigma$ be a Young diagram and $h_1,\ldots,h_p\geq1$ integers, and let $\mu=\sigma\cup(h_1)\cup\ldots\cup(h_p)$. Then, for all pairwise distinct parameters, there is an isomorphism of graded $\mathfrak{G}$-modules
\[
\mGH_\mu\ \simeq\ \mGH_\sigma\star_yW_{h_1}\star_y\cdots\star_yW_{h_p},
\]
mapping the cyclic vector to the cyclic vector.
\end{proposition}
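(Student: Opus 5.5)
This is a direct application of the associativity theorem, Theorem~\ref{thm::sWH::associativity}, to a decomposition of $\mu$ into the diagram $\sigma$ and the one-row diagrams $(h_1),\ldots,(h_p)$. Put $\lambda^1:=\sigma$ and $\lambda^{i+1}:=(h_i)$ for $i=1,\ldots,p$. Then $\lambda^1\cup\ldots\cup\lambda^{p+1}=\mu$, because the rows of $\mu$ are, by definition, the rows of $\sigma$ together with the $h_i$.

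\textbf{Step 1: identify the fusion factors.} By Example~\ref{ex::sWH::row::column}, $\sWH_{(h)}=\sW^x_{h\varepsilon_1}=W_h$, with its cyclic vector being the highest weight vector. By Corollary~\ref{cor::sWH::GH}(iii), which uses Haiman's theorem, $\bar\theta_\sigma\colon\sWH_\sigma\to\mGH_\sigma$ is an isomorphism sending $\widehat v$ to the image of $e_1^{|\sigma|}$. So the modules $\mGH_\sigma,W_{h_1},\ldots,W_{h_p}$ are, up to isomorphisms preserving cyclic vectors, the modules $\sWH_{\lambda^i}$.

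\textbf{Step 2: transport the fusion product.} The fusion product (Definition~\ref{def::fusion::graded}) depends only on the graded cyclic $\gl_V[x][y]$-modules together with their cyclic vectors. An isomorphism $\phi$ respecting the cyclic vectors induces an isomorphism of the twisted tensor products carrying $v$ to $v'$. This isomorphism identifies the fusion filtrations, and therefore the associated graded modules. This is the case of Proposition~\ref{prp::fusion::surjection} where the surjection is an isomorphism, as in the remark following it. Hence
\[
\mGH_\sigma\star_yW_{h_1}\star_y\cdots\star_yW_{h_p}\simeq\big[\sWH_{\sigma}\star_y\sWH_{(h_1)}\star_y\cdots\star_y\sWH_{(h_p)}\big]_{\delta}
\]
for every choice of pairwise distinct parameters $\delta$, compatibly with the cyclic vectors.

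\textbf{Step 3: apply associativity.} Theorem~\ref{thm::sWH::associativity}(iv) gives
\[
\big[\sWH_{\lambda^1}\star_y\cdots\star_y\sWH_{\lambda^{p+1}}\big]_\delta\simeq\mGH^{\gl}_{\mu},
\]
with cyclic vectors corresponding and for all parameters. Composing this with Step~2 gives the claim. The isomorphism is bigraded: the $y$-grading is the fusion degree, and the $x$-grading is preserved at each step.

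The only point needing care is the bookkeeping in Step~2 that the cyclic vectors match; no new estimate is required. Haiman's theorem enters through Corollary~\ref{cor::sWH::GH} and through Step~5 of the proof of Theorem~\ref{thm::sWH::associativity}.
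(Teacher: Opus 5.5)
Your proposal is correct and follows the paper's proof: the paper also obtains the statement directly from Theorem~\ref{thm::sWH::associativity}(iv) with $\lambda^1=\sigma$ and $\lambda^{i+1}=(h_i)$, combined with Corollary~\ref{cor::sWH::GH}(iii). Your Step~2, transporting the fusion product along isomorphisms that preserve cyclic vectors, only spells out the identification $\mGH_\sigma\simeq\sWH_\sigma$ that the paper uses without comment.
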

\begin{proof}
This is Theorem~\ref{thm::sWH::associativity}(iv) with $\lambda^1=\sigma$ and $\lambda^{i+1}=(h_i)$, together with Corollary~\ref{cor::sWH::GH}(iii).
\end{proof}

\begin{lemma}[Transposition]
\label{lem::Butler::transpose}
Let $\theta$ be the automorphism of $\mathfrak{G}$ interchanging $x$ and $y$, and let $M^\theta$ be the twist of a $\mathfrak{G}$-module $M$ by $\theta$. Then
\[
\mGH_\lambda^{\theta}\simeq\mGH_{\lambda^T},\qquad
\mGH_{\nu_1,\ldots,\nu_k}^{\theta}\simeq\mGH_{\nu_1^T,\ldots,\nu_k^T}
\]
for all Young diagrams $\lambda$ and all families $\nu_1,\ldots,\nu_k$ of Young diagrams of the same size, compatibly with the cyclic vectors.
\end{lemma}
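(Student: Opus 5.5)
The plan is to transport the symmetric-group statement of Lemma~\ref{lem::GH::symmetry} through the current Schur--Weyl duality. The automorphism $\theta$ of $\mathfrak{G}$ is induced by the automorphism $x\leftrightarrow y$ of $\bC[x,y]$. Its diagonal extension to $\mathcal{A}_N$ is the automorphism $\tau$ ($x_i\leftrightarrow y_i$), which commutes with $\bS_N$. By Lemma~\ref{lem::sign::properties}(iii), or rather by its proof, which applies verbatim to any automorphism $\varphi$ of $\bC[\bar{z}]$, we have $\WS(U^{\theta})=\WS(U)^{\tau}$. Since $\SW$ and $\WS$ are mutually inverse equivalences, we also get $\SW(M^\tau)\simeq\SW(M)^\theta$.

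\emph{The projective cover.} The twist of $S^N(V[x,y])$ by $\theta$ is isomorphic to $S^N(V[x,y])$ itself, via the linear map $e_a x^r y^t\mapsto e_a x^t y^r$ extended multiplicatively. This map intertwines the action of $X f(x,y)$ with that of $X f(y,x)$ and fixes $e_1^N$. Under the lattice isomorphism $I\mapsto\Psi(V^{\otimes N}\otimes I)$ of \S\ref{sec::GH::SW}, it sends the submodule $\SW(I)$ to $\SW(\tau(I))$. This follows directly from the formula~\eqref{eq::SW::Psi} for $\Psi$, since swapping $x,y$ in each factor corresponds to applying $\tau$ to the polynomial part.

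\emph{The ideals.} By Lemma~\ref{lem::GH::symmetry}, $\tau(\euJ^{\bS}_\lambda)=\euJ^{\bS}_{\lambda^T}$. Combined with Proposition~\ref{prp::GH::gl}(ii), the swap map therefore carries $\euJ^{\gl}_\lambda$ onto $\euJ^{\gl}_{\lambda^T}$. Passing to quotients gives an isomorphism $\mGH_\lambda^\theta\simeq\mGH_{\lambda^T}$ sending the image of $e_1^N$ to the image of $e_1^N$. For a family $\nu_1,\ldots,\nu_k$ of diagrams of the same size, the same swap map sends the sum $\sum_i\euJ^{\gl}_{\nu_i}$ onto $\sum_i\euJ^{\gl}_{\nu_i^T}$, because it is a linear bijection. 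Hence it induces $\mGH^{\theta}_{\nu_1,\ldots,\nu_k}\simeq\mGH_{\nu_1^T,\ldots,\nu_k^T}$, again compatibly with cyclic vectors. The bigradings are interchanged, which is consistent with the twist by $\theta$.

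There is essentially no obstacle here. The only point to check carefully is that the swap map on $S^N(V[x,y])$ is a $\mathfrak{G}$-module isomorphism from the $\theta$-twist, rather than from the module itself. This is a one-line computation on generators $Xf$ acting by derivations.
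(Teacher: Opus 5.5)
Your proposal is correct and follows essentially the same route as the paper. Both arguments transport $\tau(\euJ^{\bS}_\lambda)=\euJ^{\bS}_{\lambda^T}$ from Lemma~\ref{lem::GH::symmetry} through Proposition~\ref{prp::GH::gl}(ii) to get $\theta(\euJ^{\gl}_\lambda)=\euJ^{\gl}_{\lambda^T}$, and then pass to sums and quotients. Your explicit check, that the swap map on $S^N(V[x,y])$ intertwines $\Psi$ with $\tau$ and is an isomorphism from the $\theta$-twist, just spells out what the paper cites from the proof of Lemma~\ref{lem::sign::properties}(iii).
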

\begin{proof}
By Lemma~\ref{lem::GH::symmetry} and Proposition~\ref{prp::GH::gl}(ii), $\theta(\euJ^{\gl}_\lambda)=\euJ^{\gl}_{\lambda^T}$ as submodules of $S^N(V[x,y])$ (the Schur--Weyl functor commutes with the twist by $\theta$ by the proof of Lemma~\ref{lem::sign::properties}(iii)). Hence $\theta$ maps $\euJ^{\gl}_{\nu_1}+\ldots+\euJ^{\gl}_{\nu_k}$ to $\euJ^{\gl}_{\nu_1^T}+\ldots+\euJ^{\gl}_{\nu_k^T}$.
Note that this lemma does not use Haiman's theorem.
\end{proof}

\begin{lemma}[The case $N=2$]
\label{lem::Butler::base}
We have $\mGH_{(2)}=S^2V\oplus\Lambda^2V\cdot x$ and $\mGH_{(1,1)}=S^2V\oplus\Lambda^2V\cdot y$ (as $\gl_V$-modules, with the indicated bidegrees $(0,0)$, $(1,0)$ and $(0,1)$). The common quotient $\mGH_{(2),(1,1)}\simeq S^2V$ is concentrated in the bidegree $(0,0)$.
\end{lemma}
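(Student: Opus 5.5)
The plan is to compute everything on the Schur--Weyl dual side, in $\mathcal{A}_2$-modules, and then transport the result back with Proposition~\ref{prp::GH::gl}(ii) and Proposition~\ref{prp::N!/k::gl}(i). The approach works directly with the annihilating ideals and does not need Haiman's theorem.

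\emph{Step 1: the one-row module.} For $\lambda=(2)$ we have $\Delta_{(2)}=\pm(x_1-x_2)$, and Example~\ref{ex::GH::row} identifies $\euJ^{\bS}_{(2)}$. It is generated by $y_1,y_2$ and by the positive-degree symmetric polynomials in $x_1,x_2$. The quotient $\bC[\bar z_1,\bar z_2]/\euJ^{\bS}_{(2)}$ therefore has basis $1$ and $x_1-x_2$, in bidegrees $(0,0)$ and $(1,0)$. As $\bS_2$-modules these are the trivial and the sign representations.

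\emph{Step 2: transport to $\gl_V$-modules.} Under $\SW$ the trivial and sign representations correspond to $S^2V$ and $\Lambda^2V$. Grading is preserved by Proposition~\ref{prop::SW::graded::characters}, so $\mGH_{(2)}\simeq S^2V\oplus\Lambda^2V\cdot x$. Explicitly, the $\Lambda^2V$ part is spanned by the images of $(e_ax)(e_b)-(e_bx)(e_a)$. The statement for $(1,1)$ follows either by the same computation with $\Delta_{(1,1)}=\pm(y_1-y_2)$, or from Lemma~\ref{lem::Butler::transpose}.

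\emph{Step 3: the common quotient.} The sum of the ideals $\euJ^{\bS}_{(2)}+\euJ^{\bS}_{(1,1)}$ contains all the variables $x_i$ and $y_i$, so the quotient is $\bC$ in bidegree $(0,0)$. By Proposition~\ref{prp::N!/k::gl}(i) this gives $\mGH_{(2),(1,1)}\simeq S^2V$, concentrated in bidegree $(0,0)$; this is the argument already given in Example~\ref{ex::N!/k::gl}.

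The only point needing care is Step 1. One must check that $\euJ^{\bS}_{(2)}$ is exactly the stated ideal, and not just that it contains those generators. This follows because the quotient by those generators is already $2$-dimensional, spanned by $1$ and $x_1$. Both $1$ and $x_1$ pair nontrivially with some derivative of $x_1-x_2$, so neither lies in $\euJ^{\bS}_{(2)}$ and the ideal is no larger.
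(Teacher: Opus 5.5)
Your proof is correct, but it takes a more elementary route than the paper. The paper's proof is a pure citation. It identifies $\mGH_{(2)}$ with $\sWH_{(2)}=\sW^x_{2\varepsilon_1}=V\star_xV$ via Example~\ref{ex::sWH::row::column} and Corollary~\ref{cor::sWH::GH}, so the bigraded structure $S^2V\oplus\Lambda^2V\cdot x$ is read off from the local Weyl module, whose character is $s_2+q_0s_{11}$. The claim about the common quotient is Example~\ref{ex::N!/k::gl}, which is exactly your Step~3.

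You instead compute the $\bS_2$-side ring $\bC[\bar z_1,\bar z_2]/\euJ^{\bS}_{(2)}$ directly, as the coinvariant algebra with basis $1$, $x_1-x_2$, and transport it through $\SW$. This needs neither the Weyl-module character theory nor Corollary~\ref{cor::sWH::GH}. That corollary's item~(iii) formally invokes Haiman's theorem, although for a one-row diagram this is only the classical coinvariant statement. Your check that the ideal is not larger is fine, because $\euJ^{\bS}_{(2)}$ is multigraded and $1$, $x_1$ lie in different bidegrees, so their non-membership gives linear independence in the quotient.

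What the paper's route buys is the identification $\mGH_{(2)}\simeq W_2=\sW^x_{2\varepsilon_1}$ as a $\mathfrak{G}$-module. This is the form in which the module enters the fusion constructions of \S\ref{sec::Butler}. For the lemma itself, only the bigraded $\gl_V$-structure is needed, and your argument supplies it.
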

\begin{proof}
The first claim is Example~\ref{ex::sWH::row::column} together with Corollary~\ref{cor::sWH::GH}. The second one is the first item of Example~\ref{ex::N!/k::gl}.
\end{proof}

\begin{lemma}[Reduction to two rows]
\label{lem::Butler::reduction}
Let $\mu,\nu\vdash N$ differ in two corners, and let $\rho=\mu\cap\nu$. The two cells $\mu\setminus\nu$ and $\nu\setminus\mu$ lie in rows $i<j$ of $\mu\cup\nu$. Put $r:=\rho_i$ and $r':=\rho_j$ (with $\rho_j=0$ if $j$ exceeds the number of rows of $\rho$); then $r>r'\geq0$. Let $h_1,\ldots,h_p$ be the lengths of the rows of $\rho$ with indices different from $i$ and $j$. Let $\sigma$ be the diagram of the two rows $(r+1,r')$ and $\tau$ the diagram of the two rows $(r,r'+1)$ (so that $\sigma,\tau\vdash r+r'+1$ differ in two corners), and label $\mu,\nu$ in such a way that $\mu$ is the diagram containing the extra cell in the row $i$. Then
\[
\mu=\sigma\cup(h_1)\cup\ldots\cup(h_p),\qquad \nu=\tau\cup(h_1)\cup\ldots\cup(h_p),
\]
and there is a surjective homomorphism of $\mathfrak{G}$-modules
\[
\Theta\colon\ \mGH_{\mu,\nu}\twoheadrightarrow\mGH_{\sigma,\tau}\star_yW_{h_1}\star_y\cdots\star_yW_{h_p}
\]
sending the cyclic vector to the cyclic vector.
\end{lemma}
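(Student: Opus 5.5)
The combinatorial claims come first and are routine. Since $\mu$ and $\nu$ differ in two corners, the extra cells are $a=(r,i)$ and $b=(r',j)$, where $r=\rho_i$ and $r'=\rho_j$. The rows of $\mu$ are then the rows of $\rho$ with $\rho_i$ replaced by $r+1$. The rows of $\nu$ are the rows of $\rho$ with $\rho_j$ replaced by $r'+1$. Because $a$ and $b$ are addable cells in different rows with $i<j$, we have $\rho_{i-1}>r$ (or $i=0$), and $r\ge r'+1$, so $r>r'$. Forgetting the order of rows, this gives $\mu=\sigma\cup(h_1)\cup\ldots\cup(h_p)$ and $\nu=\tau\cup(h_1)\cup\ldots\cup(h_p)$. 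The diagrams $\sigma=(r+1,r')$ and $\tau=(r,r'+1)$ are partitions because $r\ge r'+1$. They differ exactly in the cells $(r,0)$ and $(r',1)$, so they differ in two corners (when $r'=0$, $\tau=(r,1)$ and $\sigma=(r+1)$).

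For the surjection, I will realize everything inside the fusion product $\mathsf{P}:=\mGH_{\rho'}\star_y W_{h_1}\star_y\cdots\star_y W_{h_p}$ at fixed parameters, with $\rho'$ ranging over $\sigma,\tau$ and over their common quotient. By Proposition~\ref{prp::Butler::rows}, $\mGH_\mu\simeq\mGH_\sigma\star_yW_{h_1}\star_y\cdots\star_yW_{h_p}$ and $\mGH_\nu\simeq\mGH_\tau\star_yW_{h_1}\star_y\cdots\star_yW_{h_p}$, compatibly with cyclic vectors. The quotient maps $\mGH_\sigma\twoheadrightarrow\mGH_{\sigma,\tau}$ and $\mGH_\tau\twoheadrightarrow\mGH_{\sigma,\tau}$ send cyclic vector to cyclic vector. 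The module $\mGH_{\sigma,\tau}$ is a finite-dimensional graded cyclic $\mathfrak{G}$-module, being a quotient of $S^{r+r'+1}(V[x,y])$ by a bigraded submodule. So Proposition~\ref{prp::fusion::surjection}, applied with $\fg=\gl_V[x]$ and current variable $y$, gives surjections
\[
\mGH_\mu\twoheadrightarrow\mGH_{\sigma,\tau}\star_yW_{h_1}\star_y\cdots\star_yW_{h_p}\twoheadleftarrow\mGH_\nu ,
\]
both sending cyclic vectors to the cyclic vector.

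I then compose with the projections $S^N(V[x,y])\twoheadrightarrow\mGH_\mu$ and $S^N(V[x,y])\twoheadrightarrow\mGH_\nu$. This yields two surjections $S^N(V[x,y])\to\mathsf{P}':=\mGH_{\sigma,\tau}\star_y\cdots$, each mapping $e_1^N$ to the cyclic vector. By the uniqueness in Proposition~\ref{prp::proj::cov}, they coincide. Hence the common kernel contains both $\euJ^{\gl}_\mu$ and $\euJ^{\gl}_\nu$, and therefore their sum. Equivalently, by Lemma~\ref{lem::cyclic::colimit}, $\mathsf{P}'$ is a cocone over the diagram $\mGH_\mu\twoheadleftarrow S^N(V[x,y])\twoheadrightarrow\mGH_\nu$, so it factors through the colimit $\mGH_{\mu,\nu}$. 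This gives the surjection $\Theta$, which sends cyclic vector to cyclic vector.

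The main point to check carefully is the use of Proposition~\ref{prp::Butler::rows}. It needs the isomorphisms for $\mu$ and $\nu$ at the same parameters, with the row factors $W_{h_t}$ in the same order. That proposition holds for all pairwise distinct parameters, so this is fine. The step also relies on Haiman's theorem through Theorem~\ref{thm::sWH::associativity}(iv). I also need $\mGH_{\sigma,\tau}$ to be graded with its cyclic vector in degree $0$, which holds because the submodules $\euJ^{\gl}$ are bigraded.
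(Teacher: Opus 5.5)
Your proposal is correct and follows the paper's proof essentially step for step: you use the isomorphisms of Proposition~\ref{prp::Butler::rows}, compose them with the surjections induced (via Proposition~\ref{prp::fusion::surjection}) by $\mGH_\sigma,\mGH_\tau\twoheadrightarrow\mGH_{\sigma,\tau}$, and observe that the two resulting maps out of $S^N(V[x,y])$ agree on $e_1^N$, hence coincide and kill $\euJ^{\gl}_\mu+\euJ^{\gl}_\nu$. The inequality $r>r'$ should be justified by addability of the cell in row $j$, namely $\rho_i\geq\rho_{j-1}>\rho_j$; the condition $\rho_{i-1}>r$ that you cite plays no role here.
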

\begin{proof}
The first claim is straightforward. Write $W_\bullet$ for the sequence $W_{h_1},\ldots,W_{h_p}$.
By Proposition~\ref{prp::Butler::rows}, there are isomorphisms $\alpha_\mu\colon\mGH_\mu\simeq\mGH_\sigma\star_yW_\bullet$ and $\alpha_\nu\colon\mGH_\nu\simeq\mGH_\tau\star_yW_\bullet$ preserving the cyclic vectors. Let $p_\sigma\colon\mGH_\sigma\twoheadrightarrow\mGH_{\sigma,\tau}$ and $p_\tau\colon\mGH_\tau\twoheadrightarrow\mGH_{\sigma,\tau}$ be the canonical projections. By Proposition~\ref{prp::fusion::surjection}, the maps
\[
f_\mu:=(p_\sigma\star W_\bullet)\circ\alpha_\mu\colon\mGH_\mu\twoheadrightarrow\mGH_{\sigma,\tau}\star_yW_\bullet,\qquad
f_\nu:=(p_\tau\star W_\bullet)\circ\alpha_\nu\colon\mGH_\nu\twoheadrightarrow\mGH_{\sigma,\tau}\star_yW_\bullet
\]
are surjective and send the cyclic vectors to the same cyclic vector. The compositions $S^N(V[x,y])\twoheadrightarrow\mGH_\mu\to\mGH_{\sigma,\tau}\star_yW_\bullet$ and $S^N(V[x,y])\twoheadrightarrow\mGH_\nu\to\mGH_{\sigma,\tau}\star_yW_\bullet$ send $e_1^N$ to the same vector. By Proposition~\ref{prp::proj::cov} they coincide. Hence this composition kills $\euJ^{\gl}_\mu$ and $\euJ^{\gl}_\nu$, and factors through $\mGH_{\mu,\nu}$. The resulting map $\Theta$ is surjective, because $f_\mu$ is.
\end{proof}

\subsection{The fusion quotient and the lower bound}
\label{sec::Butler::lower}

\begin{definition}
\label{def::GH::fusion::quotient}
For an unordered pair $\{\mu,\nu\}$ of diagrams differing in two corners we define a cyclic $\mathfrak{G}$-module $\mGH'_{\mu,\nu}$ and a surjection $q_{\mu,\nu}\colon\mGH_{\mu,\nu}\twoheadrightarrow\mGH'_{\mu,\nu}$, sending the cyclic vector to the cyclic vector, recursively in $N$.
\begin{enumerate}
\item[(a)] For $N=2$: $\mGH'_{(2),(1,1)}:=\mGH_{(2),(1,1)}\simeq S^2V$ and $q:=\mathrm{id}$ (Lemma~\ref{lem::Butler::base}).
\item[(b)] If $N>2$ and both $\mu$ and $\nu$ have at most two rows, put $\mGH'_{\mu,\nu}:=\big(\mGH'_{\mu^T,\nu^T}\big)^\theta$, and let $q_{\mu,\nu}$ be the twist by $\theta$ of $q_{\mu^T,\nu^T}$ (Lemma~\ref{lem::Butler::transpose}).
\item[(c)] Otherwise, let $\sigma,\tau,h_1,\ldots,h_p$ and $\Theta$ be as in Lemma~\ref{lem::Butler::reduction}, and put
\[
\mGH'_{\mu,\nu}:=\mGH'_{\sigma,\tau}\star_yW_{h_1}\star_y\cdots\star_yW_{h_p},
\]
with $q_{\mu,\nu}$ the composition of $\Theta$ and of the surjection $\mGH_{\sigma,\tau}\star_yW_\bullet\twoheadrightarrow\mGH'_{\sigma,\tau}\star_yW_\bullet$ induced by $q_{\sigma,\tau}$ (Proposition~\ref{prp::fusion::surjection}).
\end{enumerate}
We write $q_\mu\colon\mGH_\mu\twoheadrightarrow\mGH'_{\mu,\nu}$ and $q_\nu\colon\mGH_\nu\twoheadrightarrow\mGH'_{\mu,\nu}$ for the compositions of $q_{\mu,\nu}$ with the projections $\mGH_\mu\twoheadrightarrow\mGH_{\mu,\nu}$ and $\mGH_\nu\twoheadrightarrow\mGH_{\mu,\nu}$.
\end{definition}

The recursion terminates. In case~(c) at least one of the lengths $h_i$ is present, so $|\sigma|<N$: if $p=0$, then $\mu=\sigma$ and $\nu=\tau$ have at most two rows, and we are in case~(a) or (b). In case~(b) the pair $\{\mu^T,\nu^T\}$ has at most two columns and is not of type~(b) again, since for $N>2$ the $2\times2$ box contains at most one diagram of size $N$.

\begin{proposition}
\label{prp::Butler::fusion::quotient}
For every pair $\{\mu,\nu\}$ as above, the $\mathfrak{G}$-module $\mGH'_{\mu,\nu}$ is a finite-dimensional bigraded cyclic module, which is isomorphic to $S^2V\otimes V^{\otimes(N-2)}$ as a $\gl_V$-module.
\end{proposition}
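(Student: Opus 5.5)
The proof is by induction on $N$, following the three cases of Definition~\ref{def::GH::fusion::quotient}. The same recursion shows at each step that $\mGH'_{\mu,\nu}$ is bigraded, cyclic with the image of $e_1^N$ as cyclic vector, finite-dimensional, and has the prescribed $\gl_V$-structure.

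\emph{Case (a), $N=2$.} By Lemma~\ref{lem::Butler::base}, $\mGH'_{(2),(1,1)}=S^2V$ sits in bidegree $(0,0)$. It is cyclic, generated by $e_1^2$, and equals $S^2V\otimes V^{\otimes 0}$.

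\emph{Case (b).} The twist by $\theta$ preserves finite-dimensionality and cyclicity, and it sends the cyclic vector to the cyclic vector. It exchanges the two components of the bigrading, so bigradedness is preserved. Since $\theta$ is trivial on $\gl_V\subset\mathfrak{G}$, the $\gl_V$-module structure is unchanged. By the termination argument after Definition~\ref{def::GH::fusion::quotient}, the pair $\{\mu^T,\nu^T\}$ falls under case~(a) or~(c), so the induction hypothesis (or the already-treated case at the same $N$) applies.

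\emph{Case (c).} Here $\mGH'_{\mu,\nu}=\mGH'_{\sigma,\tau}\star_yW_{h_1}\star_y\cdots\star_yW_{h_p}$ with $|\sigma|<N$. By induction, $\mGH'_{\sigma,\tau}$ is a finite-dimensional graded cyclic $\gl_V[x][y]$-module with cyclic vector in $y$-degree $0$, so Definition~\ref{def::fusion::graded} applies to $\fg=\gl_V[x]$. The modules $W_h\simeq\sW^x_{h\varepsilon_1}$ live in $y$-degree $0$ and are cyclic. By Lemma~\ref{lem::fusion::graded::cyclic} the tensor product of the twisted modules is cyclic, so the fusion product is a graded cyclic $\mathfrak{G}$-module. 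The $x$-grading survives by Remark~\ref{rem::fusion::graded}(ii), since the fusion filtration consists of $x$-graded subspaces ($y$-shifts do not touch the $x$-degree). Its dimension is the product of the dimensions of the factors.

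For the $\gl_V$-structure in case (c), we use that passing to the associated graded of a filtration by $\gl_V$-submodules does not change the $\gl_V$-character, and that polynomial $\gl_V$-modules are semisimple (Remark~\ref{rem::fusion::graded}(i)). Hence the fusion product is isomorphic, as a $\gl_V$-module, to $\mGH'_{\sigma,\tau}\otimes\bigotimes_i W_{h_i}$. By induction $\mGH'_{\sigma,\tau}\simeq S^2V\otimes V^{\otimes(|\sigma|-2)}$, and $W_h\simeq V^{\otimes h}$ by Theorem~\ref{thm::Weyl::fusion}. Since $|\sigma|+\sum_ih_i=N$, this gives $S^2V\otimes V^{\otimes(N-2)}$.

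The only delicate point is that $\mGH'_{\sigma,\tau}$ must actually lie in the category where the fusion product is defined: it must be graded, not merely filtered, with homogeneous cyclic vector and polynomial graded components. This is exactly what the induction hypothesis supplies. In the $\theta$-twisted case one needs the twisted module to again be $y$-graded with cyclic vector in $y$-degree $0$; this holds because the cyclic vector has bidegree $(0,0)$.
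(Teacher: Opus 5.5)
Your proposal is correct and follows the paper's proof: induction along the three cases of Definition~\ref{def::GH::fusion::quotient}. In case~(b) the twist by $\theta$ leaves the $\gl_V$-structure unchanged, and in case~(c) Remark~\ref{rem::fusion::graded}(i) together with $|\sigma|+\sum_i h_i=N$ gives $S^2V\otimes V^{\otimes(N-2)}$. You also spell out the bigrading, finite-dimensionality and cyclicity checks that the paper leaves implicit; one small correction is that in case~(b) the transposed pair is always of type~(c), because $N>2$ rules out type~(a).
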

\begin{proof}
We use induction along Definition~\ref{def::GH::fusion::quotient}. Case~(a) is clear, and case~(b) follows since the twist by $\theta$ does not change the $\gl_V$-module structure. In case~(c), the fusion product is isomorphic to the tensor product of the factors as a $\gl_V$-module (Remark~\ref{rem::fusion::graded}(i)). By the induction hypothesis, $\mGH'_{\sigma,\tau}\star_yW_\bullet\simeq S^2V\otimes V^{\otimes(r+r'-1)}\otimes V^{\otimes h_1}\otimes\cdots\otimes V^{\otimes h_p}$, and $r+r'+1+\sum_ih_i=N$.
\end{proof}

\begin{corollary}[Lower bound]
\label{cor::Butler::lower}
Let $\mu,\nu\vdash N$ differ in two corners. The common quotient $\mGH_{\mu,\nu}$ surjects onto the module $\mGH'_{\mu,\nu}\simeq S^2V\otimes V^{\otimes(N-2)}$. Consequently, the weight subspace of weight $(1^N,0^{m-N})$ of $\mGH_{\mu,\nu}$ has dimension at least $\frac{N!}{2}$; in other words, $\dim(\mGH^{\bS}_\mu\cap\mGH^{\bS}_\nu)\geq\frac{N!}{2}$. The $\frac{N!}{2}$-conjecture for $\mu,\nu$ is equivalent to the statement that the surjection $q_{\mu,\nu}\colon\mGH_{\mu,\nu}\twoheadrightarrow\mGH'_{\mu,\nu}$ is an isomorphism.
\end{corollary}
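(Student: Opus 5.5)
The surjection $q_{\mu,\nu}\colon\mGH_{\mu,\nu}\twoheadrightarrow\mGH'_{\mu,\nu}$ of Definition~\ref{def::GH::fusion::quotient} has already been constructed, and it sends the cyclic vector to the cyclic vector. By Proposition~\ref{prp::Butler::fusion::quotient}, its target is isomorphic to $S^2V\otimes V^{\otimes(N-2)}$ as a $\gl_V$-module. So the first claim is just the existence of $q_{\mu,\nu}$. The well-definedness of $q_{\mu,\nu}$ comes from the recursion along cases (a)--(c): case (c) uses Lemma~\ref{lem::Butler::reduction} and Proposition~\ref{prp::fusion::surjection}, and case (b) uses Lemma~\ref{lem::Butler::transpose}, since the twist by $\theta$ preserves surjectivity. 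The recursion terminates, as remarked after the definition.

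For the lower bound, I apply the exact functor $\WS$ of Theorem~\ref{thm::SW::currents}, i.e.\ I take the weight subspace of weight $(1^N,0^{m-N})$. A surjection of $\gl_V$-modules stays surjective on this weight space. The weight space of $S^2V\otimes V^{\otimes(N-2)}$ is the classical Schur--Weyl dual $\mathrm{Ind}_{\bS_2}^{\bS_N}\bC$, of dimension $N!/2$; equivalently, this is the coefficient $\langle h_2h_1^{N-2},h_1^N\rangle=N!/2$. Hence $\dim\WS(\mGH_{\mu,\nu})\geq N!/2$. Proposition~\ref{prp::N!/k::gl}(i) identifies this weight space dimension with $\dim(\mGH^{\bS}_\mu\cap\mGH^{\bS}_\nu)$, which gives the stated inequality for the intersection.

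For the equivalence, the argument mirrors Corollary~\ref{cor::sWH::GH}(ii). A surjection of finite-dimensional modules is an isomorphism iff the dimensions agree. Since $\WS$ is an equivalence, $q_{\mu,\nu}$ is an isomorphism iff $\WS(q_{\mu,\nu})$ is. The latter is a surjection onto a space of dimension $N!/2$, so it is an isomorphism iff $\dim\WS(\mGH_{\mu,\nu})=N!/2$. By Proposition~\ref{prp::N!/k::gl}(i), this equality is exactly item (i) of the $\frac{N!}{2}$-conjecture.

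There is one point I would check carefully, and it is the only potential obstacle. The equivalence is stated for the dimension part (i) of Conjecture~\ref{conj::N!/k}; the graded character part would need the character of $\mGH'_{\mu,\nu}$, which is computed later. I would also confirm that the Haiman-dependent identification $\sWH_\lambda\simeq\mGH_\lambda$ enters only through Proposition~\ref{prp::Butler::rows} inside Lemma~\ref{lem::Butler::reduction}, so the corollary holds under the standing assumption of this section.
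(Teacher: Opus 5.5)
Your proposal is correct and follows essentially the same route as the paper. You take $q_{\mu,\nu}$ from the recursive definition and count $\binom{N}{2}(N-2)!=\frac{N!}{2}$ for the $(1^N,0^{m-N})$-weight space of $S^2V\otimes V^{\otimes(N-2)}$; you obtain this via $\langle h_2h_1^{N-2},h_1^N\rangle$, while the paper lists basis tensors. You then use exactness of the weight-space functor and the fact that $\WS$ is an equivalence, so that ``$q_{\mu,\nu}$ is an isomorphism'' becomes ``this weight space of $\mGH_{\mu,\nu}$ has dimension $\frac{N!}{2}$''. Your reading of the conjecture as its dimension statement matches the paper's usage at the start of that section.
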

\begin{proof}
The weight subspace of weight $(1^N,0^{m-N})$ of $S^2V\otimes V^{\otimes(N-2)}$ is spanned by the tensors $(e_ae_b)\otimes e_{c_1}\otimes\ldots\otimes e_{c_{N-2}}$ with pairwise distinct indices $\{a,b,c_1,\ldots,c_{N-2}\}=\{1,\ldots,N\}$ and has dimension $\binom{N}{2}(N-2)!=\frac{N!}{2}$. The remaining claims follow from Proposition~\ref{prp::N!/k::gl}(i) and Conjecture~\ref{conj::N!/k::gl}(i). Indeed, the functor $\WS$ is an equivalence of categories, so the surjection $q_{\mu,\nu}$ is an isomorphism if and only if it induces an isomorphism of the weight subspaces of weight $(1^N,0^{m-N})$. Since $q_{\mu,\nu}$ is surjective, this happens if and only if the dimension of this weight subspace of $\mGH_{\mu,\nu}$ equals $\frac{N!}{2}$.
\end{proof}

\begin{example}
\label{ex::Butler::two::columns}
Let $\mu=(2^k,1^{l+2})$ and $\nu=(2^{k+1},1^l)$ be two diagrams with at most two columns. Then $\rho=(2^k,1^{l+1})$ and the extra cells lie in the rows $i=k$ and $j=k+l+1$; $r=1$, $r'=0$, and $\{\sigma,\tau\}=\{(1,1),(2)\}$. The remaining rows of $\rho$ are $k$ rows of length $2$ and $l$ rows of length $1$. Hence
\[
\mGH'_{\mu,\nu}=S^2V\star_y\underbrace{W_2\star_y\cdots\star_yW_2}_{k}\star_y\underbrace{V\star_y\cdots\star_yV}_{l},
\]
whose underlying $\gl_V$-module is $S^2V\otimes V^{\otimes(2k+l)}$.
\end{example}

\subsection{The exact sequence}
\label{sec::Butler::exact}

\subsubsection{An abstract principle}
Let $z=y$. Recall from \S\ref{sec::cofusion} that, for a cyclic module $M$ with a cyclic vector $v$ and a cocyclic vector $w$, we have the increasing fusion filtration $F_s=F^y_s(M;v)$ and the decreasing cofusion filtration $G^t=G^t_y(M;w)$.

\begin{lemma}
\label{lem::sub::quot::filtrations}
Let $0\to X\to M\xrightarrow{\ p\ }Q\to0$ be an exact sequence of finite-dimensional $\mathfrak{G}$-modules. Let $v\in M$ be cyclic and $w\in M$ cocyclic.
\begin{enumerate}
\item The vector $p(v)$ is cyclic in $Q$, and the fusion filtration of $Q$ is $F_s(Q;p(v))=p(F_s)$.
\item If $X\neq0$, then $w\in X$ is cocyclic in $X$, and the cofusion filtration of $X$ is $G^t(X;w)=G^t\cap X$.
\end{enumerate}
Consequently, the induced map $\mathrm{gr}_FM\to\mathrm{gr}_FQ$ is surjective and the induced map $\mathrm{gr}_GX\to\mathrm{gr}_GM$ is injective.
\end{lemma}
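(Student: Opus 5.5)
The plan is to argue directly from the definitions of the two filtrations, using only that $p$ and the inclusion $X\subset M$ are $\mathfrak{G}$-homomorphisms.

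\emph{Item (i).} Since $p$ is surjective and $\mathfrak{G}$-linear, we have $Q=p(U(\mathfrak{G})v)=U(\mathfrak{G})p(v)$, so $p(v)$ is cyclic. The same computation with $U(\mathfrak{G})^y_{\leq s}$ in place of $U(\mathfrak{G})$ gives
\[
F_s(Q;p(v))=U(\mathfrak{G})^y_{\leq s}\,p(v)=p\big(U(\mathfrak{G})^y_{\leq s}v\big)=p(F_s),
\]
exactly as in Step~2 of the proof of Proposition~\ref{prp::fusion::surjection}. Surjectivity of $\mathrm{gr}_FM\to\mathrm{gr}_FQ$ then follows as in Step~3 there: $p(F_{s-1})\subset F_{s-1}(Q)$, so the graded map is well defined, and it is onto because $F_s(Q)=p(F_s)$.

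\emph{Item (ii).} If $X\neq0$, then $X$ contains $w$ by cocyclicity. Every nonzero submodule of $X$ is a nonzero submodule of $M$, hence contains $w$, so $w$ is cocyclic in $X$. The only point requiring care is the functional: $\pi_w$ on $X$ must be compared with the restriction of $\pi_w^M$ to $X$.

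Let $L=U(\gl_V)w$. It occurs with multiplicity one in $M$, so it occurs with multiplicity one in $X$, and the $L$-isotypic component of $X$ is $X\cap(\text{isotypic part of }M)$. By semisimplicity of polynomial $\gl_V$-modules, the isotypic projection of $M$ restricts to the isotypic projection of $X$. Hence $\pi^X_w=\pi^M_w|_X$, possibly after rescaling the coordinate. Then for $u\in X$,
\[
u\in G^t(X;w)\iff \pi_w\big(U(\mathfrak{G})^y_{<t}u\big)=0\iff u\in G^t\cap X,
\]
because $U(\mathfrak{G})^y_{<t}u\subset X$.

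For injectivity of $\mathrm{gr}_GX\to\mathrm{gr}_GM$, note that the map $(G^t\cap X)/(G^{t+1}\cap X)\to G^t/G^{t+1}$ is injective: if $u\in G^t\cap X$ lies in $G^{t+1}$, then $u\in G^{t+1}\cap X$. The map commutes with the $\mathfrak{G}$-action since the inclusion does.

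The main (minor) obstacle is the identification $\pi^X_w=\pi^M_w|_X$ of the normalized functionals. This is harmless because cofusion filtrations depend only on $\ker\pi_w$, which is unchanged by scalars.
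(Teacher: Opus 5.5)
Your proposal is correct and follows the paper's proof essentially verbatim. Part (i) rests on the identity $U(\mathfrak{G})^y_{\leq s}\,p(v)=p\big(U(\mathfrak{G})^y_{\leq s}v\big)$, and part (ii) on the fact that the multiplicity-one isotypic projection of $M$ restricts to that of $X$, so $\pi_w$ on $X$ is the restriction and $G^t(X;w)=G^t\cap X$. The graded surjectivity and injectivity you spell out are the routine consequences the paper leaves implicit.
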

\begin{proof}
(i) $U(\mathfrak{G})^y_{\leq s}\,p(v)=p\big(U(\mathfrak{G})^y_{\leq s}v\big)$.
(ii) Every nonzero submodule of $M$ contains $w$, so $w\in X$. The $\gl_V$-module $L$ generated by $w$ occurs with multiplicity one in $M$, hence also in $X$, and the isotypic projection of $X$ is the restriction of the one of $M$. Thus $\pi_w|_X=\pi_w$ is the functional attached to $w\in X$, and the definition of $G^t$ gives $G^t(X;w)=G^t\cap X$.
\end{proof}

The associated graded spaces $\mathrm{gr}_GX\subset\mathrm{gr}_GM$ and $\mathrm{gr}_FM\twoheadrightarrow\mathrm{gr}_FQ$ are defined with respect to different filtrations of $M$, so in general the two maps cannot be composed. If the two filtrations coincide, they can.

\begin{proposition}
\label{prp::coincidence::exact}
In the situation of Lemma~\ref{lem::sub::quot::filtrations} assume that there is an integer $d$ such that $F_s=G^{d-s}$ for all $s$. Identify $\mathrm{gr}_FM$ with $\mathrm{gr}_GM$ by $F_s/F_{s-1}=G^{d-s}/G^{d-s+1}$. Then the sequence
\[
0\longrightarrow\mathrm{gr}_GX\longrightarrow\mathrm{gr}_GM=\mathrm{gr}_FM\longrightarrow\mathrm{gr}_FQ\longrightarrow0
\]
is exact.
\end{proposition}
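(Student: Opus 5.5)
The plan is to build the sequence degree by degree, using the identification $F_s/F_{s-1}=G^{d-s}/G^{d-s+1}$, and then to count dimensions. First I define the two maps. By Lemma~\ref{lem::sub::quot::filtrations}(ii), the map $\iota\colon\mathrm{gr}_GX\to\mathrm{gr}_GM$ is induced by the inclusions $G^t\cap X\subset G^t$. It is injective because $(G^t\cap X)\cap G^{t+1}=G^{t+1}\cap X$. By Lemma~\ref{lem::sub::quot::filtrations}(i), the map $\bar p\colon\mathrm{gr}_FM\to\mathrm{gr}_FQ$ is induced by $p\colon F_s\twoheadrightarrow p(F_s)=F_s(Q)$, and it is surjective. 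Both maps are homomorphisms of graded $\mathfrak{G}$-modules; this is the same check as in Step~3 of Proposition~\ref{prp::fusion::surjection}.

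Next I check the gradings. The degree $-t$ piece of $\mathrm{gr}_GM$ is $G^t/G^{t+1}=F_{d-t}/F_{d-t-1}$, which is the degree $d-t$ piece of $\mathrm{gr}_FM$. So after identifying $\mathrm{gr}_GM$ with $\mathrm{gr}_FM$, the module $\mathrm{gr}_GX$ sits inside with its grading shifted by $d$. I will state the sequence with this implicit shift $\langle d\rangle$ on the $\mathrm{gr}_G$ terms. The identification itself is compatible with the $\mathfrak{G}$-action: both actions are induced by the action of $\mathfrak{G}$ on $M$, and an element of $y$-degree $k$ maps $F_s\to F_{s+k}$ and $G^{t}\to G^{t-k}$, which match under $t=d-s$.

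The main step is exactness in the middle. In the degree corresponding to $s$ (that is, $t=d-s$), the image of $\iota$ is
\[
\big((G^{t}\cap X)+G^{t+1}\big)/G^{t+1}=\big((F_s\cap X)+F_{s-1}\big)/F_{s-1}.
\]
The kernel of $\bar p$ is $\{[u]\colon u\in F_s,\ p(u)\in p(F_{s-1})\}$. This equals $\big(F_{s-1}+(F_s\cap X)\big)/F_{s-1}$: if $p(u)=p(u')$ with $u'\in F_{s-1}$, then $u-u'\in F_s\cap X$. So the two subspaces coincide degreewise, which gives exactness in the middle. Combined with the injectivity and surjectivity above, the sequence is exact.

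I expect the only real subtlety to be the bookkeeping of the grading and action under the identification, since the set-theoretic verification is just the computation above. As a consistency check, the dimensions satisfy $\dim\mathrm{gr}_GX+\dim\mathrm{gr}_FQ=\dim X+\dim Q=\dim M$.
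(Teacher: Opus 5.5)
Your proof is correct and close to the paper's. The paper also gets injectivity and surjectivity from Lemma~\ref{lem::sub::quot::filtrations}. It then checks only that the composition vanishes and obtains exactness in the middle from the count $\dim\mathrm{gr}_GX+\dim\mathrm{gr}_FQ=\dim X+\dim Q=\dim M$.

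You instead show directly that, in each degree, the image of $\mathrm{gr}_GX$ and the kernel of $\mathrm{gr}_FM\to\mathrm{gr}_FQ$ both equal $\big((F_s\cap X)+F_{s-1}\big)/F_{s-1}$. Your kernel computation is exactly the argument of Lemma~\ref{lem::gr::kernel}. This makes the dimension count only a consistency check and gives the middle exactness degree by degree without counting; the paper's version is a bit shorter because the vanishing of the composition is immediate.
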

\begin{proof}
Injectivity and surjectivity hold by Lemma~\ref{lem::sub::quot::filtrations}. For the composition, let $u\in G^t(X;w)=G^t\cap X$. Its class in $G^t/G^{t+1}=F_{d-t}/F_{d-t-1}$ maps to the class of $p(u)=0$ in $F_{d-t}(Q)/F_{d-t-1}(Q)$. Finally, an associated graded space has the dimension of the filtered space, so $\dim\mathrm{gr}_GX+\dim\mathrm{gr}_FQ=\dim X+\dim Q=\dim M$. Together with injectivity, surjectivity and the vanishing of the composition, this gives exactness in the middle.
\end{proof}

\subsubsection{The Gorenstein isomorphism}
By Corollary~\ref{cor::GH::self::sign}(i), for every diagram $\mu$ the cyclic module $\mGH_\mu$ is isomorphic to its contravariant sign dual $\mGH_\mu^\circ=\bomega(\mGH_\mu)^\dagger$ up to a shift of the bigrading.
The weight space of weight $N\varepsilon_1$ of $\mGH_\mu$ is one-dimensional (it lies in the component $S^NV$ of $V^{\otimes N}$), and a homomorphism from a cyclic module is determined by the image of the cyclic vector. Hence every two isomorphisms $\mGH_\mu^\circ\simeq\mGH_\mu$ differ by a nonzero scalar, and we fix one of them:
\begin{equation}
\label{eq::iota}
\iota_\mu\colon\ \mGH_\mu^\circ\ \xrightarrow{\ \sim\ }\ \mGH_\mu\big\langle-\mdeg(\Delta_\mu)\big\rangle.
\end{equation}
(Via the Schur--Weyl duality, $\iota_\mu$ is the Gorenstein duality $f\mapsto f(\partial)\Delta_\mu$ of the algebra $\bC[\bar{z}_1,\ldots,\bar{z}_N]/\euJ^{\bS}_\mu$.)

\begin{theorem}
\label{thm::Butler::exact}
Let $\mu,\nu\vdash N$ differ in two corners. The sequence of bigraded $\mathfrak{G}$-modules
\begin{equation}
\label{eq::exact::sequence}
0\longrightarrow\big(\mGH'_{\mu,\nu}\big)^{\circ}\xrightarrow{\ \iota_\mu\circ(q_\mu)^{\circ}\ }\mGH_\mu\big\langle-\mdeg(\Delta_\mu)\big\rangle\xrightarrow{\ \ q_\mu\ \ }\mGH'_{\mu,\nu}\big\langle-\mdeg(\Delta_\mu)\big\rangle\longrightarrow0
\end{equation}
is exact. The same holds with $\nu$ in place of $\mu$, with the same module $\mGH'_{\mu,\nu}$.
\end{theorem}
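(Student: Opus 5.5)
Since $q_\mu$ is surjective by construction and $(-)^\circ$ is an exact contravariant involution, the map $\iota_\mu\circ(q_\mu)^\circ$ is injective with image the submodule $\iota_\mu\big((\mGH'_{\mu,\nu})^\circ\big)$. The content of the theorem is therefore twofold: the composition $q_\mu\circ\iota_\mu\circ(q_\mu)^\circ$ vanishes, and the dimensions add up, $\dim\mGH_\mu=2\dim\mGH'_{\mu,\nu}$. The dimension count is immediate. By Corollary~\ref{cor::N!::gl}, $\mGH_\mu\simeq V^{\otimes N}$. By Proposition~\ref{prp::Butler::fusion::quotient}, $\mGH'_{\mu,\nu}\simeq S^2V\otimes V^{\otimes(N-2)}$, and its sign dual is $\Lambda^2V\otimes V^{\otimes(N-2)}$. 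Since $V\otimes V=S^2V\oplus\Lambda^2V$, the $\gl_V$-characters of the three terms are compatible with exactness. So the whole work is in proving the vanishing of the composition, i.e. that $\iota_\mu((\mGH'_{\mu,\nu})^\circ)\subset\ker q_\mu$. Equality then follows from the dimension count.

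I would prove the vanishing by induction along the recursion of Definition~\ref{def::GH::fusion::quotient}. I would first reformulate it as the orthogonality statement that the bilinear (Gorenstein) pairing on $\mGH_\mu$ defined by $\iota_\mu$ kills $\ker q_\mu\times\ker q_\mu^{\perp}$. Equivalently, the annihilator of $\ker q_\mu$ under this pairing contains $\ker q_\mu$, i.e. $\ker q_\mu$ is coisotropic. With this formulation the cases go as follows.

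In case (a), $N=2$, the statement is explicit from Lemma~\ref{lem::Butler::base}. The kernel is $\Lambda^2V\cdot x$, which is the socle, so it is killed by the pairing with everything of positive degree.

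In case (b), the statement transports under the twist $\theta$, using Lemma~\ref{lem::Butler::transpose} and the compatibility of $\circ$ with twists (Lemma~\ref{lem::sign::properties}(iii)).

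In case (c), $\mGH_\mu\simeq\mGH_\sigma\star_yW_\bullet$ by Proposition~\ref{prp::Butler::rows}, and $q_\mu$ is $q_\sigma\star W_\bullet$. By induction we already have the exact sequence for $\sigma$:
\[
0\to(\mGH'_{\sigma,\tau})^\circ\to\mGH_\sigma\to\mGH'_{\sigma,\tau}\to0 .
\]
I would tensor it with the twisted modules $W_{h_i}(c_i)$ to obtain an exact sequence
\[
0\to X\to M\to Q\to0,\qquad M=\mGH_\sigma(c_0)\otimes\textstyle\bigotimes_i W_{h_i}(c_i).
\]
Now apply Proposition~\ref{prp::coincidence::exact}. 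Its hypothesis $F_s=G^{d-s}$ on $M$ is Theorem~\ref{thm::fusion::cofusion}(ii), with $\lambda^1=\sigma$ and $\lambda^{i+1}=(h_i)$. This yields an exact sequence
\[
0\to\mathrm{gr}_GX\to\mathrm{gr}_FM\to\mathrm{gr}_FQ\to0 .
\]
In it, $\mathrm{gr}_FM\simeq\mGH_\mu$ by Theorem~\ref{thm::fusion::cofusion}(iii), and $\mathrm{gr}_FQ=\mGH'_{\sigma,\tau}\star_yW_\bullet=\mGH'_{\mu,\nu}$; the surjection agrees with $q_\mu$ because both send the cyclic vector to the cyclic vector (Proposition~\ref{prp::proj::cov}). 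By Proposition~\ref{prp::sign::cofusion}, $\mathrm{gr}_GX$ is the cofusion product $(\mGH'_{\sigma,\tau})^\circ\circledast_yW_\bullet^\circ$, which is $\big(\mGH'_{\sigma,\tau}\star_yW_\bullet\big)^\circ=(\mGH'_{\mu,\nu})^\circ$ up to shifts, using $W_h^\circ\simeq W_h$ (Theorem~\ref{thm::sign::Weyl}(iii)). This gives an exact sequence of the required shape.

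It remains to identify the injection with $\iota_\mu\circ(q_\mu)^\circ$. Both are injective maps $(\mGH'_{\mu,\nu})^\circ\to\mGH_\mu$ whose images are the kernel of $q_\mu$ (for ours, after the inclusion into $\ker q_\mu$ is shown), so they agree up to an automorphism. More directly, in any exact sequence of this form the image of the first map must equal $\ker q_\mu$, and it suffices to show $\iota_\mu\circ(q_\mu)^\circ$ lands in $\ker q_\mu$. For this, I would check that the image of $(q_\mu)^\circ\!\circ\iota_\mu^{-1}$-dual submodule equals $\ker q_\mu$. This holds by applying $\circ$ to the constructed sequence, which is self-dual by the symmetry $\mGH_\mu^\circ\simeq\mGH_\mu$: the dual of $\ker q_\mu\hookrightarrow\mGH_\mu$ is $\mGH_\mu\twoheadrightarrow(\ker q_\mu)^\circ\simeq\mGH'_{\mu,\nu}$, a surjection sending cyclic vector to cyclic vector, hence equal to $q_\mu$ up to scalar.

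I expect this last bookkeeping step to be the main obstacle: tracking the shifts of bigrading and verifying that the constructed sequence is self-dual under $\circ$, so that its first map really is $\iota_\mu\circ(q_\mu)^\circ$. The statement for $\nu$ is identical, using $\tau$ in place of $\sigma$.
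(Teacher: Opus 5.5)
Your proposal is correct, and its skeleton is the paper's: induction along Definition~\ref{def::GH::fusion::quotient}, a direct check for $N=2$, transport by $\theta$ in case~(b), and in case~(c) the same modules $M$, $Q$, $X=\ker(q_\sigma\otimes\mathrm{id})$, with Theorem~\ref{thm::fusion::cofusion}(ii) feeding Proposition~\ref{prp::coincidence::exact}.

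The real difference is the last step of case~(c). The paper shows that the inclusion $\mathrm{gr}_GX\hookrightarrow\mathrm{gr}_FM=\mGH_\mu$ \emph{is} $\iota_\mu\circ(q_\mu)^\circ$. It does this by checking that the duality identifications of Lemma~\ref{lem::cofusion::dual} and Proposition~\ref{prp::sign::cofusion} are natural with respect to $\mathsf{p}^\circ\colon Q^\circ\hookrightarrow M^\circ$. You extract only the abstract isomorphism $K_\mu:=\ker q_\mu\simeq\mathrm{gr}_GX\simeq(\mGH'_{\mu,\nu})^\circ$, up to grading shifts, from Proposition~\ref{prp::sign::cofusion} applied to $X\simeq Q^\circ$. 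You then dualize $0\to K_\mu\to\mGH_\mu\to\mGH'_{\mu,\nu}\to0$. The image of $\iota_\mu\circ(q_\mu)^\circ$ is the kernel of the surjection $\mGH_\mu\xrightarrow{\iota_\mu^{-1}}\mGH_\mu^\circ\twoheadrightarrow K_\mu^\circ\simeq\mGH'_{\mu,\nu}$. That surjection is a nonzero multiple of $q_\mu$, because $\mGH_\mu$ is cyclic and the weight-$N\varepsilon_1$ space of $\mGH'_{\mu,\nu}\simeq S^2V\otimes V^{\otimes(N-2)}$ is one-dimensional.

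This route is sound and more economical. The inductive content reduces to the abstract statement $\ker q_\mu\simeq(\mGH'_{\mu,\nu})^\circ$, which is also all the induction hypothesis must supply to get $X\simeq Q^\circ$. No bookkeeping of shifts or naturality is needed, and your preliminary ``vanishing plus dimension count'' reduction becomes unnecessary. In exchange, the paper's route identifies the map at the level of filtrations.

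None of the following is used in case~(c), but fix these slips in the preliminary part:
\begin{itemize}
\item The claim $\dim\mGH_\mu=2\dim\mGH'_{\mu,\nu}$ is false, since $\bomega$ exchanges $S^2V$ and $\Lambda^2V$ and so changes dimensions. The correct count is $\dim\mGH'_{\mu,\nu}+\dim(\mGH'_{\mu,\nu})^\circ$, or $\frac{N!}{2}+\frac{N!}{2}$ on the weight $(1^N,0^{m-N})$.
\item The pairing given by $\iota_\mu$ pairs $\mGH_\mu$ with $\bomega(\mGH_\mu)$; it is not a form on $\mGH_\mu$. The required condition is that every $g\in\mGH_\mu$ orthogonal to $\bomega(\ker q_\mu)$ lies in $\ker q_\mu$. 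Your phrasing ``kills $\ker q_\mu\times\ker q_\mu^{\perp}$'' is tautological.
\item The sentence saying both injections have image $\ker q_\mu$ and hence agree up to an automorphism is circular. The argument after ``More directly'' is what actually closes the proof.
\end{itemize}
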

\begin{proof}
We use induction along Definition~\ref{def::GH::fusion::quotient}. The map $(q_\mu)^\circ$ is injective and $q_\mu$ is surjective, because $\circ$ is an exact contravariant functor; so we have to prove exactness in the middle only.

\emph{Case~(a): $N=2$.} The kernel of $q_{(2)}\colon\mGH_{(2)}=S^2V\oplus\Lambda^2V\cdot x\to S^2V$ is the submodule $\Lambda^2V\cdot x$. The image of $\iota_{(2)}\circ(q_{(2)})^\circ$ is a submodule of $\mGH_{(2)}$ isomorphic to $(S^2V)^\circ=\Lambda^2V$, and the module $\mGH_{(2)}$ contains only one copy of $\Lambda^2V$. The same argument works for $\nu=(1,1)$.

\emph{Case~(b).} The sign duality and the contragredient duality commute with the twist by $\theta$ (Lemma~\ref{lem::sign::properties}(iii)), and $\mGH_{\mu^T}^\theta\simeq\mGH_\mu$ (Lemma~\ref{lem::Butler::transpose}). The twist by $\theta$ of the isomorphism $\iota_{\mu^T}$ is an isomorphism $\mGH_\mu^\circ\to\mGH_\mu$, hence a nonzero multiple of $\iota_\mu$. Therefore the twist by $\theta$ of the exact sequence for $\{\mu^T,\nu^T\}$ is the exact sequence for $\{\mu,\nu\}$.

\emph{Case~(c).} Let $\sigma,\tau,h_1,\ldots,h_p$ be as in Lemma~\ref{lem::Butler::reduction}, with $\sigma$ corresponding to $\mu$. Take pairwise distinct numbers $c_0,\ldots,c_p$ and consider
\[
M:=\mGH_\sigma(c_0)\otimes W_{h_1}(c_1)\otimes\cdots\otimes W_{h_p}(c_p),\qquad
Q:=\mGH'_{\sigma,\tau}(c_0)\otimes W_{h_1}(c_1)\otimes\cdots\otimes W_{h_p}(c_p),
\]
and the surjection $\mathsf{p}:=q_\sigma\otimes\mathrm{id}\colon M\to Q$. Put $X:=\ker\mathsf{p}$. By the induction hypothesis for $\{\sigma,\tau\}$, $\ker q_\sigma$ is the image of $\iota_\sigma\circ(q_\sigma)^\circ$, hence
\[
X=\ker(q_\sigma)(c_0)\otimes W_{h_1}(c_1)\otimes\cdots\otimes W_{h_p}(c_p)
\]
is the image of $Q^\circ\to M^\circ\simeq M$, where the first map is $\mathsf{p}^\circ$ and the second one is the tensor product of the isomorphisms $\iota_\sigma,\iota_{(h_1)},\ldots,\iota_{(h_p)}$ (the sign duality commutes with twists and tensor products by Lemma~\ref{lem::sign::properties}).

By Theorem~\ref{thm::fusion::cofusion} applied to $\lambda^1=\sigma$, $\lambda^{i+1}=(h_i)$, $\lambda=\mu$ and $d=n(\mu)$, we have $F_s=G^{d-s}$ for the fusion and cofusion filtrations of $M$, and $\mathrm{gr}_FM\simeq\mGH_\mu$ by Proposition~\ref{prp::Butler::rows}. Proposition~\ref{prp::coincidence::exact} gives an exact sequence
\begin{equation}
\label{eq::gr::exact}
0\to\mathrm{gr}_GX\to\mathrm{gr}_FM=\mGH_\mu\xrightarrow{\ \mathrm{gr}\,\mathsf{p}\ }\mathrm{gr}_FQ\to0 .
\end{equation}
By the definition of the fusion product, $\mathrm{gr}_FQ=\mGH'_{\sigma,\tau}\star_yW_\bullet=\mGH'_{\mu,\nu}$, and $\mathrm{gr}\,\mathsf{p}$ is a homomorphism of cyclic modules sending the cyclic vector to the cyclic vector, i.e., $\mathrm{gr}\,\mathsf{p}=q_\mu$. So it remains to identify $\mathrm{gr}_GX\subset\mathrm{gr}_GM$ with the image of $\iota_\mu\circ(q_\mu)^\circ$.

Let $G$ be the cofusion filtration of $M$. By Proposition~\ref{prp::sign::cofusion} and Lemma~\ref{lem::cofusion::dual}, $G^t(M^\circ)$ is the annihilator in $M^\circ=\bomega(M)^\dagger$ of $\bomega(F_{t-1}M)$. These identifications are natural for the surjection $\mathsf{p}$: the injection $\mathsf{p}^\circ\colon Q^\circ\to M^\circ$ is compatible with the filtrations (Lemma~\ref{lem::sub::quot::filtrations}), and the induced map on the associated graded spaces is $(\mathrm{gr}\,\mathsf{p})^\circ\colon(\mathrm{gr}_FQ)^\circ\to(\mathrm{gr}_FM)^\circ$. Transporting this by the isomorphism $M^\circ\simeq M$ we see that $\mathrm{gr}_GX\to\mathrm{gr}_GM=\mathrm{gr}_FM$ is the composition of $(q_\mu)^\circ\colon(\mGH'_{\mu,\nu})^\circ\to\mGH_\mu^\circ$ with an isomorphism of $\mathfrak{G}$-modules $\mGH_\mu^\circ\to\mGH_\mu$. Any such isomorphism is a nonzero multiple of $\iota_\mu$, as explained above. Hence the image of $\mathrm{gr}_GX$ in $\mGH_\mu$ is the image of $\iota_\mu\circ(q_\mu)^\circ$, and~\eqref{eq::gr::exact} is the required exact sequence.
The argument for $\nu$ is the same, starting from the sequence for $\tau$ and the module $\mGH_\tau(c_0)\otimes W_\bullet$: the module $Q$ is the same, because $\mGH'_{\sigma,\tau}$ is symmetric in $\sigma$ and $\tau$.
\end{proof}

\begin{corollary}
\label{cor::Butler::exact}
Let $\mu,\nu\vdash N$ differ in two corners. Put $K_\mu:=\ker q_\mu\subset\mGH_\mu$.
\begin{enumerate}
\item The sign dual $(\mGH'_{\mu,\nu})^\circ$ embeds into both $\mGH_\mu^\circ$ and $\mGH_\nu^\circ$. Via the isomorphisms $\iota_\mu$ and $\iota_\nu$ the submodules $K_\mu\subset\mGH_\mu$ and $K_\nu\subset\mGH_\nu$ are both isomorphic to $(\mGH'_{\mu,\nu})^\circ$, with the shifts of the grading by $\mdeg(\Delta_\mu)$ and $\mdeg(\Delta_\nu)$ respectively.
\item The following conditions are equivalent:
\begin{enumerate}
\item $\mu$ and $\nu$ satisfy the $\frac{N!}{2}$-conjecture;
\item the surjection $q_{\mu,\nu}\colon\mGH_{\mu,\nu}\to\mGH'_{\mu,\nu}$ is an isomorphism;
\item $\ker q_\mu=\ker\big(\mGH_\mu\to\mGH_{\mu,\nu}\big)$;
\item $\iota_\mu\circ(q_\mu)^\circ$ identifies $(\mGH'_{\mu,\nu})^\circ$ with the kernel of $\mGH_\mu\to\mGH_{\mu,\nu}$.
\end{enumerate}
\end{enumerate}
\end{corollary}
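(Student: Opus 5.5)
Both items follow from Theorem~\ref{thm::Butler::exact} together with Corollary~\ref{cor::Butler::lower}; essentially no new computation is needed.

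\emph{Item~(i).} By Theorem~\ref{thm::Butler::exact}, applied once to $\mu$ and once to $\nu$, the map $\iota_\mu\circ(q_\mu)^\circ$ is injective. Its image is exactly $\ker q_\mu=K_\mu$, since the sequence~\eqref{eq::exact::sequence} is exact in the middle. Because $\iota_\mu$ is an isomorphism, $(q_\mu)^\circ$ embeds $(\mGH'_{\mu,\nu})^\circ$ into $\mGH_\mu^\circ$. This gives $K_\mu\simeq(\mGH'_{\mu,\nu})^\circ$, with the grading shifted by $\mdeg(\Delta_\mu)$ as read off from~\eqref{eq::exact::sequence}. The same argument with $\nu$ gives the statement for $K_\nu$, using the same module $\mGH'_{\mu,\nu}$.

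\emph{Item~(ii).} The equivalence (a)$\Leftrightarrow$(b) is precisely the last assertion of Corollary~\ref{cor::Butler::lower}.

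For (b)$\Leftrightarrow$(c), write the factorization $q_\mu=q_{\mu,\nu}\circ p_\mu$, where $p_\mu\colon\mGH_\mu\twoheadrightarrow\mGH_{\mu,\nu}$ is the canonical projection. Since $p_\mu$ is surjective, we have
\[
\ker q_\mu=p_\mu^{-1}\big(\ker q_{\mu,\nu}\big)\supset\ker p_\mu ,
\]
and equality holds if and only if $\ker q_{\mu,\nu}=0$. Finally, (c)$\Leftrightarrow$(d) follows from item~(i): the image of $\iota_\mu\circ(q_\mu)^\circ$ is $\ker q_\mu$, and this map is injective, so it identifies $(\mGH'_{\mu,\nu})^\circ$ with $\ker p_\mu$ exactly when $\ker q_\mu=\ker p_\mu$.

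The only point that needs care is the bookkeeping of the grading shifts in item~(i). All structural content is already contained in Theorem~\ref{thm::Butler::exact}.
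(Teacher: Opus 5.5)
Your proposal is correct and follows the paper's own proof essentially step for step. Item~(i) comes from Theorem~\ref{thm::Butler::exact}, (a)$\Leftrightarrow$(b) from Corollary~\ref{cor::Butler::lower}, (b)$\Leftrightarrow$(c) from the factorization $q_\mu=q_{\mu,\nu}\circ p_\mu$, and (c)$\Leftrightarrow$(d) from exactness in the middle of \eqref{eq::exact::sequence}; you simply spell out the kernel computation $\ker q_\mu=p_\mu^{-1}(\ker q_{\mu,\nu})$ more explicitly than the paper does.
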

\begin{proof}
(i) is Theorem~\ref{thm::Butler::exact}. For (ii), (a)$\Leftrightarrow$(b) is Corollary~\ref{cor::Butler::lower}. The projection $q_\mu$ factors as $\mGH_\mu\twoheadrightarrow\mGH_{\mu,\nu}\xrightarrow{q_{\mu,\nu}}\mGH'_{\mu,\nu}$, so (b)$\Leftrightarrow$(c), and (c)$\Leftrightarrow$(d) by Theorem~\ref{thm::Butler::exact}.
\end{proof}

\begin{remark}
\label{rem::Butler::intersection}
The contravariant functor $\circ$ is exact, hence it turns the sum $\euJ^{\gl}_\mu+\euJ^{\gl}_\nu$ of submodules of $S^N(V[x,y])$ into an intersection: inside $S^N(V[x,y])^\circ=\bomega(S^N(V[x,y]))^\dagger$ we have $\mGH_{\mu,\nu}^\circ=\mGH_\mu^\circ\cap\mGH_\nu^\circ$. This is the sign dual of the common quotient; under the Schur--Weyl duality it is the intersection $\mGH^{\bS}_\mu\cap\mGH^{\bS}_\nu$. The morphism $(q_{\mu,\nu})^\circ$ embeds $(\mGH'_{\mu,\nu})^\circ$ into this intersection, and by Corollary~\ref{cor::Butler::exact}(ii) the embedding is an isomorphism if and only if the $\frac{N!}{2}$-conjecture holds for $\mu$ and $\nu$.
\end{remark}

\subsection{The character and Butler's polynomial}
\label{sec::Butler::character}

Recall the notation $T_\nu=q_0^{n(\nu^T)}q_1^{n(\nu)}$ of~\eqref{eq::T::nu} and that, by Theorem~\ref{conj::n!}, $\mathrm{ch}_q(\mGH_\nu)=\widetilde{H}_\nu(X;q_0,q_1)=\gh_\nu$. For a symmetric function $F\in\Lambda\otimes\bZ[q_0,q_1]$ let $\overline{F}(X;q_0,q_1):=F(X;q_0^{-1},q_1^{-1})$.

\begin{proposition}
\label{prp::Butler::formula}
Let $\mu,\nu\vdash N$ differ in two corners, and let $F:=\mathrm{ch}_q(\mGH'_{\mu,\nu})$. Then
\begin{align}
F&=\frac{T_\mu\,\widetilde{H}_\nu-T_\nu\,\widetilde{H}_\mu}{T_\mu-T_\nu}=\gh_{\mu,\nu}(X;q_0,q_1),\label{eq::Butler::F}\\[2pt]
\mathrm{ch}_q(K_\mu)&=T_\mu\,\frac{\widetilde{H}_\mu-\widetilde{H}_\nu}{T_\mu-T_\nu},\label{eq::Butler::K}
\end{align}
where $\gh_{\mu,\nu}$ is the intersection polynomial~\eqref{eq::SF::I} for $k=2$.
\end{proposition}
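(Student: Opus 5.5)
The plan is to take bigraded characters of the two exact sequences of Theorem~\ref{thm::Butler::exact}, one for $\mu$ and one for $\nu$. This gives two linear equations in the two unknowns $F$ and $\overline{F}$. Solving them produces \eqref{eq::Butler::F}, and \eqref{eq::Butler::K} then follows from the sequence for $\mu$.

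First I compute the character of $(\mGH'_{\mu,\nu})^\circ=\bomega(\mGH'_{\mu,\nu})^\dagger$. By \eqref{eq::sign::ch}, $\bomega$ multiplies the character by $\omega$. The duality $\dagger$ inverts $q_0,q_1$. Hence $\mathrm{ch}_q((\mGH'_{\mu,\nu})^\circ)=\omega\overline{F}$. A shift $\langle k\rangle$ multiplies the character by $q^k$. So, after multiplying the whole sequence \eqref{eq::exact::sequence} by $T_\mu$ (shifting everything by $\mdeg(\Delta_\mu)$), the sequence for $\mu$ gives
\[
\widetilde{H}_\mu = F + T_\mu\,\omega\overline{F}.
\]
I have to be careful here about where the shift sits on the kernel term, since the kernel appears as a submodule of $\mGH_\mu\langle-\mdeg(\Delta_\mu)\rangle$ with no extra shift. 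Read that way, the sequence gives $q^{-\mdeg\Delta_\mu}\widetilde{H}_\mu=\omega\overline{F}+q^{-\mdeg\Delta_\mu}F$, which is the same identity. Likewise, the sequence for $\nu$ gives $\widetilde{H}_\nu=F+T_\nu\,\omega\overline{F}$.

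Next I eliminate the unknown $G:=\omega\overline{F}$. Multiplying the $\mu$-equation by $T_\nu$, the $\nu$-equation by $T_\mu$, and subtracting gives
\[
T_\mu\widetilde{H}_\nu-T_\nu\widetilde{H}_\mu=(T_\mu-T_\nu)F.
\]
Since $T_\mu\neq T_\nu$ (the two removed corners are distinct), I can divide, which is \eqref{eq::Butler::F}. Comparing with \eqref{eq::SF::I} for $k=2$ shows this is exactly $\gh_{\mu,\nu}$. Then $\mathrm{ch}_q(K_\mu)=\widetilde{H}_\mu-F$, because $K_\mu$ sits in $\mGH_\mu$ in its original grading. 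Simplifying, $\widetilde{H}_\mu-F=T_\mu(\widetilde{H}_\mu-\widetilde{H}_\nu)/(T_\mu-T_\nu)$, which is \eqref{eq::Butler::K}. As a consistency check, the same computation gives $T_\mu G=\widetilde{H}_\mu-F$.

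The main obstacle is bookkeeping the grading shifts in \eqref{eq::exact::sequence}: whether the kernel term carries the shift $\langle-\mdeg(\Delta_\mu)\rangle$. This must be fixed so that the second identity has $T_\mu$ and not $T_\mu^{-1}$ or $1$. I would settle it by a degree sanity check. The kernel of $q_\mu$ contains the socle $\Lambda^NV$ in bidegree $\mdeg(\Delta_\mu)$, while $(\mGH'_{\mu,\nu})^\circ$ has its socle, dual to the head $S^NV$ of $\mGH'_{\mu,\nu}$ in degree $0$, placed in degree $0$. So inside $\mGH_\mu$ the kernel's character is $T_\mu\,\omega\overline{F}$. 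The case $N=2$ confirms this: $F=s_2$ and $\widetilde{H}_{(2)}=s_2+q_0 s_{11}$, so the kernel has character $q_0 s_{11}=T_{(2)}\,\omega s_2$.
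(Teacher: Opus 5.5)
Your proposal is correct and follows essentially the paper's proof. You take characters of the two exact sequences of Theorem~\ref{thm::Butler::exact}, which share the quotient $\mGH'_{\mu,\nu}$, to get $\widetilde{H}_\mu=F+T_\mu\,\omega\overline{F}$ and $\widetilde{H}_\nu=F+T_\nu\,\omega\overline{F}$, eliminate $\omega\overline{F}$ using $T_\mu\neq T_\nu$, and read off $\mathrm{ch}_q(K_\mu)=\widetilde{H}_\mu-F$; your bookkeeping of the unshifted kernel term $(\mGH'_{\mu,\nu})^{\circ}$ inside $\mGH_\mu\langle-\mdeg(\Delta_\mu)\rangle$ is also right.
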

\begin{proof}
The cells $\mu\setminus\nu$ and $\nu\setminus\mu$ lie in different rows and in different columns, hence $T_\mu\neq T_\nu$.
By Theorem~\ref{thm::Butler::exact} and Equation~\eqref{eq::sign::ch}, we have $\mathrm{ch}_q\big((\mGH')^{\circ}\big)=\omega\overline{F}$, and the shift of the bigrading by $\mdeg(\Delta_\mu)$ multiplies the character by $T_\mu$. The exact sequence~\eqref{eq::exact::sequence} gives
\[
\widetilde{H}_\mu=F+T_\mu\,\omega\overline{F}.
\]
By the second statement of Theorem~\ref{thm::Butler::exact}, which involves the same module $\mGH'_{\mu,\nu}$,
\[
\widetilde{H}_\nu=F+T_\nu\,\omega\overline{F}.
\]
Subtracting, we get $\omega\overline{F}=(\widetilde{H}_\mu-\widetilde{H}_\nu)/(T_\mu-T_\nu)$, and substituting back, we obtain~\eqref{eq::Butler::F}. Finally,~\eqref{eq::Butler::K} is $\widetilde{H}_\mu-F$, the character of the kernel of $q_\mu$.
\end{proof}

\begin{remark}
\label{rem::Butler::two::sequences}
Both exact sequences are needed. A single relation $\widetilde{H}_\mu=F+T_\mu\omega\overline{F}$ does not determine $F$: for any $G$, the function $G-T_\mu\,\omega\overline{G}$ solves the homogeneous equation. The second sequence, for $\nu$ with the same quotient $\mGH'_{\mu,\nu}$, removes this freedom.
\end{remark}

\begin{remark}[Divided differences]
\label{rem::Butler::divided}
Put $u_\mu=T_\mu^{-1}$. Then~\eqref{eq::Butler::F} is the divided difference of $u\mapsto u\widetilde{H}$:
\[
F=\frac{u_\mu\widetilde{H}_\mu-u_\nu\widetilde{H}_\nu}{u_\mu-u_\nu}.
\]
Let $\kappa=\mu\cup\nu$ denote the diagram with the two extra cells, so that $T_\mu=T_\kappa/w(a)$ and $T_\nu=T_\kappa/w(b)$, where $a=\kappa\setminus\mu$, $b=\kappa\setminus\nu$ and $w(c)=q_0^{c_0}q_1^{c_1}$ for a cell $c=(c_0,c_1)$. Then~\eqref{eq::Butler::F} can be written as
\[
F=\frac{w(a)\widetilde{H}_\mu-w(b)\widetilde{H}_\nu}{w(a)-w(b)}.
\]
This is the formula predicted by Bergeron and Garsia~\cite{Bergeron_Garsia::SF} for the character of $\mGH^{\bS}_\mu\cap\mGH^{\bS}_\nu$.
\end{remark}

\subsubsection{Butler's conjecture}
For two diagrams $\mu,\nu\subset\kappa$ with $|\kappa/\mu|=|\kappa/\nu|=1$, Butler~\cite{Butler} considered the polynomial
\[
I_{\mu,\nu}(X;q_0,q_1):=\frac{T_\mu\widetilde{H}_\nu-T_\nu\widetilde{H}_\mu}{T_\mu-T_\nu}=\gh_{\mu,\nu}
\]
(the \emph{Macdonald intersection polynomial}) and conjectured that it is Schur positive with coefficients in $\bZ_{\geq0}[q_0,q_1]$. While this manuscript was being prepared, a geometric proof of this conjecture appeared on the arXiv~\cite{Guo:Kang:Xiong::Butler:conjecture}: it realizes $I_{\mu,\nu}$ as the Frobenius characteristic of a quotient of $\mGH^{\bS}_\mu$ constructed from the Procesi bundle on a torus-invariant line in $\mathrm{Hilb}^N(\bC^2)$. Our proof below is of a different nature: it uses fusion products, and it provides an explicit algebraic description of the underlying module.

\begin{corollary}
\label{cor::Butler}
Let $\mu,\nu\vdash N$ differ in two corners. Then $I_{\mu,\nu}(X;q_0,q_1)=\mathrm{ch}_q(\mGH'_{\mu,\nu})$ is the bigraded character of the iterated fusion product $\mGH'_{\mu,\nu}$ of Definition~\ref{def::GH::fusion::quotient}. In particular, $I_{\mu,\nu}$ is Schur positive with coefficients in $\bZ_{\geq0}[q_0,q_1]$. Moreover, the polynomials
\[
T_\mu\,\frac{\widetilde{H}_\mu-\widetilde{H}_\nu}{T_\mu-T_\nu}\qquad\text{and}\qquad T_\nu\,\frac{\widetilde{H}_\nu-\widetilde{H}_\mu}{T_\nu-T_\mu}
\]
are Schur positive with coefficients in $\bZ_{\geq0}[q_0,q_1]$: they are the characters of the sign dual $(\mGH'_{\mu,\nu})^\circ$ in the gradings of $\mGH_\mu$ and of $\mGH_\nu$ respectively.
\end{corollary}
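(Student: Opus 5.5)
The plan is to read everything off from Proposition~\ref{prp::Butler::formula}, Theorem~\ref{thm::Butler::exact} and the construction of $\mGH'_{\mu,\nu}$ in Definition~\ref{def::GH::fusion::quotient}; no new estimates should be needed.

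\emph{Step 1: the character identity.} Equation~\eqref{eq::Butler::F} gives directly
\[
I_{\mu,\nu}=\frac{T_\mu\widetilde{H}_\nu-T_\nu\widetilde{H}_\mu}{T_\mu-T_\nu}=\mathrm{ch}_q(\mGH'_{\mu,\nu}).
\]

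\emph{Step 2: positivity of $I_{\mu,\nu}$.} By Proposition~\ref{prp::Butler::fusion::quotient}, $\mGH'_{\mu,\nu}$ is a finite-dimensional bigraded $\mathfrak{G}$-module concentrated in nonnegative bidegrees. It is built recursively from $S^2V$ by $y$-fusion products with the $W_h$ and by the twist $\theta$, and all of these operations produce nonnegative gradings. Each bigraded component is a polynomial $\gl_V$-module, and such modules are semisimple. Hence its character is $\sum_{\lambda,a,b}m_{\lambda,a,b}\,s_\lambda q_0^aq_1^b$ with multiplicities $m_{\lambda,a,b}\in\bZ_{\geq0}$. This is exactly Schur positivity with coefficients in $\bZ_{\geq0}[q_0,q_1]$.

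\emph{Step 3: the kernel polynomials.} By \eqref{eq::Butler::K}, $T_\mu(\widetilde{H}_\mu-\widetilde{H}_\nu)/(T_\mu-T_\nu)$ is the character of $K_\mu=\ker q_\mu\subset\mGH_\mu$. Exchanging the roles of $\mu$ and $\nu$ (the second half of Theorem~\ref{thm::Butler::exact}), the other expression is the character of $K_\nu\subset\mGH_\nu$. Both $K_\mu$ and $K_\nu$ are bigraded submodules of modules concentrated in nonnegative bidegrees, so their characters are Schur positive with $\bZ_{\geq0}[q_0,q_1]$ coefficients. By Theorem~\ref{thm::Butler::exact} (equivalently Corollary~\ref{cor::Butler::exact}(i)), $K_\mu$ and $K_\nu$ are the images of $(\mGH'_{\mu,\nu})^\circ$ under $\iota_\mu\circ(q_\mu)^\circ$ and $\iota_\nu\circ(q_\nu)^\circ$, with gradings shifted by $\mdeg(\Delta_\mu)$ and $\mdeg(\Delta_\nu)$ respectively. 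This gives the stated interpretation.

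The only point needing care is the grading bookkeeping in Step~3. The shift $\langle-\mdeg(\Delta_\mu)\rangle$ appears on all three terms of \eqref{eq::exact::sequence}, so I will work with $\mGH_\mu$ in its natural grading, where the kernel sits in nonnegative degrees. The character $\omega\overline{F}$ multiplied by $T_\mu$ then matches \eqref{eq::Butler::K}.
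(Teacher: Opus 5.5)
Your proposal is correct and follows the paper's proof essentially verbatim: the identity is \eqref{eq::Butler::F}, Schur positivity comes from $\mGH'_{\mu,\nu}$, $K_\mu$ and $K_\nu$ being bigraded polynomial $\gl_V$-modules in nonnegative bidegrees, and the kernel characters come from \eqref{eq::Butler::K} (with $\mu\leftrightarrow\nu$) and Theorem~\ref{thm::Butler::exact}. One small bookkeeping correction: in \eqref{eq::exact::sequence} the shift $\langle-\mdeg(\Delta_\mu)\rangle$ sits only on the middle and right terms, while the left term $(\mGH'_{\mu,\nu})^\circ$ is unshifted; your conclusion that $K_\mu$ has character $T_\mu\,\omega\overline{F}$ in the natural grading of $\mGH_\mu$ is nonetheless right.
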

\begin{proof}
The identity is~\eqref{eq::Butler::F}. The character of a bigraded polynomial $\gl_V$-module concentrated in nonnegative bidegrees is Schur positive with coefficients in $\bZ_{\geq0}[q_0,q_1]$. This applies to $\mGH'_{\mu,\nu}$ and, by~\eqref{eq::Butler::K} and Theorem~\ref{thm::Butler::exact}, to the kernels $K_\mu$ and $K_\nu$.
\end{proof}

This proof uses Haiman's theorem together with the description of $\mGH_\mu$ as an iterated fusion product (Theorem~\ref{thm::sWH::associativity}); no further geometric input is needed.

%\begin{remark}
%\label{rem::Butler::compare::GKX}
%The module of~\cite{Guo:Kang:Xiong::Butler:conjecture} and the module $\mGH'_{\mu,\nu}$ have the same character $I_{\mu,\nu}$. It is not clear to us whether the two modules are isomorphic in general.
%\end{remark}

\subsubsection{Specialization $q_1=1$}
The fusion product preserves the $x$-grading, and its $x$-graded $\gl_V$-character is the product of the $x$-graded characters of the factors (the associated graded space of a filtered space has the same $x$-graded dimensions). Hence we obtain the following description of the specialization $q_1=1$ of the intersection polynomial.

\begin{corollary}
\label{cor::Butler::q1}
Let $\sigma,\tau,h_1,\ldots,h_p$ be as in Lemma~\ref{lem::Butler::reduction}. Then
\[
I_{\mu,\nu}(X;q_0,1)=I_{\sigma,\tau}(X;q_0,1)\prod_{i=1}^p\widetilde{H}_{(h_i)}(X;q_0),
\]
where $\widetilde{H}_{(h)}(X;q_0,q_1)=\prod_{j=1}^h(1-q_0^j)\,h_h\big[\frac{X}{1-q_0}\big]$ does not depend on $q_1$. In particular, for $\mu=(2^k,1^{l+2})$ and $\nu=(2^{k+1},1^l)$,
\[
I_{\mu,\nu}(X;q_0,1)=h_2\,\big(h_2+q_0\,e_2\big)^k\,h_1^{\,l}.
\]
\end{corollary}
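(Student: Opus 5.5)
The plan is to specialize the recursive construction of $\mGH'_{\mu,\nu}$ at $q_1=1$. By Corollary~\ref{cor::Butler}, $I_{\mu,\nu}=\mathrm{ch}_q(\mGH'_{\mu,\nu})$, so it suffices to compute the $x$-graded $\gl_V$-character of $\mGH'_{\mu,\nu}$, i.e.\ its character with the $y$-grading forgotten.

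First, assume the pair $\{\mu,\nu\}$ falls in case~(c) of Definition~\ref{def::GH::fusion::quotient}. Then $\mGH'_{\mu,\nu}=\mGH'_{\sigma,\tau}\star_yW_{h_1}\star_y\cdots\star_yW_{h_p}$. By Remark~\ref{rem::fusion::graded}(ii), the fusion filtration in $y$ consists of $x$-graded $\gl_V$-submodules of $\mGH'_{\sigma,\tau}(c_0)\otimes\bigotimes_iW_{h_i}(c_i)$. The twists $y\mapsto y+c_i$ do not affect the $x$-grading. Hence the associated graded module has the same $x$-graded character as the tensor product, namely $\mathrm{ch}_{q_0}(\mGH'_{\sigma,\tau})\prod_i\mathrm{ch}_{q_0}(W_{h_i})$. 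Now $\mathrm{ch}_{q_0}(\mGH'_{\sigma,\tau})=I_{\sigma,\tau}(X;q_0,1)$ by Corollary~\ref{cor::Butler} applied to $\sigma,\tau$. Also $W_h=\mGH_{(h)}$ is concentrated in $y$-degree $0$ with character $\widetilde{H}_{(h)}$, which is independent of $q_1$ (Example~\ref{ex::GH::row}). This gives the product formula.

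The delicate point is that the statement is phrased for $\sigma,\tau,h_i$ as in Lemma~\ref{lem::Butler::reduction}, even when the pair is of type~(a) or~(b) rather than~(c). If $p=0$, the formula is trivial, since then $\mu=\sigma$ and $\nu=\tau$. If $p>0$, the pair is not of type~(b), because $\mu$ then has at least three rows, and it is not of type~(a) since $N>2$. So the pair is always in case~(c), and no extra argument is needed.

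For the example, Example~\ref{ex::Butler::two::columns} gives $\{\sigma,\tau\}=\{(2),(1,1)\}$, with $k$ rows of length $2$ and $l$ rows of length $1$. By Lemma~\ref{lem::Butler::base}, $I_{(2),(1,1)}=h_2$. We have $\widetilde{H}_{(1)}=h_1$. For $h=2$, the formula $(1-q_0)(1-q_0^2)h_2[X/(1-q_0)]$ equals $h_2+q_0e_2$; alternatively, this follows directly from Lemma~\ref{lem::Butler::base}, since $W_2=S^2V\oplus\Lambda^2V\cdot x$. Multiplying these factors yields $h_2(h_2+q_0e_2)^kh_1^l$. I expect the only mild obstacle to be verifying that the $x$-graded character is multiplicative under $\star_y$, which is the filtration argument above.
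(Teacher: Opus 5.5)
Your proposal is correct and follows the paper's proof: the $x$-graded character is multiplicative under $\star_y$ because the $y$-fusion filtration consists of $x$-graded subspaces, which gives the product formula in case~(c) of Definition~\ref{def::GH::fusion::quotient}, and the two-column example is then read off from the characters of $S^2V$, $W_2$ and $V$. One small slip in your case analysis: for $p>0$ it is $\nu$ (built on $\tau=(r,r'+1)$, which always has two rows) that has at least three rows, whereas $\mu$ may have only two, e.g.\ $\mu=(3,1)$, $\nu=(2,1,1)$; your conclusion that such a pair falls in case~(c) is unaffected.
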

\begin{proof}
The first formula follows from Definition~\ref{def::GH::fusion::quotient}(c) and the remark preceding the corollary. For the two-column pair of Example~\ref{ex::Butler::two::columns}, $\mGH'_{\sigma,\tau}=S^2V$ has the character $h_2$ in bidegree $(0,0)$, the $k$ modules $W_2$ have the character $\widetilde{H}_{(2)}=s_2+q_0s_{11}=h_2+q_0e_2$, and the $l$ modules $V$ have the character $h_1$.
\end{proof}

\subsection{A lower bound for the $\frac{N!}{k}$-conjecture}
\label{sec::Butler::Nk}

In this subsection we generalize the lower bound of \S\ref{sec::Butler::lower} from two corners to an arbitrary number $k$ of corners. We reduce the common quotient $\mGH_{\nu_1,\ldots,\nu_k}$ to the common quotient of the staircase diagram $(k,k-1,\ldots,1)$, which is the smallest diagram with $k$ corners, and we construct an explicit quotient of $\mGH_{\nu_1,\ldots,\nu_k}$ by a fusion product.

\subsubsection{The set-up}
Let $\lambda\vdash N+1$ be a Young diagram and let $c_1,\ldots,c_k$ be $k$ pairwise distinct corners of $\lambda$, lying in the rows $i_1<\ldots<i_k$. Put $\nu_s:=\lambda\setminus\{c_s\}\vdash N$, and
\[
a_s:=\lambda_{i_s}-1\quad(s=1,\ldots,k),\qquad\text{so that}\qquad a_1>a_2>\ldots>a_k\geq0 .
\]
(Indeed, $c_s$ is a corner, hence $\lambda_{i_s}>\lambda_{i_s+1}\geq\lambda_{i_{s+1}}$.) Let $h_1,\ldots,h_p$ be the lengths of the rows of $\lambda$ with the indices different from $i_1,\ldots,i_k$. Denote by $\kappa$ the diagram with the $k$ rows $a_1+1,\ldots,a_k+1$, and by $\sigma_s$ the diagram obtained from $\kappa$ by removing its corner in the $s$-th row (so $\sigma_s$ has the rows $a_1+1,\ldots,a_{s-1}+1,a_s,a_{s+1}+1,\ldots,a_k+1$). Then
\begin{equation}
\label{eq::nu::sigma::h}
\nu_s=\sigma_s\cup(h_1)\cup\ldots\cup(h_p),\qquad s=1,\ldots,k,
\end{equation}
because the rows of $\nu_s$ outside the rows $i_1,\ldots,i_k$ are the same for all $s$.

The staircase diagram $\delta_k=(k,k-1,\ldots,1)\vdash\frac{k(k+1)}{2}$ has exactly $k$ corners. Put $N_k:=\frac{k(k+1)}{2}-1$, let $\nu^{\mathrm{st}}_1,\ldots,\nu^{\mathrm{st}}_k\vdash N_k$ be the diagrams obtained from $\delta_k$ by removing the corners, and
\[
Z_k:=\mGH_{\nu^{\mathrm{st}}_1,\ldots,\nu^{\mathrm{st}}_k}=S^{N_k}(V[x,y])\Big/\big(\euJ^{\gl}_{\nu^{\mathrm{st}}_1}+\ldots+\euJ^{\gl}_{\nu^{\mathrm{st}}_k}\big).
\]
For $k=1$ we have $Z_1=\bC$, and for $k=2$ we have $Z_2=\mGH_{(2),(1,1)}=S^2V$ (Lemma~\ref{lem::Butler::base}).

\begin{lemma}[Reduction to the special rows]
\label{lem::Nk::reduction}
In the above notation there is a surjective homomorphism of $\mathfrak{G}$-modules
\[
\Theta\colon\ \mGH_{\nu_1,\ldots,\nu_k}\twoheadrightarrow\mGH_{\sigma_1,\ldots,\sigma_k}\star_yW_{h_1}\star_y\cdots\star_yW_{h_p}
\]
sending the cyclic vector to the cyclic vector.
\end{lemma}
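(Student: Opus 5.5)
The plan is to copy the proof of Lemma~\ref{lem::Butler::reduction} almost verbatim, now with $k$ diagrams in place of two. By~\eqref{eq::nu::sigma::h}, each $\nu_s$ equals $\sigma_s\cup(h_1)\cup\ldots\cup(h_p)$, and the rows $h_1,\ldots,h_p$ are the same for every $s$. Write $W_\bullet$ for the sequence $W_{h_1},\ldots,W_{h_p}$, and put
\[
C:=\mGH_{\sigma_1,\ldots,\sigma_k}\star_yW_{h_1}\star_y\cdots\star_yW_{h_p},
\]
with the same fixed pairwise distinct parameters for all $s$.

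First, Proposition~\ref{prp::Butler::rows} gives isomorphisms $\alpha_s\colon\mGH_{\nu_s}\simeq\mGH_{\sigma_s}\star_yW_\bullet$ that send cyclic vectors to cyclic vectors. Next, let $p_s\colon\mGH_{\sigma_s}\twoheadrightarrow\mGH_{\sigma_1,\ldots,\sigma_k}$ be the canonical projection; it is a quotient map of $S^{|\kappa|-1}(V[x,y])$, so it preserves the cyclic vectors. By Proposition~\ref{prp::fusion::surjection}, $p_s$ induces a surjection $p_s\star W_\bullet\colon\mGH_{\sigma_s}\star_yW_\bullet\twoheadrightarrow C$ that is compatible with the cyclic vectors. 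Define
\[
f_s:=(p_s\star W_\bullet)\circ\alpha_s\colon\mGH_{\nu_s}\twoheadrightarrow C .
\]
Each $f_s$ is surjective and sends the cyclic vector of $\mGH_{\nu_s}$ to the single cyclic vector $\widehat{u}$ of $C$.

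Now compose with the projections $S^N(V[x,y])\twoheadrightarrow\mGH_{\nu_s}$. This gives $k$ homomorphisms $S^N(V[x,y])\to C$, each sending $e_1^N$ to $\widehat{u}$. By the uniqueness part of Proposition~\ref{prp::proj::cov}, these $k$ homomorphisms coincide. Their common value therefore kills every $\euJ^{\gl}_{\nu_s}$, hence kills $\euJ^{\gl}_{\nu_1}+\ldots+\euJ^{\gl}_{\nu_k}$, and so factors through $\mGH_{\nu_1,\ldots,\nu_k}$; this factorization is also Proposition~\ref{prp::N!/k::gl}(ii) and Lemma~\ref{lem::cyclic::colimit}. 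The induced map is $\Theta$. It is surjective because $f_1$ is, and it sends the cyclic vector to $\widehat{u}$.

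The one point that needs checking is that the fusion products $\mGH_{\sigma_s}\star_yW_\bullet$ are taken with identical parameters for all $s$, so that the targets of the $p_s\star W_\bullet$ are literally the same module $C$. This holds by construction, and Proposition~\ref{prp::Butler::rows} applies for every choice of parameters. Proposition~\ref{prp::Butler::rows} also requires $\mGH_{\sigma_s}$ to be a fusion factor with its cyclic vector in $y$-degree $0$, which holds by Theorem~\ref{thm::sWH::associativity}. I expect no genuine obstacle beyond this bookkeeping. The argument uses Haiman's theorem only through Proposition~\ref{prp::Butler::rows}.
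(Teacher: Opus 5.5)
Your proposal is correct and follows the paper's proof step by step. You use the isomorphisms $\alpha_s$ from Proposition~\ref{prp::Butler::rows}, the surjections $f_s=(p_s\star W_\bullet)\circ\alpha_s$ from Proposition~\ref{prp::fusion::surjection}, and the uniqueness in Proposition~\ref{prp::proj::cov} to see that all compositions from $S^N(V[x,y])$ agree, and you then factor through the quotient by $\euJ^{\gl}_{\nu_1}+\ldots+\euJ^{\gl}_{\nu_k}$ via Lemma~\ref{lem::cyclic::colimit}.
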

\begin{proof}
By Proposition~\ref{prp::Butler::rows} and~\eqref{eq::nu::sigma::h}, there are isomorphisms $\alpha_s\colon\mGH_{\nu_s}\simeq\mGH_{\sigma_s}\star_yW_\bullet$ preserving the cyclic vectors. Let $p_s\colon\mGH_{\sigma_s}\twoheadrightarrow\mGH_{\sigma_1,\ldots,\sigma_k}$ be the projections. By Proposition~\ref{prp::fusion::surjection} the maps
\[
f_s:=(p_s\star W_\bullet)\circ\alpha_s\colon\ \mGH_{\nu_s}\twoheadrightarrow\mGH_{\sigma_1,\ldots,\sigma_k}\star_yW_\bullet
\]
are surjective and send the cyclic vectors to the same cyclic vector. The compositions $S^N(V[x,y])\twoheadrightarrow\mGH_{\nu_s}\xrightarrow{f_s}\mGH_{\sigma_1,\ldots,\sigma_k}\star_yW_\bullet$ send $e_1^N$ to the same vector. By Proposition~\ref{prp::proj::cov} they coincide. Hence this map kills all the submodules $\euJ^{\gl}_{\nu_s}$, and it factors through the quotient by their sum, which is $\mGH_{\nu_1,\ldots,\nu_k}$ (Lemma~\ref{lem::cyclic::colimit}).
\end{proof}

\begin{lemma}[The special rows form a staircase after transposition]
\label{lem::Nk::staircase}
The transposed diagram $\kappa^T$ has $k$ corners, and the diagrams $\sigma_1^T,\ldots,\sigma_k^T$ are obtained from $\kappa^T$ by removing them. The data $(a'_s;h'_j)$ of Lemma~\ref{lem::Nk::reduction} for the diagram $\kappa^T$ and these $k$ corners are as follows: $a'_s=k-s$ for $s=1,\ldots,k$, so that the diagram $\kappa'$ of Lemma~\ref{lem::Nk::reduction} is the staircase $\delta_k$ and the diagrams $\sigma'_s$ are the diagrams $\nu^{\mathrm{st}}_s$; and the spectator rows $h'_1,\ldots,h'_q$ consist of
\[
a_s-a_{s+1}-1\ \text{ rows of length } s\quad(s=1,\ldots,k-1),\qquad\text{and}\qquad a_k\ \text{ rows of length } k .
\]
In particular, $\sum_jh'_j=a_1+\ldots+a_k+k-N_k-1$.
\end{lemma}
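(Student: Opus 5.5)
This is a purely combinatorial statement about Young diagrams, so the plan is to compute the column lengths of $\kappa$ directly and then read off the corners of $\kappa^T$.

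\emph{Columns of $\kappa$.} The diagram $\kappa$ has the rows $a_1+1>a_2+1>\ldots>a_k+1\geq1$. Put $a_{k+1}:=-1$. The column $c$ of $\kappa$ (with $0\leq c\leq a_1$) has length $\#\{s\colon a_s\geq c\}$. So for $a_{s+1}<c\leq a_s$ the column $c$ has length exactly $s$, and the columns $0\leq c\leq a_k$ have length $k$. Hence the rows of $\kappa^T$ are as follows:
\begin{itemize}
\item for each $s=1,\ldots,k-1$, there are $a_s-a_{s+1}\geq1$ rows of length $s$;
\item there are $a_k+1$ rows of length $k$.
\end{itemize}

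\emph{Corners of $\kappa^T$.} Since the $k$ distinct lengths $1,\ldots,k$ all occur, $\kappa^T$ has exactly $k$ corners. By transposition, the corner in the $s$-th row of $\kappa$ is the cell $(a_s,s-1)$. Its transpose $(s-1,a_s)$ is the last cell of the topmost row of length $s$ in $\kappa^T$: that row is the column $c=a_s$ of $\kappa$, which is the largest column index of length $s$. Removing this cell from $\kappa$ gives $\sigma_s$, so removing its transpose from $\kappa^T$ gives $\sigma_s^T$. Thus the $\sigma_s^T$ are exactly the diagrams obtained from $\kappa^T$ by removing its $k$ corners.

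\emph{Data of Lemma~\ref{lem::Nk::reduction} for $\kappa^T$.} Label the corners of $\kappa^T$ by increasing row index, as the set-up requires. The corner with the smallest row index sits in the topmost row of length $k$, so the first special row has length $k$, and in general the $s$-th special row has length $k-s+1$. This gives $a'_s=k-s$, so $\kappa'$ has rows $k,k-1,\ldots,1$, that is, $\kappa'=\delta_k$, and the $\sigma'_s$ are the diagrams $\nu^{\mathrm{st}}_s$ up to relabeling. The spectator rows are the remaining rows of $\kappa^T$: $a_s-a_{s+1}-1$ rows of length $s$ for $s\leq k-1$, and $a_k$ rows of length $k$.

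\emph{The sum.} Summing the spectator lengths gives
\[
\sum_jh'_j=|\kappa^T|-|\delta_k|=\sum_s(a_s+1)-\tfrac{k(k+1)}{2}=a_1+\ldots+a_k+k-N_k-1,
\]
since $|\delta_k|=N_k+1$.

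The only delicate step is matching the ordering convention (rows $i_1<\ldots<i_k$) with the indexing $s\mapsto k-s+1$. This only affects the labels $\sigma'_s\leftrightarrow\nu^{\mathrm{st}}_s$, which I would state as an equality of families; everything else is routine counting.
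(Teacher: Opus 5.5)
Your proposal is correct and follows essentially the same route as the paper: both compute the row lengths $\kappa^T_c=\#\{s\colon a_s\geq c\}$ and locate the $k$ corners at the drops $c=a_s$, identifying them as transposes of the corners of $\kappa$. Both then read off $a'_s=k-s$ and the spectator rows, and obtain the sum from $|\kappa^T|-|\delta_k|$. Your explicit remark on the relabeling $s\mapsto k-s+1$ is harmless, since the $\nu^{\mathrm{st}}_s$ form an unlabeled family and the common quotient is symmetric in its indices.
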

\begin{proof}
The rows of $\kappa^T$ are indexed by the columns of $\kappa$: the length of the row $c\geq0$ is $\kappa^T_c=\#\{s\colon a_s\geq c\}$. This function drops exactly at the rows $c=a_s$, $s=1,\ldots,k$, where $\kappa^T_{a_s}=s$ and $\kappa^T_{a_s+1}=s-1$; so these are the $k$ corners of $\kappa^T$, and they are the transposes of the corners of $\kappa$. Removing them, the rows $a_k<a_{k-1}<\ldots<a_1$ get the lengths $k-1,k-2,\ldots,0$, i.e., $a'_s=k-s$. The other rows of $\kappa^T$ are the rows $c$ with $a_{s+1}<c<a_s$, of length $s$, and the rows $c<a_k$, of length $k$. The last formula follows from $|\kappa^T|=|\kappa|=a_1+\ldots+a_k+k$ and from $\sum_s(k-s+1)=N_k+1$.
\end{proof}

\begin{theorem}
\label{thm::Nk::lower}
Let $\lambda\vdash N+1$ and let $\nu_1,\ldots,\nu_k\vdash N$ be obtained from $\lambda$ by removing $k$ pairwise distinct corners. Let $h_1,\ldots,h_p$ and $h'_1,\ldots,h'_q$ be as above. Then there is a surjective homomorphism of $\mathfrak{G}$-modules
\[
\mGH_{\nu_1,\ldots,\nu_k}\ \twoheadrightarrow\ T:=\Big(Z_k\star_yW_{h'_1}\star_y\cdots\star_yW_{h'_q}\Big)^{\theta}\star_yW_{h_1}\star_y\cdots\star_yW_{h_p},
\]
sending the cyclic vector to the cyclic vector, where $\theta$ is the twist interchanging $x$ and $y$. As a $\gl_V$-module, $T\simeq Z_k\otimes V^{\otimes(N-N_k)}$. Consequently,
\[
\dim\big(\mGH_{\nu_1,\ldots,\nu_k}\big)_{(1^N,0^{m-N})}\ \geq\ \frac{N!}{N_k!}\ \dim\big(Z_k\big)_{(1^{N_k},0^{m-N_k})} .
\]
\end{theorem}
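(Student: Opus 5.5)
The plan is to compose three surjections, each sending cyclic vectors to cyclic vectors, by iterating the reduction of Lemma~\ref{lem::Nk::reduction}, with a transposition in between. First, Lemma~\ref{lem::Nk::reduction} gives
\[
\mGH_{\nu_1,\ldots,\nu_k}\twoheadrightarrow\mGH_{\sigma_1,\ldots,\sigma_k}\star_yW_{h_1}\star_y\cdots\star_yW_{h_p}.
\]
By Lemma~\ref{lem::Butler::transpose} we have $\mGH_{\sigma_1,\ldots,\sigma_k}\simeq\big(\mGH_{\sigma_1^T,\ldots,\sigma_k^T}\big)^\theta$, compatibly with cyclic vectors. By Lemma~\ref{lem::Nk::staircase}, the diagrams $\sigma_s^T$ are obtained from $\kappa^T$ by removing its $k$ corners. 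Moreover, the associated special diagram $\kappa'$ is $\delta_k$, with $\sigma'_s=\nu^{\mathrm{st}}_s$ and spectator rows $h'_1,\ldots,h'_q$.

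Next, apply Lemma~\ref{lem::Nk::reduction} a second time, now to $\kappa^T$ and its $k$ corners. Its proof only uses that the $\sigma_s^T$ are obtained from one diagram by removing corners, together with Proposition~\ref{prp::Butler::rows}. This yields
\[
\mGH_{\sigma_1^T,\ldots,\sigma_k^T}\twoheadrightarrow Z_k\star_yW_{h'_1}\star_y\cdots\star_yW_{h'_q}.
\]
Twisting by $\theta$ is an exact autoequivalence, so it preserves surjections and cyclic vectors. We then feed this surjection into the outer fusion product with $W_{h_1},\ldots,W_{h_p}$ via Proposition~\ref{prp::fusion::surjection}. One point needs care here. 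Proposition~\ref{prp::fusion::surjection} requires the modules to be graded cyclic with cyclic vector in $y$-degree $0$. After the twist $\theta$, the $y$-grading of the twisted module is the former $x$-grading, and the cyclic vector (the image of $e_1^N$) still has bidegree $(0,0)$. Also, $\varphi$ need not preserve gradings (Remark after Proposition~\ref{prp::fusion::surjection}), so this step is fine. Composing gives the surjection onto $T$.

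For the $\gl_V$-structure, fusion products are isomorphic to tensor products as $\gl_V$-modules (Remark~\ref{rem::fusion::graded}(i)), $\theta$ does not change the $\gl_V$-structure, and $W_h\simeq V^{\otimes h}$. Hence
\[
T\simeq Z_k\otimes V^{\otimes(\sum_j h'_j+\sum_i h_i)}.
\]
We need $\sum_jh'_j+\sum_ih_i=N-N_k$. Lemma~\ref{lem::Nk::staircase} gives $\sum_j h'_j=\sum_s a_s+k-N_k-1$. Since $|\lambda|=\sum_s(a_s+1)+\sum_ih_i=N+1$, the total is $N-N_k$, as required.

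For the dimension bound, pass to the weight space $(1^N,0^{m-N})$ via the exact functor $\WS$; surjections give dimension inequalities. The weight space of $Z_k\otimes V^{\otimes r}$, with $r=N-N_k$, of weight $(1^N)$ decomposes by choosing which $N_k$ indices go to $Z_k$ (each such choice is a weight of $Z_k$ conjugate to $(1^{N_k})$ under $\bS_m$) and ordering the remaining $r$ indices in the tensor factors. This gives $\binom{N}{N_k}r!\,\dim(Z_k)_{(1^{N_k})}=\frac{N!}{N_k!}\dim(Z_k)_{(1^{N_k})}$.

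The main obstacle I expect is the compatibility of the twist $\theta$ with the graded fusion setup: one must check that $(Z_k\star_y W_{h'_\bullet})^\theta$ is a finite-dimensional graded cyclic $\gl_V[x][y]$-module with cyclic vector in $y$-degree $0$. This holds because it is bigraded with cyclic vector of bidegree $(0,0)$.
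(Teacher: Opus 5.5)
Your proposal is correct and follows the paper's proof essentially step by step. The steps match in order: the surjection from Lemma~\ref{lem::Nk::reduction}, the transposition via Lemma~\ref{lem::Butler::transpose}, and a second application of Lemma~\ref{lem::Nk::reduction} to $\kappa^T$, whose reduced diagram is the staircase by Lemma~\ref{lem::Nk::staircase}. They continue with the twist by $\theta$ pushed through the outer fusion product via Proposition~\ref{prp::fusion::surjection}, the row-length bookkeeping $\sum_jh'_j+\sum_ih_i=N-N_k$, and the same weight-space count. Your extra check that the $\theta$-twisted module is still a finite-dimensional graded cyclic module with cyclic vector in bidegree $(0,0)$ is left implicit in the paper, and you handle it correctly.
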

\begin{proof}
By Lemma~\ref{lem::Nk::reduction} we have the surjection $\Theta_1\colon\mGH_{\nu_1,\ldots,\nu_k}\twoheadrightarrow\mGH_{\sigma_1,\ldots,\sigma_k}\star_yW_{h_\bullet}$.
By the same argument as in Lemma~\ref{lem::Butler::transpose}, $\mGH_{\sigma_1,\ldots,\sigma_k}\simeq\big(\mGH_{\sigma^T_1,\ldots,\sigma^T_k}\big)^\theta$. By Lemma~\ref{lem::Nk::staircase}, we can apply Lemma~\ref{lem::Nk::reduction} to the diagram $\kappa^T$ and its $k$ corners. It gives a surjection $\mGH_{\sigma^T_1,\ldots,\sigma^T_k}\twoheadrightarrow Z_k\star_yW_{h'_\bullet}$ sending the cyclic vector to the cyclic vector. Its twist by $\theta$ is a surjection $\mGH_{\sigma_1,\ldots,\sigma_k}\twoheadrightarrow\big(Z_k\star_yW_{h'_\bullet}\big)^\theta$, and by Proposition~\ref{prp::fusion::surjection} it induces the surjection of fusion products
\[
\mGH_{\sigma_1,\ldots,\sigma_k}\star_yW_{h_\bullet}\ \twoheadrightarrow\ T .
\]
The composition with $\Theta_1$ is the required surjection.
By Remark~\ref{rem::fusion::graded}(i) and since the twist by $\theta$ does not change the $\gl_V$-structure, $T\simeq Z_k\otimes V^{\otimes\sum_jh'_j}\otimes V^{\otimes\sum_ih_i}$, and $\sum_jh'_j+\sum_ih_i=N-N_k$ by Lemma~\ref{lem::Nk::staircase} and~\eqref{eq::nu::sigma::h}.
It remains to compute the dimension of the weight space of weight $(1^N,0^{m-N})$ of $Z\otimes V^{\otimes r}$ for a polynomial $\gl_V$-module $Z$ of degree $N-r$: choose the ordered indices of the $r$ tensor factors among $\{1,\ldots,N\}$ in $\frac{N!}{(N-r)!}$ ways; the remaining $N-r$ indices span a weight space of $Z$ of dimension $\dim Z_{(1^{N-r},0^{m-N+r})}$, by the symmetry under the Weyl group.
\end{proof}

\begin{corollary}
\label{cor::Nk::lower}
Assume that $\dim(Z_k)_{(1^{N_k},0^{m-N_k})}=\frac{N_k!}{k}$, i.e., that the $\frac{N!}{k}$-conjecture holds for the staircase diagram $(k,k-1,\ldots,1)$ and all its $k$ corners. Then, for every Young diagram $\lambda\vdash N+1$ and every $k$ corners of it,
\[
\dim\big(\mGH_{\nu_1,\ldots,\nu_k}\big)_{(1^N,0^{m-N})}\ \geq\ \frac{N!}{k},
\]
and the $\frac{N!}{k}$-conjecture for $\nu_1,\ldots,\nu_k$ is equivalent to the statement that the surjection of Theorem~\ref{thm::Nk::lower} is an isomorphism.
\end{corollary}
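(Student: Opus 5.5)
The lower bound follows directly from Theorem~\ref{thm::Nk::lower}. We substitute the assumed value $\dim(Z_k)_{(1^{N_k},0^{m-N_k})}=\frac{N_k!}{k}$ into the inequality there:
\[
\dim\big(\mGH_{\nu_1,\ldots,\nu_k}\big)_{(1^N,0^{m-N})}\ \geq\ \frac{N!}{N_k!}\cdot\frac{N_k!}{k}=\frac{N!}{k}.
\]
The only point to check is that the weight-space count in the proof of Theorem~\ref{thm::Nk::lower} applies to $T\simeq Z_k\otimes V^{\otimes(N-N_k)}$. It does, with $r=N-N_k$ and $Z=Z_k$ of degree $N_k$. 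That count shows that the $(1^N,0^{m-N})$-weight space of $T$ has dimension exactly $\frac{N!}{k}$.

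For the equivalence, we argue exactly as in Corollary~\ref{cor::Butler::lower}. The surjection $\mGH_{\nu_1,\ldots,\nu_k}\twoheadrightarrow T$ is a morphism in $\RepGLz{V}{N}$. Since $\WS$ is an exact equivalence (Theorem~\ref{thm::SW::currents}), this surjection is an isomorphism if and only if the induced map on $(1^N,0^{m-N})$-weight spaces is an isomorphism. The induced map is surjective, so it is an isomorphism if and only if the source has dimension $\frac{N!}{k}$, the dimension of the target computed above.

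By Proposition~\ref{prp::N!/k::gl}(i), that dimension equals $\dim(\mGH^{\bS}_{\nu_1}\cap\ldots\cap\mGH^{\bS}_{\nu_k})$. Hence the condition is exactly item~(i) of Conjecture~\ref{conj::N!/k}, the dimension statement of the $\frac{N!}{k}$-conjecture. If one also wants the character statement, note that (ii) implies (i) by Remark~\ref{rem::N!/k::status}. So "the $\frac{N!}{k}$-conjecture" is to be read here as its dimension part, as in Corollary~\ref{cor::Butler::lower}.

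There is no real obstacle. The only care needed is the bookkeeping $\sum_j h'_j+\sum_i h_i=N-N_k$, which is already established in Theorem~\ref{thm::Nk::lower}, and the identification of the weight space of a tensor product with $V^{\otimes r}$.
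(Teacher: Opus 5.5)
Your proof is correct and follows the paper's own argument essentially verbatim. You substitute the assumed staircase dimension into Theorem~\ref{thm::Nk::lower}, note that $T$ has a $(1^N,0^{m-N})$-weight space of dimension exactly $\frac{N!}{k}$, and conclude using that $\WS$ is an equivalence together with Proposition~\ref{prp::N!/k::gl}(i); your reading of the conjecture as its dimension part matches how the paper uses it (as in Corollary~\ref{cor::Butler::lower}).
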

\begin{proof}
By Theorem~\ref{thm::Nk::lower}, the dimension is at least $\frac{N!}{N_k!}\cdot\frac{N_k!}{k}=\frac{N!}{k}$. The module $T$ has exactly this dimension of the weight space. A surjection of finite-dimensional modules is an isomorphism if and only if it induces an isomorphism of the weight spaces of weight $(1^N,0^{m-N})$, because $\WS$ is an equivalence of categories (Theorem~\ref{thm::SW::currents}); for a surjection this means that the dimensions are equal. By Proposition~\ref{prp::N!/k::gl}(i), the dimension of the weight space of $\mGH_{\nu_1,\ldots,\nu_k}$ is the dimension of the intersection of the Garsia--Haiman modules.
\end{proof}

\begin{remark}
\begin{enumerate}
\item For $k=1$ the staircase is the single cell, $N_1=0$, $Z_1=\bC$, and Theorem~\ref{thm::Nk::lower} reduces to the isomorphism $\mGH_\nu\simeq V^{\otimes N}$ of Haiman's theorem.
\item For $k=2$ we have $Z_2=S^2V$ and $\frac{N!}{N_2!}\cdot1=\frac{N!}{2}$. In this case Theorem~\ref{thm::Nk::lower} is Corollary~\ref{cor::Butler::lower}, and $T=\mGH'_{\mu,\nu}$ is the module of Definition~\ref{def::GH::fusion::quotient} (the surjections used there are exactly the ones constructed above).
\item The corollary shows that the $\frac{N!}{k}$-conjecture is reduced to two independent statements: the equality $\dim(Z_k)_{(1^{N_k})}=\frac{N_k!}{k}$ for the staircase, and the injectivity of the surjection of Theorem~\ref{thm::Nk::lower}.
\item If $\mathrm{ch}(Z_k)=h_1^{N_k-2k+2}\Phi_k$ (the specialization of the intersection polynomial at $q_0=q_1=1$, see Theorem~\ref{thm::KLO::specialization}), then $\mathrm{ch}(T)=h_1^{N-N_k}\,\mathrm{ch}(Z_k)=h_1^{N-2k+2}\Phi_k$. This is the value at $q_0=q_1=1$ of the intersection polynomial for every $\lambda$ and every choice of $k$ corners. In this sense the module $T$ has the correct ungraded character.
\end{enumerate}
\end{remark}

\subsubsection{The characters for the staircase with three and four corners}
By Conjecture~\ref{conj::N!/k::gl}, the character of $Z_k$ should be the intersection polynomial $\gh_{\nu^{\mathrm{st}}_1,\ldots,\nu^{\mathrm{st}}_k}(X;q_0,q_1)$ of~\eqref{eq::SF::I}. We computed it for $k=3,4$ from the modified Macdonald polynomials (via the combinatorial formula of Haglund, Haiman and Loehr). In both cases the result has nonnegative coefficients in $\bZ_{\geq0}[q_0,q_1]$ and its value at $q_0=q_1=1$ has dimension $\frac{N_k!}{k}$, in agreement with Theorem~\ref{thm::KLO::specialization}.

\emph{The case $k=3$.} Here $N_3=5$, $\nu^{\mathrm{st}}=(2,2,1),(3,1,1),(3,2)$ and
\[
\gh_{\nu^{\mathrm{st}}_1,\nu^{\mathrm{st}}_2,\nu^{\mathrm{st}}_3}=s_{5}+(q_0+q_1)\,s_{41}+(q_0^2+q_0q_1+q_1^2)\,s_{32}+q_0q_1\,s_{311}+q_0q_1(q_0+q_1)\,s_{221}.
\]
The dimension is $1+8+15+6+10=40=\frac{5!}{3}$. This is an actual equality for the module: we computed, by exact linear algebra over $\bQ$, the bigraded dimensions of the intersection $\mGH^{\bS}_{(2,2,1)}\cap\mGH^{\bS}_{(3,1,1)}\cap\mGH^{\bS}_{(3,2)}$ (which is the weight space of $Z_3$ by Proposition~\ref{prp::N!/k::gl}). They are
\[
\begin{array}{c|ccc}
\text{$y$-degree}\backslash\text{$x$-degree}&0&1&2\\ \hline
0&1&4&5\\
1&4&11&5\\
2&5&5&0
\end{array}
\]
with the total dimension $40$; these are exactly the dimensions predicted by the polynomial above (the coefficient of $q_0^aq_1^b$, with the Schur function $s_\mu$ counted with the multiplicity $\dim\mathbb{S}_\mu$). Hence $\dim(Z_3)_{(1^5,0^{m-5})}=40=\frac{5!}{3}$. By Corollary~\ref{cor::Nk::lower}, we obtain the following statement, which is unconditional: \emph{for $k\leq3$ and for every Young diagram $\lambda\vdash N+1$ with $k$ chosen corners, $\dim(\mGH_{\nu_1,\ldots,\nu_k})_{(1^N,0^{m-N})}\geq\frac{N!}{k}$} (the cases $k=1$ and $k=2$ were treated before).

\emph{The case $k=4$.} Here $N_4=9$ and $\nu^{\mathrm{st}}=(3,3,2,1),(4,2,2,1),(4,3,1,1),(4,3,2)$. 
%The Schur expansion of the intersection polynomial is listed in Table~\ref{tab::k4}.
 Its dimension is $90720=\frac{9!}{4}$, and its specialization at $q_0=q_1=1$ equals $h_1^3\Phi_4=h_1^3\,(s_6+2s_{51}+6s_{42}+s_{411}+5s_{321}+5s_{222})$. Thus, if the $\frac{N!}{4}$-conjecture holds for the staircase $(4,3,2,1)$, then the lower bound $\frac{N!}{4}$ holds for all Young diagrams with $4$ chosen corners.

\section{Fusion filtrations for the current and for the affine algebra}
\label{sec::affine}

In this section we compare the fusion filtration of a tensor product of Demazure modules, defined by the current algebra $\gl_V[x]$, with the filtration defined by the larger affine algebra $\widehat{\gl}_V$. The comparison is based on the coincidence of the fusion and the cofusion filtrations (Theorem~\ref{thm::fusion::cofusion}).
In this section only, $\mathfrak{a}\subseteq\mathfrak{b}$ denote arbitrary Lie algebras; they should not be confused with the abelian subalgebra $\fa\subset\gl_V$ of \S\ref{sec::iterated::fusion}.

\subsection{The general setting}

Let $\mathfrak{a}\subseteq\mathfrak{b}$ be Lie algebras. For $i=0,\ldots,l$ let $L_i$ be a $\mathfrak{b}$-module with a vector $v_i$ such that
\[
M_i:=U(\mathfrak{a})v_i\subseteq L_i=U(\mathfrak{b})v_i .
\]
Fix a variable $z$ and pairwise distinct points $\gamma_0,\ldots,\gamma_l$. Let $M_i^{\gamma_i}$ and $L_i^{\gamma_i}$ be the evaluation modules over $\mathfrak{a}[z]$ and $\mathfrak{b}[z]$ respectively. Then $M_i^{\gamma_i}$ is an $\mathfrak{a}[z]$-submodule of $L_i^{\gamma_i}$. Put
\[
M:=M_0^{\gamma_0}\otimes\cdots\otimes M_l^{\gamma_l}\ \subseteq\ L:=L_0^{\gamma_0}\otimes\cdots\otimes L_l^{\gamma_l},\qquad v:=v_0\otimes\cdots\otimes v_l .
\]
Since the points are distinct, $\mathfrak{a}[z]$ acts on $M$ through the surjection $U(\mathfrak{a}[z])\to U(\mathfrak{a})^{\oplus(l+1)}$, $X\otimes f\mapsto(f(\gamma_i)X)_i$. Hence $M=U(\mathfrak{a}[z])v$, and likewise $L=U(\mathfrak{b}[z])v$. Let $U(\cdot)_{\leq k}$ denote the filtration by $z$-degree. We compare two filtrations of $M$:
\[
F^{\mathfrak{a}}_kM:=U(\mathfrak{a}[z])_{\leq k}\,v,\qquad
F^{\mathfrak{b}}_kM:=\big(U(\mathfrak{b}[z])_{\leq k}\,v\big)\cap M,\qquad k\geq0,
\]
with $F^{\mathfrak{a}}_{-1}M=F^{\mathfrak{b}}_{-1}M=0$. Since $U(\mathfrak{a}[z])_{\leq k}\subseteq U(\mathfrak{b}[z])_{\leq k}$, we always have
\begin{equation}
\label{eq::Fa::in::Fb}
F^{\mathfrak{a}}_kM\subseteq F^{\mathfrak{b}}_kM\qquad\text{for all }k .
\end{equation}
In general the inclusion is strict. We give a criterion for the reverse inclusion. It does not use any decomposition of $L$ into irreducible $\mathfrak{b}$-modules: instead, it uses a single linear functional on $L$ that does not see the complement of $\mathfrak{a}$ in $\mathfrak{b}$.

\begin{lemma}
\label{lem::ell::reduction}
Assume that
\begin{enumerate}
\item[(i)] $\mathfrak{b}=\mathfrak{n}\oplus\mathfrak{a}\oplus\mathfrak{h}$ as vector spaces, where $\mathfrak{n}$ and $\mathfrak{a}\oplus\mathfrak{h}$ are Lie subalgebras of $\mathfrak{b}$;
\item[(ii)] for every $i$, $\mathfrak{h}$ acts on $v_i$ by a character $\chi_i\colon\mathfrak{h}\to\bC$;
\item[(iii)] $\ell\colon L\to\bC$ is a linear functional such that $\ell\big((X\otimes f)u\big)=0$ for all $X\in\mathfrak{n}$, $f\in\bC[z]$ and $u\in L$.
\end{enumerate}
Then $\ell\big(U(\mathfrak{b}[z])_{\leq m}\,v\big)=\ell\big(U(\mathfrak{a}[z])_{\leq m}\,v\big)$ for every $m$.
\end{lemma}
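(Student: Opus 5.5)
The plan is a PBW argument with the ordering $\mathfrak{n}[z]$ to the left and $(\mathfrak{a}\oplus\mathfrak{h})[z]$ to the right. By assumption~(i), $\mathfrak{b}[z]=\mathfrak{n}[z]\oplus(\mathfrak{a}\oplus\mathfrak{h})[z]$ with both summands Lie subalgebras graded by the $z$-degree. The PBW theorem then gives a graded isomorphism
\[
U(\mathfrak{b}[z])\simeq U(\mathfrak{n}[z])\otimes U\big((\mathfrak{a}\oplus\mathfrak{h})[z]\big).
\]
Hence
\[
U(\mathfrak{b}[z])_{\leq m}=\sum_{p+q\leq m}U(\mathfrak{n}[z])_p\,U\big((\mathfrak{a}\oplus\mathfrak{h})[z]\big)_q .
\]
The terms with $p>0$ have the form $(X\otimes f)u$ with $X\in\mathfrak{n}$. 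By~(iii) they are killed by $\ell$. Therefore
\[
\ell\big(U(\mathfrak{b}[z])_{\leq m}v\big)=\ell\big(U((\mathfrak{a}\oplus\mathfrak{h})[z])_{\leq m}v\big).
\]

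Next I apply PBW again inside the Lie algebra $\mathfrak{c}:=\mathfrak{a}\oplus\mathfrak{h}$, now ordering the $\mathfrak{a}[z]$ factors to the left and the $\mathfrak{h}[z]$ factors to the right. This gives
\[
U(\mathfrak{c}[z])_{\leq q}=\sum_{q_1+q_2\leq q}U(\mathfrak{a}[z])_{q_1}\,\operatorname{Span}\{\text{ordered monomials in }\mathfrak{h}[z]\text{ of degree }q_2\}.
\]
I must be careful here: $\mathfrak{h}$ need not be a subalgebra. However, the PBW spanning statement, namely that ordered monomials in a basis adapted to $\mathfrak{a}\oplus\mathfrak{h}$ span $U(\mathfrak{c}[z])$ compatibly with the degree filtration, holds for any vector space decomposition of a Lie algebra. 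The reason is that reordering produces commutators of lower length with the same total $z$-degree, and these are handled by induction on length.

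Now I need the $\mathfrak{h}[z]$ part to act on $v$ by scalars. An element $H\otimes f$ with $H\in\mathfrak{h}$ acts on $v$ by $\sum_i f(\gamma_i)\chi_i(H)\,v$, by~(ii). So $v$ is an eigenvector of every element of $\mathfrak{h}[z]$, and an ordered product of such elements acts on $v$ by a scalar. Consequently
\[
U(\mathfrak{c}[z])_{\leq m}v\subseteq\sum_{q_1\leq m}U(\mathfrak{a}[z])_{q_1}v=U(\mathfrak{a}[z])_{\leq m}v .
\]
This already holds without applying $\ell$. The reverse inclusion is trivial, which gives the equality after applying $\ell$.

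I expect the main obstacle to be the second PBW step, when $\mathfrak{h}$ is not a subalgebra and $[\mathfrak{h},\mathfrak{h}]$ has an $\mathfrak{a}$-component. If this causes trouble, I would instead order the $\mathfrak{h}$ factors to the right and argue by induction on the number of $\mathfrak{h}$ factors in a word. The rightmost factor acts on $v$ by a scalar. Moving an $\mathfrak{h}$ factor past an $\mathfrak{a}$ factor creates commutators in $\mathfrak{c}[z]$ with fewer letters and no larger $z$-degree, and induction on word length keeps everything inside $U(\mathfrak{a}[z])_{\leq m}v$. In the intended application $\mathfrak{h}$ is a Cartan-type subalgebra normalizing $\mathfrak{a}$, and then the first PBW ordering works directly.
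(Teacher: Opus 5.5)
Your argument is essentially the paper's: a PBW factorization with $\mathfrak{n}[z]$ on the left, killed by $\ell$ via (iii), and the $\mathfrak{h}[z]$-part on the right, acting on $v$ by scalars via (ii). Your use of the PBW spanning statement for the vector-space splitting $\mathfrak{c}[z]=\mathfrak{a}[z]\oplus\mathfrak{h}[z]$ is slightly more careful than the paper. The paper writes $U(\mathfrak{h}[z])$, and so tacitly assumes that $\mathfrak{h}$ is a subalgebra; this holds in the application, $\mathfrak{h}=\bC K\oplus\bC d$, but it is not part of (i).

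One step needs correcting: do not split $U(\mathfrak{n}[z])$ by $z$-degree. The degree-zero part $U(\mathfrak{n}[z])_0\supseteq U(\mathfrak{n}\otimes 1)$ is not $\bC$; in the application $\mathfrak{n}=\gl_V\otimes x^{-1}\bC[x^{-1}]$ sits in $y$-degree $0$. So your $p=0$ terms are not just $U(\mathfrak{c}[z])_q$, and the ``Therefore'' does not follow as written. Instead use $U(\mathfrak{n}[z])=\bC\oplus\mathfrak{n}[z]\,U(\mathfrak{n}[z])$, as the paper does. Since (iii) holds for every $f$, including $f=1$, $\ell$ vanishes on all of $\mathfrak{n}[z]\,U(\mathfrak{n}[z])\cdot L$, and the rest of your argument goes through unchanged.
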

\begin{proof}
All Lie algebras involved are graded by the $z$-degree. We have $\mathfrak{b}[z]=\mathfrak{n}[z]\oplus(\mathfrak{a}\oplus\mathfrak{h})[z]$ and $(\mathfrak{a}\oplus\mathfrak{h})[z]=\mathfrak{a}[z]\oplus\mathfrak{h}[z]$, decompositions into subalgebras. By the PBW theorem, multiplication gives an isomorphism of graded vector spaces $U(\mathfrak{b}[z])\simeq U(\mathfrak{n}[z])\otimes U(\mathfrak{a}[z])\otimes U(\mathfrak{h}[z])$. Hence $U(\mathfrak{b}[z])_{\leq m}$ is spanned by the products $YXH$ with $Y\in U(\mathfrak{n}[z])_{a}$, $X\in U(\mathfrak{a}[z])_{b}$ and $H\in U(\mathfrak{h}[z])_{c}$, where $a+b+c\leq m$.
By (ii), an element $h\otimes f\in\mathfrak{h}[z]$ acts on $v$ by the scalar $\sum_if(\gamma_i)\chi_i(h)$. Therefore $Hv\in\bC v$.
We have $U(\mathfrak{n}[z])=\bC\oplus\mathfrak{n}[z]\,U(\mathfrak{n}[z])$, and by (iii) $\ell$ vanishes on $\mathfrak{n}[z]\,L$. So $\ell(YXHv)=0$ unless $Y$ is a scalar, and in that case $YXHv\in U(\mathfrak{a}[z])_{\leq m}v$. This proves $\subseteq$. The inclusion $\supseteq$ is clear.
\end{proof}

\begin{proposition}
\label{prp::affine::comparison}
Keep the assumptions of Lemma~\ref{lem::ell::reduction}, and assume in addition:
\begin{enumerate}
\item[(iv)] $M$ is finite-dimensional, $w\in M$ is a cocyclic vector for the $\mathfrak{a}[z]$-action (Definition~\ref{def::cocyclic}), and $\ell|_M=\pi_w$;
\item[(v)] there is an integer $d$ such that $v\in G^d(M)$, where $G$ is the cofusion filtration of $M$ ending at $w$ (Definition~\ref{def::cofusion}).
\end{enumerate}
Then for all $k$
\[
F^{\mathfrak{a}}_kM\ \subseteq\ F^{\mathfrak{b}}_kM\ \subseteq\ G^{\,d-k}(M).
\]
In particular, if $F^{\mathfrak{a}}_kM=G^{d-k}(M)$ for all $k$, then $F^{\mathfrak{a}}_kM=F^{\mathfrak{b}}_kM=G^{d-k}(M)$ for all $k$.
\end{proposition}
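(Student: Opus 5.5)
The first inclusion is~\eqref{eq::Fa::in::Fb}, so the plan concentrates on $F^{\mathfrak{b}}_kM\subseteq G^{d-k}(M)$. By Definition~\ref{def::cofusion}, an element $u\in M$ lies in $G^{d-k}(M)$ if and only if $\pi_w(U(\mathfrak{a}[z])_{<d-k}\,u)=0$. So we fix $u\in F^{\mathfrak{b}}_kM$ and $X\in U(\mathfrak{a}[z])_{\leq d-k-1}$, and aim to show $\pi_w(Xu)=0$.

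\emph{Step 1: rewrite $\pi_w(Xu)$ through $\ell$.} Since $u\in M$ and $M$ is an $\mathfrak{a}[z]$-submodule of $L$, we have $Xu\in M$. By~(iv), $\ell|_M=\pi_w$, so $\pi_w(Xu)=\ell(Xu)$. Moreover $u\in U(\mathfrak{b}[z])_{\leq k}v$, hence
\[
Xu\in U(\mathfrak{b}[z])_{\leq d-k-1}\,U(\mathfrak{b}[z])_{\leq k}\,v\subseteq U(\mathfrak{b}[z])_{\leq d-1}\,v .
\]
It therefore suffices to prove $\ell\big(U(\mathfrak{b}[z])_{\leq d-1}v\big)=0$.

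\emph{Step 2: reduce to $\mathfrak{a}$ and use~(v).} Lemma~\ref{lem::ell::reduction} with $m=d-1$ gives
\[
\ell\big(U(\mathfrak{b}[z])_{\leq d-1}v\big)=\ell\big(U(\mathfrak{a}[z])_{\leq d-1}v\big).
\]
The right-hand side lies in $M$, so $\ell$ coincides with $\pi_w$ there. Assumption~(v) says $v\in G^d(M)$, that is, $\pi_w(U(\mathfrak{a}[z])_{<d}v)=0$. This vanishing is exactly what is needed.

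Combining the two steps gives $\pi_w(Xu)=0$, hence $u\in G^{d-k}(M)$. For $k\geq d$ the statement is trivial because $G^{s}=M$ for $s\leq 0$. The final claim follows by sandwiching: if $F^{\mathfrak{a}}_k=G^{d-k}$, the chain $F^{\mathfrak{a}}_k\subseteq F^{\mathfrak{b}}_k\subseteq G^{d-k}$ collapses to equalities.

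The only delicate point is Step~1, namely ensuring that $\ell$ may replace $\pi_w$ on the element $Xu$. This works because $X$ comes from $\mathfrak{a}$, so $Xu$ stays inside $M$, where $\ell$ and $\pi_w$ agree. The passage from $\mathfrak{b}$ to $\mathfrak{a}$ is handled entirely by Lemma~\ref{lem::ell::reduction}, and it is applied only after all operators have been absorbed into a single element of $U(\mathfrak{b}[z])$ acting on $v$.
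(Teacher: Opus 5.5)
Your proposal is correct and follows the paper's proof essentially verbatim. You place $Xu$ in $U(\mathfrak{b}[z])_{\leq d-1}v$, replace $\pi_w$ by $\ell$ using~(iv), pass from $\mathfrak{b}$ to $\mathfrak{a}$ with Lemma~\ref{lem::ell::reduction}, and conclude with~(v); your remark on the case $k\geq d$ and the sandwich argument for the final claim are also fine.
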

\begin{proof}
The first inclusion is~\eqref{eq::Fa::in::Fb}. For the second one, let $u\in F^{\mathfrak{b}}_kM$ and $X\in U(\mathfrak{a}[z])_{<d-k}$. Then $Xu\in M$, and
\[
Xu\in U(\mathfrak{b}[z])_{\leq d-k-1}\,U(\mathfrak{b}[z])_{\leq k}\,v\subseteq U(\mathfrak{b}[z])_{\leq d-1}\,v .
\]
By (iv) and Lemma~\ref{lem::ell::reduction},
\[
\pi_w(Xu)=\ell(Xu)\in\ell\big(U(\mathfrak{a}[z])_{\leq d-1}v\big)=\pi_w\big(U(\mathfrak{a}[z])_{<d}\,v\big),
\]
and the right-hand side is $0$, because $v\in G^d(M)$. Since $X$ was arbitrary, $u\in G^{d-k}(M)$.
\end{proof}

In words: by (v), the cocyclic vector cannot be reached from $v$ by the smaller algebra in degree less than $d$. Lemma~\ref{lem::ell::reduction} shows that the larger algebra cannot do better. This bounds the larger fusion filtration from above by the cofusion filtration of the smaller one.

\subsection{Demazure modules inside level one modules}

Let $\widehat{\gl}_V=\gl_V\otimes\bC[x,x^{-1}]\oplus\bC K\oplus\bC d$, with $K$ central and $[d,X\otimes x^m]=m\,X\otimes x^m$. The current algebra $\gl_V[x]$ is a subalgebra: the cocycle vanishes on it. For $1\leq k\leq N\leq\dim V$, let $L(\Lambda_k)$ be the integrable irreducible $\widehat{\gl}_V$-module of level one whose highest weight has finite part $\omega_k=\varepsilon_1+\ldots+\varepsilon_k$. Normalize $d$ so that it acts by $0$ on the highest weight vector. Then
\[
L(\Lambda_k)=\bigoplus_{e\leq0}L(\Lambda_k)_e,\qquad L(\Lambda_k)_0\simeq\Lambda^kV\ \text{as a $\gl_V$-module},
\]
where $L(\Lambda_k)_e$ is the $d$-eigenspace with eigenvalue $e$, and $X\otimes x^m$ maps $L(\Lambda_k)_e$ to $L(\Lambda_k)_{e+m}$. Let $v_{k\varepsilon_1}\in L(\Lambda_k)$ be the extremal weight vector of $\gl_V$-weight $k\varepsilon_1$. The Demazure module
\[
D_{k\varepsilon_1}:=U(\gl_V[x])\,v_{k\varepsilon_1}\subseteq L(\Lambda_k)
\]
is isomorphic to the local Weyl module $\sW^x_{k\varepsilon_1}$ (\cite{CL,FL2}, see also property~(D4) in \S\ref{sec::Weyl}). It contains the highest weight vector of $L(\Lambda_k)$, hence the whole top component $L(\Lambda_k)_0=\Lambda^kV$, which is its socle.

Let $\lambda=(\lambda_0\geq\ldots\geq\lambda_l)\vdash N$, and take $z=y$. Apply the general setting with
\[
\mathfrak{a}=\gl_V[x],\quad\mathfrak{b}=\widehat{\gl}_V,\quad M_i=D_{\lambda_i\varepsilon_1},\quad L_i=L(\Lambda_{\lambda_i}),\quad v_i=v_{\lambda_i\varepsilon_1}.
\]
Then $\mathfrak{a}[y]=\gl_V[x,y]$, and
\[
D:=D_{\lambda_0\varepsilon_1}^{\gamma_0}\otimes\cdots\otimes D_{\lambda_l\varepsilon_1}^{\gamma_l}\ \simeq\ \sW^x_{\lambda_0\varepsilon_1}(\gamma_0)\otimes\cdots\otimes\sW^x_{\lambda_l\varepsilon_1}(\gamma_l)
\]
is the module $M$ of Theorem~\ref{thm::fusion::cofusion} for the one-row diagrams $\lambda^i=(\lambda_i)$ (recall that $\mGH_{(h)}\simeq\sW^x_{h\varepsilon_1}$). Thus $F^{\mathfrak{a}}$ is the $y$-fusion filtration of $D$ starting at $v_{N\varepsilon_1}=v$. Let $G$ be the $y$-cofusion filtration of $D$ ending at $w=v_{\omega_N}$.

\begin{theorem}
\label{thm::affine::current}
For all $k$,
\[
F^{\gl_V[x]}_kD\ \subseteq\ F^{\widehat{\gl}_V}_kD\ \subseteq\ G^{\,n(\lambda)-k}(D).
\]
Consequently, the fusion filtration of $D$ defined by the current algebra, the fusion filtration defined by the affine algebra, and the cofusion filtration coincide:
\[
F^{\gl_V[x]}_kD=F^{\widehat{\gl}_V}_kD=G^{\,n(\lambda)-k}(D),\qquad\text{and}\qquad
\mathrm{gr}_{F^{\widehat{\gl}_V}}D\simeq\mGH_\lambda .
\]
\end{theorem}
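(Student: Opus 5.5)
The plan is to apply Proposition~\ref{prp::affine::comparison} with $\mathfrak{a}=\gl_V[x]$, $\mathfrak{b}=\widehat{\gl}_V$ and $z=y$, and then to close the chain using Theorem~\ref{thm::fusion::cofusion}. That theorem, applied to the one-row diagrams $\lambda^i=(\lambda_i)$, gives $F^{\gl_V[x]}_kD=G^{n(\lambda)-k}(D)$, because $D$ is exactly the module $M$ there. Once the chain of inclusions is established, all three filtrations coincide. The identification $\mathrm{gr}_{F^{\widehat{\gl}_V}}D\simeq\mGH_\lambda$ then follows from $\mathrm{gr}_{F}D=\sWH_\lambda\simeq\mGH_\lambda$ (Corollary~\ref{cor::sWH::GH}(iii)). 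So everything reduces to verifying hypotheses (i)--(v).

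For (i) I take the decomposition
\[
\widehat{\gl}_V=\mathfrak{n}\oplus\gl_V[x]\oplus\mathfrak{h},\qquad \mathfrak{n}:=\gl_V\otimes x^{-1}\bC[x^{-1}],\qquad \mathfrak{h}:=\bC K\oplus\bC d .
\]
Here $\mathfrak{n}$ is a subalgebra, since the cocycle $\mathrm{Res}\,\mathrm{tr}(dX\cdot Y)$ vanishes on negative modes. The sum $\gl_V[x]\oplus\mathfrak{h}$ is also a subalgebra, since $d$ normalizes $\gl_V[x]$ and the cocycle vanishes on $\gl_V[x]$.

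For (ii), $K$ acts on each $v_i$ by $1$. The vector $v_{\lambda_i\varepsilon_1}$ lies in $L(\Lambda_{\lambda_i})_e$ for a single eigenvalue $e$, because it is an extremal weight vector and hence a weight vector for the full affine Cartan subalgebra. So $\mathfrak{h}$ acts on $v_i$ by a character.

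For (iii), I define $\ell$ as the projection of $L=\bigotimes_iL(\Lambda_{\lambda_i})^{\gamma_i}$ onto the top graded piece $\bigotimes_iL(\Lambda_{\lambda_i})_0=\bigotimes_i\Lambda^{\lambda_i}V$, followed by $\pi_w$ for the unique copy of $\Lambda^NV$ there. An element $X\otimes x^{-m}f(y)$ with $m>0$ acts on $L$ by $\sum_i f(\gamma_i)(Xx^{-m})^{(i)}$. Each summand lowers the $d$-eigenvalue in one factor, and all eigenvalues are $\leq0$. Hence its image has zero component in the top piece, and $\ell$ vanishes on $\mathfrak{n}[y]L$.

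For (iv), cocyclicity of $w$ in $D$ is Theorem~\ref{thm::N!::equiv::coincidence}. The step I expect to need the most care is checking $\ell|_D=\pi_w$. Here I use that $D_{k\varepsilon_1}\cap L(\Lambda_k)_0=\Lambda^kV$ is its socle, which is the top $x$-degree component of $D_{k\varepsilon_1}$. The $x$-grading of $D_{k\varepsilon_1}$ agrees with the $d$-grading up to a shift, so the projection to the top $d$-piece restricts on $D$ to the projection onto $\bigotimes_i\mtop^x(D_{\lambda_i\varepsilon_1})$. This projection is $\gl_V$-equivariant. Since $\Lambda^NV$ has multiplicity one in $D\simeq V^{\otimes N}$ and lies inside that top component, composing with the isotypic projection gives exactly $\pi_w$. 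I would double-check that the isotypic projection in $D$ and the one in the top piece are compatible; the multiplicity-one property should make this automatic.

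For (v), I need $v\in G^{n(\lambda)}(D)$. This is the inclusion $F_0\subset G^{d-0}$ with $d=n(\lambda)$, which Theorem~\ref{thm::N!::equiv::coincidence} proves without Haiman's theorem. Proposition~\ref{prp::affine::comparison} then gives the chain of inclusions, and the coincidence of the three filtrations follows as explained above.
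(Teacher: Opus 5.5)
Your proposal is correct and follows the paper's proof essentially verbatim. It uses the same decomposition $\widehat{\gl}_V=\mathfrak{n}\oplus\gl_V[x]\oplus\mathfrak{h}$, the same functional $\ell$ built from the multi-energy projection onto $\bigotimes_i\Lambda^{\lambda_i}V$, the same multiplicity-one argument for $\ell|_D=\pi_w$, and the same closing of the chain via Theorem~\ref{thm::fusion::cofusion}.

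The only difference is in citations. For the cocyclicity of $w$ and for $v\in G^{n(\lambda)}(D)$ you cite Theorem~\ref{thm::N!::equiv::coincidence}, which indeed gives both without Haiman's theorem; the paper cites Theorem~\ref{thm::fusion::cofusion}(i),(ii) instead.
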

\begin{proof}
We verify the hypotheses of Lemma~\ref{lem::ell::reduction} and Proposition~\ref{prp::affine::comparison}.

(i) Put $\mathfrak{n}=\gl_V\otimes x^{-1}\bC[x^{-1}]$ and $\mathfrak{h}=\bC K\oplus\bC d$. The space $\mathfrak{n}$ is a subalgebra, because the cocycle $m\,\delta_{m+m',0}\,\mathrm{tr}(XX')K$ vanishes when $m,m'<0$. The space $\mathfrak{a}\oplus\mathfrak{h}$ is a subalgebra, because $[d,\mathfrak{a}]\subseteq\mathfrak{a}$ and the cocycle vanishes on $\mathfrak{a}$.

(ii) $K$ acts by $1$ on every $L_i$. The vector $v_i$ is a weight vector for the Cartan subalgebra of $\widehat{\gl}_V$, so $d$ acts on it by a scalar.

(iii) The space $L$ is graded by the multi-energy: $L=\bigoplus_{\mathbf{e}\in\bZ_{\leq0}^{l+1}}L_{\mathbf{e}}$ with $L_{\mathbf{e}}=\bigotimes_iL(\Lambda_{\lambda_i})_{e_i}$. Its top component is
\[
T:=L_{(0,\ldots,0)}=\Lambda^{\lambda_0}V\otimes\cdots\otimes\Lambda^{\lambda_l}V .
\]
The module $\Lambda^NV$ occurs in $T$ with multiplicity one. Let $\ell:=\pi^T_{\omega_N}\circ\mathrm{pr}_T$, where $\mathrm{pr}_T$ is the projection along the other components, and $\pi^T_{\omega_N}$ is the coordinate of $v_{\omega_N}$ in $T$ (Definition~\ref{def::cocyclic} for the $\gl_V$-module $T$). For $X\in\mathfrak{n}$ of $x$-degree $-m<0$, the element $X\otimes f$ acts on $L$ as $\sum_if(\gamma_i)X^{(i)}$. Each $X^{(i)}$ maps $L_{\mathbf{e}}$ to $L_{\mathbf{e}-m\epsilon_i}$, and this is never $T$, because $e_i-m<0$. Hence $\ell\circ(X\otimes f)=0$.

(iv) $D$ is finite-dimensional, and $v_{\omega_N}$ is cocyclic in it by Theorem~\ref{thm::fusion::cofusion}(i). The multi-energy grading restricts to $D$ with $D\cap T=T$. The $\gl_V$-isotypic projection onto $\Lambda^NV$ preserves the multi-energy grading. Moreover, $\Lambda^NV$ occurs in $D\simeq V^{\otimes N}$ only once, namely inside $T$. Hence $\pi_{v_{\omega_N}}(u)=\pi^T_{\omega_N}(\mathrm{pr}_Tu)=\ell(u)$ for $u\in D$.

(v) With $d=n(\lambda)$, the condition $v\in G^{n(\lambda)}(D)$ is the case $k=0$ of the inclusion $F_k\subseteq G^{n(\lambda)-k}$ of Theorem~\ref{thm::fusion::cofusion}(ii).

Proposition~\ref{prp::affine::comparison} gives the chain of inclusions. By Theorem~\ref{thm::fusion::cofusion}(ii), $F^{\gl_V[x]}_kD=G^{n(\lambda)-k}(D)$, and therefore all three filtrations coincide. The last isomorphism follows from Theorem~\ref{thm::fusion::cofusion}(iii).
\end{proof}

\end{document}